\theoremstyle{plain}
\newtheorem{thm}{Theorem}[section]
\newtheorem{lem}[thm]{Lemma}
\newtheorem{cor}[thm]{Corollary}
\newtheorem{prop}[thm]{Proposition}
\theoremstyle{definition}
\newtheorem{defn}[thm]{Definition}
\newtheorem{rem}[thm]{Remark}
\title[BVPs for degenerate elliptic systems]
{Boundary value problems for degenerate elliptic equations and systems}
\author[P. Auscher]{Pascal Auscher}\author[A. Ros\'en]{Andreas Ros\'en}\author[D. Rule]{David Rule}
\address{Pascal Auscher, Universit\'e de Paris-Sud, UMR du CNRS 8628, 91405 Orsay Cedex, France}
\email{pascal.auscher@math.u-psud.fr}
\address{Andreas Ros\'en, Matematiska Vetenskaper, G\"oteborgs universitet, 41296 G\"oteborg, Sweden}
\email{andreas.rosen@chalmers.se}
\address{David Rule, Matematiska institutionen, Link\"opings universitet, 581 83 Link\"oping, Sweden}
\email{david.rule@liu.se}
\mathchardef\semic="303B
\newcommand{\R}{{\mathbb R}}
\newcommand{\C}{{\mathbb C}}
\newcommand{\Z}{{\mathbb Z}}
\newcommand{\mH}{{\mathcal H}}
\newcommand{\mX}{{\mathcal X}}
\newcommand{\mY}{{\mathcal Y}}
\newcommand{\mL}{{\mathcal L}}
\newcommand{\mE}{{\mathcal E}}
\newcommand{\mD}{{\mathcal D}}
\DeclareMathOperator{\re}{Re}
\newcommand{\sett}[2]{ \{ #1  \colon  #2 \} }
\newcommand{\brac}[1]{\langle #1 \rangle}
\newcommand{\supp}{\text{{\rm supp}}\,}
\newcommand{\dist}{\text{{\rm dist}}\,}
\newcommand{\nul}{\textsf{N}}
\newcommand{\ran}{\textsf{R}}
\newcommand{\dom}{\textsf{D}}
\newcommand{\clos}[1]{\overline{#1}}
\newcommand{\conj}[1]{\overline{#1}}
\newcommand{\dyadic}{\triangle}
\newcommand{\sgn}{\text{{\rm sgn}}}
\newcommand{\divv}{{\text{{\rm div}}}}
\newcommand{\curl}{{\text{{\rm curl}}}}
\newcommand{\tdd}[2]{\tfrac{\partial #1}{\partial #2}}
\newcommand{\ta}{{\scriptscriptstyle \parallel}}
\newcommand{\no}{{\scriptscriptstyle\perp}}
\newcommand{\pd}{\partial}
\newcommand{\uw}{{\underline w}}
\newcommand{\reu}{\mathbb{R}^{1+n}_+}
\newcommand{\loc}{\text{{\rm loc}}}
\newcommand{\comp}{\text{{\rm c}}}
\newcommand{\tN}{\widetilde N_*}
\newcommand{\hE}{\widehat E}
\newcommand{\tE}{\widetilde E}
\newcommand{\tS}{\widetilde S}
\newcommand{\E}{{\mathcal E}}
\newcommand{\bphi}{\varphi}
\newcommand{\bx}{{\bf x}}
\newcommand{\etaq}{1_{Q_1}}
\newcommand{\N}{{\mathbb N}}
\newcommand{\mC}{{\mathcal C}}
\newcommand{\mR}{{\mathcal R}}
\newcommand{\mS}{{\mathcal S}}
\newcommand{\eps}{\varepsilon}
\newcommand{\qe}[1]{\int_0^\infty\|#1\|^2\,\frac{dt}t}
\newcommand{\carl}[1]
{\widehat{#1}}
\def\barint_#1{\mathchoice
            {\mathop{\vrule width 6pt
height 3 pt depth -2.5pt
                    \kern -8.8pt
\intop}\nolimits_{#1}}%
            {\mathop{\vrule width 5pt height
3 pt depth -2.6pt
                    \kern -6.5pt
\intop}\nolimits_{#1}}%
            {\mathop{\vrule width 5pt height
3 pt depth -2.6pt
                    \kern -6pt
\intop}\nolimits_{#1}}%
            {\mathop{\vrule width 5pt height
3 pt depth -2.6pt
          \kern -6pt \intop}\nolimits_{#1}}}
\definecolor{gr}{rgb}   {0.,   0.8,   0. }
\definecolor{bl}{rgb}   {0.,   0.5,   1. }
\definecolor{mg}{rgb}   {0.7,  0.,    0.7}
\begin{document}

\begin{abstract} We study boundary value problems for degenerate elliptic equations and systems with square integrable boundary data. We can allow for degeneracies in the form of an $A_{2}$ weight. We obtain representations and boundary traces for solutions in appropriate classes, perturbation results for solvability and solvability in some situations. The technology of earlier works of the first two authors can be adapted to the weighted setting once the  needed  quadratic estimate is established and we even improve some results in the unweighted setting.  The proof of this quadratic estimate does not follow from earlier results on the topic and  is the core of the article.  \end{abstract}

\subjclass[2010]{35J56, 35J70, 35J25, 42B25, 42B37}

\keywords{Littlewood-Paley estimates, functional calculus, boundary value problems, second order elliptic equations and systems, weighted norm inequalities}

\maketitle
\tableofcontents

\section{Introduction}

In the series of articles \cite{FKS, FJK1, FJK2}, degenerate elliptic equations in divergence form with real symmetric coefficients are studied. There, the degeneracy is given in terms of  an $A_{2}$ weight or a power of the jacobian of a quasi-conformal map. The first article gives interior estimates, the second article deals with the Wiener test and the third one study  boundary behavior and  harmonic measure.
 Further work along these lines, for example \cite{FS, BM}, has been done. However, little work on the fundamental $L_p$ Dirichlet and Neumann
problems in the degenerate setting seems to have been done.
Here, we want to initiate such study of boundary value problems, with $L_2$ boundary data, for a large class of weights, in the case of domains which are Lipschitz
diffeomorphic to the upper half space $\R^{1+n}_+ := \sett{(t,x)\in\R\times \R^n}{t>0}$, $n\ge 1$. Thus, our work includes the case of special Lipschitz domains.
Another difference to earlier work that we want to stress is that we consider general elliptic divergence form systems, and not only scalar equations, as this is the natural setting for the
methods used. In this generality, interior pointwise regularity estimates fail in general, even
in the uniform elliptic case.
However, we emphasize that the methods used in this paper do not require such pointwise estimates.

We consider
divergence form second order, real and complex, \textbf{degenerate} elliptic systems
\begin{equation}  \label{eq:divform}
   \sum_{i,j=0}^n\sum_{\beta= 1}^m \pd_i\Big( A_{i,j}^{\alpha, \beta}(t,x) \pd_j u^{\beta}(t,x)\Big) =0,\ \alpha=1,\ldots, m
\end{equation}
in $\R^{1+n}_+$,
where $\pd_0= \tdd{}{t}$ and $\pd_i= \tdd{}{x_i}$, $1\le i\le n$, which we abbreviate as
$\divv A \nabla u=0$, where
\begin{equation}   \label{eq:boundedmatrix}
  A=(A_{i,j}^{\alpha,\beta}(t, x))_{i,j=0,\ldots,n}^{\alpha,\beta= 1,\ldots,m}.  \end{equation}
We assume $A$ to be degenerate   in the sense that for some $w\in A_{2}(\R^n)$ and $C<\infty$,
  \begin{equation}   \label{eq:bounded}
  |A(t,x)| \le Cw(x),  \ \mathrm{for \ a.e.}\ (t,x)\in \R^{1+n}_{+}
  \end{equation}
and elliptic degenerate in the sense that $w^{-1}A$ is  accretive on a space $\mH^0$ that we define below. This ellipticity condition   means that
there exists $\kappa>0$ such that
\begin{equation}   \label{eq:accrassumption}
  \re \int_{\R^n}  (Af(x),f(x)) dx\ge   \kappa
   \sum_{i=0}^n\sum_{\alpha=1}^m \int_{\R^n} |f_i^\alpha(x)|^2 w(x)\, dx,
\end{equation}
for all $f\in \mH^0$ and a.e. $t>0$.  We have set $$(A\xi,\xi)= \sum_{i,j=0}^n\sum_{\alpha,\beta=1}^m A_{i,j}^{\alpha,\beta}(t,x)\xi_j^\beta\, \conj{\xi_i^\alpha}.
$$
The space $\mH^0$ is the closed subspace of $L^2(\R^n,w;\C^{m(1+n)})$ consisting of those functions with $\curl_{x}(f_{i}^\alpha)_{i=1,\ldots,n}=0$ for all $\alpha$.  The case of equations is when $m=1$ or, equivalently, when $A_{i,j}^{\alpha,\beta}=A_{i,j}\delta _{\alpha,\beta}$. In this case, the accretivity condition becomes the usual pointwise accretivity
\begin{equation}   \label{eq:poinwiseaccr}
 \re  \sum_{i,j=0}^n A_{i,j}\xi_j \conj{\xi_i} \ge
 \kappa  \sum_{i=0}^n |\xi_i|^2 w(x),
\end{equation}
for all $\xi\in\C^{1+n}$ and a.e. $(t,x)\in \R^{1+n}_{+}$.
Observe that the function $(t,x)\mapsto w(x)$ is an $A_{2}$ weight in  $\R^{1+n}$  if $w$ is an  $A_{2}$ weight in $\R^n$. So, the degeneracy is a special case of that considered in the works mentioned above. However, for the boundary value problems we wish to consider, this seems a natural class. To our knowledge,  this has not been considered before.

 A natural question is whether weights could depend on both variables or only on the $t$-variable. Already in the non-degenerate case there are regularity conditions without which the Dirichlet problem is ill-posed.  As for the degenerate case, the well known example from \cite{CaS}  when $A= t^{1-2s}I$, $0<s<1$, leading to representation of the fractional Laplacian $(-\Delta)^s$ as the Dirichlet to Neumann operator, is of  a  nature  that our theory cannot cover.  In fact, our weights need to have a trace at the boundary in some sense. Another option is to assume the weight depends on the $x$-variable only and put the $t$-perturbations in the coefficients. See below in this introduction and Section \ref{sec:rep}.  

The equation \eqref{eq:divform} must be properly interpreted.
Solutions are taken with $u$ and $\nabla u$ locally in $L^2(\R^{1+n}_{+}, w(x)dxdt)$, and the equation is taken in the sense of distributions. Note that we allow complex coefficients and systems, so most of the theory developed for real symmetric equations does not apply, and even for real coefficients we want to develop methods regardless of regularity theory.

The boundary value problems can be formulated as follows:   find weak solutions $u$ with appropriate interior estimates of $\nabla_{t,x} u$
satisfying one of the following three natural boundary conditions.
\begin{itemize}
\item The Dirichlet condition $u= \bphi$ on $\R^n$,
given the Dirichlet datum $\bphi\in L^2(\R^n,w;\C^m)$.
\item The Dirichlet regularity condition $\nabla_x u= \bphi$ on $\R^n$, given the regularity datum
$\bphi\in L^2(\R^n,w;\C^{mn})$ satisfying $\curl_x \bphi=0$. Alternately, $\bphi$ is the tangential gradient of the Dirichlet datum.
\item The Neumann condition $\pd_{\nu_{A}}u= ( A\nabla_{t,x} u, e_{0})= \bphi$ on $\R^n$, given
$\bphi\in L^2(\R^n,w^{-1};\C^{m})$. Here $e_{0}$ is the upward unit  vector in the $t$-direction.
\end{itemize}
Observe that  the natural space for  the gradient at the boundary is  $L^2(\R^n,w;\C^{m(1+n)})$, and since $A$ is of the size $w$, multiplication by $A$ maps into $L^2(w^{-1})$.  Thus  the weight $w^{-1}$ is  natural  for the conormal derivative. In order   to work with  the same weighted space for all three problems, we shall  consider the $w$-normalized conormal derivative $\partial_{\nu_{w^{-1}A}}u|_{t=0}=w^{-1}\partial_{\nu_{A}}u|_{t=0}\in L^2(\R^n, w;\C^m)$.

Boundary value problems can be formulated  for $L^p$ data with $p\ne 2$. This is for later work and here we restrict our attention to $p=2$.
 We mention that there are also ``intermediate'' boundary value problems for regularity/Neumann data in some negative Sobolev spaces based on $L^2(w)$ using  fractional powers of the Laplacian $-\Delta_{w}$ (defined later) that can be treated by the same methods.  One important case is the treatment of variational solutions in this context, \textit{i.e.} those with $\iint_{\reu} |\nabla u |^2\, dw(x) dt<\infty$, in which these problems are always well-posed. In the case $w=1$, the methods have been worked out completely in \cite{R}.

Let us also comment on the corresponding degenerate inhomogeneous problem $\divv A \nabla u=f$, with $\divv A \nabla u$ denoting the left hand side in \eqref{eq:divform}. A study of such equations would be of interest in its own right, but could also prove useful in the study of boundary value problems of the type we study here. Such applications were implemented in \cite{DinR,DPR} in the non-degenerate setting where the coefficients of the operator were assumed to satisfy a Carleson measure condition in place of $t$-independence. In \cite{DinR} a duality argument reduced the desired estimate for solutions to the homogeneous equation to an estimate on a solution to an inhomogeneous equation, which could subsequently be proved. While such investigations in the degenerate setting would certainly make the theory more complete, studying the inhomogeneous equation is not our goal here.

We shall obtain \textit{a priori} representations of solutions and existence of boundary  traces (in the almost everywhere sense) for appropriate solution spaces together with various estimates involving non-tangential maximal functions, a characterization of well-posedness, a duality principle between regularity and Dirichlet problems, perturbation results for well-posedness on perturbing the coefficients,  and well-posedness for hermitian systems and some block-triangular coefficients.

To this end, we follow the two step strategy developed in \cite{AA1} and \cite{AA2} and we assume the reader has these references handy.  The first step is to obtain a  priori representations and boundary traces (in weighted $L^2$ and almost everywhere) for solutions in appropriate classes. Actually, we shall even improve upon known results on almost everywhere convergence  when $w=1$. The starting point is to transform the second order equation to a first order system in the $w$-normalized conormal gradient
$$\nabla_{w^{-1}A} u= \begin{bmatrix} \pd_{\nu_{w^{-1}A}}u \\ \nabla_x u \end{bmatrix},$$
and then prove a weighted  quadratic estimate  and a non-tangential maximal estimate for functions of  a bisectorial operator $DB$ when the coefficients are $t$-independent.  Then one can develop a semigroup representation of those $w$-normalized  conormal gradients. This semigroup method  for $t$-independent $A$ and their $t$-dependent perturbations is presented in  such an abstract way in \cite{AA1}  that it applies in the weighted space without any change once  the initial quadratic estimate is established. We shall basically define the necessary objects and prove only what is not \textit{mutatis mutandi} the same. Here, the discrepancy function should be renormalized by the weight, so it reads $w^{-1}(x)(A(t,x) -A(0,x))$, and it is measured by a weighted Carleson-Dahlberg condition.  Also some estimates such as  weighted Carleson embedding require the fact that $dw$ is doubling, which follows from the $A_{2}$ condition.

The quadratic estimate for bisectorial $DB$ is also a fundamental ingredient in the proof of non-tangential maximal estimates and almost everywhere convergence. As in \cite{AA2}, the almost everywhere convergence is not in a pointwise sense but in some averaged sense on Whitney regions approaching the boundary.  We obtain convergence results for solutions in spaces relative to the Dirichlet problem, and for solutions,  their gradients and $w$-normalized conormal derivatives  for solutions in spaces relative to the regularity and Neumann problems. We stress that even when $w=1$, this convergence at the level of gradients is new in this generality.

 Once representations and boundary traces of solutions are obtained, the second step is to  formulate the boundary value problems and show that solvability amounts to inverting boundary operators. As for positive results, we shall obtain
well-posedness results for $t$-independent $A$ which, in addition, are hermitian, block-diagonal,  block-triangular    or  merely satisfy a divergence free condition on the triangular block. Let us remark that in the non-degenerate case $w=1$, all these BVPs are well posed with constant matrices. Here, constant would mean that  $A$ is  a constant multiple of $w$, or that  $DB$ comes with constant $B$. However, we do not know well-posedness  for  such $B$, in particular  for  the example below with $\varepsilon \sim 2$.
As usual, all positive well-posedness results are stable under perturbation, that is, when $w^{-1}(A'-A)$ is small in $L^\infty$. We also obtain perturbation results for well-posedness with $t$-dependent coefficients $A$ under  smallness of the discrepancy function $w^{-1}(A(t,x) -A(0,x))$ in a weighted Carleson-Dahlberg sense.

 The key result is therefore the quadratic estimates for bisectorial operators $DB$ in $L_2(\R^n,w)$, stated in Theorem \ref{th:main}. The differential operator $D$, although not explicit in our notation, depends on the weight $w$ as well. The structure of the operator dictates a different
reduction to a Carleson measure estimate than in the uniformly elliptic case. In particular there
appears a dyadic averaging/expectation operator $E_t$ which acts on vector valued functions by
unweighted averages of some components and weighted averages  of some other components.
It seems therefore from the proof that the $A_2$ condition on the weight is sharp for this
quadratic estimate, since the $L_2(\R^n,w)$ bound of $E_t$ requires the $A_2$ condition.
In particular, our method does not to apply to the quasiconformal Jacobians mentioned above.

The quadratic estimate from Theorem \ref{th:main} is the central estimate without which the method breaks down. We summarize the harvest  of results using in an essential way this quadratic estimate:

1. The estimates of the functional calculus in Proposition \ref{prop:equi};

2. The non-tangential maximal estimates in $L^2$ and  almost
everywhere convergence in Theorem \ref{thm:NTmaxandaeCV};

3. The estimates, through operational calculus, of the singular integral
with operator-valued kernel $S_\mE$ in Theorems \ref{thm:estSA} and \ref{thm:esttSA};

4. The perturbation results for boundary value problems in Theorems \ref{thm:Nellie} and \ref{thm: DirLip}.

The proof of the quadratic estimate is not an easy matter
and will be the hard part of the article. Let us comment on this. First, it does not follow from available extensions of the proof of the Kato square root problem such as the ones in \cite{AKMc}, \cite{elAAM} or \cite{Ban}. This is because $D$ will no longer be a constant coefficient operator.  Let us give  an example to illustrate the differences. When $w=1$, the basic example is $$
D=\begin{bmatrix}
  0     & - \frac{d}{dx}   \\
 \frac{d}{dx}     & 0
\end{bmatrix}.$$
On $\R$, if $B$ is any constant elliptic matrix, the domain of $DB$ is the Sobolev space $H^1(\R;\C^2)$. This can be seen using a Fourier transform argument. If $w\ne 1$, then $D$ becomes
$$
D=\begin{bmatrix}
  0     & - \frac{1}{w}\frac{d}{dx}w   \\
 \frac{d}{dx}     & 0
\end{bmatrix} $$
and (the column vector) $(f,g)$ is in the domain of $D$ if and only if $f\in H^1(\R,w)$ and $wg\in  H^1(\R,w^{-1})$ (the weighted Sobolev spaces).   Note that elements in these spaces are  absolutely continuous functions when $w\in A_{2}$. Let $B$ be  the constant matrix
$$
B=\begin{bmatrix}
  1     & 0   \\
\varepsilon     & 1
\end{bmatrix}.$$
Then for any $\varepsilon\ne 0$, the domain of $DB$ cannot be the same as the domain of $D$. Indeed, the conditions to be in the domain of $DB$ are $w(g+\varepsilon f) \in H^1(\R,w^{-1})$ and $f\in H^1(\R,w)$.   If $wg$ were in $H^1(\R,w^{-1})$, then so would $wf$, and  both $f$ and $wf$ would  be continuous on $\R$, which is a restriction on $f$.

Moreover, $D$ cannot be coercive on its range. In the previous example,  that would lead to an estimate like $g'$ controlled by $\frac 1 w (gw)'$, which is absurd.

 Nevertheless,  Cruz-Uribe and Rios \cite{CR} recently proved  the Kato conjecture for square roots of degenerate elliptic operators $-\divv_{\R^n} A \nabla_{\R^n}$ on $\R^n$, which  corresponds here to the special case of  block diagonal, bounded, measurable and accretive $B=\begin{bmatrix}
  1     & 0   \\
0     & w^{-1}A
\end{bmatrix}.$  The preceding example indicates that non-block terms  introduce a difficulty that was not in the above work; indeed, it makes the argument  harder. Let us explain where this difficulty appears  in the proof and how we overcome it.   As is customary in this area, we look for  a $Tb$ argument to prove a Carleson estimate which proceeds via a stopping time argument on the averages on dyadic cubes of  some test functions. In our situation, averages are taken   with respect to both the unweighted and weighted measure $dx$ and $w(x)dx$. But if the weight varies too much on a given cube $Q$,  the stopping time does
  not give  useful information.   We are forced to organize  the collection of all subcubes of $Q$ in subclasses  on which the $dx$-averages of $w$  do not vary too much from a parent cube. The ideal situation is that all cubes fall into one of these subclasses. This is not the case, but the next best thing happens, namely that  the those subcubes which are left out satisfy a packing condition with uniform, possibly large, constant. This organization of cubes is intrinsic to the given weight and is in fact a weighted version of a result found in Garnett's book \cite{Gar}. Here, it allows us to run the stopping time on the test functions for the Carleson estimate.  In the situation of \cite{CR},  one can split things so as to run two separate stopping time arguments each concerned with one measure so this step is not necessary.

To conclude this introduction,  we discuss two different boundary geometries.  First, all of the results here are invariant under bilipschitz changes of variables. Thus,  whenever one can pull back a boundary value problem on a special Lipschitz domain  to one that fits our  hypotheses, one obtains a result for the initial problem. As an example, this shows that  if $w\in A_{2}(\R^n)$,  the three boundary value problems for $-\divv_{x,t} (w(x)\nabla_{x,t}u)(x,t)=0$ with appropriate interior estimates (see Section \ref{sec:solvability}) are  well-posed on any special Lipschitz domain $\Omega=\{t>\varphi(x)\}$ when the corresponding datum is in $L^2(\pd\Omega; \tilde wd\sigma)$ where $\sigma$ is the surface measure and $\tilde w(\varphi(x)):=w(x)$. Applying the standard pullback $(x,t) \mapsto (y, t-\varphi(y))=(y,s)$, we obtain an equation of the form $ \divv_{y,s} (A(y)\nabla_{y,s}v)(y,s)=0$ that is degenerate elliptic.
Secondly, it is natural to expect results on bounded domains. Bilipschitz invariance implies that one can look at the case of the unit  ball as the non-smoothness is carried by the coefficients. There, the setup of \cite{AA2} applies to  radially independent weights and degenerate coefficients, and perturbations of the latter. It will be clear from the present article that it all depends on the weighted quadratic estimate on the boundary. This requires a proof that is left to further work.

The first  author was partially supported by the ANR project ``Harmonic analysis at its boundaries'' ANR-12-BS01-0013-01. The second one was supported by the Grant 621-2011-3744 from the Swedish Research Council, VR. The first author wants to thank Yannick Sire for bringing his attention to this problem.

\section{Preliminaries on weights}

\subsection{Muckenhoupt weights}

Recall that for a weight $w$ on $\R^n$ and $p>1$, the  $A_{p}$ condition reads
$$
\bigg(\barint_{\hspace{-6pt}Q} w\, dx\bigg)\,
\bigg(\barint_{\hspace{-6pt}Q} w^{1-p'} \, dx\bigg)^{p/p'}\le C,
$$
 for all cubes $Q$ with $p'$ the conjugate exponent to $p$.  The smallest possible  $C$ is denoted by $[w]_{A_{p}}$.
The notation $\barint_{\hspace{-2pt}E}$ means the average with respect to the indicated measure on $E$.

We identify  $w$ with the measure $dw=w(x)\, dx$ and write $w(E)$ for $\int_{E} dw$ while $|E|= \int_{E} dx$.
Recall that $w\in A_{p}$ implies $w\in A_{q}$ for all $q>p-\varepsilon$ where $\varepsilon>0$ depends on  $[w]_{A_{p}}.$  Every $w\in A_{p}$ is an $A_{\infty}$ weight:  there exist constants $0< \sigma \le 1 \le \tau<\infty$ such that
\begin{equation}
\label{eq:ainfty}
\left(\frac {|E|}{|Q|}\right)^\tau \lesssim \frac{w(E)}{w(Q)} \lesssim  \left(\frac {|E|}{|Q|}\right)^\sigma
\end{equation}
for all cubes $Q$ and measurable subsets $E$ of $Q$ (actually, $\tau=p$ if $w\in A_{p}$ and $(1/\sigma)'$ is the reverse H\"older exponent of $w$, which can be arbitrary in $(1,\infty]$). In particular, $dw$ is a doubling measure. There also exists a constant $c_0 > 0$ such that
\begin{equation} \label{reversejensen}
\barint_{\hspace{-6pt}Q} \ln w(x) dx \leq \ln\bigg(\barint_{\hspace{-6pt}Q} w(x) dx\bigg) \leq \barint_{\hspace{-6pt}Q}\ln w(x) dx + c_0,
\end{equation}
the first inequality being Jensen's inequality and the second being a reverse form of it.

Denote $ L^p(w; \C^d)=L^p(\R^n, w; \C^d)$ for $d\ge 1$, and $L^p(w)=L^p(w;\C)$.
If $w\in A_{p}$,
\begin{equation}
\label{eq:dxdwAp}
\barint_{\hspace{-6pt}Q} \left|f(y)\right|\,  dy \le [w]_{A_{p}}^{1/p}  \left( \barint_{\hspace{-6pt}Q} |f(y)|^p\,  dw(y)\right)^{1/p}.
\end{equation}
In particular, for $w\in A_{2}$, since $w\in A_{2/p}$ for some $p>1$, this implies Muckenhoupt's theorem: the Hardy-Littlewood maximal operator $M$ with respect to $dx$ is bounded on $L^2(w)$.

\subsection{A corona decomposition for $A_{2}$ weights} \label{coronasec}

We use the following dyadic decomposition of $\R^n$. Let
$\dyadic= \bigcup_{j=-\infty}^\infty\dyadic_{2^j}$ where
$\dyadic_{2^j}:=\{ 2^j(k+(0,1]^n) :k\in\Z^n \}$. For a dyadic cube $Q\in\dyadic_{2^j}$, denote by $\ell(Q)=2^j$
its \emph{sidelength} and  by $|Q|= 2^{nj}$ its {\em Lebesgue volume}.  We set $\dyadic_t=\dyadic_{2^j}$  if $2^{j-1}<t\le
2^j$.

Here we describe a decomposition of a dyadic cube $Q \in \dyadic$ with respect to a Muckenhoupt weight $w \in A_2$.

We follow a construction of Garnett \cite{Gar}. For a fixed $\sigma_w > 0$,  to be chosen later,  we consider  $B^w(Q)$ the collection of  those  (``bad for $\ln w$'')  maximal sub-cubes of $Q$ for which
\begin{equation*} \label{st1}
|(\ln w)_R - (\ln w)_Q| > \sigma_w.
\end{equation*}
 In this section, we use the notation $f_{Q}= (f)_{Q}:= \barint_{\hspace{-2pt}Q} f\, dx$.
We can then define $B^w_j(Q)$ inductively  as   $B^w_{1}(Q)=B^w(Q)$  and for $j = 2,3,\dots$ by
\[
B^w_j(Q) = \bigcup_{R \in B^w_{j-1}(Q)} B^w(R), \quad \mbox{and set} \quad B^w_*(Q) = \bigcup_{j=1}^\infty B^w_j(R).
\]
The following proposition shows that the ``number'' of  cubes {on which the oscillation of $\ln w$  differs too much from the one on some ancestor} can be controlled.

\begin{prop} \label{prop4}
For a cube $R \in \dyadic$,  if $R\subseteq Q$ is not contained in a  cube of $B^w(Q)$  then $|(\ln w)_R - (\ln w)_Q|  \leq   \sigma_w$. We also have that
\begin{equation} \label{eqsq}
\sum_{R \in B^w_*(Q)} w(R) \leq \frac{C}{\sigma_w^2} w(Q)
\end{equation}
for some $C < \infty$ which depends only on $[w]_{A_2}$.
\end{prop}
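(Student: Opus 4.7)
The statement has two parts, which I treat separately.

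Claim (1) is immediate from the maximality built into the definition of $B^w(Q)$: if $R\subseteq Q$ is not contained in any cube of $B^w(Q)$, then in particular $R\notin B^w(Q)$, and since the cubes of $B^w(Q)$ are the maximal dyadic subcubes of $Q$ on which $|(\ln w)_\cdot - (\ln w)_Q|$ exceeds $\sigma_w$, this forces $R$ itself to satisfy $|(\ln w)_R - (\ln w)_Q|\leq \sigma_w$.

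For the packing, write $b:=\ln w$. The crux is a weighted $L^2$ oscillation estimate with \emph{unweighted} centering,
\[
\int_S (b - b_S)^2\, dw \leq C([w]_{A_2})\, w(S),
\]
valid for every cube $S$. I would prove it by combining two classical inputs: $w\in A_2$ gives $b\in \mathrm{BMO}(\R^n)$ with norm controlled by $[w]_{A_2}$, so John--Nirenberg yields all polynomial moments $\bigl(\tfrac{1}{|S|}\int_S |b-b_S|^{2p'}\, dx\bigr)^{1/(2p')}\leq C\,\|b\|_{\mathrm{BMO}}$; and the reverse H\"older self-improvement of $A_2$ provides some $p>1$ with $\bigl(\tfrac{1}{|S|}\int_S w^p\, dx\bigr)^{1/p}\leq C\, w_S$. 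H\"older's inequality with exponents $p'$ and $p$ applied to $\int_S (b-b_S)^2 w\, dx$ then gives the displayed estimate.

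The one-step packing follows from Cauchy--Schwarz together with the $A_2$ bound $w(R)w^{-1}(R)\leq [w]_{A_2}|R|^2$:
\[
|b_R - b_S|^2 w(R) = \Bigl|\tfrac{1}{|R|}\int_R (b - b_S)\, dx\Bigr|^2 w(R) \leq \tfrac{w(R) w^{-1}(R)}{|R|^2}\int_R (b - b_S)^2\, dw \leq [w]_{A_2}\int_R (b - b_S)^2\, dw.
\]
Summing over the disjoint cubes in $B^w(S)$ and invoking the previous step bounds $\sum_{R\in B^w(S)} |b_R-b_S|^2 w(R)\leq C\, w(S)$; since $|b_R - b_S|>\sigma_w$ on $B^w(S)$, this yields $\sum_{R\in B^w(S)} w(R)\leq C_1\sigma_w^{-2} w(S)$ with $C_1=C_1([w]_{A_2})$. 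Iterating generation by generation gives $\sum_{R\in B^w_j(Q)} w(R)\leq (C_1\sigma_w^{-2})^j w(Q)$, and summing the geometric series---convergent once $\sigma_w$ is chosen with $\sigma_w^2> 2C_1$, which is at our disposal---produces the stated bound with $C=2C_1$.

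The main difficulty I foresee is the first displayed inequality: producing a weighted $L^2$ oscillation estimate for $b$ while centering at its \emph{unweighted} average, since the standard weighted-BMO theory naturally delivers weighted centerings. The trick is to exploit simultaneously the two pieces of structure---John--Nirenberg for $b\in\mathrm{BMO}$ (controlling $b$ against Lebesgue measure) and the reverse H\"older self-improvement of $w$ (creating a little extra integrability)---effectively trading a weighted $L^2$ bound for an unweighted higher-$L^p$ bound. Once this is in hand, the Cauchy--Schwarz step and the geometric summation are routine.
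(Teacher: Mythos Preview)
Your argument for the weighted oscillation bound $\int_S (\ln w - (\ln w)_S)^2\, dw \lesssim w(S)$ is fine, and indeed matches what the paper eventually uses. The gap is in the packing step. Your iteration gives $\sum_{R\in B^w_j(Q)} w(R)\le (C_1\sigma_w^{-2})^j w(Q)$, and the geometric series only converges when $\sigma_w^2 > C_1$. You say this choice is ``at our disposal'', but it is not: the proposition is stated for an arbitrary fixed $\sigma_w>0$, and in the application (the $Tb$ argument, specifically Lemma~\ref{lem:parameters}) $\sigma_w$ must be chosen \emph{small}. So your proof does not establish \eqref{eqsq} in the regime that is actually needed, and the constant you obtain is $\frac{C_1/\sigma_w^2}{1-C_1/\sigma_w^2}$ rather than $C/\sigma_w^2$.

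The paper avoids this by not iterating at all. It writes
\[
\sum_{R\in B^w_*(Q)} w(R) \le \frac{1}{\sigma_w^2}\int_Q \Big(\sum_{R\in B^w_*(Q)} 1_R\,|(\ln w)_R - (\ln w)_{R'}|^2\Big)\, dw,
\]
where $R'$ is the stopping-time parent of $R$, and recognises the bracketed expression as a martingale-type square function $S(b)^2$ with $b=(\ln w - (\ln w)_Q)1_Q$. A weighted square function inequality $\int |S(b)|^2\, dw \lesssim \int |M b|^2\, dw$ together with the $L^2(w)$ boundedness of $M$ reduces everything to the oscillation estimate you already have. The point is that summing the square function over \emph{all} generations at once produces a single factor of $\sigma_w^{-2}$, with no geometric ratio to worry about.
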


\begin{proof}
The first statement of the Proposition is clear from the definition of $B^w(Q)$, so we only need to prove \eqref{eqsq}.

For each $R \in B^w_*(Q)$, there is a unique $R'\in B^w_*(Q) \cup \{Q\}$ such that $R \in B^w_1(R')$, that is, $R'$ is the stopping-time parent of $R$. Thus
\[
w(R) \leq \int_{R} \frac{|(\ln w)_R - (\ln w)_{R'}|^2}{\sigma_w^2}\,  dw
\]
and so
\begin{align*}
\sum_{R \in B^w_*(Q)} w(R) & \lesssim \frac{1}{\sigma_w^2} \sum_{R \in B^w_*(Q)} \int_{R} |(\ln w)_R - (\ln w)_{R'}|^2\,  dw \\
& = \frac{1}{\sigma_w^2} \int_{Q} \bigg( \, \sum_{R \in B^w_*(Q)} 1_{R} |(\ln w)_R - (\ln w)_{R'}|^2 \bigg)\,  dw,
\end{align*}
where $1_R$ is the characteristic function of the set $R$.   For any $b\in L^1_{loc}(\R^n, dx)$ consider
 the square function
\[
S(b)(x) := \bigg(\, \sum_{R \in B^w_*(Q)} 1_{R}(x) |b_R - b_{R'}|^2 \bigg)^{1/2}.
\]
For any $w \in A_\infty$ we have (see  Lemma 6.4 in \cite{Gar} for the unweighted estimate and   \cite{HR2}  for the weighted one) the estimate
\[
\int_{\R^n} |S(b)|^2 dw \lesssim \int_{\R^n} |M(b)|^2 dw,
\]
where $M$ is the Hardy-Littlewood maximal operator with respect to  $dx$. Applying this to $b(x) := (\ln w(x) - (\ln w)_Q)1_Q(x)$ and using Muckenhoupt's  theorem,  it suffices to show that
\begin{equation} \label{eq:rem}
\int_Q |\ln w - (\ln w)_Q|^2\, dw \lesssim w(Q).
\end{equation}

 This inequality follows from the fact that  $\ln w \in BMO(\R^n, dx)$ when $w\in A_{2}$ and the fact that    $ BMO(\R^n, dx)= BMO(\R^n, dw)$ with equivalent norms  for $w\in A_{\infty}$. The latter statement is a consequence of the John-Nirenberg inequality and reverse H\"older inequality for $w$. However, we present a direct proof.  It follows from Jensen's inequality and the $A_2$ condition on $w$ that
\[
\int_Q e^{|\ln w(x) - (\ln w)_Q|}\, dx \lesssim |Q|.
\]
 Setting $M_\lambda = \{x \in Q \, ; \, |\ln w(x) - (\ln w)_Q| > \lambda  \}$, we can write
\[
\int_{0 }^\infty e^\lambda |M_\lambda| d\lambda=  \int_Q \int_{ 0  }^{|\ln w(x) - (\ln w)_Q|} e^{\lambda} d\lambda dx \le \int_Q e^{|\ln w(x) - (\ln w)_Q|} \, dx ,
\]
so
\[
\int_{0  }^\infty e^\lambda |M_\lambda| d\lambda \lesssim |Q|.
\]
Similarly,
\[
\int_Q |\ln w - (\ln w)_Q|^2\,  dw  = \int_{0}^\infty 2\lambda w(M_\lambda) d\lambda.
\]
Condition \eqref{eq:ainfty}, in which we take $\sigma<1$ as one can always do, ensures that
\begin{align*}
\int_{0}^\infty \lambda w(M_\lambda) d\lambda & \lesssim \int_{0}^\infty \lambda w(Q)\left(\frac{|M_\lambda|}{|Q|}\right)^\sigma d\lambda \\
& = w(Q) \int_{0}^\infty \left(\frac{|M_\lambda|}{|Q|}e^\lambda \right)^\sigma (\lambda e^{-\lambda\sigma})d\lambda \\
& \lesssim  w(Q) \left(\int_{0}^\infty \frac{|M_\lambda|}{|Q|}e^\lambda d\lambda \right)^\sigma \left(\int_{0}^\infty (\lambda e^{-\lambda\sigma})^{1/(1-\sigma)}d\lambda \right)^{1-\sigma} \\
& \lesssim w(Q),
\end{align*}
which proves \eqref{eq:rem} and with it the proof of the Proposition.
\end{proof}

\subsection{Review of weighted Littlewood-Paley inequalities}

We recall here a few facts of Littlewood-Paley theory. The treatment in Wilson's book used in \cite{CR} is not completely adapted for our needs. In particular, we use Calder\'on's reproducing formula in a different way. Here $w$ denotes a weight in $A_{2}$. We begin with approximation issues.

\begin{lem}\label{lem:approx} Let $\varphi\in L^1(\R^n)$ with a  radially decreasing integrable majorant $\phi$. Let $\varphi_{\varepsilon}(x) =\varepsilon^{-n} \varphi(x/\varepsilon)$ for $\varepsilon>0$.
\begin{itemize}
  \item Convolution with $\varphi_{\varepsilon}$ is a bounded operator on $L^2(w)$, uniformly with respect to $\varepsilon$.
  \item For every $f\in L^2(w)$, $\varphi_{\varepsilon}\star f\to  cf$ and $ \varphi_{1/\varepsilon}\star f \to 0$ in $L^2(w)$ as $\varepsilon\to 0$, where $c=\int_{\R^n}\varphi(x)\, dx$.

\end{itemize}

\end{lem}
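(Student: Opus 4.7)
The plan rests on two ingredients that are essentially already present in the paper. First, the classical pointwise majorization
\[
\sup_{\varepsilon>0}|\varphi_\varepsilon\star f(x)|\le C_n\|\phi\|_1\,Mf(x),
\]
which is immediate from $|\varphi|\le\phi$ together with the standard estimate for approximations to the identity generated by a radially decreasing integrable kernel. Second, Muckenhoupt's theorem, recalled just above the lemma, gives $\|Mf\|_{L^2(w)}\lesssim\|f\|_{L^2(w)}$ since $w\in A_{2}$. Composing these yields the uniform $L^2(w)$-boundedness of both families $T_\varepsilon f:=\varphi_\varepsilon\star f$ and $S_\varepsilon f:=\varphi_{1/\varepsilon}\star f$, which is the first assertion.

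For the two convergence statements, I would use a density/dominated-convergence argument. Since $w\in A_{2}$ implies $w,w^{-1}\in L^1_{\mathrm{loc}}$, the space $C_c^\infty(\R^n)$ is dense in $L^2(w)$. For $f\in C_c^\infty$, the change of variables $y\mapsto\varepsilon y$ gives
\[
T_\varepsilon f(x)-cf(x)=\int_{\R^n}\varphi(y)\bigl[f(x-\varepsilon y)-f(x)\bigr]\,dy,
\]
which tends to $0$ at every $x$ by Lebesgue's dominated convergence (integrand bounded by $2\|f\|_\infty\phi(y)$ and vanishing pointwise in $y$ by continuity of $f$). Likewise,
\[
S_\varepsilon f(x)=\int_{\R^n}\varphi(z)f(x-z/\varepsilon)\,dz\longrightarrow 0
\]
pointwise as $\varepsilon\to0$, because for every $z\neq 0$ and fixed $x$ the point $x-z/\varepsilon$ eventually exits the support of $f$, while the integrand is dominated by $\|f\|_\infty\phi(z)\in L^1$. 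Both $|T_\varepsilon f-cf|$ and $|S_\varepsilon f|$ are bounded pointwise by $H:=C_n\|\phi\|_1\,Mf+|c||f|\in L^2(w)$, so $|T_\varepsilon f-cf|^2 w$ and $|S_\varepsilon f|^2 w$ are dominated by $H^2 w\in L^1$, and dominated convergence delivers the $L^2(w)$-convergence for $f\in C_c^\infty$.

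Finally, the passage from $C_c^\infty$ to a general $f\in L^2(w)$ is the standard three-term argument using the uniform bound from the first paragraph: approximate $f$ in $L^2(w)$ by $g\in C_c^\infty$ and split $T_\varepsilon f-cf=T_\varepsilon(f-g)+(T_\varepsilon g-cg)+c(g-f)$, controlling the outer terms by the uniform bound (noting $|c|\le\|\phi\|_1$) and the middle term by the previous step; the analogous decomposition handles $S_\varepsilon$. No step appears to be a serious obstacle: the lemma is a soft consequence of the maximal-function domination and Muckenhoupt's theorem, and the $A_{2}$ hypothesis enters only through the latter.
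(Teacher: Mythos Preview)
Your proof is correct and follows essentially the same approach as the paper's: pointwise domination by the maximal function plus Muckenhoupt's theorem for the uniform bound, then reduction to a dense class of nice functions where one has almost everywhere convergence, followed by dominated convergence in $L^2(w)$ using $Mf\in L^2(w)$. The only minor difference is that the paper uses $L^\infty_c(\R^n)$ as the dense class rather than $C_c^\infty(\R^n)$; this is worth noting because the paper actually derives density of $C_c^\infty$ in $L^2(w)$ as a \emph{corollary} of this lemma, so invoking it here is slightly circular in the paper's logical order---but your argument works verbatim with $f\in L^\infty_c$ (continuity is not essential, only boundedness and compact support), so this is a cosmetic point.
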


\begin{proof} For the first point, we observe that $\sup_{\varepsilon>0}|\varphi_{\varepsilon}\star f| \le \|\phi\|_{1} Mf$ almost everywhere (See \cite[Corollary 2.1.12]{Gra}) and  recall that $M$ is bounded on  $L^2(w)$.
For the second point,
 it is easy to see that  $L^\infty_{c}(\R^n)$, the set of bounded functions with bounded support, is dense in $L^2(w)$.  Thus, it is enough to assume $f\in L^\infty_{c}(\R^n)$. Then   $\varphi_{\varepsilon}\star f- cf$ and  $\varphi_{1/\varepsilon}\star f$  converge  almost everywhere (for $dx$, thus for $dw$) to 0  as $\varepsilon\to 0$ (See \cite[Corollary 2.1.19]{Gra}). The conclusion follows using the dominated convergence theorem in $L^2(w)$ as  $Mf\in L^2(w)$.
\end{proof}

\begin{cor}\label{lem:density}  $C_{0}^\infty(\R^n)$ is dense in $L^2(w)$.  Also the space $\mE$ of
functions of the form $\int_{\eps}^R \psi_{t}\star f\, \frac{dt}t$ where $f\in C^\infty_{0}(\R^n)$, $\psi_{t}(x)=t^{-n}\psi(x/t)$  with  $\psi\in S(\R^n)$ with  Fourier transform supported away from 0 and  $\infty$,  $\varepsilon>0, R<\infty$ and $\int_{0}^\infty \hat \psi(t\xi) \, \frac {dt} t=1$ for all $\xi\ne 0$, form a dense subspace of $L^2(w)$. [Here, $\hat g$ is the Fourier transform of $g$ in $\R^n$.]
\end{cor}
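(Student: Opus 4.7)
First, for the density of $C_0^\infty(\R^n)$ in $L^2(w)$: given $f\in L^2(w)$, the truncations $f\,1_{B_k}$ and $f\,1_{\{|f|\le k\}}$ converge to $f$ in $L^2(w)$ by dominated convergence, reducing matters to $f\in L^\infty_c(\R^n)$, whose density in $L^2(w)$ was already noted in the proof of Lemma \ref{lem:approx}. For such $f$, fixing $\eta\in C_0^\infty(\R^n)$ with $\int\eta=1$, the mollification $\eta_\varepsilon\star f$ lies in $C_0^\infty(\R^n)$, and Lemma \ref{lem:approx} gives $\eta_\varepsilon\star f\to f$ in $L^2(w)$.

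For the density of $\mE$, by the previous step and linearity it suffices to approximate an arbitrary $f\in C_0^\infty(\R^n)$ by elements of $\mE$. I would fix a radial $\psi\in\mathcal S(\R^n)$ with $\hat\psi$ supported in an annulus $\{a\le|\xi|\le b\}$ and normalized so that $\int_0^\infty\hat\psi(t\xi)\,\frac{dt}{t}=1$ for $\xi\ne 0$; such a $\psi$ is a standard construction. Setting $T_{\varepsilon,R}f:=\int_\varepsilon^R\psi_t\star f\,\frac{dt}{t}\in\mE$, Plancherel and Fubini, together with the normalization $\int_0^\infty\hat\psi(t\xi)\,\frac{dt}{t}=1$, give Calder\'{o}n's reproducing formula $f=\int_0^\infty \psi_t\star f\,\frac{dt}{t}$ in $L^2(\R^n)$, and hence
\[
f - T_{\varepsilon,R} f \;=\; \int_0^{\varepsilon} \psi_t \star f \,\frac{dt}{t} \;+\; \int_R^\infty \psi_t \star f \,\frac{dt}{t}
\]
in $L^2(\R^n)$. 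It then remains to show both tails tend to $0$ in $L^2(w)$ as $\varepsilon\to 0$ and $R\to\infty$.

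The plan for the tails is to establish the decay
\[
\|\psi_t \star f\|_{L^2(w)} \;\le\; C_{f,N}\,\min(t,t^{-1})^N
\]
for every integer $N$, which by Minkowski's integral inequality makes both tails go to zero. For $t$ small, $\psi$ has vanishing moments of all orders (since $\hat\psi\equiv 0$ near the origin), so Taylor expansion of $f$ at each $x$ gives $|\psi_t\star f(x)|\le C_N t^N$ on a fixed neighborhood of $\supp f$, while the Schwartz decay of $\psi$ yields $|\psi_t\star f(x)|\le C_M t^M |x|^{-M}$ far from $\supp f$; the weighted integral is tamed by the polynomial growth $w(B_R)\lesssim R^{2n}$ that follows from $w\in A_2$. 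For $t$ large, $\hat\psi(t\xi)$ is supported in $|\xi|\le b/t$, and the cancellation $\int\psi=0$ combined with the compact support of $f$ and the Schwartz decay of $\psi$ yields pointwise bounds of the form $|\psi_t\star f(x)|\lesssim t^{-n}(1+|x|/t)^{-M}$ for any $M$, from which the weighted $L^2$-bound follows as before. The main obstacle is precisely this conversion of unweighted pointwise/Fourier estimates into weighted $L^2$ estimates, since Plancherel is not available for $dw$; the resolution is to extract enough polynomial spatial decay from $\psi_t\star f$ to overpower the polynomial growth of any $A_2$ weight.
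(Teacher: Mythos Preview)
Your treatment of the density of $C_0^\infty(\R^n)$ matches the paper's. For the density of $\mE$, the paper takes a much shorter route: it observes that by the normalization $\int_0^\infty \hat\psi(t\xi)\,\frac{dt}{t}=1$, one has the identity $\int_\varepsilon^R \psi_t \star f\,\frac{dt}{t}=\varphi_\varepsilon\star f-\varphi_R\star f$, where $\hat\varphi(\xi):=\int_1^\infty \hat\psi(t\xi)\,\frac{dt}{t}$ defines a Schwartz function with $\int\varphi=1$ (indeed $\hat\varphi$ is smooth, compactly supported, and equal to $1$ near the origin). Then Lemma~\ref{lem:approx} applies directly to $\varphi$ and gives $\varphi_\varepsilon\star f\to f$ and $\varphi_R\star f\to 0$ in $L^2(w)$, with no need for any separate tail analysis.

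Your direct tail estimates are essentially workable, but contain an overclaim that should be corrected: the bound $\|\psi_t\star f\|_{L^2(w)}\le C_{f,N}\,t^{-N}$ for large $t$ and \emph{every} $N$ is false for general $A_2$ weights. From the pointwise estimate $|\psi_t\star f(x)|\lesssim t^{-n}(1+|x|/t)^{-M}$ you only get $\|\psi_t\star f\|_{L^2(w)}^2\lesssim t^{-2n}w(B_t)$, and for $w(x)=|x|^\alpha$ with $\alpha\in(-n,n)$ this is $\sim t^{\alpha-n}$, which decays only like a fixed small power as $\alpha\uparrow n$. The cancellation $\int\psi=0$ does not improve this, since $f$ has no vanishing moments. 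What \emph{does} hold is $\|\psi_t\star f\|_{L^2(w)}\lesssim t^{-\delta}$ for some $\delta>0$ depending on $[w]_{A_2}$ (via the self-improvement $w\in A_{2-\varepsilon}$, giving doubling exponent strictly less than $2n$), and this suffices for $\int_R^\infty\|\psi_t\star f\|_{L^2(w)}\,\frac{dt}{t}\to 0$. So your argument can be repaired, but the paper's identity $T_{\varepsilon,R}f=\varphi_\varepsilon\star f-\varphi_R\star f$ sidesteps the issue entirely by packaging the large-$t$ behavior into the already-proved convergence $\varphi_R\star f\to 0$ from Lemma~\ref{lem:approx}.
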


\begin{proof} As  $L^\infty_{c}(\R^n)$ is dense in $L^2(w)$,
choosing $\varphi\in C^\infty_{0}(\R^n)$ in the previous lemma proves the first density. Next, with $\psi$ as in the statement, it is easy to see  that $\hat \varphi(\xi): = \int_{1}^\infty  \hat\psi(t\xi)\, \, \frac {dt} t= 1- \int_{0}^1 \hat\psi(t\xi) \, \frac {dt} t$ for $\xi\ne 0$ and $\hat \varphi(0)=1$ is a Schwartz class function and that $\int_{\eps}^{R} \psi_{t}\star f\, \frac{dt}t= \varphi_{\varepsilon}\star f - \varphi_{R}\star f$. So the lemma follows.
\end{proof}

\begin{cor}\label{cor:Calderon}  Assume $\psi$ and $\varphi$ are integrable functions, that  $\varphi$ is as in Lemma \ref{lem:approx} with $\int \varphi =1$ and   that   $\int_{\eps}^{R} \hat\psi(t\xi)\, \frac{dt}t= \hat\varphi(\varepsilon\xi)- \hat\varphi({R}\xi)$ for all $\xi\in \R^n, 0<\varepsilon<R$. Set $Q_{t}f= \psi_{t}\star f$. Then the Calder\'on representation formula $\int_{0}^\infty Q_{t}f\, \frac{dt}t= f$ holds for all $f\in L^2(w)$ in the sense that
$\int_{\varepsilon}^R Q_{t}f\, \frac{dt}t$ converges to $f$ in $L^2(w)$ as $\varepsilon\to 0$ and $R\to \infty$.
\end{cor}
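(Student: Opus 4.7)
The plan is to reduce everything to an application of Lemma \ref{lem:approx} by observing that the time-integrated convolution kernel for $Q_t$ coincides with the difference of two approximations to the identity.

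First I would introduce the integrated kernel
\[
\Psi_{\varepsilon,R}(x) := \int_\varepsilon^R \psi_t(x)\,\frac{dt}{t}.
\]
Since $\|\psi_t\|_{L^1}=\|\psi\|_{L^1}$ by the change of variables $y=x/t$, Minkowski's integral inequality yields $\Psi_{\varepsilon,R}\in L^1(\R^n)$ with $\|\Psi_{\varepsilon,R}\|_{L^1}\le \|\psi\|_{L^1}\log(R/\varepsilon)$. I then compute its Fourier transform via Fubini, which is justified by the $L^1$ bound just obtained, to get
\[
\widehat{\Psi_{\varepsilon,R}}(\xi)=\int_\varepsilon^R \hat\psi(t\xi)\,\frac{dt}{t}=\hat\varphi(\varepsilon\xi)-\hat\varphi(R\xi),
\]
the last equality being the standing hypothesis. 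On the other hand, $\varphi_\varepsilon-\varphi_R\in L^1(\R^n)$ and its Fourier transform is the same expression $\hat\varphi(\varepsilon\xi)-\hat\varphi(R\xi)$. By the injectivity of the Fourier transform on $L^1$, it follows that $\Psi_{\varepsilon,R}=\varphi_\varepsilon-\varphi_R$ almost everywhere.

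Next I would interpret the object $\int_\varepsilon^R Q_t f\,\frac{dt}{t}$, for $f\in L^2(w)$, as the single convolution $\Psi_{\varepsilon,R}\star f=(\varphi_\varepsilon-\varphi_R)\star f$. This is well-defined and bounded on $L^2(w)$ thanks to Lemma \ref{lem:approx}, since both $\varphi_\varepsilon$ and $\varphi_R$ are dilates of $\varphi$, which has a radially decreasing integrable majorant. The identification $\int_\varepsilon^R Q_t f\,\frac{dt}{t}=\Psi_{\varepsilon,R}\star f$ itself is first checked pointwise for $f\in C_0^\infty(\R^n)$ by an elementary Fubini argument (the integrand $(t,y)\mapsto \psi_t(x-y)f(y)$ is absolutely integrable for each fixed $x$), and then passes to all of $L^2(w)$ by Corollary \ref{lem:density} (density of $C_0^\infty$) together with the uniform $L^2(w)$-boundedness of the right-hand side.

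Finally I would invoke Lemma \ref{lem:approx} directly: since $\int\varphi=1$, the first bullet of that lemma gives $\varphi_\varepsilon\star f\to f$ in $L^2(w)$ as $\varepsilon\to 0$, and its second bullet (applied with $R=1/\varepsilon$) gives $\varphi_R\star f\to 0$ in $L^2(w)$ as $R\to\infty$. Combining these with the identity from the previous step yields
\[
\int_\varepsilon^R Q_t f\,\frac{dt}{t}=\varphi_\varepsilon\star f-\varphi_R\star f\longrightarrow f\qquad\text{in }L^2(w),
\]
which is the Calder\'on reproducing formula.

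The only real subtlety is the step that moves the integration inside the convolution, i.e.\ the identification $\int_\varepsilon^R (\psi_t\star f)\frac{dt}{t}=\Psi_{\varepsilon,R}\star f$ at the level of $L^2(w)$. Since $\psi$ is only assumed integrable (and not, say, to have a radially decreasing majorant), one cannot estimate $\|\psi_t\star f\|_{L^2(w)}$ directly in a uniform way. The trick is that the hypothesis on $\hat\psi$ forces the \emph{averaged} kernel $\Psi_{\varepsilon,R}$ to inherit from $\varphi$ all the good convolution properties on $L^2(w)$, so the integrated identity is the correct object to work with, and it bypasses the need for individual bounds on each $Q_t$.
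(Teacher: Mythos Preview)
Your proof is correct and follows essentially the same route as the paper: the paper's one-line proof simply asserts the identity $\int_\varepsilon^R \psi_t\star f\,\frac{dt}{t}=\varphi_\varepsilon\star f-\varphi_R\star f$ and invokes Lemma~\ref{lem:approx}, while you supply the details behind that identity (via the $L^1$ kernel $\Psi_{\varepsilon,R}$, Fubini, and injectivity of the Fourier transform). One small remark: both limits $\varphi_\varepsilon\star f\to f$ and $\varphi_R\star f\to 0$ are contained in the \emph{second} bullet of Lemma~\ref{lem:approx}, not split across the two bullets as you describe.
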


\begin{proof} The lemma follows immediately from the formula
$\int_{\eps}^{R} \psi_{t}\star f\, \frac{dt}t= \varphi_{\varepsilon}\star f - \varphi_{R}\star f$ and Lemma \ref{lem:approx}.
\end{proof}

Next, we recall that any Calder\'on-Zygmund operator in the sense of  Meyer's book \cite{Me}  is bounded on $L^2(w)$ (the sharp dependance of the norm with respect to $[w]_{A_{2}}$ has been obtained by T. Hyt\"onen \cite{H} but we do not need such a refined estimate). This result extends to vector-valued Calder\'on-Zygmund operators, and consequently
we have the following useful corollary.

\begin{lem}\label{lem:LPw}  Let $\psi$ be an integrable function in $\R^n$ with $\int_{\R^n} \psi(x)\, dx=0$ and such that $(\psi_{t}(x-y))_{t>0}$ is an $ L^2({\R_+}, \frac{dt}t)$-valued Calder\'on-Zygmund kernel and set  $Q_{t}f= \psi_{t}\star f$. Then for all $f\in L^2(w)$,
$$\int_{0}^\infty \|Q_{t}f\|^2_{L^2(w)}\, \frac{dt}t \le C_{w,\psi}\|f\|^2_{L^2(w)}.$$
\end{lem}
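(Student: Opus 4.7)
The plan is to recognize $f \mapsto (Q_t f)_{t>0}$ as a vector-valued Calder\'on--Zygmund operator and then invoke the weighted norm inequality for such operators that is recalled in the paragraph immediately preceding the lemma. Concretely, set $H := L^2(\R_+, dt/t)$ and define
\[
T : L^2(\R^n, dx) \to L^2(\R^n, dx; H), \qquad (Tf)(x)(t) := Q_t f(x),
\]
so that $T$ has $H$-valued convolution kernel $K(x,y)(t) := \psi_t(x-y)$. By hypothesis $K$ is an $H$-valued Calder\'on--Zygmund kernel, so the only remaining ingredient is unweighted $L^2$-boundedness of $T$ from $L^2(\R^n, dx)$ to $L^2(\R^n, dx; H)$.

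For that step I would apply Plancherel in the $x$-variable: interchanging the order of integration gives
\[
\|T f\|_{L^2(\R^n; H)}^2 = \int_0^\infty \|Q_t f\|_{L^2}^2 \, \frac{dt}{t} = \int_{\R^n} |\hat f(\xi)|^2 m(\xi) \, d\xi,
\]
where $m(\xi) := \int_0^\infty |\hat \psi(t\xi)|^2 \, dt/t$. The change of variable $s = t|\xi|$ shows that $m$ is zero-homogeneous, so boundedness reduces to the uniform estimate $\sup_{|\xi|=1} m(\xi) < \infty$. The integrand $|\hat\psi(s\xi)|^2$ is bounded and continuous because $\psi \in L^1$; the contribution near $s = 0$ is absorbed using the cancellation $\hat\psi(0) = \int \psi = 0$, which, paired with a modulus of continuity of $\hat\psi$, gives $|\hat\psi(s\xi)|^2 \lesssim s^{2\alpha}$ for some $\alpha > 0$; the contribution near $s = \infty$ is handled by the decay of $\hat\psi$, both pieces being extractable from the size/smoothness of $K$ encoded by the Calder\'on--Zygmund hypothesis.

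Once $T$ is bounded on $L^2(\R^n, dx)$, the vector-valued weighted Calder\'on--Zygmund theorem referenced just before the statement applies with Hilbert target $H$ and yields
\[
\|T f\|_{L^2(\R^n, w; H)} \lesssim \|f\|_{L^2(w)},
\]
which unpacks to precisely the asserted quadratic estimate. The main technical point I expect is the quantitative control of $\hat\psi$ near $0$ and near $\infty$ required to bound $m$ uniformly: one has to squeeze the right pointwise information out of the integrated (in $t$) size and smoothness assumed on $K$, but this is essentially bookkeeping with no conceptual obstacle, as the CZ hypothesis on the square-function kernel is tailor-made for exactly this purpose.
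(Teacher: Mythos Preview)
Your approach is exactly the one the paper takes: view $T\colon f\mapsto (Q_t f)_{t>0}$ as an $L^2(\R_+,dt/t)$-valued Calder\'on--Zygmund operator and invoke the vector-valued weighted theory cited just before the lemma. The paper's proof is in fact a two-sentence reference to \cite{GCRF}; your Plancherel computation of the unweighted $L^2$ bound is additional detail that the paper simply absorbs into the phrase ``Calder\'on--Zygmund operator in the sense of Meyer's book'' (where $L^2$-boundedness is part of the definition), so there is no substantive difference.
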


\begin{proof}
Note that $T: f\mapsto (Q_{t}f)_{t>0}$ is a Calder\'on-Zygmund operator, bounded from $L^2(\R^n)$ to $L^2(\R^n, L^2({\R_+}, \frac{dt}t))$. Thus weighted $L^2$ theory for $A_{2}$ weights applies (see \cite{GCRF}, Chapter V).
\end{proof}

To conclude this section, we  recall a  trick proved by  Cruz-Uribe and Rios \cite{CR} originating from the proof of \cite[Corollary 4.2]{DR}.

\begin{lem}\label{lem:DRF} If $T$ is a linear operator that is bounded on $L^2(w)$ for any $w\in A_{2}$ with norm depending only on $[w]_{A_{2}}$, then for any fixed $w\in A_{2}$, there exists $\theta\in (0,1)$ and $C>0$ such that
$\|T\|_{\mL(L^2(w))}\le C\|T\|_{\mL(L^2(dx))}^\theta$.
\end{lem}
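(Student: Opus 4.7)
The plan is to combine Stein--Weiss complex interpolation with change of measure with the openness of the $A_{2}$ class. I will exhibit a slightly heavier weight $w^{1+\delta}$, still in $A_{2}$ with constant controlled by $[w]_{A_{2}}$, and then interpolate between the unweighted $L^{2}$ bound and the $L^{2}(w^{1+\delta})$ bound supplied by the hypothesis.

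The first ingredient is the self--improvement of $A_{2}$. Given $w\in A_{2}$ one has $w^{-1}\in A_{2}$ too, and both satisfy a reverse H\"older inequality with some common exponent $1+\delta>1$ depending only on $[w]_{A_{2}}$:
\[
\Bigl(\barint_{\hspace{-6pt}Q} w^{1+\delta}\,dx\Bigr)^{\!1/(1+\delta)} \le C\barint_{\hspace{-6pt}Q} w\,dx, \qquad \Bigl(\barint_{\hspace{-6pt}Q} w^{-(1+\delta)}\,dx\Bigr)^{\!1/(1+\delta)} \le C\barint_{\hspace{-6pt}Q} w^{-1}\,dx.
\]
Multiplying these two estimates, raising to the power $1+\delta$, and invoking the $A_{2}$ condition on $w$ itself shows that $w^{1+\delta}\in A_{2}$, with $[w^{1+\delta}]_{A_{2}}$ quantitatively controlled by $[w]_{A_{2}}$.

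The second ingredient is Stein--Weiss interpolation for linear operators with change of measure on $L^{2}$: if $T$ is bounded on $L^{2}(v_{0})$ with norm $M_{0}$ and on $L^{2}(v_{1})$ with norm $M_{1}$, then for every $\alpha\in(0,1)$ it is bounded on $L^{2}(v_{0}^{1-\alpha}v_{1}^{\alpha})$ with norm at most $M_{0}^{1-\alpha}M_{1}^{\alpha}$. This is obtained by applying the three--lines lemma to the analytic family
\[
T_{z}:= M_{v_{0}^{(1-z)/2}v_{1}^{z/2}}\,T\,M_{v_{0}^{-(1-z)/2}v_{1}^{-z/2}}
\]
of operators on $L^{2}(dx)$, where $M_{f}$ denotes multiplication by $f$; one tests against bounded compactly supported functions, on which the whole family is manifestly defined, and extends by density.

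I then specialize to $v_{0}=1$, $v_{1}=w^{1+\delta}$, and $\alpha=1/(1+\delta)$, which gives $v_{0}^{1-\alpha}v_{1}^{\alpha}=w$. The hypothesis of the lemma yields $M_{1}=\|T\|_{\mL(L^{2}(w^{1+\delta}))}\le C_{1}$, with $C_{1}$ depending only on $[w^{1+\delta}]_{A_{2}}$, hence only on $[w]_{A_{2}}$. Stein--Weiss then delivers
\[
\|T\|_{\mL(L^{2}(w))} \le \|T\|_{\mL(L^{2}(dx))}^{1-\alpha}\,C_{1}^{\alpha},
\]
which is the claim with $\theta:=1-\alpha=\delta/(1+\delta)\in(0,1)$ and $C:=C_{1}^{\alpha}$. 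No genuine obstacle is expected: the argument is a mechanical assembly of two classical ingredients, and the only point requiring any care is the quantitative openness of $A_{2}$, which is standard.
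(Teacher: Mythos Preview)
Your argument is correct and is essentially the standard proof: the paper does not reproduce a proof but simply attributes the result to Cruz-Uribe and Rios \cite{CR}, who trace it back to \cite[Corollary~4.2]{DR}, and the mechanism there is precisely the one you describe --- reverse H\"older self-improvement to push $w$ to $w^{1+\delta}\in A_{2}$, followed by Stein--Weiss interpolation with change of weight between $L^{2}(dx)$ and $L^{2}(w^{1+\delta})$.
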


\subsection{Weighted Sobolev spaces and the gradient operator}

For $1<p<\infty$,  $W^{1,p}(w)$ is the  closure of $ C^\infty_{0}(\R^n)$ in the norm
$$
\left(\int_{\R^n} \big( |f|^p + |\nabla f|^p \big)\, dw\right)^{1/p}.
$$ For $p=2$,  we set $H^1(w)=W^{1,2}(w).$

It follows from \cite[Theorem 1.5]{FKS} that $w\in A_{p}$ satisfies a Poincar\'e inequality:
\begin{equation}
\label{eq:Poincare}
\left(\barint_{\hspace{-6pt}Q} |\psi - c |^p \, dw\right)^{1/p} \lesssim \ell(Q) \left(\barint_{\hspace{-6pt}Q} |\nabla \psi|^p \, dw\right)^{1/p}
\end{equation}
where $\psi \in C^\infty_{0}(\R^n)$, $c$ is   the mean of $\psi$ with respect to either  $dw$ or $dx$   and the implicit constant depends only on $[w]_{A_{p}}$, $n$ and $p$.  In particular, this implies that $W^{1,p}(w)$ can be identified with a space of measurable functions imbedded in $L^p(w)$.
The gradient (taken in the sense of distributions) extends to an operator on $W^{1,p}(w)$, still denoted by $\nabla$, and the Poincar\'e inequality extends to all $W^{1,p}(w)$ functions. The space of Lemma \ref{lem:density} is also dense in $W^{1,2}(w)$ when $w\in A_{2}$.

 We shall need the following result.

\begin{prop} \label{prop:interpolation} If $w\in A_{2}$, the spaces $W^{1,p}(w)$ interpolate by the real or complex methods for $r(w)< p<\infty$ where $r(w)<2$ is the infimum of  exponents $p$ for which $w\in A_{p}$.
\end{prop}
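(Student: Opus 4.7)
The plan is to exhibit $W^{1,p}(w)$ as a retract of the Cartesian product $L^p(w;\C^{1+n})$ and then invoke the general retraction principle of interpolation theory. The first step is to introduce the coretraction
\[
J: W^{1,p}(w) \to L^p(w;\C^{1+n}), \qquad Jf = (f, \partial_1 f, \ldots, \partial_n f),
\]
which, by the very definition of the $W^{1,p}(w)$ norm, is an isometric embedding for the natural equivalent norm on the target.

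For the retraction, I would use the Bessel potential and set
\[
R(g_0, g_1, \ldots, g_n) := (1-\Delta)^{-1}\Big(g_0 - \sum_{j=1}^n \partial_j g_j\Big).
\]
A direct calculation on $C_0^\infty(\R^n)$ gives $R\circ J = \mathrm{Id}$, and this extends by density. The core step is then to check that $R$ maps $L^p(w;\C^{1+n})$ boundedly into $W^{1,p}(w)$, uniformly for $p$ in the stated range. Expanding the $W^{1,p}(w)$ norm, this reduces to the boundedness on $L^p(w)$ of the Fourier multipliers $(1-\Delta)^{-1}$, $\partial_j(1-\Delta)^{-1}$ and $\partial_i\partial_j(1-\Delta)^{-1}$. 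Each of these satisfies a Mikhlin--H\"ormander-type condition, and the corresponding convolution kernels are either integrable or of Calder\'on--Zygmund type, so by weighted Calder\'on--Zygmund theory (cf. \cite{GCRF}, Chapter V, and Lemma \ref{lem:LPw} for the vector-valued version) all of them are bounded on $L^p(w)$ whenever $w\in A_p$, which is precisely our hypothesis by definition of $r(w)$.

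With $J$ and $R$ constructed independently of $p$, the retraction theorem (Bergh--L\"ofstr\"om, Triebel) transfers the interpolation properties of the ambient space $L^p(w;\C^{1+n})$ to $W^{1,p}(w)$. Since the weighted Lebesgue scale $(L^p(w))_{p>r(w)}$ interpolates both in the real and complex sense (a classical fact for Muckenhoupt weights), and this property is preserved by taking finite direct sums, the desired interpolation of the $W^{1,p}(w)$ follows. The principal obstacle is ensuring uniform-in-$p$ boundedness of $R$ across the range, but this is standard weighted Calder\'on--Zygmund theory; no new ingredient is needed beyond the $A_p$ assumption.
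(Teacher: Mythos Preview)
Your retraction argument is correct and gives a clean, self-contained proof. It is, however, a genuinely different route from the one the paper takes: the paper simply invokes Badr's interpolation theorem for Sobolev spaces on doubling metric measure spaces satisfying a Poincar\'e inequality, both hypotheses being available here from the $A_p$ condition on $w$.

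The trade-offs are as follows. Your approach exploits the Euclidean structure directly: the retraction $R=(1-\Delta)^{-1}(g_0-\sum_j\partial_j g_j)$ reduces everything to the $L^p(w)$-boundedness of the Mikhlin multipliers $(1+|\xi|^2)^{-1}$, $\xi_j(1+|\xi|^2)^{-1}$ and $\xi_i\xi_j(1+|\xi|^2)^{-1}$, and these are handled by standard weighted Calder\'on--Zygmund theory exactly when $w\in A_p$, i.e.\ for $p>r(w)$. This is elementary and transparent, and requires no machinery beyond what the paper already uses elsewhere. One small point you leave implicit is that $R$ lands in $W^{1,p}(w)$ as \emph{defined} here (closure of $C_0^\infty$); this amounts to the weighted $H=W$ identity, which follows by mollification and truncation since convolution approximate identities are bounded on $L^p(w)$ for $w\in A_p$ (cf.\ Lemma~\ref{lem:approx}). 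Badr's theorem, by contrast, is formulated intrinsically on metric measure spaces via Poincar\'e inequalities and doubling, with no Fourier analysis; it is the natural citation if one wants the result to survive in non-Euclidean generalisations, but it imports more machinery than strictly needed here.
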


\begin{proof}
This follows from N. Badr's interpolation theorem \cite[Theorem 8.4]{Badr}, using the Poincar\'e inequality and doubling of $dw$.
\end{proof}

We denote   the Riesz transforms by $R_{j}$, $j=1,\ldots, n$.  Set   $\mR f= (R_{1}f, \ldots, R_{n}f)$ and $\mR^*g= R_{j}^*g_{1}+\cdots + R_{n}^*g_{n}$ where $R_{j}^*=-R_{j}$, the adjoint being for the $L^2(dx)$ duality. The Riesz transforms  are bounded on $L^2(w)$.

In this paper, if $A$ is an unbounded linear operator on a Banach space,  then $\dom(A), \nul(A), \ran(A)$ denote respectively, its domain, null space and range. Closure of the range is denoted by $\clos{\ran(A)}$.

\begin{lem}\label{lem:gradient} Let $w\in A_{2}$.  The following statements hold.

\begin{enumerate}
  \item  The operator $\nabla$ on $L^2(w)$ into $L^2(w;\C^n)$ with domain $\dom(\nabla)=H^1(w)$ is densely defined, closed and injective.
   \item  Its adjoint $\nabla^*:=-\divv_{w}:= - \frac 1 w \divv w$ is also closed, densely defined  and $\clos{\ran(\nabla^*)}=L^2(w)$.   Moreover,  $ \frac 1 w C^\infty_{0}(\R^n; \C^n)$ is  a dense subspace of $\dom(\divv_{w})$.
 \item The operator $\Delta_{w}=-  \nabla^*\nabla=  \divv_{w}\nabla$ constructed by the method of sesquilinear forms from $ \int_{\R^n} \nabla u \cdot \overline{\nabla v}\, dw$ on $H^1(w)$ is a  negative  self-adjoint, injective operator on $L^2(w)$.
 \item  The operator $\mR_{w}:= \nabla (-\Delta_{w})^{-1/2}$ is bounded from $L^2(w)$ into $L^2(w;\C^n)$ and its range is $\clos{\ran(\nabla)}$.
  \item   $\clos{\ran(\nabla)}= \mR(L^2(w))$.
   \item Moreover $\clos{\ran(\nabla)}= \clos{\nabla (C_{0}^\infty(\R^n))}=\{g\in L^2(w;\C^n)\, ; \, curl g=0\,\}$.

\end{enumerate}   \end{lem}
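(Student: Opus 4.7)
I would dispatch items (1)--(4) using standard weighted Sobolev and form theory. For (1), density of $H^1(w)$ in $L^2(w)$ follows from Corollary \ref{lem:density}; closedness of $\nabla$ follows from the identification $H^1(w)=W^{1,2}(w)$ for $A_2$ weights (via mollifier convergence in the Sobolev norm) combined with $w^{-1}\in L^1_\loc$, which turns $L^2(w)$-convergence into local $L^2(dx)$-convergence; injectivity holds because $\nabla u=0$ forces $u$ constant, yet no nonzero constant lies in $L^2(w)$ since $\int_{\R^n}w\,dx=\infty$. For (2), integration by parts against $v\in C^\infty_0$ in the weighted inner product yields $\nabla^*=-\divv_w$, and $\frac{1}{w}C^\infty_0(\R^n;\C^n)\subset\dom(\divv_w)$ is clear since $w^{-1}\in L^1_\loc$ places both $h/w$ and $\divv(h)/w$ in $L^2(w)$; density in the graph norm follows by mollification together with compactly supported cut-offs, and $\clos{\ran(\nabla^*)}=L^2(w)$ is the Hilbert space reformulation of $\nul(\nabla)=\{0\}$. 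Item (3) is standard form theory: $a(u,v)=\int\nabla u\cdot\overline{\nabla v}\,dw$ is closed, densely defined and nonnegative on $H^1(w)$, hence represents the nonnegative self-adjoint operator $-\Delta_w=\nabla^*\nabla$, and $a(u,u)=0$ combined with (1) gives injectivity.

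Item (4) uses the form-theoretic identity $\dom((-\Delta_w)^{1/2})=H^1(w)$ with $\|(-\Delta_w)^{1/2}u\|_{L^2(w)}=\|\nabla u\|_{L^2(w;\C^n)}$. This makes $\mR_w$ an isometry from the dense subspace $\ran((-\Delta_w)^{1/2})$ of $L^2(w)$ onto $\ran(\nabla)$, and it extends uniquely to an isometry of $L^2(w)$ with range $\clos{\ran(\nabla)}$. For items (5) and (6), the first equality $\clos{\ran(\nabla)}=\clos{\nabla C^\infty_0(\R^n)}$ in (6) is immediate from density of $C^\infty_0$ in $H^1(w)$ and continuity of $\nabla$, and the inclusion $\clos{\ran(\nabla)}\subseteq\{g:\curl g=0\}$ is automatic since $L^2(w)$-convergence is distributional convergence and the curl of a gradient vanishes. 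The remaining content I would handle through the Riesz transform $\mR$: by weighted Calder\'on--Zygmund theory $\mR$, $\mR^*$ and $\mR\mR^*$ are bounded on $L^2(w;\C^n)$; the unweighted identity $\mR\mR^*g=g$ for curl-free $g\in L^2(dx;\C^n)$ extends by density and boundedness to give $\{g:\curl g=0\}\subseteq\mR(L^2(w))$; the opposite inclusion $\mR(L^2(w))\subseteq\{g:\curl g=0\}$ follows from $R_jR_k=R_kR_j$; and $\mR(L^2(w))\subseteq\clos{\ran(\nabla)}$ is obtained by approximating $f\in L^2(w)$ by $f_k\in C^\infty_0$, writing $\mR f_k=\nabla u_k$ for smooth $u_k=(-\Delta)^{-1/2}f_k$, and truncating with a cut-off $\chi_R$ so that $\nabla(\chi_R u_k)\to\mR f_k$ in $L^2(w;\C^n)$. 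Chaining these inclusions yields (5) and (6).

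The main obstacle is the truncation step: I need $\|(\nabla\chi_R)u_k\|_{L^2(w)}\to 0$ as $R\to\infty$ even though $u_k=(-\Delta)^{-1/2}f_k$ need not lie in $L^2(w)$ globally. I would select $|\nabla\chi_R|\lesssim 1/R$ supported in the annulus $\{R<|x|<2R\}$ and control the resulting $R^{-2}\int_{R<|x|<2R}|u_k|^2 w\,dx$ via a suitable weighted Hardy-type inequality (available for $A_2$ weights) or by direct annular estimation exploiting the decay $u_k(x)=O(|x|^{-(n-1)})$ against the polynomial growth bound on $A_2$ weights. All remaining ingredients reduce to routine applications of Lemma \ref{lem:approx}, the $A_2$ boundedness of the Hardy--Littlewood maximal function, and commutation properties of the Riesz transforms.
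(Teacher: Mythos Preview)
Your treatment of (1)--(4) and the overall architecture for (5)--(6) match the paper's. The substantive difference is in how you obtain $\mR(L^2(w))\subseteq\clos{\ran(\nabla)}$. You approximate $f$ by $f_k\in C_0^\infty$, write $\mR f_k=\nabla u_k$ with $u_k=(-\Delta)^{-1/2}f_k$, and then must truncate $u_k$, which you rightly flag as the main obstacle since $u_k$ is neither Schwartz nor in $L^2(w)$. The paper bypasses this entirely by using the dense subspace $\mE\subseteq L^2(w)$ of Corollary~\ref{lem:density}: elements of $\mE$ have Fourier support away from the origin, so $(-\Delta)^{-1/2}f\in\mS(\R^n)$ directly and $\mR f=\nabla g$ with $g\in\mS(\R^n)\subseteq H^1(w)$ without any cutoff. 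The reverse inclusion is symmetric: for $g\in\mE$, $\nabla g=\mR((-\Delta)^{1/2}g)$ with $(-\Delta)^{1/2}g\in\mS(\R^n)$. Your annular-decay argument can be pushed through for $n\ge 2$ (using $w\in A_{2-\varepsilon}$ to bound $w(B(0,R))\lesssim R^{n(2-\varepsilon)}$ against $|u_k|^2\lesssim R^{-2(n-1)}$), but in dimension $n=1$ you need $\int f_k=0$ for $(-\Delta)^{-1/2}f_k$ even to be defined, so you are forced back to mean-zero approximants---which is precisely what $\mE$ supplies from the outset.

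One further point: your argument for $\{g:\curl g=0\}\subseteq\mR(L^2(w))$ via ``the unweighted identity $\mR\mR^*g=g$ extends by density'' is loose, since it is not clear that curl-free $L^2(dx;\C^n)$ functions are dense in curl-free $L^2(w;\C^n)$ functions. The paper instead shows that $(I-\mR\mR^*)g$ is both divergence-free and curl-free in the distribution sense (the former by approximating $g$ in $L^2(w)$ by \emph{arbitrary} $C_0^\infty$ functions, not curl-free ones, and using boundedness of $\mR\mR^*$ on $L^2(w)$), hence constant, hence zero. This avoids the need for curl-free approximants altogether.
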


As we can see, the closure of the range of $\nabla$ has several characterizations: the one in (4) comes from spectral theory for the weighted Laplacian, the one in (5) comes from weighted theory  for the usual Laplacian.  Both will  be useful.

\begin{proof}
To prove  (1), observe that $\nabla$ is densely defined  by construction and the fact that it is closed is proved in \cite{FKS}. Injectivity follows from Poincar\'e inequalities: indeed if $u\in H^1(w)$ such that $\nabla u=0$ implies $u$ constant. As $w$ is doubling and $\R^n$ is unbounded, we have that $w(\R^n)=+\infty$, so that the constant must be 0.

The  first part of   (2) follows upon standard functional analysis from (1).  To show the density of $ \frac 1 w C^\infty_{0}(\R^n; \C^n)$ in $\dom(\divv_{w})$, we note that this is equivalent to the density of $ C^\infty_{0}(\R^n; \C^n)$ in $\dom(\divv):=\{ f\in L^2(w^{-1};\C^n)\, ; \, \divv f\in L^2(w^{-1})\}$ (equipped with the graph norm). This equivalent statement follows using regularisation by convolution as in Lemma \ref{lem:approx}  and smooth truncations.

The construction in (3) is classical (see for example, Kato's book \cite{Kato}). The injectivity follows from that of the gradient.   Next, the boundedness of $\mR_{w}$ in (4) follows from the equality  $\|\nabla f\|=\|(-\Delta_{w})^{1/2}f\|$, which itself is obtained using self-adjointness and the form. The identification of  its range follows immediately.

Let us turn to (5).
As the  Riesz transforms are bounded on $L^2(w)$ when $w\in A_{2}$ and $\mR^*\mR=I$, $\mR(L^2(w))$ is a closed subspace of $L^2(w; \C^n)$.  For $f$ in the  dense subspace $\mE$ of Corollary \ref{lem:density}, it is easy to see that $\mR f=\nabla g$ with $g=(-\Delta)^{-1/2}f \in \mS(\R^n)$ using the properties of the function $\psi$. Thus,   $\mR(\mE) \subseteq  {\ran(\nabla)}$ so that $\mR(L^2(w)) \subseteq  \clos{\ran(\nabla)}$. Conversely, if $g$ belongs to  this dense subspace  $\mE$ then $\nabla g = \mR f$ for $f=(-\Delta)^{1/2}g\in \mS(\R^n).$ Thus $\nabla (\mE) \subseteq \mR(L^2(w))$ and, by density, $\clos{\ran(\nabla)}\subseteq \mR(L^2(w))$.

 We now prove (6). We introduce $\dot H^1(w)$ as the closure of $C_{0}^\infty(\R^n)$ for the semi norm $\| \nabla f\|_{L^2(w;\C^n)}$. We claim that this space coincides with  the space of $f\in L^2_{\loc}(w)$ such that $ \nabla f\in L^2(w; \C^n)$. In one direction, if one has a Cauchy sequence $\nabla g_{k}$ in $L^2(w)$  with $g_{k}\in C_{0}^\infty(\R^n)$, then using the Poincar\'e inequality, if $c_{k}$ is the mean of $g_{k}$ on the unit ball, one can see that $g_{k}-c_{k}$ converges in $L^2_{\loc}(w)$
to some $g$ and thus $\nabla g_{k}$ converges to the distributional gradient of $g$ in $L^2(w; \C^n)$.
Conversely, if $g\in L^2_{loc}(w)$ and $\nabla g\in L^2(w; \C^n)$, then  set
$g_{k}= (g-c_{k})\varphi_{k}$  where $c_{k}$ is the mean of $g$ on the ball $B(0,2^{k+1})$ and $\varphi_{k}(x)= \varphi(2^{-k}x)$ with $\varphi$ is a smooth function that vanishes outside $B(0,2)$ and that is 1 on $B(0,1)$. It follows from Poincar\'e's inequality that $\nabla g_{k}$ converges weakly to $\nabla g$ in $L^2(w)$. By Mazur's theorem, this implies that a sequence of  convex combinations converges strongly to $\nabla g$. Noticing that $g_{k}\in H^1(w)$ in which $C_{0}^\infty(\R^n)$ is dense  concludes the proof of the claim.
Clearly, $\clos{\ran( \nabla)}= \nabla (\dot H^1(w))$, so this proves the first equality.

Next, we have $\nabla (C_{0}^\infty(\R^n)) \subseteq \{g\in L^2(w;\C^n)\, ; \, \curl g=0\,\}$. This inclusion is preserved by taking the closure in $L^2(w;\C^n)$ as the second set is clearly closed. Conversely,  using the boundedness of the Riesz transforms, the idendity
$g= (I-\mR\mR^*)g+ \mR\mR^*g$ holds for $g\in L^2(w;\C^n)$. Moreover $\divv ((I-\mR\mR^*)g)=0$ and $\curl  (\mR\mR^*g)=0$ in the distributions sense. Indeed, it is certainly true for $g\in C_{0}^\infty(\R^n;\C^n)$ and  one can use a limiting argument. If one assumes   $\curl g=0$ then one  also  has $\curl ((I-\mR\mR^*)g)=0$. Hence $(I-\mR\mR^*)g$ is constant as a distribution, and this equals to  0 since it is in $L^2(w; \C^n)$. Thus $g=\mR\mR^*g \in \mR(L^2(w))$ and we conclude using (5).
\end{proof}

\section{The main quadratic estimate}

\subsection{Review of the $DB$ formalism}

Let $\mH$ be a separable complex Hilbert space.  One uses $(\, , \, )$ and $\|\, \|$ for the hermitian product and norm on $\mH$.  Let  $D$ be an unbounded self-adjoint operator on $\mH$. There is an orthogonal  splitting
\begin{equation}\label{eq:splitting}
\mH= \nul(D) \oplus \clos{\ran(D)}.
\end{equation}

Define closed  double sectors in the complex plane by
$$
S_{\mu} := \{z\in\C : {| \arg (\pm z)|\le\mu}\}\cup\{0\}.
$$
 Consider $B\in \mL(\mH)$   and  assume that
 $B$ is (strictly) accretive on $\clos{\ran(D)}$, that is, there exists $\kappa>0$ such that
\begin{equation}
\label{accretive}
\re (Bv,v) \ge\kappa \|v\|^2, \ \forall v\in \clos{\ran(D)}.
\end{equation}
In this case, let
$$
   \mu(B):= \sup_{v\in \ran(D), v\not = 0} |\arg(Bv,v)|  <\pi/2
$$
denote the  {\em angle of accretivity} of $B$ on $\ran(D)$.
Note that $B$ may not be invertible on $\mH$. Still for $X$ a subspace of $\mH$, we set
$B^{-1}X= \{u \in \mH\, ; \,  Bu \in X\}$.

\begin{prop}   \label{prop:typeomega} With the above assumptions, we have the following facts.
\begin{itemize}
\item[{\rm (i)}]
The operator $DB$, with domain $B^{-1}\dom(D)$,  is $\mu(B)$-bisectorial, i.e. $\sigma(DB)\subseteq S_{\mu(B)}$ and there are resolvent bounds
$\|(\lambda I - DB)^{-1}\| \lesssim 1/ \dist(\lambda, S_\mu)$ when $\lambda\notin S_\mu$, $\mu(B) <\mu<\pi/2$.
\item[{\rm (ii)}]
The operator $DB$ has range $\ran(DB)=\ran(D)$ and null space $\nul(DB)=B^{-1}\nul(D)$ such that topologically (but  not necessarily orthogonally) one has
$$
\mH = \clos{\ran(DB)} \oplus \nul(DB).
$$
\item[{\rm (iii)}]
The restriction of $DB$ to $\clos{\ran(D)}$
is a closed, injective operator with dense range in
$\clos{\ran(D)}$.  Moreover, the same statements  on spectrum and resolvents as in (i) hold.

\item[{\rm (iv)}]   Statements similar to (i) and (ii) hold for $BD$ with $\dom(BD)=\dom(D)$, defined as the adjoint of $DB^*$ or equivalently by $BD= B(DB)B^{-1}$ on $\ran(BD)\cap \dom(D)$, with $\ran(BD):=B\ran(D)$ and  $BD=0$ on the null space $\nul(BD):=\nul(D)$.
\end{itemize}
\end{prop}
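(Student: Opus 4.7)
The plan is to use the orthogonal splitting \eqref{eq:splitting} to reduce everything to the restriction of $DB$ to $\clos{\ran(D)}$, and then to extract resolvent bounds there from a sesquilinear form estimate combining self-adjointness of $D$ with accretivity of $B$. I would first dispose of the algebraic part of (ii): since $DBu=0$ iff $Bu\in\nul(D)$, one has $\nul(DB)=B^{-1}\nul(D)$; and \eqref{accretive} forces $B$ restricted to $\clos{\ran(D)}$ to be bounded below and to have closed range, so every $Dw$ with $w\in\dom(D)$ is realized as $DBu$ for some $u$, giving $\ran(DB)=\ran(D)$.

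The core of (i) and (iii) is the resolvent estimate: for $\mu(B)<\mu<\pi/2$ and $\lambda\notin S_\mu$,
$$
\dist(\lambda,S_\mu)\,\|u\| \lesssim \|(\lambda I-DB)u\|
$$
for $u\in B^{-1}\dom(D)\cap\clos{\ran(D)}$. My approach is to test the equation $(\lambda I-DB)u=f$ against $Bu$. Splitting $Bu=v_0+v_1$ orthogonally in $\nul(D)\oplus\clos{\ran(D)}$, one has $v_1\in\dom(D)$, $DBu=Dv_1$, and $(Dv_1,Bu)=(Dv_1,v_1)\in\R$ by self-adjointness. The identity $\lambda(u,Bu)-(Dv_1,v_1)=(f,Bu)$ combined with $|\arg(Bu,u)|\leq\mu(B)<\mu$ and $\re(Bu,u)\geq\kappa\|u\|^2$ yields, by a standard geometric argument in the complex plane (the real term $(Dv_1,v_1)$ cannot cancel the nonreal part of $\lambda(u,Bu)$), the desired bound $\dist(\lambda,S_\mu)\|u\|^2 \lesssim \|f\|\|u\|$. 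Injectivity of $\lambda I-DB$ on $\clos{\ran(D)}$ is then immediate; surjectivity follows from the same estimate applied with $B^*$ in place of $B$, noting that $B^*$ is accretive on $\clos{\ran(D)}$ with the same angle since $\re(B^*v,v)=\re(Bv,v)$. Closedness of $DB$ follows from having bounded resolvents, and since $DB$ vanishes on $\nul(DB)$ and maps $B^{-1}\dom(D)\cap\clos{\ran(D)}$ into $\clos{\ran(D)}$, the resolvent bounds extend from $\clos{\ran(D)}$ to all of $\mH$.

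The topological splitting $\mH=\clos{\ran(DB)}\oplus\nul(DB)$ in (ii) comes from the bounded projection $P=\lim_{t\to\infty}it(it+DB)^{-1}$, well-defined by the resolvent estimates, which projects onto $\nul(DB)$ along $\clos{\ran(DB)}=\clos{\ran(D)}$. Part (iv) follows by duality: $BD=(DB^*)^*$ as unbounded operators, so applying (i)--(iii) to $DB^*$ and taking adjoints transfers the spectral, range, and splitting statements to $BD$. The main obstacle will be the resolvent estimate above: accretivity is available only on $\clos{\ran(D)}$, the vector $Bu$ generally leaves this subspace, and $B$ need not be invertible on $\mH$; the orthogonal decomposition $Bu=v_0+v_1$ together with self-adjointness of $D$ is precisely what is needed to decouple the real contribution $(Dv_1,v_1)$ from the nonreal contribution $\lambda(u,Bu)$ and make the estimate go through. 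Beyond this pairing-and-decomposition trick no deep new analysis is required, the remainder being standard bisectorial-operator bookkeeping.
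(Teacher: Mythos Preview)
Your approach is sound and is essentially the standard one; the paper itself does not give a proof but simply cites \cite{ADMc}, remarking only that accretivity on $\clos{\ran(D)}$ suffices. Your pairing argument---testing $(\lambda I-DB)u=f$ against $Bu$, decomposing $Bu=v_0+v_1$ in $\nul(D)\oplus\clos{\ran(D)}$, and using that $(Dv_1,v_1)\in\R$ while $(u,Bu)$ lies in a sector of half-angle $\mu(B)$---is exactly the mechanism that yields the resolvent bound, and your derivations of $\ran(DB)=\ran(D)$ and $\nul(DB)=B^{-1}\nul(D)$ are correct.

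Two small points deserve tightening. First, closedness of $DB$ should be checked directly (if $u_n\to u$ and $DBu_n\to v$, then $Bu_n\to Bu$ and closedness of $D$ gives $Bu\in\dom(D)$, $DBu=v$); it is logically prior to, not a consequence of, the resolvent bounds. Second, your surjectivity step is phrased imprecisely: replacing $B$ by $B^*$ in your estimate yields bounds for $DB^*$, but the adjoint of $DB|_{\clos{\ran(D)}}$ is $P_{\clos{\ran(D)}}B^*D$, not $DB^*$. The cleanest repair is to establish the topological splitting $\mH=\clos{\ran(D)}\oplus\nul(DB)$ \emph{first}, directly from the fact that $P_{\clos{\ran(D)}}B$ is accretive (hence invertible) on $\clos{\ran(D)}$, and then show injectivity of $\bar\lambda I - P_{\clos{\ran(D)}}B^*D$ on $\clos{\ran(D)}$ by pairing with $Dg$: one gets $\bar\lambda(g,Dg)=(B^*Dg,Dg)$, where the left side has argument outside $S_{\mu(B)}$ (since $(g,Dg)\in\R$ and $\bar\lambda\notin S_\mu$) while the right side lies in $S_{\mu(B)}$, forcing $Dg=0$ and hence $g=0$. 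With this adjustment your outline goes through.
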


For a proof, see \cite{ADMc}. Note that the accretivity is only needed on $\ran(D)$.
For $t\in \R$,  we shall
\begin{align*}R_t^B&=(I+itDB)^{-1},\\
Q_t^B&= \frac{1}{2i} (R_{-t}^B -R_{t}^B)= t DB(I+t^{2}DBDB)^{-1},\\
P_{t}^B&= \frac{1}{2} (R_{-t}^B +R_{t}^B)= (I+t^{2} DBDB)^{-1}.
\end{align*}  It follows from the previous result that $R_t^B$,  $P_t^B$ and  $Q_{t}^B$
are uniformly bounded operators on $\mH$.

\subsection{The main theorem}\label{sec:main}

Let us specify in this section the operator $D$ we consider throughout.   In $\C^{m(n+1)}=\C^m\oplus \C^{mn}= \C^m \oplus (\C^m\otimes \C^n) $, we use the  notation $v=\begin{bmatrix}
        v_{\no} \\
      v_{\ta}
     \end{bmatrix}$, with $v_{\no}\in \C^m$ and $v_{\ta}\in \C^{mn}=\C^m\otimes \C^n$ which we call respectively the normal (or scalar) and tangential parts of $v$. To simplify the exposition, we carry out the detailed proof only in the ``scalar '' case $m=1$ as it carries  in the vector-valued case $m>1$ without change. See Section \ref{sec:vv} for the notation.

\begin{prop}\label{prop:D} Let $w\in A_{2}(\R^n)$ and set $\mH=L^2(w;\C^{n+1})$ with norm $\|f \|= (\int_{\R^n} |f|^2\, dw)^{1/2}$.
\begin{enumerate}
  \item The operator $D:=
     \begin{bmatrix}
        0 & \divv_{w} \\
       - \nabla  & 0
     \end{bmatrix}$ with domain  $ \begin{bmatrix}
        \dom(\nabla) \\
      \dom(\divv_{w})\end{bmatrix}$ is self-adjoint.
       \item  $\begin{bmatrix}
     C^\infty_{0}(\R^n) \\
     \frac 1 w C^\infty_{0}(\R^n; \C^n)
     \end{bmatrix}$  is a dense subspace of $\dom (D)$.
       \item  $ \clos{\ran(D)}  =  \begin{bmatrix}
       L^2(w) \\
     \mR_{w}(L^2(w))
     \end{bmatrix}.$
 \item  $ \clos{\ran(D)}  =  \begin{bmatrix}
       L^2(w) \\
     \mR(L^2(w))
     \end{bmatrix}=\{g\in L^2(\R^n,w;\C^{1+n}); \curl_x(g_\ta)=0\}$, where the closure is taken in  $\mH$  and
     $\clos{\ran(D)} \cap
      C^\infty_{0}(\R^n; \C^{n+1})$ is dense in $\clos{\ran(D)}$.
     \end{enumerate}
   \end{prop}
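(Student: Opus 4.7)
The plan is to assemble (1)--(4) directly from Lemma~\ref{lem:gradient}, treating $D$ as the off-diagonal operator built from $\nabla$ and its adjoint. For (1), I would invoke the standard fact that if $T:H_{1}\to H_{2}$ is closed and densely defined between Hilbert spaces, then $\begin{bmatrix}0 & T^{*}\\ T & 0\end{bmatrix}$ is self-adjoint on $H_{1}\oplus H_{2}$ (the symmetry is immediate, and the self-adjointness reduces to characterizing $\dom(T^{*})$). I take $T=-\nabla$ with $\dom(T)=H^{1}(w)$; Lemma~\ref{lem:gradient}(1)--(2) say $T$ is closed, densely defined with $T^{*}=-\nabla^{*}=\divv_{w}$, so the resulting off-diagonal operator is precisely $D$.

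For (2), the graph norm on $\dom(D)=\dom(\nabla)\oplus\dom(\divv_{w})$ decouples as a direct sum, so density reduces to the two factors: $C^{\infty}_{0}(\R^{n})$ is dense in $H^{1}(w)=\dom(\nabla)$ by definition of the weighted Sobolev space, and $\frac{1}{w}C^{\infty}_{0}(\R^{n};\C^{n})$ is dense in $\dom(\divv_{w})$ by the second assertion of Lemma~\ref{lem:gradient}(2).

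For (3), the block-antidiagonal structure of $D$ gives $\ran(D)=\ran(\divv_{w})\oplus\ran(\nabla)$, so $\clos{\ran(D)}=\clos{\ran(\divv_{w})}\oplus\clos{\ran(\nabla)}$. Lemma~\ref{lem:gradient}(2) identifies the first factor as $L^{2}(w)$ and Lemma~\ref{lem:gradient}(4) identifies the second as $\mR_{w}(L^{2}(w))$. For (4), the first equality is obtained by substituting $\clos{\ran(\nabla)}=\mR(L^{2}(w))$ from Lemma~\ref{lem:gradient}(5); the curl-free characterization is exactly Lemma~\ref{lem:gradient}(6).

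The remaining point is the density of $\clos{\ran(D)}\cap C_{0}^{\infty}(\R^{n};\C^{n+1})$ in $\clos{\ran(D)}$, which I would handle component by component. The normal component lies in $L^{2}(w)$ and is approximated by $C_{0}^{\infty}(\R^{n})$ functions via Corollary~\ref{lem:density}. The tangential component lies in $\clos{\ran(\nabla)}=\clos{\nabla(C_{0}^{\infty}(\R^{n}))}$ by the first equality of Lemma~\ref{lem:gradient}(6); since each approximant $\nabla\phi$ with $\phi\in C_{0}^{\infty}(\R^{n})$ is automatically smooth, compactly supported, and curl-free, the required density follows. I do not anticipate a serious obstacle here: the proposition is essentially a packaging result which translates the concrete facts about $\nabla,\divv_{w},\mR,\mR_{w}$ from Section~2 into the bisectorial-operator language needed for the forthcoming $DB$-calculus; the only care needed is the elementary observation that closures of direct sums split and that the curl-free closure admits smooth compactly supported approximants.
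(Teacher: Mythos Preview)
Your proposal is correct and is exactly what the paper intends: the paper's own proof reads in full ``The proof is a direct consequence of Lemma~\ref{lem:gradient}. We omit the details.'' You have supplied precisely those omitted details, invoking items (1)--(6) of Lemma~\ref{lem:gradient} in the natural way, together with the standard self-adjointness of the anti-diagonal operator $\begin{bmatrix}0&T^{*}\\T&0\end{bmatrix}$ for closed densely defined $T$.
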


   The proof is a direct consequence of   Lemma \ref{lem:gradient}. We omit the  details.

\bigskip

 Let us specify the required assumption on $B$: this is the operator of multiplication by  an
$(n+1)\times(n+1)$ matrix $B(x)$ which has  bounded entries and is  accretive   on $\clos{\ran(D)}$.  Associated to $B$ are the constants $\|B\|_{\infty}$ and (the best) $\kappa>0$ in \eqref{accretive}.

 In this section,  $\|f\|$ systematically designates the  weighted  $L^2(w; \C^d)$ norm with $d=1,n$ or $n+1$ depending on the context.

\begin{thm}\label{th:main}  With the preceding assumptions, one has
\begin{equation}  \label{eq:sfBD}
  \int_0^\infty\| Q_{t}^B v \|^2 \, \frac{dt}t \lesssim \|v\|^2, \qquad  \text{for all }\ v \in \mH.
\end{equation}
 \end{thm}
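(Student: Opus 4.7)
I would follow the first-order $DB$ framework of Axelsson--Keith--McIntosh \cite{AKMc}, adapted to the weighted Hilbert space $\mH = L^2(w;\C^{n+1})$. Using the splitting $\mH = \clos{\ran(DB)} \oplus \nul(DB)$ from Proposition \ref{prop:typeomega} and the fact that $Q_t^B$ vanishes on $\nul(DB)$, one first reduces to proving the estimate for $v \in \clos{\ran(D)}$. The uniform $\mH$-boundedness of $R_t^B$ and $Q_t^B$, combined with $L^2(w)$ off-diagonal decay for these resolvents (proved by commutator estimates with Lipschitz cutoffs, using that $B$ is accretive on $\clos{\ran(D)}$ and bounded on $\mH$), then allows the standard replacement of $Q_t^B$ by $Q_t^B \Theta_t$, where $\Theta_t$ is a smooth approximation of the identity (of heat-semigroup type on each component built from $\Delta_w$ and its tangential counterpart), modulo a quadratic error that one controls by Lemma \ref{lem:LPw}.

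The second step is the principal part approximation. Define a multiplier $\gamma_t(x) \in \mL(\C^{n+1})$ by $\gamma_t(x) e := (Q_t^B e)(x)$ for $e \in \C^{n+1}$ formally treated as a constant function (made rigorous as a limit of $Q_t^B$ applied to suitable truncated constants). Let $E_t$ be the dyadic averaging operator over $\dyadic_t$ that uses $dw$-averages on the normal component and $dx$-averages on the tangential component of $\clos{\ran(D)}$; the $A_2$ hypothesis on $w$ is precisely what makes $E_t$ uniformly bounded on $\mH$. By expanding $(Q_t^B - \gamma_t E_t)\Theta_t$ via resolvent identities and the off-diagonal decay one obtains
\begin{equation*}
\int_0^\infty \|(Q_t^B - \gamma_t E_t)\Theta_t v\|^2\,\tfrac{dt}{t} \lesssim \|v\|^2.
\end{equation*}
This reduces the theorem to showing that $|\gamma_t(x)|^2\,dw(x)\,\tfrac{dt}{t}$ is a $dw$-Carleson measure on $\R^{1+n}_+$, whence the quadratic estimate follows from the weighted Carleson embedding (valid because $dw$ is doubling) applied to the $\mH$-bounded family $E_t \Theta_t v$.

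The Carleson estimate on $\gamma_t$ is the heart of the proof and is carried out cube by cube by a $Tb$-type argument. For each $Q \in \dyadic$ and each direction $e$ in a suitably fine partition of the unit sphere of $\C^{n+1}$ adapted to the accretivity angle of $B$, one constructs test functions $f_Q^e := (I + i\epsilon\ell(Q) DB)^{-1}(\eta_Q e)$ with $\eta_Q$ a smooth cutoff supported near $Q$, enjoying $\|f_Q^e\|_\mH^2 \lesssim w(Q)$ and a lower bound on appropriate averages of $B f_Q^e$ over $Q$. A stopping-time argument on those averages, combined with the quasi-orthogonality identity that controls $\int_0^{\ell(Q)}\!\int_Q |\gamma_t|^2\,dw\,\tfrac{dt}{t}$ in terms of $\int_0^{\ell(Q)}\!\int_Q |\gamma_t E_t f_Q^e|^2\,dw\,\tfrac{dt}{t}$, bounds the Carleson measure by $w(Q)$.

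The main obstacle, and the genuinely new difficulty beyond the block-diagonal case of \cite{CR} and the unweighted case, is running this stopping time when $E_t$ simultaneously involves $dx$- and $dw$-averages: on subcubes $R\subseteq Q$ where the $dx$-average of $w$ differs drastically from $(w)_Q$, the two notions of average of the test function become incomparable and the standard stopping yields no useful information. I would resolve this using the corona decomposition of Proposition \ref{prop4}: inside the stopping region $Q \setminus \bigcup_{R \in B^w(Q)} R$ the $dx$-averages of $w$ over subcubes stay within a factor $e^{\sigma_w}$ of $(w)_Q$, so unweighted and weighted averages of test functions are comparable and the classical $Tb$ stopping argument runs without interference. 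The exceptional subcubes $B^w_*(Q)$ enter through the packing estimate \eqref{eqsq}, and the proof closes by a John--Nirenberg-type recursion on the Carleson constant, taking $\sigma_w$ large enough once the absolute constants in the stopping-region estimate are fixed.
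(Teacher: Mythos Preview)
Your outline captures the main architecture correctly: reduction to $\clos{\ran(D)}$, principal part approximation with the mixed averaging operator $E_t$, and the Carleson estimate via a $Tb$ argument combined with the corona decomposition of Proposition~\ref{prop4}. The identification of the mixed $dx$/$dw$ averages as the essential new obstacle, and of the corona as its resolution, is exactly right.

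There is, however, a misunderstanding in how the corona interacts with the stopping time. You write that inside the corona good region the $dx$-averages $w_R$ stay within a factor $e^{\sigma_w}$ of $w_Q$, and that one closes by taking $\sigma_w$ large. Both points are off. The corona controls $(\ln w)_R - (\ln w)_Q$, not $\ln(w_R) - \ln(w_Q)$; by Jensen and reverse Jensen \eqref{reversejensen} these differ by an uncontrolled amount in $[0,c_0]$, so the corona alone does \emph{not} force $w_R/w_Q$ close to $1$. The paper handles this with a \emph{second} compactness argument: one localizes in an auxiliary parameter $\tau\in[0,c_0]$ recording the value of $\ln(w_{Q'}) - (\ln w)_{Q'}$ on Whitney boxes, and the stopping-time criterion in Lemma~\ref{lemma7} is built around a matrix $S^\tau_{Q_1}$ that compensates for this discrepancy. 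The parameter $\sigma_w$ must then be chosen \emph{small} (together with $\sigma_1,\sigma_2$) so that the lower bound in Lemma~\ref{lem:parameters} survives; the resulting corona packing constant $C/\sigma_w^2$ is large but finite, which is harmless since it is merely summed. The geometric decay $(1-\sigma_6)$ that drives the recursion comes not from the corona but from the test-function stopping $B^{\tau,\xi}(Q_1)$. In short: the corona supplies a fixed (large) packing, the test-function stopping supplies the iteration, and the two are nested in the order $\Omega^w(Q_0)\cap\Omega^{\tau,\xi}(Q_1)$ rather than combined into a single John--Nirenberg recursion on $\sigma_w$.
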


 Theorem \ref{th:main} differs from previous results in the field. First, $D$ still being  a first order  differential operator,  no longer has constant coefficients. Secondly, the coercivity assumption
 $\|\nabla u\| \lesssim \|Du\|$ for $u\in \ran(D) \cap \dom(D) $ which is of utmost importance in other proofs cannot be true here, thus we cannot apply the metric measure space (here $\R^n$ with Euclidean distance and measure $dw$) generalisation made by Bandara  \cite{Ban} of the Euclidean  results  in \cite{AKMc} or \cite{elAAM}.

 Note that
 the block-matrix case  $B=\begin{bmatrix}
  1    & 0   \\
    0  &  d
\end{bmatrix}$ in the splitting $\C^{n+1}=\C \oplus \C^n$   gives a proof of  the Kato conjecture for degenerate operators, first proved by Cruz-Uribe and Rios \cite{CR}.  See Section \ref{sec:consequences}.

Our strategy to prove Theorem \ref{th:main} is to follow the line of argument in \cite{elAAM} but with some necessary twists. In particular, we will use different spectral decompositions on the scalar and  tangential parts. Also the   stopping-time argument requires the use of the Corona decomposition for $w$ (Proposition \ref{prop4}).

\subsection{Reduction to a Carleson measure estimate}\label{sec:reduction}

\begin{lem} [Off-diagonal decay] \label{lem:odd}
 For every integer $N$ there exists $C_N>0$
such that
\begin{equation} \label{odn}
\|1_{E}\,R_{t}^B u\|+ \|1_{E}\,Q_{t}^B u\| \le C_N \brac{\dist (E,F)/|t|}^{-N}\|u\|
\end{equation}
for all $t\ne 0$,
whenever $E,F \subseteq \R^n$ are closed sets,  $u \in \mH$
is such that $\supp u\subseteq F$.  We have set $\brac x:=1+|x|$ and
$\dist(E,F) :=\inf\{|x-y|\, ; \, x\in E,y\in F\}$.

\end{lem}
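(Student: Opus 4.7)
The plan is to use the standard commutator-with-Lipschitz-cutoff argument for resolvents of first-order operators, verifying that it passes through in the weighted setting thanks to a clean cancellation.

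The key algebraic ingredient is the commutator of $D$ with a scalar Lipschitz function $\eta$. A direct computation gives $[\nabla,\eta]f = (\nabla \eta)f$ and, crucially,
\[
[\divv_w,\eta]g = \tfrac1w\bigl(\divv(w\eta g) - \eta\divv(wg)\bigr) = (\nabla\eta)\cdot g,
\]
so the weight $w$ cancels entirely and $[D,\eta]$ is a bounded multiplier on $\mH$ with operator norm $\lesssim \|\nabla\eta\|_\infty$. Since $\eta$ acts as a scalar while $B$ acts by matrix multiplication, they commute, whence $[DB,\eta]=[D,\eta]B$ is bounded on $\mH$.

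Put $d:=\dist(E,F)$; the case $d=0$ is the uniform resolvent bound of Proposition \ref{prop:typeomega}, so assume $d>0$. Pick a Lipschitz $\eta$ equal to $1$ on $E$, vanishing on a neighborhood of $F$, with $\|\nabla\eta\|_\infty \lesssim 1/d$. Applying $I+itDB$ to $\eta R_t^B u$, and using $\eta u=0$, one obtains the basic identity
\[
\eta R_t^B u = -it\, R_t^B [D,\eta] B R_t^B u,
\]
which, together with uniform boundedness of $R_t^B$, yields the first-order estimate $\|1_E R_t^B u\| \lesssim (|t|/d)\|u\|$. To promote this to order $N$, I would iterate the identity against a sequence of nested cutoffs $\eta_1,\dots,\eta_N$, where $\eta_j$ equals $1$ on the $(j-1)d/(2N)$-neighborhood of $E$, is supported in the $jd/(2N)$-neighborhood, and satisfies $\|\nabla\eta_j\|_\infty \lesssim N/d$. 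Exploiting at each step that $\eta_{j+1}\equiv 1$ on $\supp\eta_j$ so that $\eta_j = \eta_j \eta_{j+1}$, one iterates the commutator identity $N$ times to obtain
\[
\|1_E R_t^B u\| \lesssim C_N\,(|t|/d)^{N}\,\|u\|,
\]
valid for $|t|<d$. Combined with the trivial uniform bound for $|t|\geq d$, this gives the claimed $\langle d/|t|\rangle^{-N}$ decay for $R_t^B$. The corresponding bound for $Q_t^B$ then follows from $Q_t^B = \tfrac{1}{2i}(R_{-t}^B - R_t^B)$.

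The only subtle point, which will be the main technical check rather than a real obstacle, is the legitimacy of applying $I+itDB$ to $\eta R_t^B u$: one needs $\eta R_t^B u \in \dom(DB)$, i.e.\ $\eta\,B R_t^B u \in \dom(D)$. Since $B R_t^B u \in \dom(D) = \dom(\nabla)\oplus\dom(\divv_w)$ (because $R_t^B u \in \dom(DB) = B^{-1}\dom(D)$) and multiplication by a scalar Lipschitz function preserves this domain by the product rule, this is routine; it can be justified by approximating $B R_t^B u$ via the core given in Proposition \ref{prop:D}(2) and passing to the limit using the commutator bound above.
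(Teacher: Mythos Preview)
Your proof is correct and follows exactly the approach the paper indicates: the paper simply refers to \cite[Proposition~5.1]{elAAM} and records that the commutator $[\chi,D]$ is pointwise multiplication by $\begin{bmatrix} 0 & -(\nabla\chi)^t \\ \nabla\chi & 0\end{bmatrix}$, which is precisely your observation that the weight cancels in $[\divv_w,\eta]$; the nested-cutoff iteration you wrote out is the standard argument being cited. The only (harmless) slip is the sign in your basic identity---one gets $\eta R_t^B u = +it\, R_t^B [D,\eta] B R_t^B u$---but this has no effect on the estimate.
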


\begin{proof}
Repeat the argument in \cite[Proposition 5.1]{elAAM}, which only uses that $D$ is of order 1 and that the commutator $[\chi, D]$ of $D$ with a Lipschitz function $\chi$  is bounded on the  space $\mH$. More precisely,  it is the pointwise multiplication by
 \begin{equation}
\label{eq:comm}
\begin{bmatrix}
        0 &  -(\nabla \chi)^t  \\
        \nabla \chi  & 0
     \end{bmatrix}.
\end{equation}
 We omit further details.
\end{proof}

 Let  $A_t$ be the   dyadic averaging operator  with respect to Lebesgue measure and $A_{t}^w$ the one with respect  to $dw$.  Set
$$E_{t}u(x):= \begin{bmatrix}
      A_{t}^w u_{\no}(x)    \\
       A_{t}u_{\ta}(x)     \end{bmatrix} =\begin{bmatrix}
      \barint_{\hspace{-2pt}Q} u_{\no}(y)\,  dw(y)    \\
       \barint_{\hspace{-2pt}Q} u_{\ta}(y)\,  dy
     \end{bmatrix},
     $$
     where $Q$ is the   unique dyadic cube  $Q \in \dyadic_t$ that contains $x$.  The notation for dyadic cubes is the same as in Section \ref{coronasec}.  Recall that    $\barint_{\hspace{-2pt}Q}$ means the average on $Q$ with respect to the indicated measure. We also set
     $$
     E_{Q}u= \begin{bmatrix}
      \barint_{\hspace{-2pt}Q} u_{\no}(y)\,  dw(y)    \\
       \barint_{\hspace{-2pt}Q} u_{\ta}(y)\,  dy
     \end{bmatrix}.
     $$
Observe  that $E_{t}$ acts  as a linear operator componentwise.  Using the inequality
    \eqref{eq:dxdwAp} with $p=2$,
we have
   that $E_{t}$ is a bounded operator on $\mH$, uniformly in $t$.
   We also have the  pointwise estimate
   $$
\sup_{t>0}    |E_{t}u| \le   M_{d,w}|u_{\no}| + M_{d}|u_{\ta}|
   $$
   where $M_{d,w}$ is the dyadic maximal function with respect to
   $dw$ and $M_{d}$ the dyadic maximal function with respect to
   $dx$. Both are bounded on $L^2(w)$ by the Hardy-Littlewood  theorem for the doubling measure $dw$ and
   by Muckenhoupt's theorem since $w\in A_{2}$. Thus, $u\mapsto \sup_{t>0}    |E_{t}u|$ is bounded from $\mH$ into $L^2(w)$.

\begin{defn}  \label{defn:princpart}
By the {\em principal part} of $(Q_t^B)_{t>0}$,
we mean the  function   $(x,t) \mapsto \gamma_t(x)$ defined from $\R^{n+1}_+$  to $ \mL(\C^{n+1}) $ by
$$
   \gamma_t(x)z:= (Q_t^B z)(x)
$$
for every $z\in \C^{n+1}$. We view $z$ on the right-hand side
of the above equation as the constant function valued in $\C^{n+1}$ defined on $\R^n$
by $z(x):=z$.  We denote by $|\gamma_t(x)|$ its norm in $\mL(\C^{n+1})$ subordinated to the hermitian structure on $\C^{n+1}$.
We identify $\gamma_t(x)$ with the (possibly unbounded) multiplication
operator $\gamma_t: u(x)\mapsto \gamma_t(x)u(x)$, $u\in \mH$.
\end{defn}
\begin{lem}\label{lem:gammat}
The operator $Q_t^B$ extends to a bounded operator from
$L^\infty(w; \C^{n+1})$ into $ \mH_{\text{loc}}=L^2_{\text{loc}}(w; \C^{n+1})$.
In particular we have
$\gamma_t\in L^2_{\text{loc}}(w; \mL(\C^{n+1}))$ with bounds
$$
   \barint_{\hspace{-6pt}Q} |\gamma_t(y)|^2 \, dw(y) \lesssim
  1
$$
for all $Q\in\dyadic_t$.
Moreover, $\gamma_t E_{t}$ are bounded on $\mH$ with $\|\gamma_t E_{t}u\|\lesssim \|E_{t}u\|$ uniformly for all $t>0$ and $u\in \mH$.
\end{lem}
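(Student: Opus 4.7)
The plan is to derive all three assertions from the off-diagonal decay of Lemma~\ref{lem:odd} combined with the doubling property of $dw$.

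First, I would establish the extension and the pointwise bound on $\gamma_t$ simultaneously. Fix $t>0$, $Q\in\dyadic_t$, and $z\in L^\infty(w;\C^{n+1})$. Decompose $z=\sum_{k\ge 0}z\,\mathbf 1_{C_k(Q)}$, where $C_0(Q)=4Q$ and $C_k(Q)=2^{k+2}Q\setminus 2^{k+1}Q$ for $k\ge 1$. Each truncated piece has compact support and thus lies in $\mH$ with $\|z\,\mathbf 1_{C_k(Q)}\|^2\le \|z\|_\infty^2 w(2^{k+2}Q)$. For $k=0$ the uniform $\mH$-boundedness of $Q_t^B$ from Proposition~\ref{prop:typeomega} yields $\|\mathbf 1_Q Q_t^B(z\,\mathbf 1_{C_0(Q)})\|^2\lesssim \|z\|_\infty^2 w(4Q)$. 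For $k\ge 1$ the distance $\dist(Q,C_k(Q))$ is comparable to $2^k\ell(Q)\approx 2^k t$, so Lemma~\ref{lem:odd} gives
\[
\|\mathbf 1_Q Q_t^B(z\,\mathbf 1_{C_k(Q)})\|^2\le C_N\, 2^{-2kN}\|z\|_\infty^2 w(2^{k+2}Q).
\]
The $A_2$ estimate \eqref{eq:ainfty} provides $w(2^{k+2}Q)\lesssim 2^{k\tau}w(Q)$ for some $\tau$ depending only on $[w]_{A_2}$, so choosing $N$ with $2N>\tau$ makes the geometric series summable to a constant multiple of $w(Q)$. Altogether,
\[
\int_Q |Q_t^B z|^2\,dw\lesssim \|z\|_\infty^2\, w(Q),
\]
which both defines $Q_t^B$ unambiguously as a bounded map $L^\infty(w;\C^{n+1})\to L^2_{\mathrm{loc}}(w;\C^{n+1})$ (the triangle inequality makes the definition independent of how one exhausts $z$) and, by specializing to each constant basis vector $z=e_j\in\C^{n+1}$ together with the definition of $|\gamma_t(x)|$ as the $\mL(\C^{n+1})$-norm, the estimate $\barint_Q |\gamma_t(y)|^2\,dw(y)\lesssim 1$.

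The third assertion follows immediately. By definition, $E_t u$ is constant on every $Q\in\dyadic_t$ with value $E_Q u\in\C^{n+1}$, so pointwise on $Q$ one has $|\gamma_t(x)(E_t u)(x)|=|\gamma_t(x)E_Q u|\le |\gamma_t(x)|\,|E_Q u|$, and hence
\[
\int_Q|\gamma_t E_t u|^2\,dw \le |E_Q u|^2\int_Q|\gamma_t(x)|^2\,dw(x)\lesssim |E_Q u|^2\, w(Q) = \int_Q|E_t u|^2\,dw.
\]
Summing over the disjoint cubes $Q\in\dyadic_t$ covering $\R^n$ yields $\|\gamma_t E_t u\|^2\lesssim \|E_t u\|^2$.

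The only mildly delicate step is calibrating the off-diagonal decay exponent $N$ against the doubling growth $\tau$ in the annular sum; beyond that the argument is entirely routine, and I foresee no real obstacle.
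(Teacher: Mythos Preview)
Your proof is correct and follows essentially the same route as the paper: an annular decomposition of the bounded function around the dyadic cube $Q$, combined with the off-diagonal decay of Lemma~\ref{lem:odd} and the doubling of $dw$, followed by specializing to constant basis vectors for the $\gamma_t$ bound and exploiting constancy of $E_tu$ on each $Q\in\dyadic_t$ for the final estimate. The only (inconsequential) differences are cosmetic: you start the decomposition at $4Q$ rather than $2Q$, and you make the calibration of $N$ against the doubling exponent~$\tau$ explicit where the paper leaves it implicit.
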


\begin{proof} Fix a cube $Q \in \dyadic_t$ and $u \in L^\infty(w;\C^{n+1})$ with $\|u\|_\infty=1$. Then
write $u= u_0+ u_1+u_2+\ldots$ where $u_0=u$ on $2Q$ and $0$ elsewhere and if $j\ge 1$, $u_j=u$ on $2^{j+1}Q \setminus 2^{j}Q$ and $0$ elsewhere. Then apply $Q_t^B$  and use Lemma \ref{lem:odd} for each term $Q_t^B u_j$ with $N$ large enough  and sum  to obtain
$$
   \barint_{\hspace{-6pt}Q} |(Q_t^Bu)(y)|^2 \, dw(y) \le
  C.
$$
If we do this for  the constant functions with values describing an orthonormal basis of $\C^{n+1}$ and sum, we obtain an upper bound for the desired average of $\gamma_t$.
Next,  for a function $u \in \mH$ and $Q\in \Delta_{t}$, as $E_{t}u$ is constant on $Q$,
\begin{align*}
\label{}
   \int_Q \left|\gamma_t (y)E_{t}u(y)\right|^2 \, dw(y)&\le \int_Q \left|\gamma_t (y)\right|^2 \, dw(y) \times  \barint_{\hspace{-6pt}Q} \left|E_{t}u(y)\right|^2 \, dw(y) \\
   &\lesssim    \int_Q \left|E_{t}u(y)\right|^2 \, dw(y).
   \end{align*}  Thus
$$
\|\gamma_t E_{t}u\|^2  \lesssim  \sum_{Q\in \dyadic_t}  \int_Q \left|E_{t}u(y)\right|^2 \, dw(y) = \|E_{t}u\|^2 \lesssim \|u\|^2.
$$
\end{proof}

As $Q_t^B$ vanishes on $ \nul(DB)$, it is enough to prove the quadratic estimate (\ref{eq:sfBD}) for $v \in \clos{\ran(DB)}=\clos{\ran(D)}$.
 Our principal part approximation reads as follows.

\begin{prop}  \label{lem:ppa} We have
 \begin{equation}\label{eq:ppa1}
  \qe{Q_t^B v-\gamma_t E_{t} v}
  \lesssim \|v\|^2,  \quad v\in \clos{\ran(D)}.
\end{equation}

\end{prop}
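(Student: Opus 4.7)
The plan is to follow the principal part approximation strategy of Axelsson--Keith--McIntosh \cite{elAAM}, adapted to the weighted framework. Setting $\Theta_t := Q_t^B - \gamma_t E_t$, I would first record the crucial cancellation $\Theta_t c = 0$ for every constant $\C^{n+1}$-valued function $c$: by Definition~\ref{defn:princpart} one has $Q_t^B c = \gamma_t c$, and $E_t c = c$. Combined with the uniform $\mH$-boundedness of $Q_t^B$ (Proposition~\ref{prop:typeomega}) and of $\gamma_t E_t$ (Lemma~\ref{lem:gammat}), together with the off-diagonal decay of $Q_t^B$ from Lemma~\ref{lem:odd}, this cancellation is the engine of the argument.

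Next I would introduce the self-adjoint smoothing $\tilde P_t := (I + t^2 D^2)^{-1}$. Since $D$ is self-adjoint, the spectral theorem yields the quadratic estimate $\int_0^\infty \|t D \tilde P_t v\|^2 \, dt/t \lesssim \|v\|^2$, the identity $(I - \tilde P_t) v = (tD)(t D \tilde P_t v)$, and the convergence $\tilde P_t v \to v$ in $\mH$ as $t \to 0$ for $v \in \clos{\ran(D)}$. I would then decompose $\Theta_t v = \Theta_t \tilde P_t v + \Theta_t (I - \tilde P_t) v$ and handle the two pieces separately.

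For the smooth piece $\Theta_t \tilde P_t v$: on each $Q \in \dyadic_t$ and each $x \in Q$, cancellation yields $\Theta_t \tilde P_t v(x) = Q_t^B[\tilde P_t v - c_Q \mathbf{1}_{\R^n}](x)$ with $c_Q := E_Q \tilde P_t v$. Splitting the argument of $Q_t^B$ into annular pieces around $Q$, combining uniform boundedness with off-diagonal decay from Lemma~\ref{lem:odd}, and summing over $Q \in \dyadic_t$, I am reduced to controlling $\sum_Q \|\tilde P_t v - E_Q \tilde P_t v\|^2_{L^2(w, 2Q)}$ plus rapidly decaying tails. The weighted Poincar\'e inequality \eqref{eq:Poincare} together with the comparability \eqref{eq:dxdwAp} between $dx$- and $dw$-averages then dominates this by $t^2 \|\nabla \tilde P_t v\|^2_{L^2(w)}$; integrating in $t$, using that $D$ encodes $\nabla$ and $\divv_w$ per Proposition~\ref{prop:D}, closes this estimate via $\int_0^\infty \|tD \tilde P_t v\|^2 dt/t \lesssim \|v\|^2$. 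For the rough piece $\Theta_t (I - \tilde P_t) v$, I would write $(I - \tilde P_t) v = (tD)(t D \tilde P_t v)$ and, using the resolvent identity $Q_t^B = tDB\, P_t^B$ from Proposition~\ref{prop:typeomega} together with off-diagonal decay and the $\mH$-boundedness of $\gamma_t E_t$, reduce the square-function integral to a multiple of $\int_0^\infty \|tD \tilde P_t v\|^2 dt/t$, which is again $\lesssim \|v\|^2$.

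The main technical obstacle is the weighted Poincar\'e step. Since $E_t$ averages the normal component with respect to $dw$ but the tangential component with respect to $dx$, a single Poincar\'e inequality does not apply: on the normal component \eqref{eq:Poincare} is used directly, while on the tangential component one must first compare $(f)_{Q,dx}$ with $(f)_{Q,dw}$ via the $A_2$ condition before invoking \eqref{eq:Poincare}. A global $[w]_{A_2}$ bound suffices at this stage, but a sharper, cube-localised version of the same comparison is precisely what forces the use of the corona decomposition of Proposition~\ref{prop4} in the subsequent stopping-time $Tb$ argument that will complete Theorem~\ref{th:main}.
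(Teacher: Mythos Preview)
Your approach has a genuine gap at the Poincar\'e step for the smooth piece. After applying off-diagonal decay and Poincar\'e to $\sum_{Q\in\dyadic_t}\|\tilde P_t v - E_Q\tilde P_t v\|_{L^2(w,2Q)}^2$, you correctly arrive at a bound by $t^2\|\nabla\tilde P_t v\|^2_{L^2(w)}$, where $\nabla$ is the \emph{full} spatial gradient acting componentwise on the $\C^{n+1}$-valued function $\tilde P_t v$. You then claim this closes via $\int_0^\infty\|tD\tilde P_t v\|^2\,dt/t\lesssim\|v\|^2$, invoking that ``$D$ encodes $\nabla$ and $\divv_w$''. But $D\tilde P_t v$ has tangential part $-\nabla(\tilde P_t v)_\perp$ and scalar part $\divv_w(\tilde P_t v)_\ta$: the full gradient of the \emph{scalar} component is indeed recovered, but on the \emph{tangential} component you only see $\divv_w$, not $\nabla$. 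The implicit step $\|\nabla u\|\lesssim\|Du\|$ for $u\in\ran(D)\cap\dom(D)$ is precisely the coercivity that the paper warns (just after Theorem~\ref{th:main}) \emph{fails} in the weighted setting. Concretely, on $\clos{\ran(D)}$ one has $(\tilde P_t v)_\ta=\nabla(I-t^2\Delta_w)^{-1}h$ for some $h$, and controlling $\|t\nabla(\tilde P_t v)_\ta\|$ would require $\|t\nabla^2(I-t^2\Delta_w)^{-1}h\|$, i.e.\ second-order Riesz bounds for the degenerate operator $\Delta_w$, which are not available.

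This is exactly why the paper does \emph{not} use $\tilde P_t=(I+t^2D^2)^{-1}$ but instead the hybrid $P_t=\operatorname{diag}\bigl((I-t^2\Delta_w)^{-1},\,(I-t^2\Delta)^{-1}I_{\C^n}\bigr)$, with the \emph{unweighted} Laplacian on the tangential block. On the tangential part, $t\nabla(I-t^2\Delta)^{-1}\mR g$ rewrites via Riesz transforms as $R_j\mR\,t(-\Delta)^{1/2}(I-t^2\Delta)^{-1}g$, and then weighted boundedness of the standard Riesz transforms together with weighted Littlewood--Paley theory for convolution operators (Lemma~\ref{lem:LPw}) closes the estimate. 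The price of this non-intrinsic choice of $P_t$ is that $P_t$ is no longer a function of $D$, so one must verify separately that $\clos{\ran(D)}$ is preserved (via the characterisation $\clos{\ran(D)}=L^2(w)\oplus\mR(L^2(w))$), and the third term $\gamma_tE_t(P_t-I)v$ now requires the Schur-type argument of Lemma~\ref{lem:ppalem3} using the cancellation Lemmas~\ref{lem:mean1}--\ref{lem:mean2}. Your treatment of $\gamma_tE_t(I-\tilde P_t)v$ via mere $\mH$-boundedness of $\gamma_tE_t$ is likewise insufficient, since $\int_0^\infty\|(I-\tilde P_t)v\|^2\,dt/t=\infty$; a Schur argument of the same type is unavoidable there too.
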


The function  from $\R^{n+1}_+$ to $ \R_+$ defined by $ (x,t) \mapsto |\gamma_t(x)|^2$, is
a {\em weighted dyadic Carleson function} if there exists $C<\infty$
such that
$$
  \iint_{\carl{Q}} |\gamma_t(x)|^2 \, \frac{dw(x)dt}t \le C^2 w(Q)
$$
for all dyadic cubes $Q\subseteq\R^n$.
Here $\carl{Q}:= Q\times (0,l(Q)]$ is the Carleson box over $Q$. We define the dyadic Carleson norm $\|\gamma_t\|_C$ to be the smallest
constant $C$ for which this inequality holds.  The form of Carleson's lemma that we need and will apply componentwise is  the following (see \cite{AT}, p.168 and references therein).
\begin{lem}  \label{lem:Carleson} For all $u\in \mH$,
$$
  \qe{\gamma_t E_{t}u} \lesssim \|\gamma_t\|_C^2 \| \sup_{t>0}|E_{t}u|\|^2 \lesssim \|\gamma_t\|_C^2 \| u\|^2.
$$
\end{lem}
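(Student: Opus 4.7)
The plan is to run the standard proof of Carleson's embedding theorem, but with base measure $dw$ and the mixed-averaging $E_t$ built in. The second inequality is essentially free: the pointwise bound $\sup_{t>0}|E_tu|\le M_{d,w}|u_\no|+M_d|u_\ta|$ recorded just before the lemma, combined with the $L^2(w)$-boundedness of $M_{d,w}$ (Hardy--Littlewood for the doubling measure $dw$) and of $M_d$ (Muckenhoupt, since $w\in A_2$), immediately yields $\|\sup_{t>0}|E_tu|\|\lesssim\|u\|$. So I shall focus on the first inequality.

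Set $d\mu(x,t):=|\gamma_t(x)|^2\,\frac{dw(x)\,dt}{t}$; by hypothesis $\mu(\widehat Q)\le\|\gamma_t\|_C^2\,w(Q)$ on every dyadic cube $Q$. Using the pointwise inequality $|\gamma_tE_tu|^2\le|\gamma_t|^2|E_tu|^2$ and the layer-cake formula reduces the proof to bounding
$$
\int_0^\infty 2\lambda\,\mu(S_\lambda)\,d\lambda,\qquad S_\lambda:=\{(x,t)\in\R^{n+1}_+:|E_tu(x)|>\lambda\}.
$$
I will estimate $\mu(S_\lambda)$ in terms of $w(U_\lambda)$, where $U_\lambda:=\{x\in\R^n:\sup_{t>0}|E_tu(x)|>\lambda\}$, by covering $S_\lambda$ with Carleson boxes over the maximal dyadic cubes in $U_\lambda$.

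The key covering step is as follows. If $x\in U_\lambda$, some $Q\in\dyadic$ containing $x$ has $|E_Q u|>\lambda$; since $E_Q u$ is constant on $Q$, this whole $Q$ lies in $U_\lambda$, so $U_\lambda$ is a union of dyadic cubes and decomposes as a disjoint union of maximal dyadic cubes $\{Q_k\}$. If $(x,t)\in S_\lambda$, let $Q\in\dyadic_t$ be the scale-$t$ dyadic cube containing $x$; then $|E_Q u|=|E_t u(x)|>\lambda$, so $Q\subseteq U_\lambda$, which by maximality of the $Q_k$ forces $Q\subseteq Q_k$ for some $k$, yielding $(x,t)\in\widehat{Q_k}$. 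Disjointness of the $Q_k$ together with the Carleson hypothesis gives
$$
\mu(S_\lambda)\le\sum_k\mu(\widehat{Q_k})\le\|\gamma_t\|_C^2\sum_k w(Q_k)=\|\gamma_t\|_C^2\,w(U_\lambda),
$$
and feeding this back into the layer-cake integral produces $\|\gamma_t\|_C^2\,\|\sup_t|E_tu|\|^2$, as required.

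The only mild subtlety worth flagging is that $E_t$ mixes a $dw$-average on the scalar slot with a Lebesgue average on the tangential slot. This causes no trouble in the covering argument, since all that is used there is the dyadic grid and the constancy of $E_tu$ on dyadic boxes, both of which survive the mixed averaging. The weighted structure enters only through the base measure $dw$ in the Carleson condition and through the $L^2(w)$-boundedness of the two maximal operators.
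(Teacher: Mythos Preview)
Your proof is correct. The paper does not actually prove this lemma but cites the standard Carleson embedding argument (\cite{AT}, p.~168); your proposal spells out exactly that argument, adapted verbatim to the base measure $dw$, and correctly observes that the mixed $dw$/$dx$ averaging in $E_t$ plays no role beyond the constancy on dyadic cubes and the $L^2(w)$-boundedness of the associated maximal operators already recorded in the paper.
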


\begin{cor}\label{cor:cor1} If $|\gamma_{t}(x)|^2$ is a weighted dyadic Carleson function,  then Theorem \ref{th:main} holds.
\end{cor}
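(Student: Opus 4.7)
The plan is straightforward assembly of the pieces already in hand. The goal is to show that, under the Carleson hypothesis on $(x,t)\mapsto|\gamma_t(x)|^2$, one has $\int_0^\infty \|Q_t^B v\|^2\,\frac{dt}{t} \lesssim \|v\|^2$ for every $v\in\mH$.

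First I would reduce to $v\in\clos{\ran(D)}$. By Proposition \ref{prop:typeomega}(ii), $\mH = \clos{\ran(DB)}\oplus\nul(DB)$ topologically, and $\ran(DB)=\ran(D)$. Since $DB$ annihilates $\nul(DB)$, so does $Q_t^B$ for every $t$; thus decomposing $v=v_1+v_2$ along this splitting, $Q_t^Bv=Q_t^Bv_1$, with $\|v_1\|\lesssim\|v\|$. It therefore suffices to establish the estimate on $\clos{\ran(D)}$.

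Next I would split $Q_t^B v = (Q_t^B v - \gamma_t E_t v) + \gamma_t E_t v$ and use $\|Q_t^B v\|^2 \le 2\|Q_t^B v - \gamma_t E_t v\|^2 + 2\|\gamma_t E_t v\|^2$. The first term is handled by the principal part approximation, Proposition \ref{lem:ppa}, which gives
\[
\int_0^\infty \|Q_t^B v - \gamma_t E_t v\|^2\,\frac{dt}{t} \lesssim \|v\|^2
\]
for $v\in\clos{\ran(D)}$. For the second, Lemma \ref{lem:Carleson}, applied componentwise to the scalar and tangential parts (the point being that $E_t$ involves $dw$-averages on the normal component and $dx$-averages on the tangential component, both controlled by weighted/unweighted dyadic maximal functions that are bounded on $L^2(w)$), yields
\[
\int_0^\infty \|\gamma_t E_t v\|^2\,\frac{dt}{t} \lesssim \|\gamma_t\|_C^2\,\|v\|^2,
\]
where the Carleson norm $\|\gamma_t\|_C$ is finite by the standing hypothesis of the corollary.

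Summing the two bounds gives the desired quadratic estimate on $\clos{\ran(D)}$, and hence on all of $\mH$ by the initial reduction. There is no real obstacle here; every input is already in place, and this reduces the proof of Theorem \ref{th:main} to the remaining task of verifying that $|\gamma_t|^2$ is indeed a weighted dyadic Carleson function, which is where the $Tb$/stopping-time argument using the corona decomposition of Proposition \ref{prop4} will be needed.
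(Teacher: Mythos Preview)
Your proof is correct and follows exactly the same route as the paper: reduce to $v\in\clos{\ran(D)}$ (using that $Q_t^B$ vanishes on $\nul(DB)$), then split $Q_t^B v=(Q_t^B v-\gamma_tE_tv)+\gamma_tE_tv$ and apply Proposition~\ref{lem:ppa} and Lemma~\ref{lem:Carleson} respectively. The paper in fact states this corollary with a one-line justification (``clearly follows from the above lemma and \eqref{eq:ppa1}''), so your write-up is a faithful and slightly more detailed rendering of the same argument.
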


This corollary clearly  follows from the above lemma and \eqref{eq:ppa1}.

\subsection{Proof of the principal part approximation}

We begin the proof of the principal part approximation \eqref{eq:ppa1} with some further notation.
Define
$$P_{t}=\begin{bmatrix}
        (I-t^2\Delta_{w})^{-1} &  0  \\
        0  & (I-t^2\Delta)^{-1} I_{\C^n}
     \end{bmatrix}.
     $$

           Here $\Delta_{w}=\divv_{w}\nabla$ is the  negative   self-adjoint operator  on $L^2(w)$ defined in Lemma \ref{lem:gradient} while $\Delta$ is the usual  negative  Laplacian  on $L^2(dx)$.  From Lemma \ref{lem:approx},
       the convolution operator  $(I-t^2\Delta)^{-1}$ is bounded on $L^2(w)$ uniformly with respect to $t>0$: It is indeed classical that $(I-t^2\Delta)^{-1}$ is the convolution with $t^{-n}G_{2}(x/t)$ where the Bessel potential $G_{2}$ is integrable and radially decreasing (see \cite[Chapter 6]{Gra}).  Thus, $P_{t}$ is uniformly bounded on $\mH$.

\begin{lem}\label{whatisneeded}   For all $ v \in \clos{\ran(D)}$, one has
\begin{equation}  \label{firsttermprop}
     \qe{Q_t^B(I-P_t) v}\lesssim \|v\|^2.
\end{equation}
\end{lem}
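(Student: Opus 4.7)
My plan is to combine a factorization of $(I-P_t)v$ as $tD\Phi_t$ with the algebraic identity
\[
Q_t^B\cdot tDB = (tDB)^2(I+t^2(DB)^2)^{-1} = I - P_t^B,
\]
which holds on $\dom(DB)$ and whose right-hand side is uniformly bounded on $\mH$ (since $tDB$ commutes with $(I+t^2(DB)^2)^{-1}$ on $\dom(DB)$). The auxiliary $\Phi_t$ will lie in $\dom(D)\cap\clos{\ran(D)}$ and satisfy $\int_0^\infty \|\Phi_t\|^2\,\frac{dt}{t}\lesssim \|v\|^2$. Converting $tD\Phi_t$ into $tDB\Psi_t$ by inverting the compression of $B$ to $R := \clos{\ran(D)}$ (constructed below) will then reduce $Q_t^B(I-P_t)v$ to $(I-P_t^B)\Psi_t$, from which \eqref{firsttermprop} follows on integrating against $dt/t$.

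By linearity I split $v$ into its scalar and tangential parts, both of which lie in $\clos{\ran(D)}$ by Proposition \ref{prop:D}. For the scalar part $v = \begin{bmatrix} v_\no \\ 0 \end{bmatrix}$, define $\Phi_t := -\begin{bmatrix} 0 \\ t\nabla(I-t^2\Delta_w)^{-1}v_\no \end{bmatrix}$; the relation $\divv_w\nabla = \Delta_w$ gives $tD\Phi_t = (I-P_t)v$ directly, while spectral calculus for the nonnegative self-adjoint operator $-\Delta_w$ on $L^2(w)$ (Lemma \ref{lem:gradient}) yields $\int_0^\infty \|\Phi_t\|^2\,\frac{dt}{t} = \frac{1}{2}\|v_\no\|^2$ after the substitution $s = t^2\lambda$ in the spectral integral. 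For the tangential part $v = \begin{bmatrix} 0 \\ v_\ta\end{bmatrix}$ with $v_\ta\in\clos{\ran(\nabla)}$ (hence curl-free), I exploit the scalar Fourier-multiplier structure of $(I-t^2\Delta)^{-1}$ and the distributional identity $\nabla\divv u = \Delta u$ for curl-free $u$ to check that $\eta_t := t\,\divv(I-t^2\Delta)^{-1}v_\ta$ and $\Phi_t := \begin{bmatrix}\eta_t \\ 0\end{bmatrix}$ obey $tD\Phi_t = (I-P_t)v$. Writing $\eta_t$ componentwise as $\sum_k \psi_{k,t}\star(v_\ta)_k$ with $\widehat{\psi_k}(\xi) = i\xi_k/(1+|\xi|^2)$ presents each $\psi_k$ as a standard Calder\'on--Zygmund kernel of vanishing integral, so Lemma \ref{lem:LPw} supplies $\int_0^\infty \|\Phi_t\|^2\,\frac{dt}{t}\lesssim \|v_\ta\|^2$.

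To pass from $tD\Phi_t$ to $tDB\Psi_t$ I use the orthogonal splitting $\mH = \nul(D)\oplus R$ with corresponding projection $P_R$ onto $R$. Since $B$ is accretive on $R$, the compression $P_R B|_R\colon R\to R$ inherits the same lower bound via $\re(P_R B u, u) = \re(Bu,u) \ge \kappa\|u\|^2$ for $u\in R$, and is therefore boundedly invertible on $R$. Define $\Psi_t := (P_R B|_R)^{-1}\Phi_t \in R$; then $\|\Psi_t\|\lesssim \|\Phi_t\|$ and $B\Psi_t - \Phi_t \in \nul(D)$. Since $\Phi_t \in \dom(D)$ by construction and $\nul(D)\subset \dom(D)$, it follows that $B\Psi_t \in \dom(D)$, i.e., $\Psi_t \in \dom(DB)$, and $DB\Psi_t = D\Phi_t$. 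Applying the identity gives $Q_t^B(I-P_t)v = Q_t^B\cdot tDB\Psi_t = (I-P_t^B)\Psi_t$, so $\|Q_t^B(I-P_t)v\|\lesssim \|\Phi_t\|$ uniformly in $t$; integrating against $dt/t$ and combining the scalar and tangential estimates yields \eqref{firsttermprop}.

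The main technical difficulty should be the tangential case: one must confirm that $\eta_t\in H^1(w)$ so that $\Phi_t$ genuinely lies in $\dom(D)$, and one must produce the weighted Littlewood--Paley square function bound. Both ultimately rest on the $A_2$ hypothesis, which underlies the $L^2(w)$-boundedness of Mikhlin-type Fourier multipliers and is packaged into Lemma \ref{lem:LPw}; in the scalar case the corresponding facts come for free from spectral calculus for $-\Delta_w$.
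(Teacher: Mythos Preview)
Your proof is correct and takes essentially the same route as the paper: both factor $(I-P_t)v=tD\Phi_t$ with the very same $\Phi_t$ (your $\eta_t=t\,\divv(I-t^2\Delta)^{-1}v_\ta$ equals $-t(-\Delta)^{1/2}(I-t^2\Delta)^{-1}g$ once one writes $v_\ta=\mR g$ as the paper does), and then reduce to the same two square-function bounds handled by spectral theory for $-\Delta_w$ and by Lemma~\ref{lem:LPw}. The one genuine addition is that your compression argument $\Psi_t=(P_RB|_R)^{-1}\Phi_t$ actually \emph{proves} the step the paper states without justification, namely that $tQ_t^BD$ extends to a uniformly bounded operator on $\mH$.
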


\begin{proof}\label{proof:thm}   The key  point is that $ \clos{\ran(D)}$ is preserved by $P_{t}$. This follows from the characterization of  $\clos{\ran(D)}$ in Proposition \ref{prop:D}  as $\begin{bmatrix}
       L^2(w) \\
     \mR(L^2(w))
     \end{bmatrix}$, since  the Riesz transforms and $(I-t^2\Delta)^{-1}$ commute and $\mR(- \Delta)^{1/2}=\nabla$.
     Write $v=\begin{bmatrix}
       f \\
     \mR g
     \end{bmatrix}$, with $f, g\in L^2(w)$.  Then,
     \begin{align*}
     (I-P_{t})v&=\begin{bmatrix}
       -t^2\Delta_{w} (I-t^2\Delta_{w})^{-1}f   \\
        -t^2\Delta (I-t^2\Delta)^{-1} \mR g
     \end{bmatrix}\\
     & = \begin{bmatrix}
       -t^2\divv_{w} \nabla (I-t^2\Delta_{w})^{-1}f   \\
        t^2\nabla (- \Delta)^{1/2} (I-t^2\Delta)^{-1}  g
     \end{bmatrix}\\
     &  =  - tD  \begin{bmatrix}
               t (- \Delta)^{1/2} (I-t^2\Delta)^{-1}  g
               \\
               t \nabla (I-t^2\Delta_{w})^{-1}f
\end{bmatrix} .
\end{align*}
Thus using that $tQ_{t}^BD$ is uniformly bounded on $\mH$, it suffices to prove
$$
\qe{t \nabla (I-t^2\Delta_{w})^{-1}f}\lesssim \|f\|^2
$$
and
$$\qe{t (- \Delta)^{1/2} (I-t^2\Delta)^{-1}  g} \lesssim \|g\|^2.
$$
The first estimate is a simple consequence of the construction of $-\Delta_{w}$ as the self-adjoint operator $\nabla^*\nabla$ on $L^2(w)$.  Indeed, $\|\nabla (I-t^2\Delta_{w})^{-1}f\|= \| (-\Delta_{w})^{1/2} (I-t^2\Delta_{w})^{-1}f\|$ and one concludes using the spectral theorem for the self-adjoint operator $\Delta_{w}$ that $\qe{t \nabla (I-t^2\Delta_{w})^{-1}f }= c \|f\|^2$ with $c= \int_{0}^\infty t(1+t^2)^{-1} \frac{dt}{t}$.
The second estimate is a consequence of Lemma \ref{lem:LPw} as soon as the conditions for the function $\psi$ defined on the Fourier transform side by $\hat\psi(\xi)= |\xi|(1+|\xi|^2)^{-1}$ have been checked.  \end{proof}

 \begin{lem}\label{lem:ppalem2} For all $v \in \clos{\ran(D)}$, one has
 $$
 \qe{Q_t^BP_tv -\gamma_tE_{t}P_tv } \lesssim  \|v\|^2.
 $$
 \end{lem}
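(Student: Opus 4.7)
The plan is to establish the pointwise-in-$t$ bound
$$\|Q_t^B u - \gamma_t E_t u\|^2 \lesssim t^2 \|\nabla u\|^2$$
valid for any $u \in \mH$ with each scalar component in $H^1(w)$, where $\nabla u$ is understood componentwise, and then to apply it with $u = P_t v$. Integrating against $dt/t$, the lemma reduces to the quadratic estimate
$$\int_0^\infty \|t\nabla P_t v\|^2\, \frac{dt}{t} \lesssim \|v\|^2.$$
For the normal slot this is precisely the spectral-calculus identity already exploited in the proof of Lemma~\ref{whatisneeded}. For the tangential slot it follows from Lemma~\ref{lem:LPw} applied to the convolution kernel with Fourier symbol $i\xi(1+|\xi|^2)^{-1}$; the mean-zero, integrability, and $L^2(\R_+, dt/t)$-valued Calder\'on--Zygmund estimates are routine consequences of the smoothness and exponential decay of the Bessel potential $G_2$.

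To prove the displayed pointwise bound, fix $Q \in \dyadic_t$ and set $c_Q := E_Q u$, i.e.\ the constant vector whose normal slot is the $dw$-average of $u_{\no}$ on $Q$ and whose tangential slots are the $dx$-averages of the components of $u_{\ta}$ on $Q$. By Definition~\ref{defn:princpart} and Lemma~\ref{lem:gammat} applied to the constant function $c_Q \cdot \mathbf{1}_{\R^n}$, one has $\gamma_t(x)\, c_Q = Q_t^B(c_Q\cdot \mathbf{1}_{\R^n})(x)$, and therefore for $x \in Q$
$$(Q_t^B u - \gamma_t E_t u)(x) = Q_t^B\bigl(u - c_Q\cdot \mathbf{1}_{\R^n}\bigr)(x).$$
Writing $\mathbf{1}_{\R^n} = \chi_{2Q} + \sum_{j\geq 1} \chi_{C_j(Q)}$ with $C_j(Q) = 2^{j+1}Q \setminus 2^j Q$ separates a local piece from tails at distance $\gtrsim 2^j t$ from $Q$. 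The off-diagonal decay of $Q_t^B$ (Lemma~\ref{lem:odd}) then gains a factor $\lesssim \langle 2^j\rangle^{-N}$ on the $L^2(Q,dw)$-norm of each tail contribution, for any prescribed $N$.

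The core technical step is the scale-aware weighted oscillation estimate
$$\|u - c_Q\|_{L^2(2^{j+1}Q, dw)} \lesssim 2^{jC'}\, t\, \|\nabla u\|_{L^2(2^{j+1}Q, dw)}, \qquad j \geq 0,$$
with $C' = 1 + n\tau/2$ depending only on $n$ and $[w]_{A_2}$ through the $A_\infty$ exponent $\tau$ in \eqref{eq:ainfty}. This follows from two ingredients: the weighted Poincar\'e inequality \eqref{eq:Poincare} on $2^{j+1}Q$ (handling $\|u - E_{2^{j+1}Q}u\|_{L^2(2^{j+1}Q, dw)}$), combined with a direct Cauchy--Schwarz bound of $|E_{2^{j+1}Q}u - c_Q|\, w(2^{j+1}Q)^{1/2}$ in which the change of mean across the nested cubes $Q \subset 2^{j+1}Q$ loses a factor $(w(2^{j+1}Q)/w(Q))^{1/2} \lesssim 2^{jn\tau/2}$ via \eqref{eq:ainfty}; for the tangential slot one additionally invokes \eqref{eq:dxdwAp} to pass from the $dx$-average on $Q$ to a $dw$-$L^2$ norm on $Q$. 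With this in hand, summing over $Q \in \dyadic_t$, using the bounded overlap $\#\{Q \in \dyadic_t : y \in 2^{j+1}Q\} \lesssim 2^{jn}$, and choosing $N$ in Lemma~\ref{lem:odd} larger than $C' + n$ makes the geometric sum in $j$ converge and yields the desired pointwise-in-$t$ estimate.

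The main obstacle is the oscillation bound just described: the mixed character of $E_Q$ is benign because \eqref{eq:Poincare} allows either $dw$- or $dx$-mean, but the scale-by-scale polynomial loss $2^{jC'}$ is intrinsic to $A_2$ weights and can only be absorbed thanks to the arbitrarily large $N$ available in the off-diagonal decay. Once this is handled, the outer integration in $t$ and the quadratic estimates for $t\nabla P_t v$ are essentially standard.
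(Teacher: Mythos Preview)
Your argument is correct and follows essentially the same route as the paper: both localize to dyadic cubes, use the identity $(Q_t^B u - \gamma_t E_t u)|_Q = Q_t^B(u - c_Q)|_Q$, decompose into annuli, apply the off-diagonal decay of Lemma~\ref{lem:odd}, and control the oscillation $\|u - c_Q\|_{L^2(2^{j+1}Q, dw)}$ via the weighted Poincar\'e inequality \eqref{eq:Poincare} together with a doubling/telescoping loss, reducing everything to the quadratic estimate $\int_0^\infty \|t\nabla P_t v\|^2\, \frac{dt}{t} \lesssim \|v\|^2$. The only genuine difference is in how you dispatch this last estimate on the tangential slot: the paper invokes the hypothesis $v\in\clos{\ran(D)}$ to write $v_\ta = \mR g$ and then factors $t\nabla(I-t^2\Delta)^{-1}R_j = R_j\mR\, t(-\Delta)^{1/2}(I-t^2\Delta)^{-1}$, recycling the kernel $|\xi|(1+|\xi|^2)^{-1}$ already checked in the proof of Lemma~\ref{whatisneeded}; you instead apply Lemma~\ref{lem:LPw} directly to the kernel $\partial_k G_2$ with symbol $i\xi_k(1+|\xi|^2)^{-1}$. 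Your route is slightly more direct and in fact shows that the conclusion of this particular lemma holds for all $v\in\mH$, not just $v\in\clos{\ran(D)}$; the paper's route avoids reverifying the Calder\'on--Zygmund conditions for a new kernel. One minor bookkeeping remark: the threshold on $N$ that actually emerges from your Minkowski/overlap argument is $N > C' + n/2$, not $N > C' + n$, but since $N$ is arbitrary this is immaterial.
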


  \begin{proof}
  We remark that for $t>0$ fixed and $x\in \R^n$, we have
$$(Q_t^BP_tv -\gamma_tE_{t}P_t v)(x)= Q_t^B (u- u_Q ) (x),
$$
where $u=P_tv$,  $Q$ is the unique dyadic cube in $\dyadic_t$ containing $x$ and
$u_{Q}$ is the value of $E_{t}u$ on $Q$.
Define $C_0(Q)=2Q$ and $C_j(Q)=2^{j+1}Q\setminus 2^jQ$ if $j\in \N^*$. Then,
\begin{align*}
\|Q_t^BP_t v-\gamma_tE_{t}P_t v  \|^2 &=  \sum_{Q\in \dyadic_t} \int_Q | Q_t^B(u- u_{Q})|^2 \, dw
\\
& \le \sum_{Q\in \dyadic_t} \left(\sum_{j\ge 0} \bigg(\int_Q | Q_t^B ({\bf 1}_{C_j(Q)} (u -u_{Q}))|^2\, dw \bigg)^{1/2}\right)^{2}
\\
&
\lesssim  \sum_{Q\in \dyadic_t} \left(\sum_{j\ge 0} 2^{-jN} \bigg(\int_{C_j(Q)}|u-u_{Q} |^2\, dw \bigg)^{1/2}\right)^{2}
\\
&
\lesssim  \sum_{Q\in \dyadic_t} \sum_{j\ge 0}  2^{-jN} \int_{C_j(Q)}  |u - u_{Q} |^2\, dw
\\
&
\lesssim  \sum_{Q\in \dyadic_t} \sum_{j\ge 0}  2^{-jN} 2^{2j} \ell(Q)^2 2^{jd}\int_{2^{j+1}Q} |\nabla  u |^2
\, dw \\
&
\lesssim   t^2 \sum_{j\ge 0}  2^{-jN} 2^{2j} 2^{jd} 2^{jn} \int_{\R^n} |\nabla u   |^2\, dw.
\\
&
\lesssim   t^2 \|\nabla  u \|^2.
\end{align*}
We  used the Minkowski inequality on the second line, off-diagonal decay  on the third,  Cauchy--Schwarz inequality on the fourth,
Poincar\'e inequality on the fifth  (recalling that one can take  the average  with respect to either  $dx$ or $dw$), and a telescoping argument which produces the doubling exponent $d$ of $w$,
the covering inequality $ \sum_{Q\in \dyadic_t}  {\bf 1}_{2^{j+1}Q} \lesssim 2^{jn}$ and $\ell(Q)\sim t$  on the sixth. Finally, we choose $N> n+d+ 2$ in the last.
  Hence
$$\qe{Q_t^BP_tv-\gamma_t E_{t} P_t v}
  \lesssim \qe{t \nabla  P_t v}.
  $$

 For the scalar part of $v$, we have to control the weighted quadratic estimate for $t\nabla (I-t^2\Delta_{w})^{-1}v_{\no}$ which we have seen already. Using  $v\in \clos{\ran(D)}$,  the tangential part $v_{\ta}$ is of the form $\mR g$ for some $g\in L^2(w)$. Hence we have to control the  quadratic estimate of $t\nabla  (I-t^2\Delta)^{-1}R_{j} g=R_{j} \mR t  (-\Delta)^{1/2}(I-t^2\Delta)^{-1}g$ for $j=1, \ldots, n$. We can eliminate $R_{j}$ and $\mR$ as the Riesz transforms are bounded on $L^2(w)$ and the weighted Littlewood-Paley estimate  for $t  (-\Delta)^{1/2}(I-t^2\Delta)^{-1}$ has been already seen.   \end{proof}

  \begin{lem}\label{lem:mean1} There are constants $C<\infty$ and $\tau_{1}\in(0,1)$ such that for all $f\in \dom(\divv_{w})$ and all  dyadic cubes $Q$,
  $$
\left| \barint_{\hspace{-6pt}Q} \divv_{w}f \, dw \right| \le   \frac{C}{\ell(Q)^{\tau_{1}}}
\left( \barint_{\hspace{-6pt}Q} |\divv_{w}f|^2 \, dw \right)^{\frac{1-\tau_{1}}2}\left( \barint_{\hspace{-6pt}Q} |f|^2 \, dw \right)^{\frac{\tau_{1}}2}.
$$
  \end{lem}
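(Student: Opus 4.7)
The plan is to exploit the identity $\divv_{w}f = \frac{1}{w}\divv(wf)$ together with an integration by parts against a suitable cutoff, and then use the $A_\infty$ property of $w$ to estimate the boundary term.

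First I would rewrite
\[
\int_Q \divv_{w}f \, dw = \int_Q \divv(wf) \, dx,
\]
which is legitimate for $f \in \dom(\divv_w)$ (and can be justified by density of $\frac{1}{w}C_0^\infty(\R^n;\C^n)$ stated in Lemma \ref{lem:gradient}(2)). Fix a parameter $\delta \in (0,1/2]$ to be chosen and pick a Lipschitz cutoff $\eta$ supported in $Q$ with $\eta = 1$ on $(1-\delta)Q$ and $|\nabla \eta| \lesssim 1/(\delta\,\ell(Q))$. Splitting $1_Q = \eta + (1_Q - \eta)$, integration by parts on the $\eta$ part (using the compact support of $\eta$ inside $Q$) gives
\[
\int_Q \eta\,\divv(wf)\,dx = -\int_{Q\setminus(1-\delta)Q}\nabla\eta\cdot wf\,dx,
\]
while the remainder is supported in the shell $Q\setminus (1-\delta)Q$. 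Applying Cauchy--Schwarz on both pieces (with respect to $dw$) yields
\[
\Bigl|\int_Q \divv_{w}f\,dw\Bigr|
\lesssim \Bigl(\frac{w(Q\setminus(1-\delta)Q)}{\delta^2\ell(Q)^2}\Bigr)^{\!1/2}\|f\|_{L^2(Q,w)}
+ w(Q\setminus(1-\delta)Q)^{1/2}\|\divv_{w}f\|_{L^2(Q,w)}.
\]

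The decisive step is to control the shell measure. Since $|Q\setminus(1-\delta)Q| \lesssim \delta|Q|$, the $A_\infty$ estimate \eqref{eq:ainfty} gives $w(Q\setminus(1-\delta)Q) \lesssim \delta^{\sigma} w(Q)$ for the exponent $\sigma \in (0,1)$ from that inequality. Dividing the previous display by $w(Q)$ and setting $a := \bigl(\barint_{Q}|\divv_w f|^2 dw\bigr)^{1/2}$, $b := \bigl(\barint_{Q}|f|^2 dw\bigr)^{1/2}$, I obtain
\[
\Bigl|\barint_{Q} \divv_{w}f\,dw\Bigr| \lesssim \frac{\delta^{\sigma/2-1}}{\ell(Q)}\,b + \delta^{\sigma/2}\,a.
\]

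Finally, I would optimize in $\delta$. The balancing choice is $\delta_* = b/(\ell(Q)a)$. When $\delta_* \le 1/2$ this is admissible and produces the desired bound with $\tau_1 := \sigma/2 \in (0,1)$:
\[
\Bigl|\barint_{Q}\divv_w f\,dw\Bigr| \lesssim \frac{1}{\ell(Q)^{\tau_1}}\,a^{\,1-\tau_1}\,b^{\tau_1}.
\]
When $\delta_* > 1/2$, i.e.\ $a < 2b/\ell(Q)$, I fall back on the trivial Cauchy--Schwarz bound $|\barint_{Q}\divv_w f\,dw| \le a$, which then satisfies $a \lesssim \ell(Q)^{-\tau_1} a^{1-\tau_1} b^{\tau_1}$ precisely because $a\ell(Q)\lesssim b$. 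The main obstacle is really the first two steps: justifying the integration by parts formula for the weighted divergence and identifying the correct $A_\infty$ exponent $\sigma$ that governs the gain on the shell; the rest is a standard interpolation/balancing argument.
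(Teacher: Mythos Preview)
Your proof is correct and follows essentially the same route as the paper: introduce a cutoff equal to $1$ on $(1-\delta)Q$ and supported in $Q$, integrate by parts using the adjoint relation between $\nabla$ and $-\divv_w$, bound the remainder on the shell via Cauchy--Schwarz and the $A_\infty$ estimate $w(Q\setminus(1-\delta)Q)\lesssim\delta^\sigma w(Q)$, and optimize in $\delta$, with the trivial Cauchy--Schwarz bound covering the case $b\gtrsim a\ell(Q)$. The only (harmless) difference is that you also exploit the shell support in the gradient term, which yields $\tau_1=\sigma/2$ rather than the paper's $\tau_1=\tfrac{\sigma/2}{1+\sigma/2}$; either value works for the application.
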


\begin{proof} Observe that if $f$ has support contained in $Q$, then $\int_{Q}  \divv_{w}f \, dw=0$.   Thus this lemma  follow from \cite{Ban}. Here is a simple proof in our situation.   Let $A=\left( \barint_{\hspace{-2pt}Q} |\divv_{w}f|^2 \, dw \right)^{1/2}$ and $B=\left( \barint_{\hspace{-2pt}Q} |f|^2 \, dw \right)^{1/2}$. If $B\ge A\ell$ a simple application of the Cauchy-Schwarz inequality gives the result.

Assume next that $B<A\ell$.
Let  $\varphi\colon \R^n\to [0,1]$ be a smooth function which equals to  1 on $(1-t)Q$, 0 on $Q^c$
with $\|\nabla\varphi\|_\infty \le C(t\ell)^{-1}$ for some $t\in (0,1)$ to be chosen. We can write
$$
\barint_{\hspace{-6pt}Q} \divv_{w}f\, dw
= \barint_{\hspace{-6pt}Q} \varphi\, \divv_{w}f \, dw + \barint_{\hspace{-6pt}Q} (1-\varphi)\divv_{w}f \, dw
=I+II.
$$
Using that $-\divv_{w}$ is the adjoint of $\nabla$ (writing the integral as an integral on $\R^n$ thanks to the support of $\varphi$),
$$| I | = \left|\barint_{\hspace{-6pt}Q} \nabla \varphi f\, dw\right| \le C(t\ell)^{-1} B.$$
For $II$,   using Cauchy-Schwarz inequality  and  the right hand side of  \eqref{eq:ainfty} with $E$ being  the support of $1-\varphi$ in $Q$,
$$
| II |  \le A  \frac{w(Q\setminus (1-t)Q)^{1/2}}{w(Q)^{1/2}} \le C_{w}  A t^{\sigma/2}.
$$
Hence, choosing $t^{1+\sigma/2}=B/A\ell$, we obtain the inequality with $\tau_{1}=\frac{\sigma/2}{1+\sigma/2}$.
\end{proof}

 \begin{lem}\label{lem:mean2} There are constants $C<\infty$ and $\tau_{2}\in(0,1)$ such that for all $g\in \dom(\nabla)$ and all dyadic cubes $Q$,
  $$
\left| \barint_{\hspace{-6pt}Q} \nabla g \, dx \right| \le   \frac{C}{\ell(Q)^{\tau_{2}}}
\left( \barint_{\hspace{-6pt}Q} |\nabla g|^2 \, dw \right)^{\frac{1-\tau_{2}}2}\left( \barint_{\hspace{-6pt}Q} |g|^2 \, dw \right)^{\frac{\tau_{2}}2}.
$$
  \end{lem}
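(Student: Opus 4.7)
The plan is to imitate the proof of Lemma~\ref{lem:mean1} verbatim, interchanging the roles of Lebesgue measure $dx$ and the weighted measure $dw$ at the key steps, and adding one new ingredient: the fact that $w \in A_2$ forces $w^{-1} \in A_2 \subset A_\infty$. Writing $A := (\barint_Q |\nabla g|^2\, dw)^{1/2}$, $B := (\barint_Q |g|^2\, dw)^{1/2}$, and $\ell := \ell(Q)$, the trivial case $B \ge A\ell$ follows directly from $|\barint_Q \nabla g\, dx| \le \barint_Q |\nabla g|\, dx$ combined with the $A_2$ transfer inequality \eqref{eq:dxdwAp} applied to $|\nabla g|$.

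In the main case $B < A\ell$, I would introduce a cutoff $\varphi \colon \R^n \to [0,1]$ equal to $1$ on $(1-t)Q$, vanishing off $Q$, with $\|\nabla\varphi\|_\infty \le C/(t\ell)$, and split $\barint_Q \nabla g\, dx = I + II$ with $\varphi$ and $1-\varphi$ respectively. For $I$, ordinary integration by parts in the $dx$-sense (the natural adjoint of $\nabla$ with respect to Lebesgue measure is $-\divv$, not $-\divv_w$) yields $I = -\barint_Q g\, \nabla\varphi\, dx$, after which \eqref{eq:dxdwAp} applied to $|g|$ produces $|I| \lesssim B/(t\ell)$. For $II$, I apply the Cauchy--Schwarz inequality against $dx$ with weights $w^{1/2}$ and $w^{-1/2}$,
\[
\int_E |\nabla g|\, dx \le \Bigl(\int_E |\nabla g|^2\, dw\Bigr)^{1/2}\Bigl(\int_E w^{-1}\, dx\Bigr)^{1/2}, \qquad E := Q \setminus (1-t)Q.
\]

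The main obstacle, and the one genuinely new ingredient, is the estimate for $\int_E w^{-1}\, dx$. Since $w^{-1} \in A_\infty$ with respect to Lebesgue measure (as a consequence of $w \in A_2$), the right-hand inequality in \eqref{eq:ainfty}, applied to the doubling measure $w^{-1}\, dx$, gives $w^{-1}(E) \lesssim (|E|/|Q|)^{\sigma'} w^{-1}(Q) \lesssim t^{\sigma'}\, w^{-1}(Q)$ for some $\sigma' \in (0,1)$ depending only on $[w]_{A_2}$. Combined with the $A_2$ identity $w(Q)^{1/2}\, w^{-1}(Q)^{1/2} \lesssim |Q|$, this produces $|II| \lesssim A\, t^{\sigma'/2}$. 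Optimizing by choosing $t^{1+\sigma'/2} = B/(A\ell)$, which lies in $(0,1)$ precisely because $B < A\ell$, yields the claimed inequality with $\tau_2 = \sigma'/(\sigma' + 2)$.
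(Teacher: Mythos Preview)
Your proof is correct, and the overall architecture (the $A$--$B$ split, the cutoff $\varphi$, the integration by parts for $I$) matches the paper exactly. The genuine difference lies in how you bound $II$.

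The paper handles $II$ by invoking the open-endedness of $A_2$: it picks $p>1$ with $w\in A_{2/p}$, applies H\"older with exponents $p$ and $p'$ to get $|II|\lesssim (\barint_Q |\nabla g|^p\,dx)^{1/p}\,t^{1/p'}$, and then uses \eqref{eq:dxdwAp} (with $A_{2/p}$) to pass to the weighted $L^2$ norm, yielding $|II|\lesssim A\,t^{1/p'}$ and $\tau_2=\frac{1/p'}{1+1/p'}$. Your approach instead splits $dx=w^{1/2}\cdot w^{-1/2}\,dx$ via Cauchy--Schwarz, then exploits that $w^{-1}\in A_2\subset A_\infty$ to control $w^{-1}(E)$ by $t^{\sigma'}w^{-1}(Q)$, and closes with the $A_2$ inequality $w(Q)^{1/2}w^{-1}(Q)^{1/2}\le [w]_{A_2}^{1/2}|Q|$. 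Your route is arguably more symmetric with the proof of Lemma~\ref{lem:mean1}, since both then rest directly on the $A_\infty$ estimate \eqref{eq:ainfty} (for $w$ there, for $w^{-1}$ here); the paper's route is a touch more direct in that it bypasses mentioning $w^{-1}$ altogether. Both ultimately rely on the same self-improvement phenomenon for Muckenhoupt weights, just packaged differently.
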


\begin{proof} The proof follows a similar pattern to that of Lemma \ref{lem:mean1}.     Let $A=\left( \barint_{\hspace{-2pt}Q} |\nabla g|^2 \, dw \right)^{1/2}$ and $B=\left( \barint_{\hspace{-2pt}Q} |g|^2 \, dw \right)^{1/2}$. If $B\ge A\ell$ a simple application of \eqref{eq:dxdwAp} with $p=2$ gives the result.

Assume next that $B<A\ell$.
Let  $\varphi\colon \R^n\to [0,1]$ be a smooth function which equals to  1 on $(1-t)Q$, 0 on $Q^c$
with $\|\nabla\varphi\|_\infty \le C(t\ell)^{-1}$ for some $t\in (0,1)$ to be chosen. We can write
$$
\barint_{\hspace{-6pt}Q}\nabla g \, dx
= \barint_{\hspace{-6pt}Q} \varphi \nabla g\, dx + \barint_{\hspace{-6pt}Q} (1-\varphi)\nabla g \, dx
=I+II.
$$
Interpreting $I$ as a distribution-test function pairing, we may integrate by parts coordinatewise,  and using that $w\in A_{2}$ we find
$$| I | = \left|\barint_{\hspace{-6pt}Q}  g \nabla \varphi \, dx\right| \le C(t\ell)^{-1} \barint_{\hspace{-6pt}Q}  |g|\, dx \le C[w]_{A_{2}}(t\ell)^{-1} B.$$
For $II$, pick $p>1$ such that  $w\in A_{2/p}$ and use H\"older's inequality with conjugate exponents $p$ and $p'$, the support  properties of $1-\varphi$, and the fact that  $w\in A_{2/p}$, to conclude that
$$
| II |  \lesssim \left( \barint_{\hspace{-6pt}Q} |\nabla g|^p \, dx \right)^{1/p} t^{1/p'} \le [w]_{A_{2/p}}^{1/p}  A t^{1/p'}.
$$
Hence, choosing $t^{1+1/p'}=B/A\ell$, we obtain the inequality with $\tau_{2}=\frac{1/p'}{1+1/p'}$.
\end{proof}

\begin{lem}\label{lem:ppalem3} For all $u\in \mH=L^2(w;\C^{n+1})$,
$$\qe{\gamma_tE_{t}(P_t-I)u} \lesssim \|u\|^2.$$
\end{lem}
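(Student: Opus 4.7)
The plan is to use the uniform operator bound $\|\gamma_t E_t v\|_\mH \lesssim \|E_t v\|_\mH\le\|v\|_\mH$ from Lemma \ref{lem:gammat} to reduce the claim to the weighted quadratic estimate
$$
\int_0^\infty \|E_t(P_t-I)u\|_\mH^2\,\frac{dt}t \;\lesssim\; \|u\|_\mH^2,
$$
and then to treat the normal and tangential components of $u$ separately, since $P_t$ is block-diagonal in the splitting $\mH=L^2(w)\oplus L^2(w;\C^n)$.

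For the scalar component, I would rewrite $(P_t-I)u_\no = t\,\divv_w f_t$ with $f_t := t\nabla (I-t^2\Delta_w)^{-1}u_\no$, for which the spectral Littlewood--Paley bound $\int_0^\infty \|f_t\|^2\frac{dt}t \lesssim \|u_\no\|^2$ was already obtained in Lemma \ref{whatisneeded}. On each $Q\in\dyadic_t$, Lemma \ref{lem:mean1} applied to $f_t$, together with the identity $t|\divv_w f_t|=|(P_t-I)u_\no|$ and $\ell(Q)\sim t$, yields the cubewise bound
$$
|E_Q^w(P_t-I)u_\no|^2 \;\lesssim\; \Bigl(\barint_{\hspace{-6pt}Q} |(P_t-I)u_\no|^2\,dw\Bigr)^{1-\tau_1}\Bigl(\barint_{\hspace{-6pt}Q} |f_t|^2\,dw\Bigr)^{\tau_1}.
$$
The tangential component is handled in parallel via $(P_t-I)u_\ta = t\,\divv h_t$ with $h_t := t\nabla(I-t^2\Delta)^{-1}u_\ta$: Lemma \ref{lem:mean2} produces the analogous bound with a $dx$-averaging in the interior factor, and $\int_0^\infty \|h_t\|^2\frac{dt}t\lesssim \|u_\ta\|^2$ follows from Lemma \ref{lem:LPw} and the $L^2(w)$-boundedness of the Riesz transforms, exactly as in the tangential half of the proof of Lemma \ref{whatisneeded}.

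Multiplying the cubewise bound by $w(Q)$, summing over $Q\in\dyadic_t$, and integrating in $t$, the proof reduces to bounding a space-time integral of $F_1(Q_t(x),t)^{1-\tau_j} F_2(Q_t(x),t)^{\tau_j}$, where $F_j$ are the local $L^2(w)$-averages appearing above. The main obstacle is the integrability in $t$: a naive H\"older in $t$ would separate this into $(\int\|(P_t-I)u\|^2\tfrac{dt}t)^{1-\tau_j}(\int\|f_t\|^2\tfrac{dt}t)^{\tau_j}$, but the first factor diverges at $t=\infty$ since $(P_t-I)u\to -u$. The argument must therefore preserve the local $(x,t)$-correlation between the two averages. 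The motivating identity is the spectral one: for a single frequency $\lambda$ of $-\Delta_w$, a substitution $s=t^2\lambda$ gives
$$
\int_0^\infty \Bigl(\tfrac{(t^2\lambda)^2}{(1+t^2\lambda)^2}\Bigr)^{1-\tau_1}\Bigl(\tfrac{t^2\lambda}{(1+t^2\lambda)^2}\Bigr)^{\tau_1}\frac{dt}t \;=\; \tfrac12\!\int_0^\infty \frac{s^{1-\tau_1}}{(1+s)^2}\,ds,
$$
which converges uniformly in $\lambda$ precisely because $\tau_1\in(0,1)$. I would transfer this finiteness to the cubewise setting by exchanging the $dw(x)$ and $\frac{dt}t$ integrations and applying H\"older in $t$ at each $x$ along the chain of dyadic cubes $Q_t(x)$, combining the resulting integrals of $F_1$ and $F_2$ with the Littlewood--Paley bounds for $f_t$ and $h_t$ and the uniform bound $\|(P_t-I)u\|\le\|u\|$. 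The strict fractional exponents $\tau_1,\tau_2\in(0,1)$ from Lemmas \ref{lem:mean1}--\ref{lem:mean2} are exactly what provides the gain needed to make both endpoints of the $t$-integration convergent.
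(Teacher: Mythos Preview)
Your reduction via Lemma~\ref{lem:gammat} and the componentwise splitting are fine, and the cubewise bound you extract from Lemma~\ref{lem:mean1} for the scalar part is correct. The gap is in the last paragraph. After summing over cubes you have
\[
\|A_t^w(P_t-I)u_\no\|^2 \lesssim \|(P_t-I)u_\no\|^{2(1-\tau_1)}\|f_t\|^{2\tau_1},
\]
and you must integrate this against $\tfrac{dt}t$. Your own diagnosis is right: a global H\"older in $t$ fails because $\int\|(P_t-I)u_\no\|^2\tfrac{dt}t$ diverges at infinity. But the proposed cure---``H\"older in $t$ at each fixed $x$''---fails for exactly the same reason: the first factor $\int_0^\infty \barint_{Q_t(x)}|(P_t-I)u_\no|^2\,dw\,\tfrac{dt}t$ still diverges at $t=\infty$, since $(P_t-I)u_\no\to -u_\no$ and the Whitney average does not vanish at a generic $x$. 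The spectral identity you display is valid, but it lives \emph{inside} the spectral integral: one would need
$\|(P_t-I)u_\no\|^{2(1-\tau_1)}\|f_t\|^{2\tau_1}\le \int (\cdots)^{1-\tau_1}(\cdots)^{\tau_1}\,d\mu_{u_\no}(\lambda)$,
and H\"older in $\lambda$ gives the reverse inequality. The cubewise bound has already destroyed the spectral correlation that makes the single-frequency calculation converge.

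The paper's argument avoids this by \emph{not} applying Lemma~\ref{lem:mean1} to $(P_t-I)u_\no$ directly. Instead it writes $u_\no=8\int_0^\infty (Q_s^w)^2 u_\no\,\tfrac{ds}s$ via the spectral Calder\'on formula and proves a Schur-type bound $\|A_t^w((I-t^2\Delta_w)^{-1}-I)Q_s^w\|\lesssim \min((s/t)^{\tau_1},(t/s)^2)$. Lemma~\ref{lem:mean1} enters only in the regime $s<t$, applied to $A_t^w Q_s^w$; the extra parameter $s$ is precisely what supplies the missing decay. There is also a smaller issue in your tangential half: Lemma~\ref{lem:mean2} controls $dx$-averages of \emph{gradients}, whereas $(P_t-I)u_\ta^j=t\,\divv h_t^j$ is a divergence, so the lemma is not the right tool. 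The paper handles that component differently, by working in $L^2(dx)$ first (where the whole operator $A_t((I-t^2\Delta)^{-1}-I)Q_s$ has a known Schur kernel) and then invoking Lemma~\ref{lem:DRF} to transfer the bound to $L^2(w)$.
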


 \begin{proof} {It follows from Lemma \ref{lem:gammat} that
 $\|\gamma_tE_{t}(P_t-I)u\| \lesssim \|E_{t}(P_t-I)u\|$. Given the definitions of $E_{t}$ and $P_{t}$, the lemma reduces to  the scalar inequalities
 $$\qe{A_t^w((I-t^2\Delta_{w})^{-1}-I)f} \lesssim \|f\|^2$$
 and
  $$\qe{A_t((I-t^2\Delta)^{-1}-I)f} \lesssim \|f\|^2.$$

 For the first one, we follow \cite{AKMc} with a minor simplification. Let $Q_{s}^w= s^2\Delta_{w}e^{s^2\Delta_{w}}$. Then for $f\in L^2(w)$ we have by the spectral theorem,  $f= 8  \int_{0}^\infty (Q_{s}^w)^2f\, \frac {ds}s$
 and also $\|f\|^2= 8 \int_{0}^\infty \|Q_{s}^w f\|^2 \, \frac {ds}s$.
 By Schur's lemma,  it is enough to show that  the operator norm of $A_t^w((I-t^2\Delta_{w})^{-1}-I)Q_{s}^w$ in $L^2(w)$ is bounded by $h(s/t)$ with $h\ge 0$ and $\int_{0}^\infty h(u)\, \frac{du}u<\infty$. We shall find $h(u)= C \inf (u^{\tau_{1}}, u^{-2})$.

 If $t<s$,
 \begin{align*}
\label{}
   \|A_{t}^w((I-t^2\Delta_{w})^{-1}-I)Q_{s}^wf\| &\lesssim \|((I-t^2\Delta_{w})^{-1}-I)s^2\Delta_{w}e^{s^2\Delta_{w}}f\| \\
   &\lesssim C(t/s)^2\|f\|.
    \end{align*}

 If $s<t$,
 \begin{align*}
\label{}
   \|A_{t}^w(I-t^2\Delta_{w})^{-1}Q_{s}^wf\|&\lesssim \|(I-t^2\Delta_{w})^{-1}s^2\Delta_{w}e^{s^2\Delta_{w}}f\|
   \\
   &
   \lesssim C(s/t)^2\|f\|. \end{align*}

 Thus, it remains to study the operator norm of $A_{t}^wQ_{s}^w$ for $s<t$. For this, we remark
 that $A_{t}^wQ_{s}^wf(x) $ is an average with respect to $dw$ of $ \divv_{w}(sg)$ with $g=-s \nabla e^{s^2\Delta_{w}}f$ so that we can use Lemma \ref{lem:mean1}.
 Thus,
 \begin{align*}
 \|A_{t}^wQ_{s}^wf\|^2&= \sum_{Q\in \Delta_{t}}  w(Q) \left| \barint_{\hspace{-6pt}Q} \divv_{w}(sg)\, dw \right|^2
 \\
 &
 \lesssim  \frac {1}{t^{2\tau_{1}}} \sum_{Q\in \Delta_{t}}  w(Q)  \left( \barint_{\hspace{-6pt}Q} |\divv_{w}(sg)|^2 \, dw \right)^{{1-\tau_{1}}}\left( \barint_{\hspace{-6pt}Q} |sg|^2 \, dw \right)^{{\tau_{1}}}
 \\
& = \frac {s^{2\tau_{1}}}{t^{2\tau_{1}}} \sum_{Q\in \Delta_{t}}    \left( \int_{Q} |Q_{s}^wf|^2 \, dw \right)^{{1-\tau_{1}}}\left( \int_{Q} |s \nabla e^{s^2\Delta_{w}}f|^2 \, dw \right)^{{\tau_{1}}}
\\
 &\le  \frac {s^{2\tau_{1}}}{t^{2\tau_{1}}} \|Q_{s}^wf\|^{2(1-\tau_{1})}  \|s \nabla e^{s^2\Delta_{w}}f\|^{2\tau_{1}}
 \\
 &
 \lesssim  \frac {s^{2\tau_{1}}}{t^{2\tau_{1}}} \|f\|^2
 \end{align*}
 where we used H\"older's inequality for the sum.

 The proof of the second inequality is as follows. Setting $Q_{s}= s^2\Delta e^{s^2\Delta }$, as before  we have that  the operator norm  of
 $A_t((I-t^2\Delta)^{-1}-I)Q_{s}$ on $L^2(dx)$ is bounded by $h(s/t)$ with $h$ similar to  above. As this operator is also bounded on $L^2(w)$ uniformly in $s$ and $t$ for all $w\in A_{2}$ (for the convolution operators,  it follows from Lemma \ref{lem:approx}  and for  $A_{t}$ this has been seen before), its operator norm on $L^2(w)$ is bounded by $h(s/t)^\theta$ for some power $\theta>0$ depending only on  $w$  using Lemma \ref{lem:DRF}. Thus, we can use  the fact that  the integral $f=  8  \int_{0}^\infty (Q_{s})^2f\, \frac {ds}s$ converges in $L^2(w)$ from Corollary \ref{cor:Calderon}, Schur's Lemma and that $ \int_{0}^\infty \|Q_{s} f\|^2 \, \frac {ds}s \lesssim \|f\|^2$ from Lemma \ref{lem:LPw}.
 }
 \end{proof}

\begin{proof}[Proof of  Proposition \ref{lem:ppa}]
It is enough to write
$$
Q_t^B v-\gamma_t E_{t} v= Q_t^B(I-P_{t}) v +  (Q_t^BP_{t}v -\gamma_t E_{t}P_{t} v) +  \gamma_t E_{t}(P_{t}-I) v
$$
and to use successively Lemma \ref{whatisneeded}, Lemma \ref{lem:ppalem2} and Lemma \ref{lem:ppalem3}.
\end{proof}

\subsection{Preamble to the Carleson measure estimate}

We are now ready to prove that $|\gamma_t(x)|^2$ is a weighted dyadic Carleson function and so, via Proposition \ref{lem:ppa}, complete the proof of Theorem \ref{th:main}. The first step towards this is a compactness argument. As was seen in the solution of the Kato square root problem (\cite{AHLMcT}), the application of a stopping-time argument was made possible by restricting $\gamma_t(x)$ so that, once normalised, it is close to a fixed element in the unit sphere of $\mathcal{L}(\C^{1+n})$, the set of bounded linear transformations on $\C^{1+n}$. We will make use of the same stopping-time argument, but also require a second stopping-time related to the oscillation of the weight, and with it comes a second compactness argument which restricts our attention to Whitney boxes on which the average of the weight is close to that of the top cube.

 A convenient way to define the  Whitney box $W_{Q}$ associated to a given  dyadic cube $Q$ is
$W_{Q}=\{ (x,t)\, ; \, x\in Q, Q\in \Delta_{t}\} $. With our definition $\widehat Q$ is the union all $W_{Q'}$ for which $Q'$ is a dyadic subcube of $Q$.

Consider the compact unit sphere in $\mathcal{L}(\C^{1+n})$ and the compact interval $[0,c_0]$, where $c_0$ is as in \eqref{reversejensen}. For each $\nu \in \mathcal{L}(\C^{1+n})$ such that $|\nu| = 1$, $\tau \in [0,c_0]$ and $\sigma_{1},\sigma_2 > 0$, define   $G_{\tau,\sigma_2}$ as the union of those Whitney boxes $W_{Q}$ for which $|\ln(w_Q) - (\ln w)_Q - \tau| < \sigma_2$,
 and
\begin{equation}\label{gammatilde}
\widetilde{\gamma}_t(x)=
\begin{cases} \gamma_t(x) & \text{if $\gamma_t(x)\ne 0$,  $\left|\frac{\gamma_t(x)}{|\gamma_t(x)|} - \nu \right| \leq \sigma_1$ and $(x,t) \in G_{\tau,\sigma_2}$,}
\\
0 &\text{otherwise.}
\end{cases}
\end{equation}
 We recall the notation $B^w(Q)$ from Section \ref{coronasec} and set
$$\Omega^w(Q) = \carl{Q}   \setminus \cup_{R \in B^w(Q)} \carl{R}.
$$

\begin{lem} \label{lemma5}
Suppose that we can show
\[
 K=  \sup_{\nu, \tau}\sup_{Q \in \dyadic} \frac{1}{w(Q)} \iint_{\Omega^w(Q)} |\widetilde{\gamma}_t(x)|^2\,  \frac{dw(x)dt}{t} < \infty
\]
for some choice of parameters $\sigma_1$ and $\sigma_2$ depending only on $\|B\|_{\infty}$, $\kappa$, $[w]_{A_2}$ and $n$. Then $|\gamma_t(x)|^2$ is a weighted dyadic Carleson function.
\end{lem}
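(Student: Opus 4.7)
The plan is to combine a finite compactness cover with the corona decomposition of Proposition \ref{prop4}. The hypothesis provides a bound uniform in the parameters $(\nu, \tau)$ on the ``good'' piece $\Omega^w(Q)$ of each Carleson box, and the lemma essentially asks to assemble these pieces into a bound on all of $\carl{Q}$.

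First, I would exploit compactness twice. Cover the unit sphere of $\mL(\C^{1+n})$ by finitely many closed balls $\overline{B}(\nu_i, \sigma_1)$, $i=1,\ldots,N_1$, and cover $[0, c_0]$ by finitely many open intervals $(\tau_j - \sigma_2, \tau_j + \sigma_2)$, $j=1,\ldots,N_2$; the range $[0, c_0]$ is admissible thanks to the reverse Jensen inequality \eqref{reversejensen}. For each pair $(i,j)$, write $\widetilde{\gamma}_t^{i,j}$ for the function defined by \eqref{gammatilde} with $\nu = \nu_i$ and $\tau = \tau_j$. Then every $(x,t)$ lies in some $G_{\tau_j, \sigma_2}$, and whenever $\gamma_t(x)\ne 0$ the unit vector $\gamma_t(x)/|\gamma_t(x)|$ lies in at least one of the balls; hence pointwise
\[
|\gamma_t(x)|^2 \le \sum_{i=1}^{N_1} \sum_{j=1}^{N_2} |\widetilde{\gamma}_t^{i,j}(x)|^2.
\]

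Next, I would decompose $\carl{Q}$ along the corona tree. By maximality of the stopping cubes, each Carleson box splits as the disjoint union $\carl{R} = \Omega^w(R) \sqcup \bigsqcup_{R' \in B^w(R)} \carl{R'}$; iterating from $R = Q$ yields $\carl{Q} = \bigsqcup_{R \in \{Q\} \cup B^w_*(Q)} \Omega^w(R)$. Integrating $|\gamma_t|^2 \frac{dw\,dt}{t}$ over $\carl{Q}$ via this decomposition, applying the pointwise majorization above, and invoking the hypothesis on each $\Omega^w(R)$ for each of the finitely many $(\nu_i, \tau_j)$ yields an upper bound $N_1 N_2 K \sum_R w(R)$. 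The packing estimate \eqref{eqsq} controls this last sum by $(1 + C\sigma_w^{-2}) w(Q)$, giving $\|\gamma_t\|_C^2 \lesssim N_1 N_2 K$, which is precisely the required Carleson estimate.

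I do not foresee a genuine obstacle: the argument is bookkeeping on top of Proposition \ref{prop4}, with no new quantitative estimate required. The only mild points of care are matching the cover-based majorization with the strict definition \eqref{gammatilde}, which is exactly why the compact range $[0, c_0]$ enters via \eqref{reversejensen}, and observing that the multiplicities $N_1, N_2$ depend only on $\sigma_1, \sigma_2, n, c_0$, quantities that themselves depend only on $\|B\|_\infty$, $\kappa$, $[w]_{A_2}$ and $n$.
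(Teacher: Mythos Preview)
Your proposal is correct and follows essentially the same approach as the paper: both combine the compactness cover of the unit sphere of $\mL(\C^{1+n})$ and of $[0,c_0]$ with the corona partition $\carl{Q}=\bigsqcup_{R\in\{Q\}\cup B^w_*(Q)}\Omega^w(R)$, and then apply the packing estimate \eqref{eqsq}. The only cosmetic difference is the order: the paper first shows $\iint_{\carl{Q}}|\widetilde\gamma_t|^2\,\frac{dw\,dt}{t}\lesssim Kw(Q)$ for a fixed $(\nu,\tau)$ and then sums over a finite set of parameters, whereas you first majorize $|\gamma_t|^2$ pointwise by the finite sum and then integrate, but this is immaterial.
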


\begin{proof}  Fix $\sigma_{1}$ and $\sigma_{2}$ so that the hypothesis applies.  Let $Q\in \Delta$.  Observe  that the sets $\Omega^w(R)$ form a partition of $\carl{Q}$ when $R$ runs over elements of $ B^w_*(Q) \cup \{Q\}$.  Thus,
by the hypothesis and Proposition \ref{prop4},
\begin{align*}
\iint_{\carl{Q}} |\widetilde{\gamma}_t(x)|^2\,  \frac{dw(x)dt}{t} & = \sum_{R \in B^w_*(Q) \cup \{Q\}} \iint_{\Omega^w(R)} |\widetilde{\gamma}_t(x)|^2\,  \frac{dw(x)dt}{t} \\
& \leq  K   \sum_{R \in B^w_*(Q) \cup \{Q\}}  w(R) \leq  \frac{KC}{\sigma^2_{w}}  w(Q)  + K w(Q).
\end{align*}
 By the compactness of the unit sphere in $\mathcal{L}(\C^{1+n})$ and the interval $[0,c_0]$, there exist a finite index set $A \subseteq \N$ and, for each $j \in A$, choices of $\nu_j$ and $\tau_j$ such that $|\gamma_t(x)|^2 \leq \sum_{j \in A} |\widetilde{\gamma}^j_t(x)|^2$, where $\widetilde{\gamma}^j_t(x) = \widetilde{\gamma}_t(x)$ with the choice $\nu = \nu_j$ and $\tau = \tau_j$.
This completes the proof.
\end{proof}

\subsection{Stopping-time arguments for test functions}

We fix an arbitrary vector $\xi$ in the unit sphere of $\C^{1+n}$.  For any  $Q_1 \in \dyadic$ and   $\sigma_{3}>0$ to be chosen, define a test function
\begin{equation}
\label{testf}
f^\xi_{Q_1} := \Big(I + \big(\sigma_3\ell(Q_{1})DB\big)^2\Big)^{-1}(\etaq\xi)  = P^B_{\sigma_3\ell(Q_{1})}(\etaq\xi),
\end{equation}
where   $\etaq$ is the indicator of $Q_{1}$.
    Note that $\|f^\xi_{Q_1}\|^2 \lesssim w(Q_{1})$ and $\|\sigma_{3}\ell(Q_{1})DBf^\xi_{Q_1}\|^2\lesssim w(Q_{1})$ with uniform implicit constants with respect to  $|\xi|=1$, $\sigma_{3}>0$ and $Q_{1}$,   as can be seen using the uniform  boundedness in $t>0$ of $Q^B_{t}$ and $P^B_{t}$.

\begin{lem} \label{lemma6}
There exist a constant $c$ depending only on $\|B\|_{\infty}$, $\kappa$, $[w]_{A_2}$ and $n$, and  a constant $\delta > 0$ depending only on $[w]_{A_2}$, such that for all such $\xi$, $Q_{1}$ and $\sigma_{3}$,
\[
|E_{Q_1}(f^\xi_{Q_1}) - \xi| \leq c\sigma_3^{\delta}.
\]
\end{lem}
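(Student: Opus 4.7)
The plan is to write $f^\xi_{Q_1} - \etaq\xi = (P^B_{t_1} - I)(\etaq\xi)$ with $t_1 = \sigma_3\ell(Q_1)$, observe that $E_{Q_1}(\etaq\xi) = \xi$ since $\etaq\xi \equiv \xi$ on $Q_1$, and then bound the averages of $(P^B_{t_1}-I)(\etaq\xi)$ componentwise by means of Lemmas \ref{lem:mean1} and \ref{lem:mean2}. The key algebraic identity is the resolvent identity $I - P^B_t = t^2(DB)^2 P^B_t$, which factors as
\[
(P^B_{t_1} - I)(\etaq\xi) = -t_1 D h, \qquad h := B Q^B_{t_1}(\etaq\xi).
\]
Since the normal part of $Dh$ is $\divv_w h_\ta$ and the tangential part is $-\nabla h_\no$, the two components of $E_{Q_1}\bigl(f^\xi_{Q_1} - \etaq\xi\bigr)$ are precisely $-t_1\,\barint_{Q_1}\divv_w h_\ta\,dw$ and $t_1\,\barint_{Q_1}\nabla h_\no\,dx$, which are of the form controlled by the two mean-value lemmas.

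Next I would supply the two norm estimates that feed into those lemmas. Uniform boundedness of $Q^B_t$ on $\mH$ together with boundedness of $B$ gives
\[
\int_{Q_1} |h|^2\,dw \;\le\; \|h\|^2 \;\lesssim\; \|\etaq\xi\|^2 \;=\; w(Q_1),
\]
so $\barint_{Q_1}|h|^2\,dw \lesssim 1$. On the other hand, the same factorisation plus uniform boundedness of $P^B_t$ yields
\[
\|Dh\|^2 \;=\; t_1^{-2}\|(I-P^B_{t_1})(\etaq\xi)\|^2 \;\lesssim\; t_1^{-2} w(Q_1),
\]
so $\barint_{Q_1}|Dh|^2\,dw \lesssim t_1^{-2}$. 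Since $|\divv_w h_\ta|, |\nabla h_\no| \le |Dh|$, inserting these bounds into Lemmas \ref{lem:mean1}, \ref{lem:mean2} gives, respectively,
\[
\Bigl|t_1\,\barint_{Q_1}\divv_w h_\ta\,dw\Bigr|
\;\lesssim\; \frac{t_1}{\ell(Q_1)^{\tau_1}}\bigl(t_1^{-2}\bigr)^{(1-\tau_1)/2}
\;=\; \Bigl(\tfrac{t_1}{\ell(Q_1)}\Bigr)^{\tau_1}
\;=\; \sigma_3^{\tau_1},
\]
and similarly the tangential component is bounded by $\sigma_3^{\tau_2}$. Setting $\delta := \min(\tau_1,\tau_2) \in (0,1)$ (which depends only on $[w]_{A_2}$ through Lemmas \ref{lem:mean1}--\ref{lem:mean2}) concludes the bound $|E_{Q_1}(f^\xi_{Q_1}) - \xi| \le c\,\sigma_3^\delta$ with $c$ depending on $\|B\|_\infty$, $\kappa$, $[w]_{A_2}$ and $n$.

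I do not expect a real obstacle here: the only thing to be careful about is that the two components of $E_{Q_1}$ use different measures (weighted for the normal part, unweighted for the tangential part), which is precisely why two mean-value lemmas are needed and why the result gives two different exponents $\tau_1,\tau_2$. Taking the minimum as $\delta$ then produces the single $\sigma_3^\delta$ bound of the statement.
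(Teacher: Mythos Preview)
Your proof is correct and is essentially the same as the paper's. The paper sets $u:=-(\sigma_3\ell(Q_1))^2 BDBP^B_{\sigma_3\ell(Q_1)}(\etaq\xi)$, which is exactly your $-t_1 h$, writes $E_{Q_1}(f^\xi_{Q_1})-\xi=E_{Q_1}Du$, and then applies Lemmas~\ref{lem:mean1} and~\ref{lem:mean2} componentwise with the same norm bounds and the same choice $\delta=\min(\tau_1,\tau_2)$; your bookkeeping (keeping $t_1$ outside of $h$) is if anything slightly cleaner.
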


\begin{proof}
We have  $E_{Q_1}(f^\xi_{Q_1}) - \xi = E_{Q_1}D u$ with
\[
u: = - (\sigma_3 \ell(Q_1))^2BDB \Big(I + \big(\sigma_3\ell(Q_{1})DB\big)^2\Big)^{-1}(\etaq\xi)
\]
 and notice that $E_{Q_1}D$ acts on  $u =  \begin{bmatrix}
u_1 \\
u_2
\end{bmatrix}$ componentwise by averaging  $\divv_w u_2$ with respect to $dw$ and $\nabla u_1$ with respect to $dx$.
Lemma \ref{lem:mean1} says
\begin{align*}
& \left| \barint_{\hspace{-6pt}Q_1} \divv_{w}u_2 \, dw \right| \le   \frac{C}{\ell(Q_1)^{\tau_{1}}}
\left( \barint_{\hspace{-6pt}Q_1} |\divv_{w}u_2|^2 \, dw \right)^{\frac{1-\tau_{1}}2}\left( \barint_{\hspace{-6pt}Q_1} |u_2|^2 \, dw \right)^{\frac{\tau_{1}}2} \\
& \leq   C\sigma_3^{\tau_1}
\left( \barint_{\hspace{-6pt}Q_1} | \xi - f^\xi_{Q_1}  |^2 \, dw \right)^{\frac{1-\tau_{1}}2}\left( \barint_{\hspace{-6pt}Q_1} | \sigma_{3}\ell(Q_{1})DBf^\xi_{Q_1}|^2 \, dw \right)^{\frac{\tau_{1}}2} \\
& \leq   C\sigma_3^{\tau_1}
\end{align*}
and Lemma \ref{lem:mean2} says
\begin{align*}
& \left| \barint_{\hspace{-6pt}Q} \nabla u_1 \, dx \right| \le   \frac{C}{\ell(Q)^{\tau_{2}}}
\left( \barint_{\hspace{-6pt}Q} |\nabla u_1|^2 \, dw \right)^{\frac{1-\tau_{2}}2}\left( \barint_{\hspace{-6pt}Q} |u_1|^2 \, dw \right)^{\frac{\tau_{2}}2} \\
& \leq   C\sigma_3^{\tau_2}
\left( \barint_{\hspace{-6pt}Q_1} |\xi - f^\xi_{Q_1} |^2 \, dw \right)^{\frac{1-\tau_{2}}2}\left( \barint_{\hspace{-6pt}Q_1} | \sigma_{3}\ell(Q_{1})DBf^\xi_{Q_1}  |^2 \, dw \right)^{\frac{\tau_{2}}2} \\
& \leq   C\sigma_3^{\tau_2}.
\end{align*}
So taking $\delta = \min(\tau_1,\tau_2)$ completes the proof.
\end{proof}

Recall that  $w_{Q}= \barint_{\hspace{-2pt}Q} w\, dx$ and similarly for $(\ln w)_{Q}$.

\begin{lem} \label{lemma7}
Fix $\tau\in [0,c_{0}]$ and $\xi$ in the unit sphere of $\C^{1+n}$. Let
\[
S_{Q_1}^\tau=\begin{bmatrix}
        w_{Q_1} e^{-\tau - (\ln w)_{Q_1}} &  0  \\
        0  & I
     \end{bmatrix},
\]
and define the collection of `bad' cubes $B^{\tau,\xi}(Q_1)$ to be the set of maximal $Q' \in \dyadic$ such that $Q' \subseteq Q_1$ and
\[
\mbox{either $|E_{Q'}(f^\xi_{Q_1})| > \frac{1}{\sigma_4}$ or $ \re  \left(S_{Q_1}^\tau(\xi),\begin{bmatrix}
        w_{Q'}/w_{Q_1} &  0  \\
        0  & I
     \end{bmatrix}E_{Q'}(f^\xi_{Q_1})\right) < \sigma_5$.}
\]
We can then choose positive $\sigma_3$, $\sigma_4$ and $\sigma_5$  depending only on $\|B\|_{\infty}$, $\kappa$, $[w]_{A_2}$ and $n$, in particular independently on $\tau,\xi, Q_{1}$, so that
\begin{equation} \label{geometric}
\sum_{R \in B^{\tau,\xi}(Q_1)} w(R) \leq (1-\sigma_6)w(Q_1),
\end{equation}
with $0< \sigma_{6}\le 1$.
\end{lem}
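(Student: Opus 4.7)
The plan is to convert the conditions defining $B^{\tau,\xi}(Q_1)$ into an integral statement over $Q_1$ with respect to Lebesgue measure, and then to control the $w$-measure of the bad union through one Cauchy--Schwarz computation. The key algebraic observation is that for every subcube $Q'\subseteq Q_1$,
\[
\re\left(S_{Q_1}^\tau\xi,\begin{bmatrix} w_{Q'}/w_{Q_1} & 0 \\ 0 & I \end{bmatrix} E_{Q'}(f^\xi_{Q_1})\right) = \barint_{Q'} \Phi\, dx,
\]
where $\Phi := w\,\re(\eta_\no\,\overline{(f^\xi_{Q_1})_\no}) + \re(\eta_\ta\cdot\overline{(f^\xi_{Q_1})_\ta})$ and $\eta := (e^{-\tau-(\ln w)_{Q_1}}\xi_\no,\xi_\ta)$. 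This follows from the identity $w_{Q'}\barint_{Q'}(f^\xi_{Q_1})_\no\, dw = \barint_{Q'}(f^\xi_{Q_1})_\no w\, dx$; the factor $w_{Q'}/w_{Q_1}$ is designed precisely to turn the $dw$-average on the normal component into a $dx$-average, placing the two components on a common Lebesgue footing. Specialising to $Q'=Q_1$, using \eqref{reversejensen} to place the eigenvalues of $S_{Q_1}^\tau$ in $[e^{-c_0},e^{c_0}]$, and invoking Lemma \ref{lemma6}, one obtains, provided $\sigma_3$ is chosen small enough,
\[
\barint_{Q_1} \Phi\, dx = \re(S_{Q_1}^\tau\xi, E_{Q_1}(f^\xi_{Q_1})) \ge \tfrac{1}{2} e^{-c_0}.
\]

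Next I would split the disjoint family $B^{\tau,\xi}(Q_1) = B_1\sqcup B_2$, with $B_1$ collecting the cubes where $|E_{Q'}(f^\xi_{Q_1})|>1/\sigma_4$, and set $B := \bigcup B^{\tau,\xi}(Q_1)$ and $G := Q_1\setminus B$. A maximal-function style estimate based on $|(E_{Q'}f)_\no|^2\le \barint_{Q'}|f_\no|^2\, dw$ together with the consequence $|(E_{Q'}f)_\ta|^2\le [w]_{A_2}\barint_{Q'}|f_\ta|^2\, dw$ of \eqref{eq:dxdwAp}, summed over the disjoint $Q' \in B_1$, yields $w(B_1) \lesssim \sigma_4^2\|f^\xi_{Q_1}\|^2 \lesssim \sigma_4^2 w(Q_1)$. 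The second ingredient is a master Cauchy--Schwarz bound: for every measurable $E\subseteq Q_1$,
\[
\frac{1}{|Q_1|}\int_E|\Phi|\, dx \lesssim \left(\frac{w(E)}{w(Q_1)}\right)^{1/2} + \left(\frac{|E|}{|Q_1|}\right)^{\sigma'/2} \lesssim \left(\frac{w(E)}{w(Q_1)}\right)^{\delta},
\]
for some $\delta>0$ depending only on $[w]_{A_2}$. The normal contribution uses $|\eta_\no| \le e^{c_0}/w_{Q_1}$ (reverse Jensen) together with $\|(f^\xi_{Q_1})_\no\|_{L^2(w)}\le C w(Q_1)^{1/2}$; the tangential contribution is handled by writing $F_\ta = F_\ta w^{1/2}\cdot w^{-1/2}$ and using both the $A_2$ bound $w^{-1}(Q_1)\le [w]_{A_2}|Q_1|^2/w(Q_1)$ and the $A_\infty$ property of $w^{-1}$. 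The final passage from $|E|/|Q_1|$ to $w(E)/w(Q_1)$ uses \eqref{eq:ainfty} applied to $w$.

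Applying this bound with $E=B_1$ gives $\frac{1}{|Q_1|}\big|\int_{B_1}\Phi\, dx\big|\lesssim \sigma_4^{\delta_1}$ for some $\delta_1>0$, while the definition of $B_2$ yields directly $\int_{B_2}\Phi\, dx<\sigma_5|Q_1|$. Subtracting these upper bounds from the lower bound on $\barint_{Q_1}\Phi\, dx$, and choosing first $\sigma_5$ and then $\sigma_4$ sufficiently small (depending on the admissible data), I obtain $\int_G\Phi\, dx\ge \tfrac{1}{4} e^{-c_0}|Q_1|$. Applying the master Cauchy--Schwarz bound with $E=G$ now forces $w(G)/w(Q_1)\ge \sigma_6$ for some $\sigma_6\in(0,1]$, and hence $w(B)\le (1-\sigma_6)w(Q_1)$.

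The hard part will be the delicate parameter juggling coupled with the handling of tangential integrals $\int_E|F_\ta|\, dx$, since $F_\ta$ is only controlled in $L^2(w)$: this forces the systematic use of both the $A_2$-duality between $w$ and $w^{-1}$ and the $A_\infty$ exponents of $w$ and $w^{-1}$.
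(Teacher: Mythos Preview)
Your argument is correct and follows essentially the same route as the paper: the lower bound on $\re(S_{Q_1}^\tau\xi,E_{Q_1}(f^\xi_{Q_1}))$ via Lemma~\ref{lemma6}, the $L^2(w)$-maximal control of the cubes in $B_1$, and the Cauchy--Schwarz/$A_2$/$A_\infty$ estimate over a subset of $Q_1$ are exactly the paper's ingredients. Your explicit identification of a function $\Phi$ whose $dx$-average on $Q'$ equals $\re(S_{Q_1}^\tau\xi,\begin{bmatrix} w_{Q'}/w_{Q_1} & 0\\ 0 & I\end{bmatrix}E_{Q'}(f^\xi_{Q_1}))$ is a clean repackaging of the paper's decomposition $E_{Q_1}(f^\xi_{Q_1})=\sum_{Q'}\tfrac{|Q'|}{|Q_1|}\begin{bmatrix} w_{Q'}/w_{Q_1}&0\\0&I\end{bmatrix}E_{Q'}(f^\xi_{Q_1})+(\text{integrals over }G)$, and allows you to handle $B_1$ and $B_2$ together rather than running two separate stopping-time measure estimates as the paper does.
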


\begin{proof}
There are two sets of cubes to consider. The first is the set of those   maximal  $Q'$ for which
\begin{equation} \label{stopone}
  \re  \left(S_{Q_1}^\tau(\xi),\begin{bmatrix}
        w_{Q'}/w_{Q_1} &  0  \\
        0  & I
     \end{bmatrix}E_{Q'}(f^\xi_{Q_1})\right) <  \sigma_5.
\end{equation}
By \eqref{reversejensen}, we know that
\[
-c_0 \leq \ln(w_{Q_1}) - \tau - (\ln w)_{Q_1} \leq c_0
\]
so $S_{Q_1}^\tau$ is a constant  self-adjoint  matrix with  $e^{-c_{0}} I \le S_{Q_1}^\tau  \le e^{c_{0}} I$. Applying Lemma \ref{lemma6},
\begin{align}
 \re   \big(S_{Q_1}^\tau(\xi),E_{Q_1}(f^\xi_{Q_1})\big) & = \big(S_{Q_1}^\tau(\xi),\xi\big) +   \re  \big(S_{Q_1}^\tau(\xi), E_{Q_1}(f^\xi_{Q_1})-\xi\big) \nonumber \\
& \geq e^{-c_{0}} - ce^{c_{0}}\sigma_3^{\delta} \geq \frac 1 2 e^{-c_{0}}, \label{sigma3}
\end{align}
on choosing $\sigma_3$ so that $2c\sigma_{3}^\delta \le e^{-2c_{0}}$. Consequently, setting $G = Q_1 \setminus (\cup Q')$   and $f^\xi_{Q_1}= \begin{bmatrix}
       f_{1}   \\ f_{2}
        \end{bmatrix}
        $,
 we have
 \begin{align*}
\label{}
    E_{Q_1}(f^\xi_{Q_1})&= \begin{bmatrix}
        \frac{1}{w(Q_1)} \int_{Q_{1}} f_1\, dw  \\
        \frac{1}{|Q_1|} \int_{Q_{1}} f_2\,  dx
     \end{bmatrix}  \\
    &
    =\sum_{Q'} \frac{|Q'|}{|Q_1|}  \begin{bmatrix}
        w_{Q'}/w_{Q_{1}} & 0 \\
        0 & I
     \end{bmatrix}E_{Q'}(f^\xi_{Q_1}) +   \begin{bmatrix}
        \frac{1}{w(Q_1)} \int_G f_1\, dw  \\
        \frac{1}{|Q_1|} \int_G f_2\,  dx
     \end{bmatrix} ,
     \end{align*}
where the subcubes $Q'$ are those of \eqref{stopone}.     Using \eqref{stopone} and \eqref{sigma3}, we obtain
        \begin{align*}
\frac 1 2 e^{-c_{0}}
&
 \leq   \re  \big(S_{Q_1}^\tau(\xi),E_{Q_1}(f^\xi_{Q_1})\big) \\
& \leq \sigma_5 \sum_{Q'} \frac{|Q'|}{|Q_1|} +   \re  \left(S_{Q_1}^\tau(\xi),\begin{bmatrix}
        \frac{1}{w(Q_1)} \int_G f_1\,  dw  \\
        \frac{1}{|Q_1|} \int_G f_2\,  dx
     \end{bmatrix}\right) \\
& \leq \sigma_5 +  \re  \left(S_{Q_1}^\tau(\xi),\begin{bmatrix}
        \frac{1}{w(Q_1)} \int_G f_1\,  dw  \\
        \frac{1}{|Q_1|} \int_G f_2\,  dx
     \end{bmatrix}\right)
\end{align*}
and,  using the estimate $\int_{Q_{1}} |f^\xi_{Q_1}|^2\, dw \lesssim w(Q_{1}) $ and again the $A_{2}$ condition for $w$,
\begin{align*}
& \left|\left(S_{Q_1}^\tau(\xi),\begin{bmatrix}
        \frac{1}{w(Q_1)} \int_G f_1\, dw  \\
        \frac{1}{|Q_1|} \int_G f_2 \, dx
     \end{bmatrix}\right)\right| \\
& \leq e^{c_{0}}\left(\frac{w(G)}{w(Q_1)^2}\left(\int_G |f_1|^2 dw\right) + \frac{w^{-1}(G)}{|Q_1|^2}\left(\int_G |f_2|^2 dw\right)\right)^{1/2} \\
& \lesssim \left(\frac{w(G)}{w(Q_1)}\right)^{1/2} + \frac{(w^{-1}(G))^{1/2}w(Q_1)^{1/2}(w^{-1}(Q_1))^{1/2}}{(w^{-1}(Q_1))^{1/2}|Q_1|} \\
& \lesssim \left(\frac{w(G)}{w(Q_1)}\right)^{1/2} + \left(\frac{w^{-1}(G)}{w^{-1}(Q_1)}\right)^{1/2} \lesssim \left(\frac{w(G)}{w(Q_1)}\right)^{\theta}.
\end{align*}
for some $\theta>0$ by \eqref{eq:ainfty} applied to $w^{-1}$ and $w$.
 Therefore, for a small enough choice of $\sigma_5$, we have that
\[
\left(\frac{w(G)}{w(Q_1)}\right)^{\theta} \gtrsim 1
\]
which implies that $w(G) \geq 2\sigma_6 w(Q_1)$ for some small $\sigma_6 > 0$ and so
\begin{equation} \label{firsthalf}
\sum_{Q'} w(Q') \leq (1-2\sigma_6) w(Q_1),
\end{equation}
where the sum is taken over those cubes $Q'$ which satisfy \eqref{stopone}.

Now we consider the set of maximal dyadic subcubes $Q'$ of $Q_{1}$ for which
\begin{equation} \label{stoptwo}
|E_{Q'}(f^\xi_{Q_1})| > \frac{1}{\sigma_4}
.\end{equation}
 Then $w(Q') \leq  \sigma_4^2  \int_{Q'} |f^\xi_{Q_1}|^2 dw$ and
\[
\sum_{Q'} w(Q') \leq \sum_{Q'} \sigma_4^2 \int_{Q'} |f^\xi_{Q_1}|^2 dw \leq \sigma_4^2  \int_{Q_1} |f^\xi_{Q_1}|^2 dw \lesssim \sigma_4^2\,   w(Q_1)
\]
where the sum is now over those cubes $Q'$ which satisfy \eqref{stoptwo}.
So we can choose $\sigma_4$ so small that
\begin{equation} \label{secondhalf}
\sum_{Q'} w(Q') \leq \sigma_6\,  w(Q_1).
\end{equation}
Combining \eqref{firsthalf} and \eqref{secondhalf} proves the lemma.
\end{proof}

\subsection{Conclusion of the Carleson measure estimate} Consider $\tilde{\gamma}_t(x)$ depending on  $\nu$ and $ \tau$ as defined in \eqref{gammatilde}.
 Associate to $\nu$ a vector  $\xi \in \C^{1+n}$  such that $|\nu(\xi)|=1$ and $|\xi|=1$. Such a $\xi$ may not be uniquely  defined but we pick one.
For a cube $Q_1 \in \dyadic$, consider the test function $f_{Q_{1}}^\xi$ of   \eqref{testf} and  set $\Omega^{\tau,\xi}(Q_1) = \carl{Q_1} \setminus \cup_{R \in B^{\tau,\xi}(Q_1)} \carl{R}$ with $B^{\tau,\xi}(Q_1)$ defined in Lemma \ref{lemma7}.

\begin{lem}  \label{lem:parameters}
Suppose that $\sigma_3$, $\sigma_4$ and $\sigma_5$  are chosen as in Lemma \ref{lemma7} so that \eqref{geometric} holds.
Then there exists a choice of $\sigma_w$, $\sigma_1$ and $\sigma_2$  so that for all  $Q_{0}, Q_{1}\in \dyadic$ with $Q_{1}\subseteq Q_{0}$, and all  $\nu$ and $ \tau$,
\begin{equation} \label{43}
|\tilde{\gamma}_t(x)| \leq C|\gamma_tE_t(f^\xi_{Q_1})(x)|, \quad \mathrm{for}\,(x,t) \in \Omega^w(Q_0) \cap \Omega^{\tau,\xi}(Q_1),
\end{equation}
 where $C > 0$ depends only on the choice of $\sigma_w$, $\sigma_1$, $\sigma_2$, $\sigma_3$, $\sigma_4$ and $\sigma_5$.
\end{lem}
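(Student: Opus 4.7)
The goal is a pointwise inequality on $\Omega^w(Q_0)\cap\Omega^{\tau,\xi}(Q_1)$, which I plan to obtain by combining the three constraints in the definition of $\tilde\gamma_t$ with the two stopping conditions of Lemma~\ref{lemma7}, closing with a simple linear algebra fact about $\nu$. The case $\tilde\gamma_t(x)=0$ is trivial, so I may assume $\gamma_t(x)\ne 0$, $|\gamma_t(x)/|\gamma_t(x)|-\nu|\le\sigma_1$, and $(x,t)\in G_{\tau,\sigma_2}$. Let $Q\in\dyadic_t$ contain $x$; then $Q\subseteq Q_1\subseteq Q_0$ and $\gamma_tE_t(f^\xi_{Q_1})(x)=\gamma_t(x)\bigl(E_Q(f^\xi_{Q_1})\bigr)$, so it suffices to produce a lower bound $|\gamma_t(x)\bigl(E_Q(f^\xi_{Q_1})\bigr)|\gtrsim|\gamma_t(x)|$ with constant depending only on $\|B\|_\infty,\kappa,[w]_{A_2}$ and $n$.

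From $(x,t)\in\Omega^w(Q_0)$, neither $Q$ nor $Q_1$ can be contained in a cube of $B^w(Q_0)$, else $(x,t)$ would fall into the excluded Carleson box; Proposition~\ref{prop4} therefore yields $|(\ln w)_Q-(\ln w)_{Q_1}|\le 2\sigma_w$. Combined with the $G_{\tau,\sigma_2}$ condition $|\ln w_Q-(\ln w)_Q-\tau|<\sigma_2$ this gives $w_Q\,e^{-\tau-(\ln w)_{Q_1}}=1+O(\sigma_w+\sigma_2)$. Since $Q$ is also outside every cube of $B^{\tau,\xi}(Q_1)$, both stopping alternatives of Lemma~\ref{lemma7} fail: $|E_Q(f^\xi_{Q_1})|\le 1/\sigma_4$ and, using that $S_{Q_1}^\tau$ is diagonal self-adjoint,
\[
\re\Bigl(\xi,\begin{bmatrix} w_Q e^{-\tau-(\ln w)_{Q_1}} & 0 \\ 0 & I\end{bmatrix} E_Q(f^\xi_{Q_1})\Bigr)\ge \sigma_5.
\]
Replacing the top-left entry by $1$ generates an error bounded by $C(\sigma_w+\sigma_2)|E_Q(f^\xi_{Q_1})|\le C(\sigma_w+\sigma_2)/\sigma_4$, leaving $\re(\xi,E_Q(f^\xi_{Q_1}))\ge\sigma_5-C(\sigma_w+\sigma_2)/\sigma_4$.

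The linear algebra step exploits the choice of $\xi$: because $|\nu|=|\xi|=|\nu(\xi)|=1$, the vector $\xi$ maximizes the Rayleigh quotient of the self-adjoint operator $\nu^*\nu$, forcing $\nu^*\nu\,\xi=\xi$; diagonalizing then yields $|\nu(u)|^2=(u,\nu^*\nu u)\ge|(\xi,u)|^2$ for every $u$, whence $|\nu(E_Q(f^\xi_{Q_1}))|\ge\re(\xi,E_Q(f^\xi_{Q_1}))$. Combining with $|\gamma_t(x)/|\gamma_t(x)|-\nu|\le\sigma_1$ and $|E_Q(f^\xi_{Q_1})|\le 1/\sigma_4$,
\[
|\gamma_t(x)E_Q(f^\xi_{Q_1})|\ge |\gamma_t(x)|\bigl(\sigma_5-C(\sigma_w+\sigma_2)/\sigma_4-\sigma_1/\sigma_4\bigr).
\]
With $\sigma_3,\sigma_4,\sigma_5$ already fixed by Lemma~\ref{lemma7}, the plan is to choose $\sigma_w,\sigma_2$ so small that the weight-oscillation error absorbs at most $\sigma_5/4$, and finally $\sigma_1\le\sigma_4\sigma_5/4$, giving the claim with $C=4/\sigma_5$. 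The main subtlety is just this parameter bookkeeping together with the need for the corona decomposition to bridge the $\ln w$-average appearing in $S_{Q_1}^\tau$ with the $w$-average appearing in the definition of $G_{\tau,\sigma_2}$; the rest is routine.
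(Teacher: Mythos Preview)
Your proof is correct and follows the same overall architecture as the paper's: use membership in $\Omega^w(Q_0)$ together with Proposition~\ref{prop4} to get $|(\ln w)_Q-(\ln w)_{Q_1}|\le 2\sigma_w$, combine with the $G_{\tau,\sigma_2}$ condition to control $w_Q\,e^{-\tau-(\ln w)_{Q_1}}-1$, exploit the failure of both stopping alternatives at $Q$ to extract $|E_Q(f^\xi_{Q_1})|\le 1/\sigma_4$ and the lower bound for the $\re(\cdot,\cdot)\ge\sigma_5$ expression, and close by choosing $\sigma_w,\sigma_1,\sigma_2$ small against the already fixed $\sigma_4,\sigma_5$.

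The one genuine difference is the passage from $\re\bigl(\xi,E_Q(f^\xi_{Q_1})\bigr)$ to a lower bound on $\bigl|\nu\bigl(E_Q(f^\xi_{Q_1})\bigr)\bigr|$. The paper derives $|\nu(E_{Q'}(f^\xi_{Q_1}))|\ge\tfrac14\re(\xi,E_{Q'}(f^\xi_{Q_1}))$ by first claiming $|\nu(E_{Q'}(f^\xi_{Q_1}))|\ge 1/2$ and $\re(\xi,E_{Q'}(f^\xi_{Q_1}))\le 2$ via Lemma~\ref{lemma6}; but that lemma controls $E_{Q_1}(f^\xi_{Q_1})-\xi$, not $E_{Q'}(f^\xi_{Q_1})-\xi$ for a general subcube, so the justification as written is at least opaque. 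Your route sidesteps this entirely: from $|\nu(\xi)|=|\xi|=|\nu|=1$ you deduce $\nu^*\nu\,\xi=\xi$ and hence $|\nu(u)|^2=(\nu^*\nu u,u)\ge|(\xi,u)|^2$ for every $u$, giving directly $|\nu(E_Q(f^\xi_{Q_1}))|\ge\re(\xi,E_Q(f^\xi_{Q_1}))$. This is both cleaner and yields the paper's intermediate inequality with a better constant, without any need for $E_Q(f^\xi_{Q_1})$ to be close to $\xi$.
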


\begin{proof}  We assume that $\Omega^w(Q_0) \cap \Omega^{\tau,\xi}(Q_1)$  is non-empty, otherwise, there is nothing to prove. Recall that  it is a union of Whitney boxes $W_{Q'}$ and \eqref{43} follows from $\left| \frac{\tilde{\gamma}_t(x)}{|\tilde{\gamma}_t(x)|} \big( E_{Q'}(f^\xi_{Q_1})\big) \right|\gtrsim 1$ for $(x,t)\in  W_{Q'}$ with $\tilde{\gamma}_t(x)\ne 0$.
For  a Whitney box $W_{Q'} \subseteq\Omega^w(Q_0) \cap \Omega^{\tau,\xi}(Q_1)$ we have that
\begin{align*}
|(\ln w)_{Q'} - (\ln w)_{Q_0}| & \leq \sigma_w, \\
|E_{Q'}(f^\xi_{Q_1})| & \leq \frac{1}{\sigma_4} \,\,\, \mbox{and} \\
 \re  \left(S_{Q_1}^\tau(\xi),\begin{bmatrix}
        w_{Q'}/w_{Q_1} &  0  \\
        0  & I
     \end{bmatrix}E_{Q'}(f^\xi_{Q_1})\right) & \geq \sigma_5.
\end{align*}
 The last two inequalities are the definition of $\Omega^{\tau,\xi}(Q_{1})$.  The first  comes from the fact  $Q'$ is not contained in a  cube of $B^w(Q_{0})$ by Proposition \ref{prop4}. As $Q'\subseteq Q_{1}\subseteq Q_{0}$, $Q_{1}$ is also not contained in a cube of  $B^w(Q_{0})$ and    we also  have
\[ |(\ln w)_{Q_{1}} - (\ln w)_{Q_0}|  \leq \sigma_w.
\]
 Moreover, recall  that  if $\tilde{\gamma}_t(x)\ne 0$,  then
\[
\left|\frac{\tilde{\gamma}_t(x)}{|\tilde{\gamma}_t(x)|} - \nu\right| \le \sigma_{1}.
\]
Finally, if $\tilde{\gamma}_t(x)\ne 0$ and $(x,t)\in W_{Q'}$ then
\[
|\ln(w_{Q'}) - (\ln w)_{Q'} - \tau| < \sigma_2.
\]
Clearly then,   we may assume the  six  inequalities above.

We begin by observing that
\[
 \begin{bmatrix}
        w_{Q'}/w_{Q_1} &  0  \\
        0  & I
     \end{bmatrix} S_{Q_1}^\tau = \begin{bmatrix}
        e^{\ln (w_{Q'})-\tau - (\ln w)_{Q_1}} &  0  \\
        0  & I
     \end{bmatrix}
\]
and
\begin{align}
 |\ln(w_{Q'}) - \tau - (\ln w)_{Q_1}|
& \leq |\ln(w_{Q'}) - (\ln w)_{Q'} - \tau| \nonumber \\
& + |(\ln w)_{Q'} - (\ln w)_{Q_0}| + |(\ln w)_{Q_1} - (\ln w)_{Q_0}|  \nonumber\\
&  \leq \sigma_2 + \sigma_w + \sigma_w. \label{1}
\end{align}
Recall that we chose $\sigma_3$ in \eqref{sigma3} so that $0 < c\sigma_3^{\delta} \leq e^{-2c_{0}}/2  \leq 1/2$. Therefore, using Lemma \ref{lemma6},
\[
\left| \nu \left(E_{Q'}(f^\xi_{Q_1})\right) \right| \geq \left| \nu \left(\xi\right) \right| - \left| \nu \left(E_{Q'}(f^\xi_{Q_1}) - \xi\right) \right| \geq 1 - c\sigma_3^{\delta} \geq 1/2
\]
and
\[
 \re  \left( \xi, E_{Q'}(f^\xi_{Q_1})\right) = \left( \xi, \xi\right) +  \re  \left( \xi,  E_{Q'}(f^\xi_{Q_1})-\xi\right) \leq 1 + c\sigma_3^{\delta} \leq 2,
\]
so
\[
\left| \nu \left(E_{Q'}(f^\xi_{Q_1})\right) \right| \geq \frac{1}{4}  \re  \left( \xi, E_{Q'}(f^\xi_{Q_1})\right).
\]
It then follows that for $(x,t)\in W_{Q'}$ with $\tilde{\gamma}_t(x)\ne 0$,
\begin{align*}
& \left| \frac{\tilde{\gamma}_t(x)}{|\tilde{\gamma}_t(x)|} \left( E_{Q'}(f^\xi_{Q_1})\right) \right| \\
& \geq \left| \nu \left(E_{Q'}(f^\xi_{Q_1})\right) \right| - \left|\left(\frac{\tilde{\gamma}_t(x)}{|\tilde{\gamma}_t(x)|} - \nu\right)\left( E_{Q'}(f^\xi_{Q_1})\right) \right| \\
& \geq \frac{1}{4} \re   \left( \xi, E_{Q'}(f^\xi_{Q_1})\right) - \frac{\sigma_1}{\sigma_4} \\
& = \frac{1}{4}  \re  \left( \begin{bmatrix}
        e^{\ln(w_{Q'} ) -\tau - (\ln w)_{Q_1}}&  0  \\
        0  & I
     \end{bmatrix}\xi, E_{Q'}(f^\xi_{Q_1})\right) \\
& \quad + \frac{1}{4} \re  \left(\begin{bmatrix}
        1-e^{\ln(w_{Q'})-\tau - (\ln w)_{Q_1}} &  0  \\
        0  &  0
     \end{bmatrix}\xi, E_{Q'}(f^\xi_{Q_1})\right) - \frac{\sigma_1}{\sigma_4} \\
& \geq \frac{1}{4} \re  \left( S_{Q_1}^\tau(\xi), \begin{bmatrix}
        w_{Q'}/w_{Q_1} &  0  \\
        0  & I
     \end{bmatrix}E_{Q'}(f^\xi_{Q_1})\right) - \frac{e}{\sigma_4} (\sigma_2 + 2\sigma_w) - \frac{\sigma_1}{\sigma_4} \\
& \geq \frac{1}{4}\sigma_5 - \frac{e}{\sigma_4} (\sigma_2 + 2\sigma_w + \sigma_1).
\end{align*}
We have used \eqref{1} and  $|1-e^u|\le e u$ if $u$ is a real with $|u|\le 1$, assuming $\sigma_2 + 2\sigma_w\le 1$.
We have already chosen $\sigma_4$ and $\sigma_5$, but we are still free to choose $\sigma_w$, $\sigma_1$ and $\sigma_2$ small  so that \eqref{43} holds.
\end{proof}

\begin{proof}[Proof of Theorem \ref{th:main}]  By Corollary \ref{cor:cor1} and  Lemma \ref{lemma5}, we know it is enough to show, for fixed $\nu$ and $\tau$,  and   for any cube $Q_0$, that
\begin{equation} \label{indeed}
\iint_{\Omega^w(Q_0)} |\widetilde{\gamma}_t(x)|^2\,  \frac{dw(x)dt}{t} \leq Kw(Q_0).
\end{equation}
   Fix $Q_{1}\in \Delta$ with $Q_{1}\subseteq Q_{0}$.  Having fixed the parameters in Lemma \ref{lem:parameters}, we apply  \eqref{43} in the first inequality to obtain
\begin{align*}
 \iint_{\Omega^w(Q_0)\cap\Omega^{\tau,\xi}(Q_1)}& |\widetilde{\gamma}_t(x)|^2\,  \frac{dw(x)dt}{t} \\
& \leq \iint_{\carl{Q_1}} |\gamma_tE_t(f^\xi_{Q_1})(x)|^2\,  \frac{dw(x)dt}{t} \\
& \lesssim \iint_{\carl{Q_1}} |(Q^B_tf^\xi_{Q_1})(x)|^2\,  \frac{dw(x)dt}{t} \\
& \quad + \iint_{\carl{Q_1}} |(Q^B_t - \gamma_tE_t)(f^\xi_{Q_1} - \etaq\xi)(x)|^2\,  \frac{dw(x)dt}{t} \\
& \quad + \iint_{\carl{Q_1}} |(Q^B_t - \gamma_tE_t)(\etaq\xi)(x)|^2 \, \frac{dw(x)dt}{t}.
\end{align*}
Since $Q_{t}^Bf^\xi_{Q_1}= \frac t{\sigma_{3}\ell(Q_{1})}P_{t}^B (\sigma_{3}\ell(Q_{1})DBf^\xi_{Q_1} ) $, one has  that
\[
\iint_{\carl{Q_1}} |(Q^B_tf^\xi_{Q_1})(x)|^2\,  \frac{dw(x)dt}{t} \lesssim \int_{0}^{\ell(Q_{1})} \frac {t^2\|\sigma_{3}\ell(Q_{1})DBf^\xi_{Q_1}\|^2}{(\sigma_{3}\ell(Q_{1}))^2} \,  \frac {dt}t \lesssim w(Q_1),
\]
and, by Proposition \ref{lem:ppa} because $f^\xi_{Q_1} - \etaq\xi \in \ran(D)$,
\[
\iint_{\carl{Q_1}} |(Q^B_t - \gamma_tE_t)(f^\xi_{Q_1} - \etaq\xi)(x)|^2\,  \frac{dw(x)dt}{t} \lesssim \|f^\xi_{Q_1} - \etaq\xi\|^2 \lesssim w(Q_1).
\]
 For the last term, using that by definition $Q^B_t - \gamma_t E_t$ annihilates constants and $E_t((\etaq -1)\xi)(x)=0$ when $(x,t) \in \carl{Q_{1}}$, we can rewrite
\[(Q^B_t - \gamma_t E_t) (\etaq \xi)(x)= (Q^B_t - \gamma_t E_t) ((\etaq -1) \xi)(x) =  Q^B_t ((\etaq -1)\xi)(x).\]
Using off-diagonal estimates for $Q^B_t$ as in Lemma \ref{lem:gammat}, one can easily show $\iint_{\carl{Q_1}} |Q^B_t ((1_{2Q_{1}}-1)\xi)(x)|^2  \frac{dw(x)dt}{t} \lesssim w(Q_{1})$. Next, decompose $2Q_{1}\setminus Q_{1} = \pd_{a(t)} \cup (2Q_{1}\setminus \pd_{a(t)})$ where $a(t)= \sqrt {t/\ell(Q_{1})} \ell(Q_{1})$ and $\pd_{a}=\{y\notin Q_{1}\, ; \, d(y,Q_{1})\le a\}$.  Again,  using the off-diagonal estimates for each $t$, the function $1_{2Q_{1}\setminus \pd_{a(t)}}$ contributes  $w(Q_{1})$. It  remains to control  the integral corresponding to $1_{\pd_{a(t)}}$. From the uniform boundedness of $Q^B_{t}$, one has
\begin{align*}
\iint_{\carl{Q_1}} |Q^B_t (1_{\pd_{a(t)}}\xi)(x)|^2  \frac{dw(x)dt}{t}    &  \lesssim  \int_{0}^{\ell(Q_{1})} w(\pd_{a(t)})  \frac{dt}{t}  \\
    &  \lesssim  \int_{0}^{\ell(Q_{1})} \bigg(\frac{|\pd_{a(t)}|}{|Q_{1}|}\bigg)^\sigma   \frac{dt}{t} \, w(Q_{1})\\
    &   \lesssim w(Q_{1})
\end{align*}
using  \eqref{eq:ainfty} and $\frac{|\pd_{a(t)}|}{|Q_{1}|} \lesssim \big(\frac t{\ell(Q_{1})}\big)^{1/2}$ obtained from elementary observations. Summarizing the estimates above, we have proved that
$$
\iint_{\Omega^w(Q_0)\cap\Omega^{\tau,\xi}(Q_1)} |\widetilde{\gamma}_t(x)|^2\,  \frac{dw(x)dt}{t}
\lesssim w(Q_{1}).
$$

 We can now prove \eqref{indeed}.
Define
\begin{align*}
B^{\tau,\xi}_0(Q_0) & = \{Q_0\}, \quad B^{\tau,\xi}_1(Q_0) = B^{\tau,\xi}(Q_0), \\
B^{\tau,\xi}_{j+1}(Q_0) & = \cup_{R \in B^{\tau,\xi}_j(Q_0)}B^{\tau,\xi}(R) \,\,\, \mbox{for} \,\,\, j = 1,2,\dots, \\
\mbox{and} \,\,\, B^{\tau,\xi}_*(Q_0) & = \cup_{j=0}^\infty B^{\tau,\xi}_j(Q_0).
\end{align*}
Using
\[
\iint_{\Omega^w(Q_0)} |\widetilde{\gamma}_t(x)|^2\,  \frac{dw(x)dt}{t} = \sum_{Q_1 \in B^{\tau,\xi}_*(Q_0)} \iint_{\Omega^w(Q_0) \cap \Omega^{\tau,\xi}(Q_1)} |\widetilde{\gamma}_t(x)|^2 \, \frac{dw(x)dt}{t},
\]
 and summing the estimate above together with an iteration of  Lemma \ref{lemma7} imply
\begin{align*}
& \iint_{\Omega^w(Q_0)} |\widetilde{\gamma}_t(x)|^2\,   \frac{dw(x)dt}{t} \lesssim \sum_{Q_1 \in B^{\tau,\xi}_*(Q_0)} w(Q_1) \\
& \leq \sum_{j=0}^\infty (1-\sigma_6)^j w(Q_0) \lesssim w(Q_0),
\end{align*}
which proves \eqref{indeed} and with it Theorem \ref{th:main}.
\end{proof}

\subsection{The case of block matrices}
We show how to simplify the argument in this case.
Recall that $B$ is a $(n+1) \times (n+1)$ matrix. Assume here that it is block diagonal, namely
$$
B(x)= \begin{bmatrix} a(x) & 0 \\ 0 & d(x)
\end{bmatrix}
$$
with $a(x) $ scalar-valued and $d(x)$ $n\times n$ matrix-valued. Define the normal and tangential spaces
$$\mH_{\no}=\begin{bmatrix} L^2(\R^n, w; \C) \\ 0
\end{bmatrix} \quad \mathrm{and} \quad  \mH_{\ta}=\begin{bmatrix} 0\\ L^2(\R^n, w; \C^n)
\end{bmatrix}.$$
In this case, both operators $BD$ and $DB$  swap the normal and tangential spaces. So do $Q_{t}^B$ and  multiplication by $\gamma_{t}$. This means that
$$
\gamma_{t}(x)= \begin{bmatrix} 0 & \alpha_{t}(x) \\ \beta_{t}(x) & 0
\end{bmatrix}
$$
so that   the Carleson function norms
for $|\alpha_{t}(x)|^2$ and $|\beta_{t}(x)|^2$ can be estimated separately. The normal and tangential parts of our test functions can be used in two separate  stopping-time arguments,  which do not require the Corona decomposition (Proposition \ref{prop4}),   following the usual proof in the unweighted case,  since for each stopping-time we use the average against \emph{one} measure: either $dx$ or $dw$.

\subsection{A vector-valued extension}\label{sec:vv}

The proof of the main quadratic estimate carries straightforwardly to the case of systems where
\begin{itemize}
  \item $\mH= L_{2}(\R^n, w; \C^{m(1+n)})$,  where $ \C^{m(1+n)}= (\C^m)^{1+n}$,
  \item $D$ acts componentwise on $\mH$ by $(Du)^\alpha= D(u^\alpha)$ for $\alpha=1, \ldots, m$  (in other words, the new $D$ is $D\otimes I_{\C^m}$ but we shall not use this notation),  and
  \item  $B(x)$ is an $(n+1)\times (n+1)$ matrix whose entries are $m\times m $ matrices and the multiplication by $B(x)$ is assumed to be bounded on $\mH$ and accretive on $\clos{\ran(D)}$.

\end{itemize}

\subsection{Consequences}\label{sec:consequences}

We gather  here  some consequences  on the functional calculus for the convenience of the reader.

\begin{prop}\label{prop:equi} Let  $w,D$ and $B$ be as above. If $T=DB$ or $T=BD$, then one has the equivalence
\begin{equation}\label{eq:sfT}
  \int_0^\infty\|tT(1+t^2T^2)^{-1} u \|^2 \, \frac{dt}t \sim \|u\|^2, \qquad  \text{for all }\ u\in \clos{\ran(T)}.
\end{equation}
\end{prop}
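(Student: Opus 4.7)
The plan is to derive both directions of \eqref{eq:sfT} from Theorem \ref{th:main}. Write $\Psi_t(T) := tT(I+t^2T^2)^{-1}$ for brevity, so that \eqref{eq:sfT} asks for $\int_0^\infty \|\Psi_t(T)u\|^2\,\frac{dt}{t}\sim \|u\|^2$. The upper bound $\lesssim$ for $T=DB$ is precisely Theorem \ref{th:main}. For $T=BD$, I would use the intertwining identity $\Psi_t(BD)\circ B = B\circ \Psi_t(DB)$ on $\mH$, which follows from the algebraic identity $(BD)^2B = B(DB)^2$: this yields $(I+t^2(BD)^2)^{-1}B = B(I+t^2(DB)^2)^{-1}$, whence the intertwining after multiplying on the left by $tBD$. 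Since every $v\in\clos{\ran(BD)}=B\clos{\ran(D)}$ can be written $v=Bu$ with $u\in\clos{\ran(D)}$ and $\|u\|\le\|v\|/\kappa$ by accretivity, the intertwining gives $\|\Psi_t(BD)v\|\le\|B\|_\infty\|\Psi_t(DB)u\|$; integrating and invoking Theorem \ref{th:main} then produces the upper bound on $\clos{\ran(BD)}$, and the topological splitting $\mH=\clos{\ran(BD)}\oplus\nul(BD)$ from Proposition \ref{prop:typeomega}, together with the fact that $\Psi_t(BD)$ vanishes on $\nul(BD)$, extends it to all of $\mH$. Observe also that $B^*$ satisfies exactly the same hypotheses as $B$, with identical constants (because $\re(B^*v,v)=\re(Bv,v)$), so the same reasoning delivers the upper bound for $DB^*$ and $B^*D$; since $(DB)^*=B^*D$ and $(BD)^*=DB^*$, this is the upper bound for the Hilbert space adjoint $T^*$.

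For the lower bound $\gtrsim$, I would combine a Calder\'on reproducing formula with a duality argument against $T^*$. A direct computation at the level of the resolvent yields the operator identity
\begin{equation*}
2\int_{\eps}^R \Psi_t(T)^2\,\frac{dt}{t} = (I+\eps^2T^2)^{-1} - (I+R^2T^2)^{-1}.
\end{equation*}
As $\eps\to 0$, the first term tends strongly to $I$ on $\mH$ (using uniform boundedness of the resolvents from Proposition \ref{prop:typeomega} and density of $\dom(T^2)$); as $R\to\infty$, the second term tends strongly to $0$ on $\clos{\ran(T)}$, since for $u=Tw$ with $w\in\dom(T)$ one has $(I+R^2T^2)^{-1}u = R^{-1}\Psi_R(T)w = O(1/R)$, and $\ran(T)$ is dense in $\clos{\ran(T)}$. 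Hence, for $u\in\clos{\ran(T)}$,
\begin{equation*}
u = 2\lim_{\eps\to 0,\,R\to\infty}\int_\eps^R \Psi_t(T)^2 u\,\frac{dt}{t} \quad\text{strongly in }\mH.
\end{equation*}
Pairing with arbitrary $v\in\mH$, using $\Psi_t(T)^*=\Psi_t(T^*)$, and applying the Cauchy--Schwarz inequality yield
\begin{equation*}
\bigg|\bigg\langle\int_\eps^R \Psi_t(T)^2 u\,\frac{dt}{t},v\bigg\rangle\bigg|^2 \le \int_0^\infty\|\Psi_t(T)u\|^2\,\frac{dt}{t}\cdot \int_0^\infty\|\Psi_t(T^*)v\|^2\,\frac{dt}{t}.
\end{equation*}
Inserting the upper bound already obtained for $T^*$, then letting $\eps\to 0$, $R\to\infty$ and taking the supremum over $v$ in the unit ball of $\mH$, produces the lower bound $\|u\|^2 \lesssim \int_0^\infty\|\Psi_t(T)u\|^2\,\frac{dt}{t}$. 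The main subtlety, though routine in this bisectorial framework, lies in justifying the strong convergence of the Calder\'on integral on $\clos{\ran(T)}$, which rests essentially on the uniform resolvent bounds of Proposition \ref{prop:typeomega}.
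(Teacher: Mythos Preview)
Your proof is correct and follows the same strategy as the paper: the upper bound for $DB$ is Theorem~\ref{th:main}, the upper bound for $BD$ follows by the intertwining/similarity between $BD$ and $DB$ on $\clos{\ran(BD)}$, the same argument applied to $B^*$ yields the estimates for the adjoints, and then duality gives the lower bound. The only difference is that the paper simply cites \cite{ADMc} for the final step (upper square function bounds for $T$ and $T^*$ imply the equivalence on $\clos{\ran(T)}$), whereas you spell this out explicitly via the Calder\'on reproducing formula $2\int_0^\infty \Psi_t(T)^2\,\frac{dt}{t}=I$ on $\clos{\ran(T)}$ and the pairing argument---which is exactly the content of that classical reference.
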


\begin{proof} First, the square function estimate for $BD$  follows from that for $DB$. Indeed, on $\nul(BD)$,
$tBD(1+t^2(BD)^2)^{-1} u =0$. On $\clos{\ran(BD)}$, $BD$ is similar to $DB$ so the square function inequality  for $BD$   follows. We  conclude  using the splitting $\mH= \nul(BD) \oplus  \clos{\ran(BD)}$. Now, if one changes $B$ to  $B^*$, this means we have proved
\eqref{eq:sfBD} for both $T=DB$ (resp. $T=BD$) and its adjoint. It is classical (\cite{ADMc}) that this implies the equivalence on the range.
\end{proof}

The next result summarizes consequences of quadratic estimates that are needed.

\begin{prop}\label{prop:SFimpliesFC} Let  $T$ be an $\omega$-bisectorial operator on  a separable Hilbert space $\mH$ with $0\le \omega<\pi/2$.  Assume that the quadratic estimate
\begin{equation}
  \int_0^\infty\|tT(1+t^2T^2)^{-1} u \|^2 \, \frac{dt}t \sim \|u\|^2 \   \text{holds for all }\ u\in \clos{\ran(T)}.
\end{equation}Then, the following statements hold.
\begin{itemize}
  \item $T$ has a bounded holomorphic functional calculus on $\clos{\ran(T)}$ on any bisector $|\arg (\pm z)| <\mu$ for any $\omega<\mu<\pi/2$, which can be extended to all $\mH$ by setting $f(T)=f(0)I$ on $\nul(T)$ whenever $f$ is also defined at 0.
\item The comparison  \begin{equation} \label{eq:psiT}
  \int_0^\infty\|\psi(tT) u \|^2 \, \frac{dt}t \sim \|u\|^2 \  \text{holds for all }\ u\in \clos{\ran(T)}.
\end{equation}
for any $\omega<\mu<\pi/2$ and  for any  holomorphic function $\psi$ in the bisector $|\arg (\pm z)| <\mu$, which is not identically zero on each connected component of the bisector and which satisfies
$|\psi(z)|\le C\inf (|z|^\alpha, |z|^{-\alpha})$ for some $C<\infty$ and $\alpha>0$.
  \item The operator $\sgn(T)$ is a bounded involution on $\clos{\ran(T)}$.
  \item  $\clos{\ran(T)}$ splits topologically into two spectral subspaces
\begin{equation}     \label{eq:hardysplit}\clos{\ran(T)}=\mH^+_{T}\oplus \mH^-_{T}
\end{equation}
  with $\mH^\pm_{T}=E_{T}^\pm(\clos{\ran(T)})$ and $E_{T}^+=\chi^\pm(T)$ are projections with $\chi^\pm(z)=1$ if $\pm \re z>0$ and $\chi^\pm(z)=0$ otherwise.
\item The operator $|T|=sgn(T)T = \sqrt {T^2}$ with $\dom(|T|)=\dom(T)$ is a $\omega$-sectorial operator and
$-|T|$ generates an analytic semigroup of operators $(e^{-z|T|})_{|\arg z| <\pi/2 - \omega}$.
\item For $h\in \dom(T)$, $h\in \mH^\pm_{T}$ if and only if $|T|h=\pm Th.$ As a consequence
$e^{\mp zT}$ are well-defined operators on $\mH^\pm_{T}$ respectively,  and $e^{-zT}E_{T}^+$ and $e^{+zT}E_{T}^-$ are well-defined operators on $\mH$   for $|\arg z | <\pi/2 - \omega$.
\end{itemize}\end{prop}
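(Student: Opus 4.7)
The plan is to reduce all six bullets to the classical McIntosh theorem identifying quadratic estimates with bounded $H^\infty$ functional calculus for bisectorial operators. I would begin by recording the topological splitting $\mH = \nul(T)\oplus\clos{\ran(T)}$ coming from bisectoriality, on which $T$ reduces respectively to $0$ and to an injective operator with dense range. On $\clos{\ran(T)}$, the assumed two-sided equivalence applied to $\phi(z) := z(1+z^2)^{-1}$ is exactly the form of hypothesis in McIntosh's equivalence theorem \cite{ADMc}: the upper bound is the quadratic estimate for $T$ itself, and the lower bound, via pairing against $T^*$ and the Calder\'on reproducing formula for $\phi$, delivers the companion quadratic estimate for $T^*$. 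This produces a bounded holomorphic functional calculus on any bisector $S_\mu$ with $\omega<\mu<\pi/2$; extending by $f(0)I$ on $\nul(T)$ is automatic since the splitting is $T$-invariant, settling the first bullet.

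For the second bullet, given admissible $\psi$, non-degeneracy on each component of $S_\mu$ combined with the decay $|\psi(z)|\lesssim\inf(|z|^\alpha,|z|^{-\alpha})$ supplies a companion $\tilde\psi$ of the same class for which $\int_0^\infty \tilde\psi(tz)\psi(tz)\,\tfrac{dt}{t}=c_\psi$ holds on each component; applying the $H^\infty$ calculus yields the resolution $u = c_\psi^{-1}\int_0^\infty \tilde\psi(tT)\psi(tT)u\,\tfrac{dt}{t}$ on $\clos{\ran(T)}$. The upper bound in \eqref{eq:psiT} is then immediate from the calculus, while the lower bound follows by pairing this resolution against $\tilde\psi(tT)$ and using Cauchy--Schwarz in $L^2(\R_+,dt/t)$ together with the upper bound for $\tilde\psi$. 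Bullets three and four follow by evaluating the calculus at $\sgn(z)$ (locally constant on $S_\mu$, hence in $H^\infty(S_\mu)$) and $\chi^\pm(z) = (1\pm\sgn(z))/2$: $\sgn(T)$ is bounded on $\clos{\ran(T)}$ and satisfies $\sgn(T)^2 = I$, and $E_T^\pm := \chi^\pm(T)$ are complementary commuting projections producing \eqref{eq:hardysplit}.

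For bullet five, the symbol $|z|:=\sgn(z)z$ is holomorphic on $S_\mu$ and takes values in the right sector $\{|\arg z|<\mu\}$ with sectorial resolvent bounds inherited from those of $T$; the calculus then produces an $\omega$-sectorial operator $|T| = \sgn(T)T$ with $\dom(|T|)=\dom(T)$, and standard sectorial semigroup theory gives the analytic semigroup $(e^{-z|T|})_{|\arg z|<\pi/2-\omega}$. Bullet six is a direct symbol identity: for $h\in\dom(T)\cap\mH_T^\pm$ we have $\sgn(T)h=\pm h$ and, by commutativity of the calculus, $|T|h=\sgn(T)Th=\pm Th$; hence $e^{-z|T|}$ coincides with $e^{\mp zT}$ on $\mH_T^\pm$, and summing across the splitting defines $e^{-zT}E_T^+$ and $e^{+zT}E_T^-$ as bounded operators on $\mH$ for $|\arg z|<\pi/2-\omega$. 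The main obstacle is executing the first step rigorously, i.e.\ extracting the dual quadratic estimate for $T^*$ from the lower bound and assembling McIntosh's bounded $H^\infty$ calculus out of it; this is entirely classical and can be cited rather than reproved, after which the remaining bullets are applications of the calculus to concrete bounded holomorphic symbols.
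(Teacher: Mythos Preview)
Your proposal is correct and, in fact, more detailed than what the paper does: the paper does not prove this proposition at all. It is introduced with the sentence ``The next result summarizes consequences of quadratic estimates that are needed'' and is simply stated as a package of known facts from the $H^\infty$ functional calculus literature, with the implicit reference being \cite{ADMc}. There is nothing to compare; your outline is a faithful sketch of the standard McIntosh theory that the paper is invoking.

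One small remark on your first step: the passage from the two-sided quadratic equivalence for $T$ to the bounded $H^\infty$ calculus does not literally proceed by ``extracting the dual quadratic estimate for $T^*$'' as an intermediate step. The more direct route, also in \cite{ADMc}, is to use the Calder\'on reproducing formula $u=c^{-1}\int_0^\infty \psi(tT)\tilde\psi(tT)u\,\tfrac{dt}{t}$ on $\clos{\ran(T)}$ (which converges by the assumed equivalence), then for $f\in H^\infty(S_\mu)$ write $f(T)u=c^{-1}\int_0^\infty (f\psi)(tT)\tilde\psi(tT)u\,\tfrac{dt}{t}$, bound $(f\psi)(tT)$ uniformly via the resolvent integral (this uses only bisectoriality and the decay of $\psi$), and conclude by Cauchy--Schwarz in $L^2(\R_+,dt/t)$ using the upper quadratic bound for $\tilde\psi(tT)$. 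This is a cosmetic point; either route is standard and your conclusion is the same.
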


As announced in Section \ref{sec:main}, we recall here why this implies the Kato conjecture for block diagonal
$$
B= \begin{bmatrix} a & 0 \\ 0 & d
\end{bmatrix}
$$
identifying the functions $a$ and $d$ with the corresponding multiplication operators.
We have
$$BD= \begin{bmatrix}  0 & a \divv_{w}  \\ - d\nabla  & 0
\end{bmatrix} \ , \ (BD)^2= \begin{bmatrix} -a\divv_{w} d\nabla & 0 \\ 0 & - d\nabla a  \divv_{w}
\end{bmatrix}, $$ so that for $u\in H^1(\R^n, w;\C^m)$, $v=\begin{bmatrix}
      u    \\
      0
\end{bmatrix} \in \dom(BD)= \dom  (|BD|)$ and
$$\|\sqrt {-a\divv_{w} d\nabla} u\| \sim \| |BD|v \| \sim \|BD v\|  \sim \|d\nabla u \| \sim \|\nabla u\|.
$$

\section{Representations for solutions of degenerate elliptic systems}\label{sec:rep}

From now on, we write  points in the upper half-space  $\R^{1+n}_+$ as $\bx=(t,x)$, $t>0, x\in \R^n$.

\subsection{From second order to first order}

We shall now follow closely \cite{AA1}, and its extension \cite{R}, but in the weighted setting. It is necessary to have these references handy.  The estimates  of these two articles obtained in  abstract Hilbert spaces  evidently apply here.  Some other estimates use harmonic analysis (tent spaces, maximal functions). Thus we shall try to extract the relevant information and give proofs only when the argument uses a particular feature of the weighted situation.

We recall the notation $\mH=L^{2}(\R^n,w;\C^{m(1+n)})$ and  use $\mH^{0}=\clos{\ran(D)}$ where $D$ was defined in Section  \ref{sec:vv}. Beware that in \cite{AA1}, $\mH$ was taken as $\clos{\ran(D)}$.
We continue to use $\|\ \|$ to denote the norm in $\mH$, and occasionally use other notation when needed.

We construct solutions $u$ to the divergence form system (\ref{eq:divform}),
by  solving the equivalent  vector-valued ODE (\ref{eq:firstorderODE}) below for the $w$-normalized conormal gradient
$$
  f=\nabla_{w^{-1}A} u= \begin{bmatrix} \pd_{\nu_{w^{-1}A}}u \\ \nabla_x u \end{bmatrix},$$
and
$\pd_{\nu_{w^{-1}A}}u$ denotes the upward  (hence inward for $\reu$) $w$-normalized conormal derivative of $u$.

Using the normal/tangential  decomposition for  $\C^{m(1+n)}= \C^m \oplus \C^{mn}= \C^m \oplus (\C^m \otimes \C^n)$ (see Section \ref{sec:main}),  we  write matrices acting on $\C^{m(1+n)}$   as
$$
  M= \begin{bmatrix} M_{\no\no} & M_{\no\ta} \\ M_{\ta\no} & M_{\ta\ta} \end{bmatrix},
$$
the entries being matrices acting from and into the various spaces in the splitting.

\begin{prop}\label{prop:hat}
  The transformation
$$
   C\mapsto \widehat C:=   \begin{bmatrix} I & 0  \\
     C_{\ta\no} & C_{\ta\ta} \end{bmatrix} \begin{bmatrix} C_{\no\no} &  C_{\no\ta}  \\
     0 & I \end{bmatrix}^{-1} =  \begin{bmatrix} C_{\no\no}^{-1} & -C_{\no\no}^{-1} C_{\no\ta}  \\
     C_{\ta\no}C_{\no\no}^{-1} & C_{\ta\ta}-C_{\ta\no}C_{\no\no}^{-1}C_{\no\ta} \end{bmatrix}
$$
is a self-inverse bijective transformation of the set of operator-valued matrices which are bounded on $\mH$ and   accretive on $\clos{\ran(D)}$.
\end{prop}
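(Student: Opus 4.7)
The plan is to factor the definition as $\widehat C = L R^{-1}$ with $L := \begin{bmatrix} I & 0 \\ C_{\ta\no} & C_{\ta\ta}\end{bmatrix}$ and $R := \begin{bmatrix} C_{\no\no} & C_{\no\ta} \\ 0 & I\end{bmatrix}$, and check in order: (i) $R$ is invertible on $\mH$; (ii) $R$ restricts to a bounded bijection of $\clos{\ran(D)}$ onto itself; (iii) $\widehat C$ is bounded on $\mH$ and accretive on $\clos{\ran(D)}$; (iv) the transformation is an involution. For (i), I would feed test vectors of the form $\begin{bmatrix} u_{\no} \\ 0\end{bmatrix}$ into the hypothesis. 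By the characterization $\clos{\ran(D)} = \{g : \curl_x g_{\ta} = 0\}$ from Proposition~\ref{prop:D}(4), every such vector lies in $\clos{\ran(D)}$, so the accretivity assumption yields $\re(C_{\no\no}u_{\no}, u_{\no}) \geq \kappa \|u_{\no}\|^2$. A bounded accretive operator on a Hilbert space is an isomorphism, giving $\|C_{\no\no}^{-1}\| \leq 1/\kappa$ on $L^2(\R^n, w; \C^m)$. Then $R$ is upper triangular with invertible diagonal, hence invertible on $\mH$, and direct block multiplication of $LR^{-1}$ recovers the explicit formula in the statement, showing $\widehat C$ is well defined and bounded on $\mH$.

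For (ii), the decisive observation is that both $R$ and $R^{-1}$ act as the identity on the tangential component, so they preserve the curl-freeness condition from Proposition~\ref{prop:D}(4); consequently $R$ restricts to a bounded bijection of $\clos{\ran(D)}$ onto itself with bounded inverse, whose norm depends only on $\|C\|_{\infty}$ and $\kappa$. For (iii), my tool is the pointwise algebraic identity $\re(Lu, Ru) = \re(Cu, u)$, valid for every $u \in \mH$: expanding both inner products in block form, the tangential-block pairs agree term by term, while the normal-block pairs differ only in the order of the two arguments of the inner product, and complex conjugation erases that discrepancy. Given $w \in \clos{\ran(D)}$, I set $u := R^{-1}w$; by (ii) we have $u \in \clos{\ran(D)}$, so
\[
\re(\widehat C w, w) = \re(Lu, Ru) = \re(Cu, u) \geq \kappa \|u\|^2 \geq (\kappa/\|R\|^2)\|w\|^2,
\]
yielding accretivity of $\widehat C$ on $\clos{\ran(D)}$ with a constant depending only on $\kappa$ and $\|C\|_{\infty}$.

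Step (iv) is pure block algebra. Setting $E := \widehat C$, one checks $E_{\no\no}^{-1} = C_{\no\no}$, $-E_{\no\no}^{-1}E_{\no\ta} = C_{\no\ta}$, $E_{\ta\no}E_{\no\no}^{-1} = C_{\ta\no}$, and that the Schur-complement expression $E_{\ta\ta} - E_{\ta\no}E_{\no\no}^{-1}E_{\no\ta}$ collapses to $C_{\ta\ta}$ after cancellation; note that $E_{\no\no} = C_{\no\no}^{-1}$ is invertible, so the formula can indeed be applied a second time. The main conceptual obstacle is (iii), but its reduction to the identity $\re(Lu, Ru) = \re(Cu, u)$ together with the $\clos{\ran(D)}$-bijectivity of $R$ obtained in (ii) makes it mechanical; the key point throughout is the tangential-preserving structure of $R$, which is precisely why the accretivity hypothesis may be posed only on $\clos{\ran(D)}$ rather than on all of $\mH$.
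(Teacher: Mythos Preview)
Your proof is correct and follows precisely the standard route of \cite{AA1}, which is what the paper defers to (the paper itself gives no independent argument here). The factorization $\widehat C = LR^{-1}$, the invertibility of $C_{\no\no}$ via accretivity on normal vectors, the tangential-preserving structure of $R$, and the key identity $\re(Lu,Ru)=\re(Cu,u)$ are exactly the ingredients used there.
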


The proof is analogous to that of \cite{AA1}.

  We set
  $$
  \widehat {w^{-1}A}= B$$ in what follows.
  Our assumption is that as a multiplication operator,  $w^{-1}A(t,\cdot)$ is bounded on $\mH$  and  accretive on $\clos{\ran(D)}$ for a.e.~$t>0$ with uniform bounds with respect to $t$. In particular,  the matrix $w^{-1}A_{\no\no}(t,\cdot)$ is invertible as an operator acting on $L^2(\R^n; w; \C^m)$, hence it is also invertible in $L^\infty(\R^n, \mL(\C^{m}))$, with uniform bounds a.e.~in $t>0$.  Thus, $B(t,\cdot)$ is also a multiplication operator.

We now introduce some notation.
Let
$$
    \mD_{w}=\begin{bmatrix} C_{0}^\infty(\R^{1+n}_{+}; \C^m)  \\
    w^{-1} C_{0}^\infty(\R^{1+n}_{+}; \C^{mn})\end{bmatrix}.
    $$

    Let $\mC url_{\ta,0}= \{f \in \mD'(\R^n; \C^{m(1+n)})\, ;\, \curl_{x} f_{\ta}=0\}$, where the curl operator is computed componentwise.
    Let $\mH_\loc=L^2_\loc(\R^n, w;\C^{m(1+n)})$.

\begin{prop}  \label{prop:divformasODE}
For a pair of coefficient matrices $A$ and $B$ related by  $A= w\widehat B$, or equivalently $B= \widehat {w^{-1}A}$,
the pointwise map $g\mapsto f= \begin{bmatrix}  (w^{-1} A g)_\no \\ g_\ta \end{bmatrix} $ gives
a one-to-one correspondence, with inverse $g= \begin{bmatrix}  (B f)_\no \\ f_\ta \end{bmatrix} $,
between solutions $g$ to the equations
\begin{equation}  \label{eq:firstorderdiv}
     \begin{cases}
     g\in L^2_\loc(\R_+;\mH_\loc), \\
     \divv_{t,x} (Ag)=0, \\
     \curl_{t,x} g=0,
     \end{cases}
\end{equation}  in the sense of distributions on $\R^{1+n}_{+}$
and solutions $f$  to the generalized Cauchy--Riemann equations
\begin{equation}  \label{eq:firstorderODE}
  \begin{cases}
  f\in L^2_\loc(\R_+;\mH_\loc\cap \mC url_{\ta,0}), \\
  \pd_t f+ DB f=0,
   \end{cases}
\end{equation}
 in the weak sense
\begin{equation}
\label{firstorderweak}
\int_{0}^\infty -(f, \pd_{t}\varphi) + (Bf,D\varphi)\, dt =0 \quad \forall \varphi\in \mD_{w},
\end{equation}
where $(\ ,\ )$ is the complex inner product with respect to $dw$.  \end{prop}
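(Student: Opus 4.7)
\medskip

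My plan is to verify the correspondence in two stages: first the pointwise algebraic bijection, then the equivalence of the PDEs in the weak sense.

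For the algebraic part, Proposition~\ref{prop:hat} is the engine: since $A = w\widehat{B}$ and $\widehat{\ }$ is self-inverse on the class of matrices bounded and accretive on $\clos{\ran(D)}$, the invertibility of $(w^{-1}A)_{\no\no} = C_{\no\no}$ is guaranteed and the componentwise formulas for the hat transform translate directly into the mapping $g \mapsto f$ and its inverse $f \mapsto g$. The two crucial identities I would extract are
\[
(Ag)_\no = w\,f_\no \qquad \text{and} \qquad (Ag)_\ta = w\,(Bf)_\ta.
\]
The first is the definition of $f_\no$, and the second is a direct computation from the block formula for $\widehat C$: writing out $(Bf)_\ta = C_{\ta\no}C_{\no\no}^{-1}f_\no + (C_{\ta\ta} - C_{\ta\no}C_{\no\no}^{-1}C_{\no\ta})f_\ta$ and substituting $f_\no = C_{\no\no}g_\no + C_{\no\ta}g_\ta$, $f_\ta = g_\ta$ collapses the right-hand side to $C_{\ta\no}g_\no + C_{\ta\ta}g_\ta = (Cg)_\ta = (w^{-1}Ag)_\ta$.

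For the equivalence of equations, I would exploit the splitting of the test functions in $\mD_w$. A test function $\varphi \in \mD_w$ has normal part $\varphi_\no \in C_0^\infty$ and tangential part $\varphi_\ta = w^{-1}\psi$ with $\psi \in C_0^\infty$. This is exactly the right normalization: since $w$ is $t$-independent, $\int f_\ta \,\overline{\partial_t \varphi_\ta}\, w\,dx = \int f_\ta\,\overline{\partial_t \psi}\,dx$, and $\divv_w \varphi_\ta = w^{-1}\divv_x \psi$, so the weighted inner product against $D\varphi$ unfolds into unweighted $L^2$ pairings. Plugging $Bf = g$ (at the normal level) and using the two identities above, the weak formulation \eqref{firstorderweak} splits cleanly into two decoupled pieces, one for each kind of test function:
\[
\int_0^\infty\!\!\int_{\R^n}\!\bigl(-(Ag)_\no\,\overline{\partial_t\varphi_\no} - (Ag)_\ta\,\overline{\nabla_x\varphi_\no}\bigr)\,dx\,dt = 0,
\]
\[
\int_0^\infty\!\!\int_{\R^n}\!\bigl(-g_\ta\,\overline{\partial_t\psi} + g_\no\,\overline{\divv_x\psi}\bigr)\,dx\,dt = 0.
\]
The first line is exactly $\divv_{t,x}(Ag) = 0$ in $\mD'(\reu)$, and the second is $\partial_t g_\ta - \nabla_x g_\no = 0$ in $\mD'(\reu)$.

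Finally, the curl condition matches up: $f_\ta = g_\ta$ makes $\curl_x f_\ta = 0$ equivalent to $\curl_x g_\ta = 0$, and combining this with $\partial_t g_\ta = \nabla_x g_\no$ produced above gives the full condition $\curl_{t,x} g = 0$. The $L^2_\loc$ membership for $f$ and $g$ is preserved by the bijection because the hat transform and its inverse are pointwise bounded operators on $\C^{m(1+n)}$ with bounds uniform in $(t,x)$, and $w$ enters only as a pointwise scalar factor on the normal block. The main thing to watch is the interplay between the weight $w$ (on $x$ only) and the $t$-derivative, which is what motivates the choice of $\mD_w$ and is the only place where the proof genuinely differs from the unweighted version in \cite{AA1}; once this is organized, the verification is a bookkeeping exercise with the two identities above.
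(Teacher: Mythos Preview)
Your proposal is correct and follows essentially the same approach as the paper's proof: establish the pointwise algebraic bijection via the hat transform, then split the second-order system into the three distributional equations $\partial_t(Ag)_\no + \divv_x(Ag)_\ta = 0$, $\partial_t g_\ta - \nabla_x g_\no = 0$, and $\curl_x g_\ta = 0$, and identify these with the weak form of $\partial_t f + DBf = 0$ together with the tangential curl condition. Your treatment is in fact more explicit than the paper's, which simply asserts the equivalence ``is seen'' after recording the relations $(w^{-1}Ag)_\no = f_\no$, $g_\ta = f_\ta$, $g_\no = (Bf)_\no$; you additionally spell out the identity $(Ag)_\ta = w(Bf)_\ta$ and carefully unpack how the weighted test space $\mD_w$ makes the weak formulation \eqref{firstorderweak} decouple into the two unweighted distributional equations, which is a helpful clarification of the role of the weight.
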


The proof is almost completely identical to the one in \cite{AA1}.

\begin{proof}
The transformation $g\mapsto f= \begin{bmatrix}  (w^{-1} A g)_\no \\ g_\ta \end{bmatrix} $ is easily seen to be invertible on $L^2_\loc(\R_+;\mH^\loc)$.  Consider a pair of functions $g$ and $f$ in $L^2_\loc(\R_+;\mH^\loc)$ related in this fashion.
  Equations (\ref{eq:firstorderdiv}) for $g$ are equivalent to
\begin{equation}
\begin{cases}
  \pd_t (Ag)_\no + \divv_x( A_{\ta\no} g_\no + A_{\ta\ta}g_\ta) =0, \\
  \pd_t g_\ta - \nabla_x g_\no =0, \\
  \curl_x g_\ta =0,
\end{cases}
\end{equation}
each in the sense of distributions on $\R^{1+n}_{+}$.
  The last equation is equivalent to $f_t=f(t,\cdot)\in  \mC url_{\ta,0}$.
  Moreover, using that $(w^{-1}Ag)_\no= f_\no$, $g_\ta= f_\ta$ and
  $g_\no= (Bf)_\no= A_{\no\no}^{-1}(wf_\no- A_{\no\ta}f_\ta)$,
  the first two equations are seen to be equivalent to the equation $\pd_t f+ DB f=0$ in the prescribed sense.
  \end{proof}

  Next, the strategy in \cite{AA1} is to integrate the weak differential equation  \eqref{firstorderweak}
  to obtain an equivalent formulation in the Duhamel sense. Again,  this  can be followed almost line by line, once we have the following density lemma.

  \begin{lem} The space $\mD_{w}$ is dense in  $H^{1}_\comp(\R_+;\mH) \cap L^2_\comp(\R_+;\dom(D))$, where the subscript $\comp$ means that elements have compact support in $\R_{+}$. Thus, if $f\in L^2_\loc(\R_+;\mH^{0})$, \eqref{firstorderweak} holds for any $\varphi\in H^{1}_\comp(\R_+;\mH) \cap L^2_\comp(\R_+;\dom(D))$ if it does for any $\varphi\in\mD_{w}$.
\end{lem}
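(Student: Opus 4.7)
The plan is to reduce the claim to the spatial density results already proven, via a standard smoothing-and-tensorization in the $t$-variable. Let $\varphi\in H^1_\comp(\R_+;\mH)\cap L^2_\comp(\R_+;\dom(D))$ with support contained in $[a,b]\times\R^n$ for some $0<a<b<\infty$. First I would convolve $\varphi$ with a smooth mollifier $\rho_\eps(t)$ in the $t$-variable alone: since $D$ acts only in $x$, convolution in $t$ commutes with $D$ and is bounded on $\mH$, so the mollified function $\varphi^\eps$ lies in $C^\infty(\R_+;\dom(D))$ (with $\dom(D)$ carrying its graph norm), is supported in $[a-\eps,b+\eps]\subset(0,\infty)$ for $\eps$ small, and $\varphi^\eps\to\varphi$ in $H^1(\R_+;\mH)$ as well as in $L^2(\R_+;\dom(D))$. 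Hence we may suppose $\varphi$ itself is smooth in $t$ with values in $\dom(D)$.

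Next I would approximate such a smooth $\varphi$ by finite tensor-product sums. Picking a smooth partition of unity $\{\chi_j\}$ of $\R$ with mesh $h$ and sample points $t_j\in\supp\chi_j$, set
$$\tilde\varphi_h(t,x):=\sum_j \chi_j(t)\,\varphi(t_j,x).$$
Since $\varphi$ is compactly supported and continuously differentiable with values in $\dom(D)$, only finitely many $j$ contribute and $\tilde\varphi_h\to\varphi$ together with $\pd_t\tilde\varphi_h\to\pd_t\varphi$ uniformly on $\R_+$ into $\dom(D)$ and $\mH$ respectively as $h\to 0$, which gives convergence in the target norm. Thus it suffices to approximate each finite sum $\sum_j \chi_j(t)u_j(x)$ with $\chi_j\in C^\infty_0(\R_+)$ and $u_j\in\dom(D)$ by elements of $\mD_{w}$. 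Splitting $u_j=(u_j^\no,u_j^\ta)$, the definition of $H^1(w)$ yields $\phi_j^\no\in C^\infty_0(\R^n;\C^m)$ close to $u_j^\no$ in the $H^1(w)$-norm, while Lemma \ref{lem:gradient}(2) yields $w^{-1}\psi_j$ with $\psi_j\in C^\infty_0(\R^n;\C^{mn})$ close to $u_j^\ta$ in the graph norm of $\divv_{w}$. Assembling the tensor sum with these spatial approximations produces a member of $\mD_{w}$ close to $\sum_j \chi_j\otimes u_j$ in the $H^1(\R_+;\mH)\cap L^2(\R_+;\dom(D))$ norm.

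The second assertion is then immediate, since each term in \eqref{firstorderweak} is continuous in $\varphi$ with respect to this norm (Cauchy--Schwarz over the compact $t$-support, using boundedness of $B$ for the second term), so validity on the dense set $\mD_{w}$ extends to the whole ambient space. The only mild subtlety is the $w^{-1}$ factor in the tangential slot of $\mD_{w}$: it precisely matches the weight built into $\divv_{w}=w^{-1}\divv(w\,\cdot)$, and this is exactly what makes Lemma \ref{lem:gradient}(2), rather than raw $C^\infty_0$-density in $L^2(w;\C^{mn})$, the right spatial input.
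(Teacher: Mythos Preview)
Your proof is correct and follows essentially the same strategy as the paper's own proof, which is only a one-sentence sketch invoking ``(2) in Proposition \ref{prop:D} and standard truncation and regularization in the $t$-variable'' for the density, and then continuity of the pairing for the extension. You have simply carried out in detail what the paper leaves implicit: mollification in $t$ (which commutes with $D$), reduction to finite tensor sums via a partition of unity, and then the spatial density from Lemma \ref{lem:gradient}(2) and the definition of $H^1(w)$, which together amount exactly to Proposition \ref{prop:D}(2).
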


 \begin{proof} The density of $\mD_{w}$ in  $H^{1}_\comp(\R_+;\mH) \cap L^2_\comp(\R_+;\dom(D))$ can be easily established using (2) in Proposition \ref{prop:D} and standard truncation and regularization in the $t$-variable.  If  $f\in L^2_\loc(\R_+;\mH^{0})$ and $\varphi\in H^{1}_\comp(\R_+;\mH) \cap L^2_\comp(\R_+;\dom(D))$ then the integral  in \eqref{firstorderweak} makes sense and vanishes by approximating $\varphi$ by elements in $\mD_{w}$.
\end{proof}

In the above proposition, we are mostly interested in  having $g\in L^2_\loc(\R_+;\mH)$. Recall that
    $$\mH^{0}=\clos{\ran(D)}= L^{2}(\R^n,w;  \C^{m(1+n)})\cap \mC url_{\ta,0}=\mH\cap \mC url_{\ta,0}.$$
    In particular, $g\in L^2_\loc(\R_+;\mH)$ if and only if $f\in L^2_\loc(\R_+;\mH^{0})$ and we can apply the above lemma.
Formally writing $\pd_{t}f+ DB_{0}f= D(\E f)$, $\E=B_{0}-B$, where $B_{0}$ is now  multiplication by a $t$-independent matrix,
the integration of \eqref{firstorderweak} leads to the following equation
\begin{equation}   \label{eq:inteqroadmap}
  f_{t}= e^{-tDB_{0}} E_{0}^+ h^+ + (S_Af)_{t},
\end{equation}
for a unique $h^+\in \mH^+_{DB_{0}}$
and where $S_A$ is the vector-valued singular integral operator given
\begin{equation}   \label{eq:firstformalSAdefn}
    (S_A f)_t :=  \int_0^t e^{-(t-s)DB_0} E_0^+ D \E_s f_s \, ds - \int_t^\infty e^{(s-t)DB_0} E_0^- D\E_s f_s\,  ds.
\end{equation}
Here $E_{0}^\pm=\chi^\pm(DB_{0})$ are the projections defined in Section \ref{sec:consequences}
and $ \mH^+_{DB_{0}}:= E_{0}^\pm\mH $ are the ranges of  the respective projections. We also use  the notation $g_{t}=g(t,\cdot)$.
This operator can be rigorously defined using  the maximal regularity operator for $|DB_0|$ viewed from the operational calculus point of view as $F(|DB_0|)$ with $F(z)$ being the operator-valued analytic function given by
$$
(F(z)g)_t:= \int_0^t ze^{-(t-s) z}g_s\,  ds,  \ \re z>0,
$$
so  \eqref{eq:firstformalSAdefn} becomes
\begin{equation}    \label{eq:inteqopcalcroadmap}
   S_A= F(|DB_0|) \hE_0^+\E + F^*(|DB_0|) \hE_0^- \E,
\end{equation}
where $\hE_0^\pm$ are bounded operators on $\mH$ such that $E_0^\pm D= (DB_0)\hE_0^\pm$. These two representations and Proposition \ref{prop:equi} allow us to prove most relevant boundedness results concerning the regularity and Neumann problems. For the Dirichlet problem, they can be used as well  in an appropriate sense (see \cite{R}) but there is  another useful representation using the operator \begin{equation}    \label{eq:tildeSA}
  (\tS_A f)_t:=  \int_0^t e^{-(t-s)B_0D} \tE_0^+ \E_s f_s \, ds - \int_t^\infty e^{(s-t)B_0D} \tE_0^- \E_s f_s \, ds,
\end{equation} where $\tE_0^\pm=\chi^\pm(B_{0}D)$, and the vector field defined by
$$
   v_{t}  := e^{-tB_{0}D} \tE_{0}^+  \tilde h^+ + (\tS_A f)_{t},
   $$
   for some $\tilde h^+$. From the  intertwining property $b(DB_0)D= Db(B_0D)$ of the functional
calculi of $DB_0$ and $B_0D$, one has $D\tS_A =S_{A}$, so that the relation $D\tilde h^+=h^+$, which uniquely determines a choice $\tilde h^+\in \mH^+_{B_{0}D}$, shows  that $Dv=f$. Solutions $u$ to the second order equations are related to $v$ in the sense that   there exists a constant $c\in \C^m$ such that
   $
   u =c  -v_\no.
$
This means that the tangential part $-v_{\ta}$ encodes a conjugate to the solution $u$. This notion of conjugate was further developed in \cite{AA2}.

These representations are justified provided one has  the operator bounds in the next section.

\subsection{Functions spaces and operator estimates}

Here, we give  the definition of the functions spaces associated to the BVPs. What changes compared to \cite{AA1} is that
the Lebesgue measure $dx$ on $\R^n$ is replaced by $dw$ and Lebesgue measure $d\bx=dtdx$ on $\R^{1+n}$  by $d\uw=dtdw$ where $\uw$ is the $A_{2}$ weight on $\R^{1+n}$ defined by $\uw(t,x)=w(x)$. The only property required for $w$  in this section is the doubling property of $dw$, except when we use the quadratic estimate which uses the $A_{2}$ property.  Also we incorporate the posterior duality and multiplier results of
 \cite{HR} (for $dx$), which can be extended to the weighted setting too. See below.

\begin{defn}    \label{defn:NTandC} For an $L^q_{loc}$ function $f$, $1\le q\le \infty$, define
$W_{q}f(t,x)= \left(\barint_{\hspace{-2pt}W(t,x)} |f|^q \, d\uw\right)^{1/q}$ with the usual essential supremum definition if $q=\infty$ and where $W(t,x):= (c_0^{-1}t,c_0t)\times B(x;c_1t)$ is a Whitney region, for some fixed constants $c_0>1$, $c_1>0$.
The {\em weighted non-tangential maximal function} of an $L^2_{\loc}$ function $f$ in $\R^{1+n}_+$ is
$$
  \tN(f)(x):= \sup_{t>0}  W_{2}f(t,x), \qquad x\in \R^n,
$$
The {\em weighted Carleson functional} of an $L^1_{\loc}$ function $f$ is
$$
  Cf(x) := \sup_{Q\ni x} \frac 1{w(Q)} \int_{(0, \ell(Q))\times Q} |f(t,y)| \, d\uw(t,y),\qquad x\in\R^n,
$$
where the supremum is taken over all cubes $Q$ in $\R^n$ containing $x$, with $\ell(Q)$ denoting their sidelengths.
The {\em modified weighted Carleson norm} of a measurable function $g$ in $\R^{1+n}_+$ is
$$
  \|g\|_* := \| C(W_{\infty}(|g|^2/t))\|_\infty^{1/2}.
$$
\end{defn}

We will use the modified Carleson norm to measure the size of perturbations
of $t$-independent coefficients $A_0$.
  The proof of Lemma 2.2 in \cite{AA1} adapts to show that if there exists $A_{0}(x)$ with
  $\|w^{-1}(A-A_0)\|_* < \infty$, then it is unique and $w^{-1}A_{0}$ is bounded, and accretive on $\clos{\ran(D)}$, so that we may call $A_{0}$ the trace of $A$.

  \begin{defn}    \label{defn:XY}
   Define the Banach/Hilbert spaces
\begin{align*}
  \mX & := \sett{ f\in L^2_{loc}(\R^{1+n}_+; \C^{m(1+n)})}{ \|\tN(f)\|<\infty }, \\
  \mC & :=   \sett{f\in L^2_{loc}(\R^{1+n}_+; \C^{m(1+n)}) }{ \|C(W_{2}f)\|<\infty },\\
  \mY &:= \sett{f\in L^2_{loc}(\R^{1+n}_+; \C^{m(1+n)})}{\int_0^\infty \| f_t \|^2 \, tdt < \infty}, \\
  \mY^* &:= \sett{f\in L^2_{loc}(\R^{1+n}_+; \C^{m(1+n)}) }{\int_0^\infty \| f_t \|^2\,  \frac{dt}t < \infty},
\end{align*}
with the obvious norms.
\end{defn}

We use the same notation as in \cite{AA1}, but of course, here all norms are weighted. Note that $\mY^*$ is the dual space of
$\mY$ with respect to  the inner product $\langle\ , \ \rangle_{\uw}$ of $\mH=L^2(\R^{1+n}_+, d\uw ;\C^{m(1+n)})$.

\begin{lem}      \label{lem:XlocL2}
  There are estimates
$$
  \sup_{t>0} \frac 1t\int_t^{2t} \| f_s \|^2\,  ds \lesssim \| \tN(f) \|^2 \lesssim \int_0^\infty \| f_s \|^2\, \frac {ds}s.
$$
In particular $\mY^*\subseteq \mX$.
\end{lem}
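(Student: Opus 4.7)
The plan is to prove the two inequalities separately, with both relying on Fubini together with the doubling property of $dw$ (equivalently, of $d\underline{w}$).

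For the first inequality, I would fix $t>0$ and write
$\frac{1}{t}\int_t^{2t}\|f_s\|^2\,ds = \frac{1}{t}\int_t^{2t}\int_{\R^n}|f(s,y)|^2\,dw(y)\,ds$.
The trick is to rewrite each pointwise value as an average in an auxiliary variable $x$:
$|f(s,y)|^2 = \frac{1}{w(B(y,c_1 t))}\int_{B(y,c_1 t)}|f(s,y)|^2\,dw(x)$,
and then swap the $x$- and $y$-integrations using the symmetry $y\in B(x,c_1 t)\Leftrightarrow x\in B(y,c_1 t)$. Choosing $c_0$ large enough that $(t,2t)\subseteq(c_0^{-1}t,c_0 t)$, the inner $(s,y)$-integral runs over $W(t,x)$. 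Doubling of $dw$ gives $w(B(y,c_1 t))\sim w(B(x,c_1 t))\sim \underline{w}(W(t,x))/t$ whenever $|x-y|\le c_1 t$, so after the swap the inner integral is $\lesssim (W_2 f(t,x))^2 \le \tN(f)(x)^2$. Integrating in $x$ against $dw$ and taking the supremum over $t$ yields the first bound.

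For the second inequality, the main idea is a dyadic discretisation in $t$ combined with the crude but sufficient estimate $\sup_k\le\sum_k$ of nonnegative terms. For $t\in[2^{k-1},2^k]$ the Whitney region $W(t,x)$ is contained in an enlarged, $t$-independent region $\widetilde{W}(2^k,x):=(C^{-1}2^k,C2^k)\times B(x,C2^k)$ whose $\underline w$-mass is comparable to $\underline{w}(W(t,x))$ by doubling of $dw$. Hence $(W_2f(t,x))^2\lesssim (\widetilde W_2 f(2^k,x))^2$, giving
$\tN(f)(x)^2 \lesssim \sup_k (\widetilde W_2 f(2^k,x))^2 \le \sum_k (\widetilde W_2 f(2^k,x))^2$.

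Integrating this sum in $x$ against $dw$ and applying Fubini, one uses that for each fixed $s>0$ only finitely many $k$ (those with $2^k\sim s$) satisfy $s\in(C^{-1}2^k,C2^k)$, and that for $x\in B(y,C2^k)$ doubling gives $w(B(x,C2^k))\sim w(B(y,C2^k))$. These two observations reduce the $(k,x)$-integrations to
$\int_{B(y,Cs)} \frac{dw(x)}{s\cdot w(B(x,Cs))}\lesssim \frac{1}{s}$, and what remains is precisely $\int_0^\infty\|f_s\|^2\,\frac{ds}{s}$. The inclusion $\mY^*\subseteq\mX$ then follows from the finiteness of this integral on $\mY^*$.

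The potentially delicate point is the $\sup_k\le\sum_k$ step, which is wasteful in general but harmless here because the dyadic Whitney layers have bounded $t$-overlap, ensuring the resulting double integral collapses back to an integral against $\frac{dt}{t}$ without any loss. All the doubling comparisons between $w(B(y,r))$ and $w(B(x,r))$ for $|x-y|\lesssim r$ are routine consequences of $w\in A_2$ (they only use that $dw$ is doubling), so no $A_2$-specific tool is needed beyond what is already recorded in Section~2.
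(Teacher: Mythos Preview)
Your argument is correct and follows the standard route for this kind of estimate: introduce an auxiliary spatial variable, exchange integrals using the symmetry of the ball condition, and invoke doubling of $dw$; for the upper bound, dyadically discretise in $t$, use $\sup\le\sum$, and collapse the bounded overlap. The paper in fact offers no proof of this lemma at all --- it is stated as a routine fact --- so there is nothing to compare against beyond noting that your approach is the expected one.

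One small caveat: you write ``choosing $c_0$ large enough that $(t,2t)\subseteq(c_0^{-1}t,c_0 t)$'', but $c_0>1$ is a \emph{fixed} constant in Definition~\ref{defn:NTandC}, not a parameter at your disposal. This is harmless: either observe that different choices of $c_0,c_1$ yield equivalent $\tN$-norms by doubling (so you may as well assume $c_0\ge 2$), or evaluate the Whitney average at a shifted time $\tau\sim t$ chosen so that $[t,2t]\subseteq(c_0^{-1}\tau,c_0\tau)$, or subdivide $[t,2t]$ into $O(1)$ pieces each fitting inside a single Whitney interval. Any of these fixes the issue without changing the substance of your proof.
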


 A fundamental quantity is the norm  of multiplication operators mapping $\mX$ into $\mY^*$ or $\mY$ into $\mC$.

\begin{lem}   \label{lem:Carleson} The dual of $\mX$  with respect to the pairing   $\langle\ , \ \rangle_{\uw}$ is $\mC$.
  For  functions $\E: \R^{1+n}_+\to \mL(\C^{m(1+n)})$, we have estimates
$$
  \|\E\|_\infty \lesssim  \|\E\|_* \sim \sup_{\|f\|_\mX=1}\|\E f\|_{\mY^*} \sim \sup_{\|f\|_\mY=1}\|\E f\|_{\mC}.
$$
\end{lem}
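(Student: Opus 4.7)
The plan is to adapt the Coifman--Meyer--Stein tent duality to the doubling measure $d\uw$, then read off the multiplier bounds from the duality together with a Lebesgue differentiation argument. Since $dw$ is doubling (as $w\in A_{2}$) and the $t$-direction is unchanged, the standard tent/Carleson machinery transposes without structural change. For the duality $\mX^{*}=\mC$, given $f\in\mX$ and $g\in\mC$, Whitney averaging and Fubini yield $|\langle f,g\rangle_{\uw}|\lesssim \iint W_{2}f\cdot W_{2}g\,\tfrac{d\uw}{t}$; viewing $(W_{2}g)^{2}\tfrac{d\uw}{t}$ as a $dw$-Carleson measure of norm $\|g\|_{\mC}^{2}$, the weighted Carleson embedding bounds this further by $\int\tN(f)\,C(W_{2}g)\,dw\le \|f\|_{\mX}\|g\|_{\mC}$. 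The reverse inclusion is the classical tent-atomic decomposition of $\mX$, carried out with atoms supported on Carleson boxes $\carl Q$ and $L^{2}(\uw)$-normalized by $\uw(\carl Q)^{-1/2}$.

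The bound $\|\mE\|_{\infty}\lesssim\|\mE\|_{*}$ comes from Lebesgue differentiation. At a Lebesgue point $(t_{0},x_{0})$ of $|\mE|^{2}$, whenever the Whitney region $W(s,y)$ contains $(t_{0},x_{0})$ one has $W_{\infty}(|\mE|^{2}/t)(s,y)\geq |\mE(t_{0},x_{0})|^{2}/t_{0}$. The set of such $(s,y)$ has $d\uw$-measure comparable to $t_{0}\,w(B(x_{0},c_{1}t_{0}))$, so for any cube $Q\ni x_{0}$ with $\ell(Q)\sim t_{0}$ whose Carleson box contains this set,
\[
\|\mE\|_{*}^{2}\geq \frac{1}{w(Q)}\iint_{\carl Q}W_{\infty}(|\mE|^{2}/t)\,d\uw\gtrsim \frac{|\mE(t_{0},x_{0})|^{2}}{t_{0}}\cdot\frac{t_{0}\,w(B(x_{0},c_{1}t_{0}))}{w(Q)}\sim |\mE(t_{0},x_{0})|^{2}
\]
by doubling of $dw$.

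The middle equivalence $\|\mE\|_{*}\sim\sup_{\|f\|_{\mX}=1}\|\mE f\|_{\mY^{*}}$ is the heart of the multiplier statement. For $\lesssim$, using the pointwise bound $|\mE(t,x)|^{2}/t\leq W_{\infty}(|\mE|^{2}/t)(t,x)$,
\[
\|\mE f\|_{\mY^{*}}^{2}=\iint \tfrac{|\mE|^{2}|f|^{2}}{t}\, d\uw\leq \iint W_{\infty}(|\mE|^{2}/t)\,|f|^{2}\,d\uw;
\]
since $W_{\infty}(|\mE|^{2}/t)\,d\uw$ is a $dw$-Carleson measure of norm $\|\mE\|_{*}^{2}$, the weighted Carleson embedding (with $\tN$ built from $W_{2}$-averages) gives $\|\mE f\|_{\mY^{*}}^{2}\lesssim \|\mE\|_{*}^{2}\|\tN(f)\|_{L^{2}(w)}^{2}=\|\mE\|_{*}^{2}\|f\|_{\mX}^{2}$. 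For $\gtrsim$, test on $f=\mathbf{1}_{\carl Q}$ for a cube $Q$ nearly maximizing the Carleson average: then $\|\mathbf{1}_{\carl Q}\|_{\mX}^{2}\lesssim w(Q)$, while $\|\mE\mathbf{1}_{\carl Q}\|_{\mY^{*}}^{2}=\iint_{\carl Q}|\mE|^{2}\tfrac{d\uw}{t}$ is comparable to $\|\mE\|_{*}^{2}w(Q)$ thanks to the standard comparability $\iint_{\carl Q}W_{\infty}(|\mE|^{2}/t)\,d\uw\lesssim \iint_{\carl{Q^{*}}}|\mE|^{2}\tfrac{d\uw}{t}$ on a bounded dilate $Q^{*}$.

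The last equivalence is a formal duality: since $(\mY,\mY^{*})$ pair under $\langle\cdot,\cdot\rangle_{\uw}$ by Cauchy--Schwarz against $t\,dt$ versus $dt/t$, and $(\mX,\mC)$ pair by the first step,
\[
\sup_{\|f\|_{\mY}=1}\|\mE f\|_{\mC}=\sup_{\substack{\|f\|_{\mY}=1\\ \|g\|_{\mX}=1}}|\langle \mE f,g\rangle_{\uw}|=\sup_{\|g\|_{\mX}=1}\|\mE^{*}g\|_{\mY^{*}}\sim\|\mE^{*}\|_{*}=\|\mE\|_{*},
\]
as $|\mE^{*}|=|\mE|$ pointwise. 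The main obstacle is justifying the weighted tent duality $\mX^{*}=\mC$ for matrix-valued integrands, but only the doubling of $dw$ is used, so the classical Coifman--Meyer--Stein argument carries through verbatim.
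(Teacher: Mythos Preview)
Your approach is essentially the one the paper points to (the Hyt\"onen--Ros\'en/Huang argument transported to the doubling measure $dw$), and the duality $\mX^{*}=\mC$, the bound $\|\mE\|_\infty\lesssim\|\mE\|_*$, the upper bound $\|\mE f\|_{\mY^*}\lesssim\|\mE\|_*\|f\|_\mX$, and the passage from the $\mX\to\mY^*$ norm to the $\mY\to\mC$ norm by duality are all handled correctly in outline.

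There is, however, a real gap in your lower bound $\|\mE\|_*\lesssim\sup_{\|f\|_\mX=1}\|\mE f\|_{\mY^*}$. Testing on $f=\mathbf 1_{\carl Q}$ only yields $\|\mE\mathbf 1_{\carl Q}\|_{\mY^*}^{2}=\iint_{\carl Q}|\mE|^{2}\tfrac{d\uw}{t}$, whereas $\|\mE\|_*^{2}$ is built from the Whitney $L^\infty$ functional $W_\infty(|\mE|^{2}/t)$. The ``standard comparability'' you invoke,
\[
\iint_{\carl Q}W_\infty(|\mE|^{2}/t)\,d\uw\ \lesssim\ \iint_{\carl{Q^{*}}}\frac{|\mE|^{2}}{t}\,d\uw,
\]
is false in general: take $|\mE|^{2}/t$ supported on a set meeting every Whitney box in $\carl Q$ but of arbitrarily small $\uw$-measure; then the left side is $\gtrsim\uw(\carl Q)$ while the right side is small. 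So the indicator test function cannot recover the $W_\infty$ in $\|\,\cdot\,\|_*$.

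The fix (this is what is actually done in \cite{HR} and \cite{Huang}) is to choose a test function that \emph{concentrates} where $|\mE|$ is near its Whitney supremum. Partition $\carl Q$ into Whitney boxes $W_k$; on each pick a unit vector $\xi_k$ and a measurable set $E_k\subset W_k$ of positive $\uw$-measure on which $|\mE\xi_k|^{2}/s\ge\tfrac12\,\mathrm{ess\,sup}_{W_k}(|\mE|^{2}/s)$, and set
\[
f=\sum_k \Big(\tfrac{\uw(W_k)}{\uw(E_k)}\Big)^{1/2}\xi_k\,\mathbf 1_{E_k}.
\]
Since neighbouring Whitney boxes have comparable $\uw$-measure and bounded overlap, one checks $W_2 f\lesssim \mathbf 1_{\carl{Q^*}}$ pointwise, hence $\|f\|_\mX^{2}\lesssim w(Q)$ by doubling. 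On the other hand,
\[
\|\mE f\|_{\mY^*}^{2}=\sum_k \tfrac{\uw(W_k)}{\uw(E_k)}\iint_{E_k}\frac{|\mE\xi_k|^{2}}{s}\,d\uw\ \ge\ \tfrac12\sum_k \uw(W_k)\,\mathrm{ess\,sup}_{W_k}(|\mE|^{2}/s)\ \gtrsim\ \iint_{\carl Q}W_\infty(|\mE|^{2}/t)\,d\uw,
\]
which, divided by $w(Q)$ and optimised over $Q$, gives $\|\mE\|_*^{2}$. With this correction your argument goes through and matches the paper's cited proof.
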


\begin{proof} When $w=1$, the duality was established in \cite{HR} and recently  another more direct proof  was given in \cite{Huang}. This second proof passes to the  doubling measure setting (personal communication  of Amenta and Huang). Next,
the first inequality is proved in a similar way than in  \cite{AA1}.  The equivalences for the pointwise multiplier operator norms were also established in \cite{HR}, and reproved in \cite{Huang} when $w=1$, and the latter proof extends in a doubling measure context as well  (personal communication  of Amenta and Huang).
  \end{proof}

  \begin{prop}\label{prop:X}
  Let $u\in W^{1,2}_{\loc}(\R^{1+n}_+,w) $  be such that $\|\tN(\nabla_{t,x}u)\|<\infty$. Then there exists $u_{0}\in \dot H^1(\R^n,w)$ (as defined in the proof of  Lemma \ref{lem:gradient}) such that $\|u_{t}-u_{0}\| \lesssim t$,  $\|\nabla_{x}u_{0}\| \lesssim \|\tN(\nabla_{t,x}u)\|$,  and for $dw$ almost every $x_{0} \in \R^n$,
\begin{equation}
\label{eq:pointwiset}
 \barint_{\hspace{-6pt}W(t,x_{0})} |u-u_{0}(x_{0})|^2\, d\uw \leq t^2g(x_{0}).
\end{equation}
with $g\in L^1(\R^n,w)$.  Conversely, if $u_{0}\in \dot H^1(\R^n,w)$, then $u=e^{t^2\Delta_{w}}u_{0}$ satisfies $\|\tN(\nabla_{t,x}u)\| \lesssim \|\nabla_{x} u_{0}\|$.
 \end{prop}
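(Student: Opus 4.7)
The forward direction will construct the trace $u_{0}$ via weighted $L^{2}$-convergence of slices $u_{t}=u(t,\cdot)$ as $t\downarrow 0$ together with weak compactness of their gradients; the pointwise estimate will combine the Poincar\'e inequality on Whitney regions with a Sobolev pointwise oscillation bound for $u_{0}$. The converse rests on the functional calculus for $-\Delta_{w}$ developed in Lemma~\ref{lem:gradient} plus standard heat kernel upper bounds. The genuinely delicate step is the pointwise bound: upgrading the \emph{global} $L^{2}(w)$-convergence $\|u_{t}-u_{0}\|_{L^{2}(w)}\lesssim t$ to a \emph{pointwise} uniform-in-$t$ quadratic-rate bound with an $L^{1}(w)$ majorant.

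\emph{Existence of the trace.} Combining the definition of $\tN$ with Fubini (using only the doubling of $dw$) yields
\[
\frac{1}{t}\int_{c_{0}^{-1}t}^{c_{0}t}\|\nabla_{r,x}u(r,\cdot)\|_{L^{2}(w)}^{2}\,dr \;\lesssim\; \|\tN(\nabla_{t,x}u)\|_{L^{2}(w)}^{2},
\]
so for a.e.\ $r>0$ the slice $\nabla_{r,x}u(r,\cdot)$ lies in $L^{2}(w;\C^{1+n})$. Since $u\in W^{1,2}_{\loc}(\reu,w)$, for a.e.\ $x$ the map $r\mapsto u(r,x)$ is absolutely continuous, hence $u_{t}-u_{s}=\int_{s}^{t}\partial_{r}u\,dr$ pointwise a.e. Decomposing $[s,t]$ into dyadic subintervals $[2^{j}s,2^{j+1}s]$ and applying Cauchy--Schwarz on each, the slice bound upgrades to $\|u_{t}-u_{s}\|_{L^{2}(w)}\lesssim|t-s|\,\|\tN(\nabla u)\|_{L^{2}(w)}$. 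Consequently $\{u_{t}-u_{1}\}_{t>0}$ is Cauchy in $L^{2}(w)$ as $t\downarrow 0$, producing $u_{0}\in L^{2}_{\loc}(w)$ with $\|u_{t}-u_{0}\|_{L^{2}(w)}\lesssim t$. Selecting $t_{n}\downarrow 0$ along which $\|\nabla_{x}u(t_{n},\cdot)\|_{L^{2}(w)}\lesssim\|\tN(\nabla u)\|_{L^{2}(w)}$, weak compactness in $L^{2}(w;\C^{n})$ provides a weak limit which distributional testing identifies as $\nabla u_{0}$; thus $u_{0}\in \dot H^{1}(w)$ and $\|\nabla u_{0}\|_{L^{2}(w)}\lesssim\|\tN(\nabla u)\|_{L^{2}(w)}$ by weak lower semicontinuity.

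\emph{Pointwise estimate.} At a joint Lebesgue point $x_{0}$ of $u_{0}$ and $|\nabla u_{0}|^{2}$ for the doubling measure $dw$, set $c(t,x_{0}):=\barint_{W(t,x_{0})}u\,d\uw$ and split
\[
u(r,y)-u_{0}(x_{0})=[u(r,y)-c(t,x_{0})]+[c(t,x_{0})-u_{0}(x_{0})].
\]
The weighted Poincar\'e inequality on the Whitney region $W(t,x_{0})$ (valid since $\uw\in A_{2}(\R^{1+n})$) controls the first piece by $t^{2}\tN(\nabla u)(x_{0})^{2}$ in $L^{2}$-mean. For the second, the pointwise Sobolev estimate $|u_{0}(x)-u_{0}(y)|\lesssim|x-y|(M|\nabla u_{0}|(x)+M|\nabla u_{0}|(y))$ combined with the norm convergence $u_{t}\to u_{0}$ and Muckenhoupt's theorem yields $|c(t,x_{0})-u_{0}(x_{0})|^{2}\leq t^{2}h(x_{0})$ with $h\in L^{1}(w)$ controlled by $(M|\nabla u_{0}|)^{2}$ and $\tN(\nabla u)^{2}$. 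Setting $g:=C(h+\tN(\nabla u)^{2})$ completes the claim.

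\emph{Converse.} Given $u_{t}=e^{t^{2}\Delta_{w}}u_{0}$, the identity $\nabla=\mR_{w}(-\Delta_{w})^{1/2}$ from Lemma~\ref{lem:gradient}, the commutation of functional-calculus operators of $-\Delta_{w}$, and the isometry of $\mR_{w}$ on $L^{2}(w)$ give
\[
\|\nabla u_{t}\|_{L^{2}(w)} = \|e^{t^{2}\Delta_{w}}(-\Delta_{w})^{1/2}u_{0}\|_{L^{2}(w)} \leq \|\nabla u_{0}\|_{L^{2}(w)},
\]
and the spectral theorem likewise yields $\|t\,\partial_{t}u_{t}\|_{L^{2}(w)}\lesssim\|\nabla u_{0}\|_{L^{2}(w)}$. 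The non-tangential maximal bound then follows from Gaussian-type upper bounds on the heat kernel of $-\Delta_{w}$ (in the spirit of Fabes--Stroock for $A_{2}$-degenerate divergence-form operators), which dominate the heat extension pointwise by the weighted Hardy--Littlewood maximal operator $M_{w}$; boundedness of $M_{w}$ on $L^{2}(w)$ delivers $\|\tN(\nabla u)\|_{L^{2}(w)}\lesssim\|\nabla u_{0}\|_{L^{2}(w)}$.
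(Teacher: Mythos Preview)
Your construction of the trace $u_0$ and the weak-compactness route to $u_0\in\dot H^1(w)$ are fine. The genuine gap is in the pointwise bound~\eqref{eq:pointwiset}, precisely at the step you flag as delicate: controlling $|c(t,x_0)-u_0(x_0)|$. You invoke the Haj\l asz inequality for $u_0$ together with the global rate $\|u_t-u_0\|_{L^2(w)}\lesssim t$. The Haj\l asz part correctly handles $\bigl|\barint_{\hspace{-2pt}B(x_0,c_1t)}u_0\,dw - u_0(x_0)\bigr|\lesssim t\bigl(M|\nabla u_0|(x_0)+M_wM|\nabla u_0|(x_0)\bigr)$. But the remaining piece $\bigl|c(t,x_0)-\barint_{\hspace{-2pt}B}u_0\,dw\bigr| = \bigl|\barint_{\hspace{-2pt}W(t,x_0)}(u-u_0)\,d\uw\bigr|$ is fed only by the \emph{global} bound $\|u_s-u_0\|_{L^2(w)}\lesssim s$, and Cauchy--Schwarz then gives merely
\[
\Bigl|\barint_{\hspace{-6pt}W(t,x_0)}(u-u_0)\,d\uw\Bigr| \lesssim \frac{t}{w(B(x_0,c_1t))^{1/2}},
\]
which blows up as $t\to0$ and cannot be majorised by $t\sqrt{h(x_0)}$ with $h\in L^1(w)$. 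The global $L^2$ rate simply does not localise. The paper avoids this by never passing through the slices $u_s$: it runs a telescoping argument directly on Whitney averages, using Poincar\'e on overlapping Whitney boxes to obtain $|c(t,x_0)-c(t',x_0)|\lesssim t\,\tN^1(\nabla u)(x_0)$ for all $t'<t$, so that $|c(t,x_0)-u_0(x_0)|\lesssim t\,\tN^1(\nabla u)(x_0)$ with $\tN^1(\nabla u)^2\in L^1(w)$. This is entirely local to the cone over $x_0$. The same telescoping between cones over two boundary points yields the Haj\l asz-type bound $|u_0(x_0)-u_0(y_0)|\lesssim|x_0-y_0|\bigl(\tN^1(\nabla u)(x_0)+\tN^1(\nabla u)(y_0)\bigr)$, which is how the paper gets $u_0\in\dot H^1(w)$.

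Your converse also has a loose end. Pointwise Gaussian heat-kernel domination would control $\tN(u)$ by $M_w u_0$, but the claim concerns $\tN(\nabla_{t,x}u)$; since $\nabla_x$ and $e^{t^2\Delta_w}$ do not commute, you would need \emph{gradient} heat-kernel bounds for $-\Delta_w$, which are not established here and are delicate for general $A_2$ weights. The paper instead defers the converse to Theorem~\ref{thm:NTmaxandaeCV}: writing $\nabla_xe^{t^2\Delta_w}u_0$ and $\partial_te^{t^2\Delta_w}u_0$ as components of $\varphi(tD)h$ for $h\in\clos{\ran(D)}$ with $h_\ta=\nabla_x u_0$ and suitable holomorphic $\varphi$, the non-tangential maximal estimate proved there gives the bound directly.
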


\begin{proof} The first part is the weighted version of a result in \cite{KP}.
First, it is easy to show   $\|u_{t}-u_{t'}\| \lesssim |t-t'|$ by using $u_{t}-u_{t'}= \int_{t'}^t \pd_{s}u_{s}\, ds$, Cauchy-Schwarz inequality and   the left hand  inequality in Lemma \ref{lem:XlocL2}. This gives the existence of  $u_{0}\in L^2_{\loc}(\R^n,w)$ with $\|u_{t}-u_{0}\| \lesssim t$  (observe that only the difference is in $L^2(\R^n,w)$).
Next, the Poincar\'e inequality and a telescopic  sum argument implies
$$
\bigg |\barint_{\hspace{-6pt}W(t,x_{0})} u \,  d\uw - \barint_{\hspace{-6pt}W(t',x_{0})} u \,  d\uw\bigg| \le C\tau  \tN^1(\nabla_{t,x}u)(x_{0})
$$
whenever $t,t'\le \tau$ up to using a non-tangential maximal function with appropriately large Whitney regions and $\tN^1$ is the analogue of $\tN$ with $L^1$-averages. Thus for  every $x_{0}$ where  $u_{0}(x_{0})$ exists
$$
\bigg|\barint_{\hspace{-6pt}W(t,x_{0})} u \,  d\uw  -u_{0}(x_{0})\bigg| \lesssim t \tN^1(\nabla_{t,x}u)(x_{0})
$$
and $\tN^1(\nabla_{t,x}u)\in L^2(\R^n,w)$ by equivalences of norms if we change the Whitney regions.  By the Poincar\'e inequality again,
$$
\barint_{\hspace{-6pt}W(t,x_{0})} \bigg|u-\barint_{\hspace{-6pt}W(t,x_{0})} u\bigg|^2\, d\uw \le Ct^2\tN(\nabla_{t,x}u)(x_{0})
$$
and we deduce  \eqref{eq:pointwiset} on combining the last two inequalities.
 Finally, note that  if $x_{0}, y_{0}$ are different points and $t =  10 (c_{0}+c_{1}) |x_{0}-y_{0}|$, then
$$
\bigg |\barint_{\hspace{-6pt}W(t,x_{0})} u \,  d\uw - \barint_{\hspace{-6pt}W(t',y_{0})} u \,  d\uw\bigg| \le C |x_{0}-y_{0}|  \tN^1(\nabla_{t,x}u)(x_{0})
$$
again with  slightly larger Whitney regions in the definition of $\tN^1$, so that combining with
  inequalities above, we obtain
$$
|u_{0}(x_{0})-u_{0}(y_{0})| \le C |x_{0}-y_{0}|(\tN^1(\nabla_{t,x}u)(x_{0})+ \tN^1(\nabla_{t,x}u)(y_{0})).
$$
Using the theory of Sobolev spaces on the complete doubling  metric-measure space $(\R^n, |\ |, w)$, it follows that $u_{0}\in \dot H^1(\R^n,w)$ (identified with the Haj\l ash space), see \cite{HajlashKoskela}.

The converse will be proved after Theorem \ref{thm:NTmaxandaeCV}.
\end{proof}

At this stage, we do not know if $\nabla_{t,x}u$ has  almost everywhere limits or even strong $L^2(w)$ limits in the above averaged sense (although weak $L^2(w)$ convergence can be shown as in \cite{KP}). This will be the case, however, when $u$ is a solution of our systems.
We remark that in comparison, the space defined by $\int_0^\infty \|\nabla_{t,x}u_t\|^2\, tdt<\infty$ does not have a trace on $\R^n$.

With the above notation, we can state our main theorem for  $t$-independent $B_{0}$, thus for semigroups only.

\begin{thm}\label{thm:NTmaxandaeCV} Let $T=DB_{0}$ or $B_{0}D$.
Then one has the estimate
\begin{equation}
\label{eq:Ntmax}
\|e^{-t|T|}h\|_{\mX}  \sim \|h\| \sim \|\pd_{t}e^{-t|T|}h\|_{\mY} , \ \forall h\in \clos{\ran(T)}.
\end{equation}
Furthermore, for any $h\in \mH$ (not just $\clos{\ran(T)}$), we have that the Whitney averages of $e^{-t|T|}h$ converge to $h$ in $L^2$ sense, that is for $dw$ almost every $x_{0}\in \R^n$,
\begin{equation}
\label{eq:CVae}
\lim_{t\to 0}\ \barint_{\hspace{-6pt}W(t,x_{0})} |e^{-s|T|}h-h(x_{0})|^2\, d\uw=0.
\end{equation}
In particular, this implies the $dw$ almost everywhere convergence of Whitney averages
\begin{equation}
\label{eq:CVaew}
\lim_{t\to 0}\ \barint_{\hspace{-6pt}W(t,x_{0})} e^{-s|T|}h\,  d\uw=h(x_{0}).
\end{equation}
More generally, one can replace $e^{-s|T|}$ by any $\varphi(sT)$ where $\varphi$ is holomophic and bounded in some bisector containing $\sigma(T)$ and satisfies $|\varphi(z)| \lesssim |z|^{-\alpha}$ and $|\varphi(z)-a| \lesssim  |z|^\alpha$ for some $\alpha>0$, $a\in \C$. In this case,  convergence is towards $ah$, and only the upper bound $\|\varphi({tT})h\|_{\mX}  \lesssim  \|h\| $ holds if $a=0$.
\end{thm}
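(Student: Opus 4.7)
The plan is to reduce everything to the bounded holomorphic functional calculus and quadratic estimates of Proposition~\ref{prop:SFimpliesFC}, combined with off-diagonal decay of the semigroup $e^{-t|T|}$. The $\mY$-equivalence is immediate: writing $\partial_t e^{-t|T|}h=-t^{-1}\psi(tT)h$ with $\psi(z)=|z|e^{-|z|}$, the function $\psi$ is holomorphic on any bisector containing $\sigma(T)\setminus\{0\}$, non-trivial on each component, and satisfies $|\psi(z)|\lesssim\min(|z|,|z|^{-1})$, so estimate~\eqref{eq:psiT} yields
\[
\int_0^\infty\|\partial_t e^{-t|T|}h\|^2\, t\,dt=\int_0^\infty\|\psi(tT)h\|^2\,\tfrac{dt}{t}\sim\|h\|^2
\]
for $h\in\clos{\ran(T)}$.

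For the upper bound $\|e^{-t|T|}h\|_\mX\lesssim\|h\|$, I would first upgrade the resolvent off-diagonal decay of Lemma~\ref{lem:odd} to a corresponding estimate for $e^{-t|T|}$ on $\clos{\ran(T)}$:
\[
\|1_E e^{-t|T|}(1_F u)\|\lesssim\langle\mathrm{dist}(E,F)/t\rangle^{-N}\|u\|
\]
for every $N$, obtained by integrating the resolvent against a Cauchy kernel representing $e^{-tz}$ along a suitable contour. For $Q\in\dyadic_t$ and $h=\sum_{j\ge 0}h_j$ decomposed over $2Q$ and the annuli $2^{j+1}Q\setminus 2^jQ$, this decay together with Minkowski's inequality and the $L^2(w)$-boundedness of the dyadic maximal operator produces a rapidly convergent sum and gives $\|\tN(e^{-t|T|}h)\|_{L^2(w)}\lesssim\|h\|$, in the spirit of Lemma~\ref{lem:gammat}.

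For the $dw$-a.e. convergence \eqref{eq:CVae} with general $h\in\mH$, I split $h=h^0+h^++h^-$ along $\mH=\nul(T)\oplus\mH^+_T\oplus\mH^-_T$. On $\nul(T)$ the semigroup acts as the identity (by the convention $f(T)=f(0)I$), so \eqref{eq:CVae} reduces to the Lebesgue differentiation theorem in the doubling measure $dw$. On the range, for $h\in\dom(|T|)$ the identity $e^{-t|T|}h-h=-\int_0^t|T|e^{-s|T|}h\,ds$ gives $\|e^{-t|T|}h-h\|=O(t)$, so Whitney averages of $e^{-t|T|}h$ converge to $h(x_0)$ at every $dw$-Lebesgue point of $h$ after splitting $e^{-s|T|}h-h(x_0)=(e^{-s|T|}h-h)+(h-h(x_0))$. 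Extension to all of $\clos{\ran(T)}$ proceeds by the standard maximal-function density argument: for $h'\in\dom(|T|)$ approximating $h$, the difference is controlled in measure via the $L^2(w)$ bound of Step~2 applied to $h-h'$ together with Lebesgue differentiation of $|h-h'|^2$. The reverse inequality $\|h\|\lesssim\|e^{-t|T|}h\|_\mX$ in \eqref{eq:Ntmax} then follows by Fatou applied to $W_2(e^{-t|T|}h)(x_0)$ as $t\to 0$, whose liminf is $|h(x_0)|$ at $dw$-a.e. $x_0$.

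The hard part is Step~2: establishing the $L^2(w)$ non-tangential maximal bound without any pointwise regularity of solutions, which forces the proof through purely $L^2(w)$ annular estimates and essentially requires the $A_2$ hypothesis on the weight via the boundedness of the dyadic maximal operator. For the last statement concerning $\varphi(sT)$, writing $\varphi(z)=a+\tilde\varphi(z)$ with $|\tilde\varphi(z)|\lesssim\min(|z|^\alpha,|z|^{-\alpha})$ on the relevant bisector, the arguments above apply to $\tilde\varphi(sT)$ and give $\tilde\varphi(sT)h\to 0$ in Whitney averages, whence $\varphi(sT)h\to ah$; when $a=0$, only the upper $\mX$ bound survives since Fatou produces no lower bound from the zero limit.
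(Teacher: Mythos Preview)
Your Step~2 has a real gap. Even granting the off-diagonal decay you claim for $e^{-t|T|}$ (which is itself delicate, since $z\mapsto e^{-t|z|}$ does not vanish at $0$ and so cannot be written directly as an absolutely convergent Cauchy integral of the resolvent), the $L^2(w)$ annular decomposition you describe only yields
\[
W_2\big(e^{-\cdot|T|}h\big)(t,x_0)^2\;\lesssim\;\sum_{j\ge 0}2^{-jN'}\,\barint_{\hspace{-6pt}B(x_0,2^jt)}|h|^2\,dw\;\lesssim\;M_w(|h|^2)(x_0),
\]
so $\tN(e^{-t|T|}h)\lesssim M_w(|h|^2)^{1/2}$. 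But $M_w$ is only weak type $(1,1)$ on $L^1(w)$, so this gives $\tN(e^{-t|T|}h)\in L^{2,\infty}(w)$, not $L^2(w)$. This is exactly the obstruction the paper has to work around; compare the remark after \eqref{eq:Ntmax} that only a weak type bound holds on $\nul(T)$.

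The paper's route is genuinely different and uses two extra ingredients you did not invoke. First, one splits $h=h^++h^-$ and recognises $e^{-t|DB|}h^\pm$ as the $w$-normalised conormal gradient of a weak solution in a half-space; the Caccioppoli/Meyers-type reverse H\"older inequality \eqref{eq:meyers} then gives the pointwise improvement $\tN(e^{-t|DB|}h^\pm)\lesssim\tN^p(e^{-t|DB|}h^\pm)$ for some $p<2$. Second, one writes $e^{-t|DB|}=\psi(tDB)+(I+itDB)^{-1}$; the $\psi$-part is handled by the square function via $\mY^*\subset\mX$, while for the resolvent part one needs $L^q$ off-diagonal decay for some $q<2$ (Lemma~\ref{lem:lqoffdiag}, proved via \v{S}ne{\u\i}berg extrapolation), which yields $\tN^p\big((I+itDB)^{-1}h^\pm\big)\lesssim M_w(|h^\pm|^p)^{1/p}$ with $2/p>1$, and now the strong $L^{2/p}(w)$ boundedness of $M_w$ closes the estimate. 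Your density argument for \eqref{eq:CVae} would be fine once the correct maximal bound is in place, though the paper instead argues directly via the algebraic observation $D\begin{bmatrix}c_\perp\\c_\ta w^{-1}\end{bmatrix}=0$ and the local coercivity inequality \eqref{eq:localcoerc}, which is why the $T=BD$ and $T=DB$ cases are treated separately there.
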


The proof  of this theorem will be given in Section \ref{sec:NTmax}.
The last equivalence in \eqref{eq:Ntmax}  is nothing but \eqref{eq:psiT}, we put it here for completeness.

If $B_{0}=I$, then $T^2=D^2= \begin{bmatrix} -\Delta_{w} & 0 \\ 0 & -\nabla \divv_{w} \end{bmatrix}$, so that $$\nabla_{x} e^{t^2\Delta _{w}}u_{0}= -\bigg(De^{-t^2D^2}\begin{bmatrix} u_{0}  \\ 0  \end{bmatrix}\bigg)_{\ta}= \bigg(e^{-t^2D^2}\begin{bmatrix} 0  \\ \nabla_{x }u_{0}  \end{bmatrix}\bigg)_{\ta}
$$
and
$$\nabla_{t} e^{t^2\Delta _{w}}u_{0}= \bigg(2tDe^{-t^2D^2}\begin{bmatrix} 0  \\ \nabla_{x }u_{0}  \end{bmatrix}\bigg)_{\no}
$$
Thus, $\|\tN(\nabla_{t,x} e^{t^2\Delta _{w}}u_{0})\| \lesssim \|\nabla_{x }u_{0}\|$ follows from this result, proving the converse statement in Proposition \ref{prop:X}.

We observe that  only the weak type bound  $\|\tN(e^{-t|T|}h) \|_{L^{2,\infty}(w)} \lesssim \|h\|$ holds if $h\in \nul(T)$. Concerning  the convergence \eqref{eq:CVae}, this is new even when $w=1$    for $T=DB_{0}$ in this generality.
What was proved in \cite{AA2} is \eqref{eq:CVae} for $|B_{0}e^{-t|DB_{0}|}h-(B_{0}h)(x_{0})|^2$  (which is also true in this situation), and the removal of $B_{0}$ was done only when $B_{0}^{-1}$ is given by pointwise multiplication. It turns out this is not necessary.   This will yield the almost everywhere limits in full generality in Theorem \ref{apriori_HSNeumann} as compared to \cite{AA2}.

\begin{rem}\label{rem} the almost everywhere limit  \eqref{eq:CVaew} is stated with respect to $d\uw$, which is natural. However, as they are derived from the weighted $L^2(\uw)$ limits \eqref{eq:CVae}, using that $\uw\in A_{2}$, we also have unweighted $L^1$ averages that converge to 0 almost everywhere. This means that \eqref{eq:CVaew} holds also with Lebesgue measure replacing $d\uw$.
\end{rem}

 The next two theorems are for the $t$-dependent $S_{A}$ and $\tS_{A}$. Note that we may rewrite $S_{A}f=S(\E f):=S_{\E}f$ and $\tS_{A}f= \tS(\E f):=\tS_{\E}f$ where $\E$ may not be related to $A$. We use this notation in what follows.

\begin{thm}\label{thm:estSA} Assume that $ \|\E\|_*<\infty$. Then we have the following estimates for arbitrary $f\in \mX$.
\begin{equation}
\|S_{\E}f\|_{\mX}\lesssim \|\E\|_*\|f\|_{\mX}.
\end{equation}
The function $h^-:= -\int_0^\infty  e^{sDB_{0}}E_0^-D \E_s f_s ds$ belongs to $ E_0^-\mH=\mH^-_{DB_{0}}$ and
\begin{equation}
\|h^-\|\lesssim \|\E\|_*\|f\|_{\mX},
\end{equation}
\begin{equation}
\|S_{\E}f-e^{tDB_{0}} E_0^- h^-\|_{\mY^*}\lesssim \|\E\|_*\|f\|_{\mX},
\end{equation}
\begin{equation}  \label{eq:SAavlim}
\lim_{t\to 0} t^{-1} \int_t^{2t} \| (S_{\E}f)_{s} -h^- \|^2 \,ds =0=
\lim_{t\to \infty} t^{-1} \int_t^{2t} \| (S_{\E}f)_s \|^2 ds,
\end{equation}
and
\begin{equation}
\label{eq:CVaeSA}
\lim_{t\to 0}\ \barint_{\hspace{-6pt}W(t,x_{0})} |S_{\E}f-h^-(x_{0})|^2\, d\uw=0, \ \mathrm{for \ a.e.} \ x_{0}\in \R^n.
\end{equation}
Moreover, $\tilde h^-:= -\int_0^\infty  e^{sB_{0}D}\tE_0^- \E_s f_s \, ds$ satisfies $D\tilde h^-=h^-\in E_0^-\mH$,
\begin{equation}  \label{eq:SAavlimint}
 \| (\tS_{\E}f)_{t} -\tilde h^- \|  \lesssim t \|\E\|_*\|f\|_{\mX}.
\end{equation}
In addition, if $\|\E\|_*$ is sufficiently small
 and  $\E$ satisfies the $t$-regularity condition
$
  \|t\pd_{t}\E\|_{*}<\infty,
$
  then
  \begin{equation}
\label{ }
 \|\pd_t (S_{\E} f)\|_\mY\lesssim (\|\E\|_* + \|t\pd_t \E\|_*) \|f\|_\mX + \|\E\|_\infty \|\pd_t f\|_{\mY},
\end{equation}
and one has $t\mapsto (S_{\E}f)_{t}$ is continuous  into  $\mH$ if $\|f\|_\mX + \|\pd_t f\|_{\mY}<\infty$ with improved limits
\begin{equation}  \label{eq:SAavlimimp}
\lim_{t\to 0}  \| (S_{\E}f)_{t} -h^- \|  =0=
\lim_{t\to \infty}  \| (S_{\E}f)_t \| .
\end{equation}
 \end{thm}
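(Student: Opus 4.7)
The plan is to exploit the operational-calculus representation~\eqref{eq:inteqopcalcroadmap},
\[
S_\E = F(|T|)\hE_0^+\,\E + F^*(|T|)\hE_0^-\,\E,\qquad T=DB_0,
\]
together with three ingredients: the multiplier estimate $\|\E g\|_{\mY^*}\lesssim \|\E\|_*\|g\|_\mX$ from Lemma~\ref{lem:Carleson}; the $\mY^*$-boundedness of the maximal-regularity operators $F(|T|)$ and $F^*(|T|)$, a consequence of the quadratic estimate in Proposition~\ref{prop:equi} and the bounded $H^\infty$-calculus furnished by Proposition~\ref{prop:SFimpliesFC}; and the non-tangential and almost-everywhere statements of Theorem~\ref{thm:NTmaxandaeCV} for the boundary semigroup. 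Since $\hE_0^\pm$ are bounded on $\mH$, composing yields $S_\E\colon \mX\to\mY^*$ with norm $\lesssim\|\E\|_*$, and the embedding $\mY^*\hookrightarrow\mX$ (right inequality of Lemma~\ref{lem:XlocL2}) produces the first displayed estimate.

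To define $h^-$ rigorously and estimate its norm, I test against an arbitrary $g\in\mH$. Using $e^{sDB_0}E_0^- = e^{-s|T|}E_0^-$ for $s>0$ and the self-adjointness of $D$, one rewrites
\[
\langle h^-,g\rangle = -\int_0^\infty \langle \E_s f_s,\, D\,(E_0^-)^*\,e^{-s|T^*|}g\rangle\,ds.
\]
A Cauchy--Schwarz in $ds/s$ then bounds the right-hand side by $\|\E f\|_{\mY^*}$ times the $\mY$-norm of $s\mapsto sD\,(E_0^-)^*e^{-s|T^*|}g$, the latter being controlled by the dual quadratic estimate for $T^*=B_0^*D$ because $D=(B_0^*)^{-1}T^*$ on $\clos{\ran(T^*)}$ and $(B_0^*)^{-1}$ is bounded there. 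This gives $\|h^-\|\lesssim\|\E\|_*\|f\|_\mX$, and $h^-\in E_0^-\mH$ is built into the definition. For the $\mY^*$-bound of $S_\E f - e^{tDB_0}E_0^- h^-$, I split by spectral projections: on $\mH^+$ the operator is exactly $F(|T|)\hE_0^+\E$, already bounded in $\mY^*$; on $\mH^-$, after substituting $u=s-t$ in the backward part of~\eqref{eq:firstformalSAdefn} and $\sigma=s+t$ in the expansion of $e^{-t|T|}h^-$, the two integrals combine into a single operator of $F/F^*$-type acting on $\E f$, to which the same $\mY^*$-bound applies.

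The Whitney-averaged limits \eqref{eq:SAavlim} now follow from this $\mY^*$-bound and the left-hand inequality in Lemma~\ref{lem:XlocL2}, combined with $\|e^{-t|T|}h^-\|\to\|h^-\|$ as $t\to 0$ and $\|e^{-t|T|}h^-\|\to 0$ as $t\to\infty$ (the latter because $h^-\in\clos{\ran T}$). The a.e. statement \eqref{eq:CVaeSA} follows from the decomposition
\[
S_\E f - h^-(x_0) = \bigl(S_\E f - e^{tDB_0}E_0^-h^-\bigr) + \bigl(e^{-t|T|}h^- - h^-(x_0)\bigr),
\]
where the first summand has Whitney-average $L^2(dw)$-norm tending to $0$ for a.e. $x_0$ (extracted from the $\mY^*$-bound by a standard tent-space/maximal-function argument), and the second is handled by \eqref{eq:CVae} of Theorem~\ref{thm:NTmaxandaeCV} applied to $h^-\in\mH$. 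The results for $\tS_\E$ are then obtained through the intertwining $D\tS_\E = S_\E$: I define $\tilde h^-$ as stated, apply the same duality with $B_0D$ in place of $DB_0$ to place $\tilde h^-$ in $\mH$ with $D\tilde h^-=h^-$, and derive \eqref{eq:SAavlimint} by writing $(\tS_\E f)_t - \tilde h^- = \int_0^t \pd_s(\tS_\E f)_s\,ds$ and invoking $\pd_s\tS_\E f = -B_0 S_\E f + \E f$, each term bounded in $\mH$ by $\|\E\|_*\|f\|_\mX$ uniformly in $s$.

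The last statement, on $\pd_t(S_\E f)\in\mY$ under the smallness of $\|\E\|_*$ and the $t$-regularity $\|t\pd_t\E\|_*<\infty$, is the technically hardest. The formal identity $\pd_t(S_\E f) + DB_0(S_\E f) = D(\E f)$ cannot be used directly since $D(\E f)$ is not in $\mY$; the strategy is instead to integrate by parts in the $s$-variable inside the semigroup integral, transferring the derivative from $e^{\pm(t-s)DB_0}$ to $\E_s f_s$. This produces a boundary term controlled by $\|\E\|_\infty\|\pd_t f\|_\mY$, an interior term controlled by $\|t\pd_t\E\|_*\|f\|_\mX$ via a variant of the multiplier bound in Lemma~\ref{lem:Carleson}, and an auxiliary self-coupling term of the form (constant)$\cdot\|S_{\wt\E}\pd_t f\|_\mY$ for a modified discrepancy $\wt\E$ with $\|\wt\E\|_*\lesssim\|\E\|_*$, which by the smallness hypothesis can be absorbed on the left via a Neumann-series argument. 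The continuity $t\mapsto (S_\E f)_t\in\mH$ and the improved limits \eqref{eq:SAavlimimp} then follow from the identity $(S_\E f)_{t_1} - (S_\E f)_{t_0} = \int_{t_0}^{t_1}\pd_s(S_\E f)_s\,ds$ together with Cauchy--Schwarz, upgrading the weighted averaged convergence in~\eqref{eq:SAavlim} to strong convergence in $\mH$. I expect the integration-by-parts justification and the absorption step to be where the main technical work lies.
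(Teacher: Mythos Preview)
Your operational-calculus framework is the right one and mirrors \cite{AA1}, but the first paragraph contains a genuine error: $F^*(|T|)$ is \emph{not} bounded on $\mY^*$, so the claim that $S_\E\colon\mX\to\mY^*$ fails. For scalar $\lambda>0$ and $g=1_{[1,2]}$ one has $(F^*(\lambda)g)_t=e^{-(1-t)\lambda}(1-e^{-\lambda})$ for $0<t<1$, which does not vanish as $t\to 0$, whence $\int_0^1|(F^*(\lambda)g)_t|^2\,\tfrac{dt}{t}=\infty$. More tellingly, the theorem itself asserts that $(S_\E f)_t\to h^-$ in the averaged sense as $t\to 0$, so $S_\E f\notin\mY^*$ whenever $h^-\ne 0$. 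The $\mX$-bound therefore cannot be obtained as you propose, by factoring through $\mY^*$.

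The repair is to reorder the argument: establish $h^-$ and its norm bound first (your duality argument in the second paragraph is correct for this), then prove the $\mY^*$-bound of the \emph{difference} $S_\E f-e^{-t|T|}h^-$. On the $E_0^-$ side this amounts to bounding $G(|T|)$ on $\mY^*$, where $(G(\lambda)g)_t=\int_t^\infty\lambda e^{-(s-t)\lambda}g_s\,ds-\int_0^\infty\lambda e^{-(s+t)\lambda}g_s\,ds$; this operator \emph{is} uniformly bounded on $L^2(\tfrac{dt}{t})$ precisely because the subtraction forces $(G(\lambda)g)_t\to 0$ as $t\to 0$. Only then does $\|S_\E f\|_\mX\le\|S_\E f-e^{-t|T|}h^-\|_\mX+\|e^{-t|T|}h^-\|_\mX$ follow, via Lemma~\ref{lem:XlocL2} for the first term and Theorem~\ref{thm:NTmaxandaeCV} for the second. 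A smaller issue: your derivation of \eqref{eq:SAavlimint} via $\int_0^t\pd_s(\tS_\E f)_s\,ds$ assumes $\|(S_\E f)_s\|$ and $\|(\E f)_s\|$ are bounded uniformly in $s$, which is not available from the $\mX$- or $\mY^*$-estimates; here one needs the finer decompositions indicated in \cite[Section~13]{AA2}.
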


\begin{thm}\label{thm:esttSA} Assume that $ \|\E\|_*<\infty$. Then we have the following estimates for arbitrary $f\in \mY$.
\begin{equation}
\|S_{\E}f\|_{\mY}\lesssim \|\E\|_*\|f\|_{\mY}.
\end{equation}
The operator $\tS_{\E}$ maps $\mY$ into $C([0,\infty); \mH)$ with
\begin{equation}
\label{ }
\sup_{t\ge 0}\|(\tS_{\E}f)_{t}\| \lesssim   \|\E\|_*\|f\|_{\mY}.
\end{equation}
Moreover, $\tilde h^-:= -\int_0^\infty  e^{sB_{0}D}\tE_0^- \E_s f_s\,  ds \in \tE_0^-\mH= \mH^-_{B_{0}D}$,
\begin{equation}  \label{eq:tSAavlimint}
 \lim_{t\to 0}\| (\tS_{\E}f)_{t} -\tilde h^- \|=0 =\lim_{t\to\infty}  \| (\tS_{\E}f)_{t} \|.
\end{equation}
Furthermore, if $p<2$ and $\tN^p$ is the $p$-modified version of $\tN$ by taking $W_{p}$ functionals on Whitney regions,
\begin{equation}
\label{ }
 \|\tN^p(\tS_{\E}f)\| \lesssim  \|\E\|_*\|f\|_{\mY}
 \end{equation}
and
\begin{equation}
\label{eq:CVaetSAint}
 \barint_{\hspace{-6pt}W(t,x_{0})} |\tS_{\E}f-\tilde h^-(x_{0})|^p\, d\uw =0 , \ \mathrm{for \ a.e.} \ x_{0}\in \R^n.
\end{equation}

\end{thm}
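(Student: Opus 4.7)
The plan is to exploit the bounded $H^\infty$-calculi of $DB_0$ and $B_0D$ on $\mH$ (Propositions \ref{prop:equi} and \ref{prop:SFimpliesFC}) furnished by Theorem \ref{th:main}, and to transcribe the proofs of the analogous unweighted bounds in \cite{AA1,R,AA2}. Throughout I would rely on the Carleson--tent duality and multiplier estimates of Lemma \ref{lem:Carleson}, together with the non-tangential maximal estimate for semigroups of Theorem \ref{thm:NTmaxandaeCV}.

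For the $\mY$-boundedness of $S_\E$ I would use the operational calculus representation \eqref{eq:inteqopcalcroadmap}, namely $S_\E = F(|DB_0|)\hE_0^+\E + F^*(|DB_0|)\hE_0^-\E$. Pointwise multiplication by $\E$ is bounded on $\mY$ with norm at most $\|\E\|_\infty$, which is dominated by $\|\E\|_*$ via Lemma \ref{lem:Carleson}, and $\hE_0^\pm$ are bounded on $\mH$; it remains to bound $F(|DB_0|)$ and $F^*(|DB_0|)$ on $\mY$. This is the Hilbert-space maximal-regularity statement for the semigroup generated by $-|DB_0|$ on $L^2((0,\infty),t\,dt;\mH)$, a standard consequence of the bounded $H^\infty$-calculus as established abstractly in \cite{AA1,R}.

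For the uniform $\sup_t$ bound on $(\tS_\E f)_t$, together with the identification and bound of $\tilde h^-$ and the limits \eqref{eq:tSAavlimint}, I would dualize. Given $v\in\mH$ with $\|v\|=1$, passing the semigroups $e^{-rB_0D}$, $e^{rB_0D}$ to adjoints on $v$ yields
\[
\langle (\tS_\E f)_t,v\rangle = \int_0^t \langle \E_s f_s,\,V^+_{t-s}\rangle\,ds - \int_t^\infty \langle \E_s f_s,\,V^-_{s-t}\rangle\,ds,\qquad V^\pm_r:=e^{-r|DB_0^*|}(\tE_0^\pm)^*v.
\]
Since $B_0^*$ satisfies the same structural hypotheses as $B_0$, Theorem \ref{thm:NTmaxandaeCV} applied to $DB_0^*$ gives $\|V^\pm\|_\mX\lesssim \|v\|$. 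Recasting the pairings through the $\mC$--$\mX$ duality of Lemma \ref{lem:Carleson}, using that the time-shift $r\leftrightarrow|t-r|$ only recombines Whitney-type regions (so that the relevant rearrangement of $\E f$ has Carleson norm controlled by $\|\E f\|_\mC\lesssim \|\E\|_*\|f\|_\mY$), yields the desired bound. The special case $t=0$ gives $\|\tilde h^-\|\lesssim \|\E\|_*\|f\|_\mY$, and $\tilde h^-\in\mH^-_{B_0D}$ is immediate from its definition since $\tE_0^-$ sits in front of the integrand. The limits in \eqref{eq:tSAavlimint} and continuity of $t\mapsto(\tS_\E f)_t$ on $[0,\infty)$ into $\mH$ then follow by dominated convergence for $f$ in a dense class and the uniform bound just proved.

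For the weak non-tangential bound $\|\tN^p(\tS_\E f)\|\lesssim\|\E\|_*\|f\|_\mY$ with $p<2$ and the almost-everywhere Whitney-average limit \eqref{eq:CVaetSAint}, I would follow the strategy of \cite{AA2}: split $(\tS_\E f)_t = I_t + II_t$ with $I_t$ the contribution of $s\in(t/2,2t)$ and $II_t$ the complementary part. The far part $II_t$ admits a pointwise Whitney-average bound via off-diagonal decay (Lemma \ref{lem:odd}) by a weighted Hardy--Littlewood-type maximal function of $\E f$, giving $L^2(w)$ control through Muckenhoupt's theorem. The near part $I_t$ is controlled via the $\sup_t$ bound applied to truncations of $f$, combined with a Kolmogorov-type argument, which is where the restriction $p<2$ enters. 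Once $\tN^p$ is under control, the almost-everywhere convergence of Whitney averages follows from the strong limit in \eqref{eq:tSAavlimint} by the standard maximal-function plus density argument.

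The main obstacle will be the $\tN^p$ bound and the a.e.\ statement: there is no $L^2$-type non-tangential maximal bound for $\tS_\E$ acting on $\mY$, and adapting the tent-space/Whitney decomposition of \cite{AA2} to the present weighted setting requires systematic use of weighted Hardy--Littlewood, of the doubling of $dw$, and of the weighted Carleson embedding, all of which are available via the $A_2$ hypothesis on $w$. A secondary technical point is the verification, used in the $\sup_t$ bound, that the Carleson norm of $\E f$ is essentially preserved under the time reflection and shift $s\mapsto|t-s|$ appearing in the dualized pairings, which amounts to comparing Whitney regions at $(r,x)$ and $(|t-r|,x)$ and exploiting doubling.
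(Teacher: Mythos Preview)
Your proposal is correct and follows essentially the same route as the paper, which itself gives no detailed proof but defers to Sections 6--10 of \cite{AA1}, the simplifications in \cite{R}, and Section 15 of \cite{AA2}, noting that all harmonic-analytic ingredients (Carleson duality, maximal functions, off-diagonal decay) transfer to the doubling weighted setting. You have correctly identified these same references, the key inputs (the bounded $H^\infty$-calculus from Theorem \ref{th:main}, the $\mC$--$\mX$ duality and multiplier bounds of Lemma \ref{lem:Carleson}, the non-tangential estimate of Theorem \ref{thm:NTmaxandaeCV}, and the $L^q$ off-diagonal bounds of Lemma \ref{lem:lqoffdiag}), and the overall architecture of each estimate.
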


These two theorems can be proved following the corresponding results in  Sections 6,~7,~8,~9 and 10 of \cite{AA1} (some of the arguments were simplified in \cite{R})  and, concerning the almost everywhere convergence limits \eqref{eq:CVaeSA} and  \eqref{eq:CVaetSAint},  in Section 15 of \cite{AA2} : they  hold in  a doubling weighted context when coming to use maximal functions and Carleson estimates.  The inequality \eqref{eq:SAavlimint} is not proved in \cite{AA1} and merely sketched in \cite[Section 13]{AA2}, but is easy to prove following the same decompositions as there. We shall not give details.  We just mention that $\tilde h^-$ in Theorem \ref{thm:estSA} is not an element of $\mH$: it is only defined as a limit of the integrals  truncated away from 0 and $\infty$ in the sense that $-\int_\varepsilon^R  De^{sB_{0}D}\tE_0^- \E_s f_s \, ds$ converges to $h^-$ in $\mH$. Thus, only the difference $(\tS_{\E}f)_{t} -\tilde h^- $ in  \eqref{eq:SAavlimint} makes sense in $\mH
 $. The scalar part of $\tilde h^-$ belongs to the homogeneous Sobolev space $\dot H^1(\R^n,w;\C^m)$ (as defined in the proof of Lemma \ref{lem:gradient}) and as such is also an $L^2_{loc}(w)$ function.

\subsection{{\em A priori} estimates}\label{sec:apriori}

In this subsection, we derive a priori estimates for solutions of $\divv A \nabla u=0$ with $\nabla u \in \mX$ or $\mY$.  Again, these are obtained as in \cite{AA1}, together with \cite{AA2} for the almost everywhere statements and the improvements noticed in Theorem \ref{thm:NTmaxandaeCV}.

\begin{thm}   \label{apriori_HSNeumann}
   Consider coefficients $w^{-1}A\in L^\infty(\R^{1+n}_+; \mL(\C^{m(1+n)}))$ such that $w^{-1}A$ is  accretive on $\mH^{0}$ and assume there exists $t$-independent measurable coefficients $A_0$ such that
 $\|w^{-1} (A-A_0) \|_* <\infty$ or equivalently that $ \|\E\|_*\sim \| w^{-1}(A-A_{0})\|_{*}<\infty$ where $\E=B_{0}-B$ and $B=\widehat {w^{-1}A}, B_{0}=\widehat{w^{-1}A_{0}}$.

  Let  $u$ 
    be a weak solution of
   $\divv A\nabla u=0$ in $\reu$ with  $\|\tN(\nabla_{t,x}u)\| <\infty$.
  Then
$$
  \lim_{t\to 0} t^{-1} \int_t^{2t} \| \nabla_{s,x} u_s - g_0 \|^2 ds =0=
   \lim_{t\to \infty} t^{-1} \int_t^{2t} \| \nabla_{s,x} u_s \|^2 ds,
$$
for some $g_0 \in L^2(\R^n ,w;\C^{m(1+n)})$, with estimate
$\|g_0\|\lesssim\|\tN(\nabla_{t,x}u)\|$, which we call the gradient of $u$ at the boundary and we set
$\nabla_{t,x}u|_{t=0}:=g_{0}$.
Furthermore, one has that for $dw$ almost every $x_{0}\in \R^n$,
\begin{equation}
\label{eq:CVaewgradsol}
\lim_{t\to 0}\ \barint_{\hspace{-6pt}W(t,x_{0})} \nabla_{s,x} u  \, d\uw=g_{0}(x_{0}).
\end{equation}
All three limits hold with $\nabla u, g_{0}$ replaced by  the $w$-normalized conormal gradient $f= \nabla_{w^{-1}A}u$ and $f_{0}=\begin{bmatrix}
  (w^{-1}A_{0}g_{0})_{\perp}        \\
     ( g_{0})_{\ta}
\end{bmatrix}:=\nabla_{w^{-1}A}u|_{t=0}$ (in particular, they hold for the $w$-normalised conormal derivative $\pd_{\nu_{w^{-1}A}}u$). Moroever,   one has the
representation
\begin{equation}
\label{eq:representationRegNeu}
\nabla_{w^{-1}A}u = e^{- t  DB_{0}}h^+ + S_{A}(\nabla_{w^{-1}A}u).
\end{equation}
for a unique $h^+\in \mH^+_{DB_{0}}$ and
\begin{equation}
\label{eq:representationRegNeu1}
\nabla_{w^{-1}A}u|_{t=0} = h^+ + h^-, \quad h^-=-\int_0^\infty  e^{sDB_{0}}E_0^-D \E_s (\nabla_{w^{-1}A}u)_s ds.
\end{equation}
Finally, there exists $u_{0}\in \dot H^1(w)$ (as defined in Lemma \ref{lem:gradient}) such that  $\nabla_{x}u_{0}= (g_{0})_{\ta}$ and one has $\|u_{t}-u_{0}\|\lesssim t$   and for $dw$ almost every $x_{0}\in \R^n$
\begin{equation}
\label{eq:CVaewsolreg}
\lim_{t\to 0}\ \barint_{\hspace{-6pt}W(t,x_{0})} u  \, d\uw=u_{0}(x_{0}).
\end{equation}
Remark \ref{rem} about replacing $d\uw$ by Lebesgue measure in the almost everywhere limit applies here too.

\end{thm}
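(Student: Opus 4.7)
The plan is to reduce the second order system for $u$ to the first order Cauchy--Riemann system for the $w$-normalized conormal gradient $f := \nabla_{w^{-1}A}u$ via Proposition \ref{prop:divformasODE}. Because $w^{-1}A$ is bounded and accretive on $\mH^{0}$, the pointwise transformation between $f$ and $\nabla_{t,x}u$ is a bounded linear isomorphism with constants depending only on $\|w^{-1}A\|_\infty$ and $\kappa$, so $\|f\|_{\mX} \sim \|\tN(\nabla_{t,x}u)\| < \infty$ and $f \in L^2_{\loc}(\R_+;\mH^0)$ solves $\pd_t f + DB f = 0$ in the weak sense of \eqref{firstorderweak}. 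Writing this as
\[
  \pd_t f + DB_0 f \;=\; D(\E f), \qquad \E := B_0 - B,
\]
sets up a perturbative framework where the hypothesis $\|w^{-1}(A-A_0)\|_* < \infty$ translates (by the transformation $A \mapsto \widehat{w^{-1}A}$) into $\|\E\|_* < \infty$, so that Theorem \ref{thm:estSA} applies.

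Next I would integrate the weak equation against $\mD_w$-test functions to obtain a Duhamel formulation, following the strategy of \cite{AA1}. Using the bounded holomorphic functional calculus for $DB_0$ on $\clos{\ran(D)}$ (available by Theorem \ref{th:main} and Proposition \ref{prop:SFimpliesFC}) together with the operational calculus definition \eqref{eq:inteqopcalcroadmap} of $S_A = S_\E$, one shows that for a unique $h^+ \in \mH^+_{DB_0}$ the representation
\[
  f_t \;=\; e^{-t|DB_0|}E_0^+ h^+ + (S_A f)_t
\]
holds in $\mH$ for a.e. $t > 0$. The existence of $h^+$ comes from matching the $t \to 0$ behaviour of $S_A f$ from \eqref{eq:SAavlim} with the $\mX$-bound on $f$; uniqueness is forced by the requirement that $f$ does not develop $\mY^*$-mass at infinity, which dictates the choice of spectral projection $E_0^+$.

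Having the representation, the conclusions for $f$ follow by adding the contributions of the two terms. From Theorem \ref{thm:NTmaxandaeCV} applied with $T = DB_0$, the semigroup term satisfies the averaged $L^2$ limit $s^{-1}\int_s^{2s} \|e^{-r|DB_0|}h^+ - h^+\|^2 dr \to 0$ as $s \to 0$ (together with its analogue at infinity since $h^+ \in \mH^+_{DB_0}$), and $\barint_{W(t,x_0)} e^{-r|DB_0|}h^+ \,d\uw \to h^+(x_0)$ for $dw$-a.e.\ $x_0$. Theorem \ref{thm:estSA} supplies exactly the same two types of convergence for $(S_A f)_t$, with boundary value $h^-$ given by \eqref{eq:representationRegNeu1}. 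Summing, $f_0 := h^+ + h^-$ satisfies all the stated limits, and the estimate $\|f_0\| \lesssim \|f\|_\mX \lesssim \|\tN(\nabla_{t,x}u)\|$ follows from $\|h^+\| \lesssim \|f\|_\mX$ (obtained by evaluating the representation as $t \to \infty$ on the component in $\mH^-_{DB_0}$) and the bound on $h^-$ in Theorem \ref{thm:estSA}. Inverting the pointwise isomorphism between $f$ and $\nabla_{t,x}u$ transfers all three convergence statements to $\nabla_{t,x}u$ with boundary value $g_0$ determined by $f_0 = \bigl[(w^{-1}A_0 g_0)_\no;(g_0)_\ta\bigr]$.

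Finally, the statements about $u$ itself follow from Proposition \ref{prop:X}: the hypothesis $\|\tN(\nabla_{t,x}u)\| < \infty$ produces $u_0 \in \dot H^1(w)$ with $\|u_t - u_0\| \lesssim t$ and Whitney-average a.e.\ convergence of $u$ to $u_0$. The identity $\nabla_x u_0 = (g_0)_\ta$ is obtained by passing to the limit $t \to 0$ in $\nabla_x u_t = (\nabla_{t,x}u_t)_\ta$: the left side converges to $\nabla_x u_0$ in the distributional sense via $\|u_t - u_0\| \lesssim t$, while the right side converges in the averaged $L^2$ sense to $(g_0)_\ta$ by what was just proved. Remark \ref{rem} for unweighted averages applies identically since $\uw \in A_2(\R^{1+n})$. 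The main obstacle I expect is the rigorous derivation of the Duhamel representation from the weak first order equation when $f$ is only assumed in $\mX$: one must justify both the integration by parts in $t$ and the vanishing of boundary contributions at $t = \infty$ in an appropriate weak sense, a step which is essentially carried out in \cite{AA1} and which passes to the doubling weighted setting once Theorems \ref{thm:NTmaxandaeCV} and \ref{thm:estSA} are available.
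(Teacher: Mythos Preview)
Your proposal is correct and follows essentially the same approach as the paper, which does not give a detailed proof but states that the argument is obtained as in \cite{AA1}, together with \cite{AA2} for the almost everywhere statements and the improvements from Theorem~\ref{thm:NTmaxandaeCV}. You have correctly identified and assembled the ingredients in the right order: the first-order reduction via Proposition~\ref{prop:divformasODE}, the Duhamel representation, the semigroup limits from Theorem~\ref{thm:NTmaxandaeCV}, the $S_A$ limits from Theorem~\ref{thm:estSA}, and Proposition~\ref{prop:X} for the trace $u_0$.
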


\begin{thm}   \label{apriori_HSDir}
   Consider coefficients $w^{-1}A\in L^\infty(\R^{1+n}_+; \mL(\C^{m(1+n)}))$ such that $w^{-1}A$ is  accretive on $\mH^{0}$ and assume there exists $t$-independent measurable coefficients $A_0$
 such that $\| w^{-1}(A-A_0) \|_* <\infty$, or equivalently that $ \|\E\|_*\sim \| w^{-1}(A-A_{0})\|_{*}<\infty$ where $\E=B_{0}-B$ and $B=\widehat {w^{-1}A}, B_{0}=\widehat{w^{-1}A_{0}}$.

  Let  $u$     be a weak solution of
   $\divv A\nabla u=0$ in $\reu$ and assume that
  $\int_0^\infty \|\nabla_{t,x}u\|^2\, tdt<\infty$.
 Then $u= \hat u+c$ almost everywhere,
 for a unique constant $c\in \C^m$ and  $\hat u\in C([0,\infty); L^2(\R^n,w;\C^m))$ given by
 $\hat u= -v_{\perp}$ with
 \begin{equation}
\label{eq:representationDir}
v= e^{- t B_{0}D } \tilde h^+ + \tS_{A}(\nabla_{w^{-1}A}u),
\end{equation}
for a unique $ \tilde h^+\in \tE_{0}^+\mH$. Moreover,
\begin{equation}
\label{eq:representationDir1}
v_{0}=  \tilde h^+ + \tilde h^-
\ \mathrm{with}\  \tilde h^-= -\int_0^\infty  e^{sB_{0}D}\tE_0^- \E_s (\nabla_{w^{-1}A}u)_{s}\,  ds,
\end{equation}
and we call $-v$ the conjugate system associated to $u$.  In addition, we have $Dv=\nabla_{w^{-1}A}u$.

 Identifying the functions $u$ and $\hat u+c$, we have
  limits
$$
  \lim_{t\to 0} \| u_t -\hat u_0-c \| =0=
   \lim_{t\to \infty} \| u_t -c\|,
$$
for  $\hat u_0= -(\tilde h^+)_{\perp}\in L^2(\R^n, w;\C^m)$, and we have the estimates
$$
\|\hat u_{0}\|\lesssim \max(\|\tN(\hat u)\|, \sup_{t>0}\|\hat u_t \|)\lesssim \bigg(\int_0^\infty \|\nabla_{t,x}u\|^2\,  tdt\bigg)^{1/2}.
$$
Finally, for $dw$ almost every $x_{0}\in \R^n$,
\begin{equation}
\label{eq:CVaewsoldir}
\lim_{t\to 0}\ \barint_{\hspace{-6pt}W(t,x_{0})} u  \, d\uw=u_{0}(x_{0}).
\end{equation}
 Remark \ref{rem} about replacing $d\uw$ by Lebesgue measure in the almost everywhere limit applies here too.

\end{thm}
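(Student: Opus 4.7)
The plan is to adapt the Dirichlet strategy of \cite{R} (building on \cite{AA1, AA2}) to the weighted setting, using Proposition \ref{prop:divformasODE} and the operator estimates already proved in the paper. Setting $B=\widehat{w^{-1}A}$, $B_{0}=\widehat{w^{-1}A_{0}}$, and $\E=B_{0}-B$, the equation $\divv A\nabla u=0$ is equivalent to the first-order system $\pd_{t}f+DBf=0$ for $f=\nabla_{w^{-1}A}u\in L^{2}_{\loc}(\R_{+};\mH^{0})$, i.e.\ $\pd_{t}f+DB_{0}f=D(\E f)$. The pointwise boundedness and accretivity of $w^{-1}A$ on $\mH^{0}$ give $\|f_{t}\|\sim\|\nabla_{t,x}u(t,\cdot)\|$ uniformly in $t$, so the Dirichlet-type hypothesis $\int_{0}^{\infty}\|\nabla_{t,x}u\|^{2}t\,dt<\infty$ amounts to $f\in\mY$, which is precisely the input for Theorem \ref{thm:esttSA}.

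Next, I would construct the conjugate field one order below $f$. Integrating the weak form \eqref{firstorderweak} against the dual semigroup pieces $e^{\pm sB_{0}D}\tE_{0}^{\pm}$ and using the intertwining relation $b(DB_{0})D=Db(B_{0}D)$ together with the identity $D\tS_{A}=S_{A}$ and the $\mY$-estimates of Theorem \ref{thm:esttSA} produces a vector field
\[
v_{t}:=e^{-tB_{0}D}\tilde h^{+}+(\tS_{A}f)_{t}, \qquad \tilde h^{+}\in\tE_{0}^{+}\mH,
\]
continuous from $[0,\infty)$ into $\mH$ and satisfying $Dv=f$ by construction. Uniqueness of $\tilde h^{+}$ follows from the splitting $\mH=\nul(B_{0}D)\oplus\tE_{0}^{+}\mH\oplus\tE_{0}^{-}\mH$ (Proposition \ref{prop:SFimpliesFC}) combined with the prescribed boundary behaviour \eqref{eq:tSAavlimint} of $\tS_{A}f$. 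Writing $Dv=f$ componentwise gives $-\nabla v_{\no}=\nabla_{x}u$, so $u+v_{\no}=:c(t)$ depends on $t$ only; a short differentiation-in-$t$ argument using $\pd_{t}f+DBf=0$ together with the equation $\pd_{t}v+BDv=0$ (the latter obtained by direct differentiation of the formula for $v$, combined with $f=Dv$ and $\E=B_{0}-B$) shows $c'(t)=0$, so $c$ is a true constant in $\C^{m}$. Setting $\hat u:=-v_{\no}$ yields $u=\hat u+c$ and, from the formula for $v$, $\hat u\in C([0,\infty);L^{2}(\R^{n},w;\C^{m}))$, with $\hat u_{0}$ identified via the normal projection of $v_{0}=\tilde h^{+}+\tilde h^{-}$.

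For the remaining claims, the norm-limit $\|u_{t}-c\|\to 0$ at $t=\infty$ reduces, via $u_{t}=-v_{t,\no}+c$, to the decay of $e^{-tB_{0}D}\tilde h^{+}$ (analytic semigroup on $\tE_{0}^{+}\mH\subseteq\clos{\ran(B_{0}D)}$ by Proposition \ref{prop:SFimpliesFC}) and of $(\tS_{A}f)_{t}$ from \eqref{eq:tSAavlimint}. The estimate $\|\hat u_{0}\|\lesssim\max(\|\tN(\hat u)\|,\sup_{t}\|\hat u_{t}\|)\lesssim\bigl(\int_{0}^{\infty}\|\nabla_{t,x}u\|^{2}t\,dt\bigr)^{1/2}$ comes from Lemma \ref{lem:XlocL2} applied to $\hat u$, the $\mX$-bound $\|e^{-tB_{0}D}\tilde h^{+}\|_{\mX}\lesssim\|\tilde h^{+}\|$ from Theorem \ref{thm:NTmaxandaeCV}, and the $\tN^{p}$-estimate for $\tS_{A}f$ in Theorem \ref{thm:esttSA}. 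The almost everywhere Whitney-average convergence \eqref{eq:CVaewsoldir} is obtained by summing the a.e.\ Whitney limits \eqref{eq:CVae} (for the semigroup piece) and \eqref{eq:CVaetSAint} (for the $\tS_{A}$ piece), transferring from $dw$-weighted to Lebesgue Whitney averages as in Remark \ref{rem} since $\uw\in A_{2}$.

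The main obstacle will be the identification of the $t$-independent constant $c$ and the clean identification of $\hat u_{0}$ as an honest element of $L^{2}(\R^{n},w;\C^{m})$: both hinge on the fact that, because $f\in\mY$, Theorem \ref{thm:esttSA} furnishes $\tilde h^{-}\in\tE_{0}^{-}\mH\subseteq\mH$ as a bona-fide $\mH$-element, so that $v$ is actually continuous into $\mH$ on $[0,\infty)$, unlike in the $\mX$-setting of Theorem \ref{thm:estSA}. Once this is in place, the remainder is a transcription to the doubling weighted framework of the arguments from \cite{AA1,AA2,R}, each ingredient of which has already been verified in the weighted setting earlier in the paper.
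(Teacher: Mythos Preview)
Your proposal is correct and follows the approach of \cite{AA1,AA2,R}, which is exactly what the paper does: it gives no independent proof of this theorem, deferring instead to those references at the start of Section~\ref{sec:apriori}.

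One point to tighten: the equation $\pd_t v+BDv=0$ you invoke does not hold literally. From $Dv=f$ and $\pd_t f+DBf=0$ one only obtains $D(\pd_t v+BDv)=0$, i.e.\ $\pd_t v+BDv\in\nul(D)$; equivalently, direct differentiation of the Duhamel formula for $v$ gives $\pd_t v+B_0Dv=(\tE_0^++\tE_0^-)\E f$, whence $\pd_t v+BDv=-(I-\tE_0^+-\tE_0^-)\E f\in\nul(B_0D)=\nul(D)$. Since elements of $\nul(D)$ have vanishing normal component (Proposition~\ref{prop:D}), this still yields $(\pd_t v)_\no=-(BDv)_\no=-(Bf)_\no=-\pd_t u$ and hence $c'(t)=0$, so your conclusion is unaffected.
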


The representation formula suggests a possible construction of solution given $\tilde h^+$ provided $\|\E\|_{*}$ is sufficiently small. This is what leads to well-posedness results.

\begin{rem}
We also have the representation \eqref{eq:representationRegNeu} with both $e^{-t|DB_{0}|}$ and $h^+$ interpreted in a suitable  sense with Sobolev spaces of order $s=-1$. This point of view is developed more systematically in \cite{R} when $w=1$ and with Sobolev regularity $-1\le s \le0$. We refer the reader there to make the straightforward adaptation, as it is again an abstract argument.   We just warn the reader that the $\E$ in \cite{R} is not exactly the same as ours because the author assumed pointwise accretivity to simplify matters. One should use  representation \eqref{eq:firstformalSAdefn}  for $S_{A}$ instead.
\end{rem}

\begin{cor}\label{cor:aentcv}
Assume that $A$ satisfies $\| w^{-1}(A-A_0) \|_* <\infty$ for some $t$-independent $A_{0}$ and is such that all weak solutions $u$ to the system $\divv A \nabla u=0$ in a ball $B\subseteq \reu$ satisfy
 the local boundedness property
$$
\sup_{\alpha B}|u| \le C \left( \,\, \barint_{\hspace{-6pt}\beta B} |u|^2 d\uw\right)^{1/2},
$$
for any fixed constant $\alpha<\beta<1$,  with  $C$ independent of $u$ and  $B$.  Then any
weak solution $u$ with $\int_0^\infty \|\nabla_{t,x}u\|^2\, tdt<\infty$ or $\|\tN(\nabla_{t,x}u)\|<\infty$ converges non-tangentially almost everywhere to its boundary trace.
\end{cor}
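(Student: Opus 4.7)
The plan is to upgrade the Whitney-average convergence of $u$ to its boundary trace $u_0$ (granted by Theorems \ref{apriori_HSNeumann} and \ref{apriori_HSDir}) to pointwise non-tangential convergence, using the hypothesised local boundedness. Fix a point $x_0$ in the full $dw$-measure set where the relevant Whitney limits hold. Since $u_0(x_0)$ is a constant, $U:=u-u_0(x_0)$ is itself a weak solution of $\divv A\nabla\cdot=0$, so the hypothesis applies to it: for any ball $B\subseteq\reu$,
$$\sup_{\lambda B}|U|\le C\Bigl(\barint_{\mu B}|U|^2\, d\uw\Bigr)^{1/2}$$
with fixed $\lambda<\mu<1$. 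Choosing $B$ so that $\lambda B$ covers a slightly shrunk Whitney region $\tilde W(t,x_0)$ while $\mu B\subseteq W'(t,x_0)$ for a fixed enlargement $W'$ of $W$, I obtain
$$\sup_{\tilde W(t,x_0)}|u-u_0(x_0)|\lesssim\Bigl(\barint_{W'(t,x_0)}|u-u_0(x_0)|^2\, d\uw\Bigr)^{1/2}.$$

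In the class $\|\tN(\nabla_{t,x}u)\|<\infty$, Proposition \ref{prop:X} directly supplies the pointwise bound $\barint_{W(t,x_0)}|u-u_0(x_0)|^2\, d\uw\le t^2 g(x_0)$ with $g\in L^1(w)$, so the right-hand side above is $O(t\sqrt{g(x_0)})\to 0$ at $dw$-a.e.\ $x_0$ and no further work is needed.

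In the Dirichlet class $\int_0^\infty\|\nabla_{t,x}u\|^2\,t\, dt<\infty$ the situation is more delicate. I would first self-improve the hypothesis from $L^2$ to $L^p$ averages for every $0<p<2$ by a standard Bombieri--De Giorgi iteration on the doubling space $(\reu,d\uw)$. Then I would use the representation \eqref{eq:representationDir} to split $u=c-v_\perp$ with $v=e^{-tB_0D}\tilde h^++\tS_A(\nabla_{w^{-1}A}u)$: the semigroup term has $L^2$ Whitney convergence to $\tilde h^+$ by Theorem \ref{thm:NTmaxandaeCV} (applied with $T=B_0D$), while by \eqref{eq:CVaetSAint} of Theorem \ref{thm:esttSA} the $\tS_A$-term has $L^p$ Whitney convergence to $\tilde h^-$ for every $p<2$. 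Combining via the $L^p$-triangle inequality and passing to the normal component yields $L^p$ Whitney convergence of $u-u_0(x_0)$ for some fixed $p<2$, and through the $L^p$-version of the local boundedness the desired sup-convergence.

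Finally, any non-tangential cone $\Gamma_\eta(x_0)=\{(t',x'):|x'-x_0|<\eta t'\}$ of small enough aperture is covered by $\bigcup_{t>0}\tilde W(t,x_0)$ with $t$ comparable to the apex height, so the sup-estimate transfers to $|u(t',x')-u_0(x_0)|\to 0$ as $(t',x')\to(0,x_0)$ within $\Gamma_\eta(x_0)$. The main technical obstacle is the self-improvement of the hypothesis to $L^p$ averages, needed only in the Dirichlet class since Theorem \ref{thm:esttSA} supplies only $L^p$ Whitney convergence there; the classical Bombieri--De Giorgi iteration does go through in the weighted doubling setting, but verifying this adaptation is the one nontrivial ingredient beyond results already established in the paper.
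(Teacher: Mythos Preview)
Your proposal is correct and follows exactly the route the paper intends: apply the local boundedness hypothesis to the solution $u-u_0(x_0)$ on Whitney balls, reducing non-tangential convergence to the Whitney $L^2$ (or $L^p$) convergence already supplied by Theorems~\ref{apriori_HSNeumann} and~\ref{apriori_HSDir} together with Theorem~\ref{thm:NTmaxandaeCV} and \eqref{eq:CVaetSAint}. The paper's own proof is the single sentence that this is ``a straightforward consequence of the more precise almost everywhere convergences'' and omits all details; you have correctly identified and supplied them, including the one subtlety the paper elides---namely that in the Dirichlet class the $\tS_A$ contribution only has $L^p$ Whitney convergence for $p<2$, forcing the self-improvement of the local boundedness to $L^p$ averages. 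That self-improvement is entirely routine (it is the standard Bombieri--Giusti iteration, needing only the doubling of $d\uw$ and the uniformity of the hypothesis over all $\alpha<\beta<1$), so you should not regard it as a genuine obstacle: once you observe $\bigl(\barint_{\beta B}|U|^2\bigr)^{1/2}\le (\sup_{\beta B}|U|)^{1/2}\bigl(\barint_{\beta B}|U|\bigr)^{1/2}$ and iterate over a sequence of radii, the $L^p$ version drops out immediately.
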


The proof is a straightforward consequence of the more precise almost everywhere convergences
we stated in the previous section. We skip the details.
This result applies in particular to real equations as a consequence of \cite{FKS}.

\section{Well-posedness}\label{sec:solvability}

\subsection{Formulation and general results}

\begin{defn} Fix $w\in A_{2}(\R^n)$. Consider degenerate coefficients $A$ with $w^{-1}A\in L^\infty(\R^{1+n}_+; \mL(\C^{m(1+n)}))$ such that $w^{-1}A$ is  accretive on $\mH^{0}$.
 \begin{itemize}
  \item By the Dirichlet problem with coefficients $A$  being well-posed, we mean that
  given $\bphi\in L^2(\R^n,w;\C^m)$, there is a unique weak solution $u$
   solving (\ref{eq:divform}),  with $ \int_0^\infty \|\nabla_{t,x} u\|^2\,  tdt<\infty$
 and trace $u_0= \bphi$.
  \item By the regularity problem with coefficients $A$  being well-posed, we mean that
  given $\bphi\in L^2(\R^n,w; \C^{mn})$, where $\bphi$ satisfies $\curl_x \bphi=0$, there is a  weak solution $u$,
  unique modulo constants,
  solving  (\ref{eq:divform}),    with $\|\tN(\nabla_{t,x} u)\|<\infty$
  and  such that $\nabla_{x}u|_{t=0}= \bphi$.
   \item By the Neumann problem with coefficients $A$  being well-posed, we mean that
  given $\bphi\in L^2(\R^n,w;\C^m)$, there is a weak solution $u$,
  unique modulo constants,
  solving  (\ref{eq:divform}),  with $\|\tN(\nabla_{t,x} u)\|<\infty$
  and  such that $\pd_{\nu_{w^{-1}A}}u|_{t=0}= \bphi$.
\end{itemize}
We  write $A \in$ WP(BVP), if the corresponding boundary value problem (BVP) is well-posed with coefficients  $A$.
\end{defn}

We remark that the definition does not include  almost everywhere requirements. For the regularity and Neumann problems, one can make sense of the trace in a weak sense, but for the Dirichlet problem, the trace may not even make sense. However, as soon as we assume $\|w^{-1}(A-A_{0})\|_{*}<\infty$, which will be the case here, we know exactly the meaning of the trace from the results in Section \ref{sec:apriori}.

The most important observation following the \textit{a priori}  estimates in Theorems \ref{apriori_HSNeumann} and \ref{apriori_HSDir} is the fact that in the $t$-independent coefficient case, we completely identify the trace spaces: $ \mH^+_{DB}$ is the trace space of $w$-normalized conormal gradients for solutions with $\|\tN(\nabla_{t,x} u)\|<\infty$;
$ \mH^+_{BD}$ is the trace space of  conjugate systems $v$  for solutions with $ \int_0^\infty \|\nabla_{t,x} u\|^2\,  tdt<\infty$. In each case this is an isomorphism.

This leads to the following characterisation of well-posedness.

\begin{thm} Consider coefficients $w^{-1}A\in L^\infty(\R^{1+n}_+; \mL(\C^{m(1+n)}))$ such that $w^{-1}A$ is  accretive on $\mH_{0}$. Assume that $A$ has $t$-independent coefficients.  Let $B=\widehat {w^{-1}A}$.
Then $A\in$ WP(Reg)/WP(Neu)/WP(Dir)   if and only if
\begin{align*}
  \mH^+_{DB} \longrightarrow \sett{g\in L^2(\R^n,w;\C^{mn})}{\curl_x g=0} &: f\longmapsto f_\ta, \\
   \mH^+_{DB} \longrightarrow L^2(\R^n,w;\C^m) &: f\longmapsto f_{\perp}, \\
   \mH^+_{BD} \longrightarrow L^2(\R^n,w;\C^m) &: f\longmapsto f_{\perp},
\end{align*}
are isomorphisms respectively.
\end{thm}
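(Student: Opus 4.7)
The plan is to read off the characterization directly from the a priori representations of Theorems \ref{apriori_HSNeumann} and \ref{apriori_HSDir}. Since $A$ is $t$-independent, I can take $A_0=A$, so the discrepancy $\E=B_0-B$ vanishes identically and consequently $S_A\equiv 0$, $\tS_A\equiv 0$. This reduces the two representation theorems to pure semigroup expressions, giving a bijective correspondence between weak solutions (modulo constants for Reg/Neu, strictly for Dir) and elements of the Hardy subspace $\mH^+_{DB}$ or $\mH^+_{BD}$.

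For Reg and Neu, the first step is to show that $u\mapsto h^+:=\nabla_{w^{-1}A}u|_{t=0}$ is a bijection between weak solutions modulo constants with $\|\tN(\nabla_{t,x}u)\|<\infty$ and $\mH^+_{DB}$. The forward direction is Theorem \ref{apriori_HSNeumann} with $\E=0$. Conversely, given $h^+\in \mH^+_{DB}$, set $f_t:=e^{-tDB}h^+$; by Theorem \ref{thm:NTmaxandaeCV} this belongs to $L^2_\loc(\R_+;\mH^0)$ with $\|\tN(f)\|\lesssim \|h^+\|$ and solves the generalized Cauchy--Riemann system, so Proposition \ref{prop:divformasODE} delivers a weak solution $u$ of $\divv A\nabla u=0$ with $\nabla_{w^{-1}A}u=f$, unique modulo constants. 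Reading off the components, $(h^+)_\ta=\nabla_x u|_{t=0}$ is the regularity datum and $(h^+)_\no=\pd_{\nu_{w^{-1}A}}u|_{t=0}$ is the Neumann datum. The image $(h^+)_\ta$ automatically lies in $\{g\in L^2(w;\C^{mn}):\curl_x g=0\}$ because $\mH^+_{DB}\subseteq \clos{\ran(D)}=\{g:\curl_x g_\ta=0\}$ by Proposition \ref{prop:D}(4). Well-posedness now translates literally: existence = surjectivity of the data map; uniqueness modulo constants = injectivity (using $\nabla u=0$ iff $u$ is constant, from Lemma \ref{lem:gradient}(1)).

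For Dir, Theorem \ref{apriori_HSDir} writes $u=-v_\no+c$ with $v=e^{-tBD}\tilde h^+$, $\tilde h^+\in \mH^+_{BD}$, so $u_0=-(\tilde h^+)_\no+c$. The key observation is that the constraint $u_0=\bphi\in L^2(w;\C^m)$ together with $(\tilde h^+)_\no\in L^2(w;\C^m)$ forces $c=0$, because $w(\R^n)=\infty$ (doubling plus unboundedness) prevents nonzero constants from being square-integrable against $w$. Hence there is a strict bijection between Dirichlet solutions and elements of $\mH^+_{BD}$, and well-posedness is exactly the isomorphism property of $\tilde h^+\mapsto -(\tilde h^+)_\no$, which is equivalent (up to sign) to the isomorphism property of $\tilde h^+\mapsto (\tilde h^+)_\no$.

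In all three cases, the bounded inverse in the isomorphism statement is automatic from the open mapping theorem, since $\mH^+_{DB}$ and $\mH^+_{BD}$ are closed subspaces of the Hilbert space $\mH$ and the data maps are restrictions of the bounded coordinate projections. The main technical content has already been absorbed by the a priori representations and the functional calculus bounds (Proposition \ref{prop:equi}, Theorem \ref{thm:NTmaxandaeCV}); what remains is essentially bookkeeping. The one point that warrants care is the $c=0$ argument in the Dirichlet case, but this is immediate once one notes that $L^2(w)$ excludes constants. I do not anticipate any serious obstacle beyond the careful matching of the Hardy space components with the three types of boundary data.
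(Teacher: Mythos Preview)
Your proposal is correct and follows essentially the same route the paper indicates: the theorem is presented as an immediate consequence of the trace space identifications in Theorems~\ref{apriori_HSNeumann} and~\ref{apriori_HSDir}, specialized to $\E=0$, which is exactly what you spell out. Your handling of the constant $c$ in the Dirichlet case via $w(\R^n)=\infty$ is the right mechanism, and the remaining bookkeeping (reading off data from the normal/tangential components, the open mapping theorem for boundedness of the inverse) is accurate.
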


Observe the change of space in the third line.

Let us mention a connection to so-called Rellich estimates. The isomorphisms imply the Rellich estimates
\begin{align*}
  \|f_{\perp}\|\lesssim \|f_{\ta}\|, \quad \forall f \in \mH^+_{DB}, \\\
  \|f_{\ta}\|\lesssim \|f_{\perp}\|, \quad \forall f \in \mH^+_{DB}, \\
  \|f_{\ta}\|\lesssim \|f_{\perp}\|, \quad \forall f \in  \mH^+_{BD},
\end{align*}
respectively. Assuming the Rellich estimates is not enough to conclude well-posedness because this only gives injectivity with closed range. The surjectivity  usually follows from  a continuity argument starting with a situation where one knows surjectivity. Thus, if, in a connected component of validity of a Rellich estimate, there is one $B$   for which surjectivity holds, then surjectivity holds for all $B$ in  this connected component and the corresponding BVP is well-posed for all $B$ in this connected component. Usually one considers $B=I$ so that, here,  $A=wI$. See \cite{AAM} for a discussion which applies \textit{in extenso}.  We remark that this depends on the continuous dependence on $B\in L^\infty$ of the projections $E_{0}^+$ and $\tE_{0}^+$, which follows from Theorem \ref{th:main}.
Let us mention also the duality principle between Dirichlet and Regularity, whose proof is the same as that of Proposition 17.6 in \cite{AA2}.

\begin{thm}\label{th:dualityDirReg} Let $w^{-1}A\in L^\infty(\R^{1+n}_+; \mL(\C^{m(1+n)}))$ such that $w^{-1}A$ is  accretive on $\mH^{0}$. Assume  there exists $t$-independent measurable coefficients $A_0$
 such that $\| w^{-1}(A-A_0) \|_* <\varepsilon$. If $\varepsilon$ is small enough, then $A\in$ WP(Dir) if and only if $A^*\in$ WP(Reg).
\end{thm}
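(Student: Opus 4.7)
The plan is to adapt the proof of \cite[Proposition 17.6]{AA2} to the weighted setting. The first step is to translate each well-posedness statement into the invertibility of a bounded linear map between boundary spectral subspaces. By Theorem \ref{apriori_HSDir}, $A \in$ WP(Dir) is equivalent to the map
\[
  T_A : \mH^+_{B_0 D} \longrightarrow L^2(\R^n,w;\C^m), \qquad \tilde h^+ \longmapsto -\bigl(\tilde h^+ + \tilde h^-\bigr)_\no,
\]
being a bijection, where $\tilde h^-$ is produced from $\tilde h^+$ through the fixed-point equation \eqref{eq:representationDir} (solvable because Theorem \ref{thm:esttSA} together with the smallness $\|\E\|_* < \varepsilon$ makes $\tS_\E$ a strict contraction on the relevant space). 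Similarly, by Theorem \ref{apriori_HSNeumann}, $A^* \in$ WP(Reg) is equivalent to the bijectivity of
\[
  U_{A^*} : \mH^+_{D B_0^*} \longrightarrow \{g \in L^2(\R^n,w;\C^{mn}) : \curl_x g = 0\}, \qquad h^+ \longmapsto \bigl(h^+ + h^-\bigr)_\ta,
\]
with $h^-$ produced via \eqref{eq:representationRegNeu1} and Theorem \ref{thm:estSA}. In both cases the smallness of $\|\E\|_*$ also guarantees that $T_A$ and $U_{A^*}$ are bounded, and that their invertibility is stable under the Carleson perturbation from $A_0, A_0^*$.

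The second step is an abstract duality between $B_0 D$ and $D B_0^*$. Since $D$ is self-adjoint on $\mH = L^2(\R^n,w;\C^{m(1+n)})$ by Proposition \ref{prop:D} and $B_0$ is a bounded pointwise multiplier, one checks that $(B_0 D)^* = D B_0^*$ on $\mH$. The identity $\chi^\pm(\bar z) = \chi^\pm(z)$ on the relevant bisector then gives $\chi^+(B_0 D)^* = \chi^+(D B_0^*)$ via the bounded holomorphic functional calculus of Proposition \ref{prop:SFimpliesFC}, so that $\mH^+_{D B_0^*}$ is canonically identified with the dual of $\mH^+_{B_0 D}$ through the restriction of the $L^2(w)$ inner product. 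I would then combine this with the Green's formula
\[
  \int_{\R^n} u_0 \cdot \overline{(\nabla_{w^{-1}A^*} v)_\no}\, dw \;=\; \int_{\R^n} (\nabla_{w^{-1}A} u)_\no \cdot \overline{v_0}\, dw
\]
(derived by integration by parts on a slab $(0,R)\times \R^n$ and passage to the limit, using the boundary traces and almost everywhere convergence results of Theorems \ref{apriori_HSNeumann} and \ref{apriori_HSDir}), together with its tangential counterpart, to express $U_{A^*}$ as $\pm (T_A)^*$ under the above identifications. The third, essentially formal, step is then to use that a bounded linear map between Hilbert spaces is invertible if and only if its adjoint is, concluding the proof.

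The main obstacle is verifying the adjoint identity $U_{A^*} = \pm (T_A)^*$ in the perturbed setting, with the correct pairing conventions between the $w$-normalized conormal derivative $\pd_{\nu_{w^{-1}A}} = w^{-1}\pd_{\nu_A}$ and the inner product $\langle\cdot,\cdot\rangle_{L^2(w)}$. The weight appears both in the definition of $D$ (via $\divv_w = w^{-1}\divv w$) and in the normalization of the conormal derivative, and careful bookkeeping is needed to see that these two occurrences combine to give the unweighted pairing structure used in \cite{AA2}. One also needs the dual pairing of the singular integrals $S_\E$ and $\tS_\E$ in the sense of \cite[Section 10]{AA1}, extended to the weighted tent/Carleson duality provided by Lemma \ref{lem:Carleson}. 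Density of smooth conormal gradients in the relevant spaces, available through the $t$-regularized continuity in Theorems \ref{thm:estSA} and \ref{thm:esttSA}, then allows one to reduce the Green's formula to a computation on well-behaved test functions, completing the reduction exactly as in the $w=1$ case.
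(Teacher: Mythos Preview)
Your proposal is correct and follows exactly the approach the paper indicates: the paper's proof consists solely of the remark that it ``is the same as that of Proposition~17.6 in \cite{AA2}'', and you have faithfully outlined that argument (translation to invertibility of boundary maps on $\mH^+_{B_0D}$ and $\mH^+_{DB_0^*}$, the duality $(B_0D)^*=DB_0^*$ and hence $\chi^+(B_0D)^*=\chi^+(DB_0^*)$, and a Green's formula to identify $U_{A^*}$ with $\pm(T_A)^*$). The weighted bookkeeping you flag is exactly the content of the adaptation, and the paper provides all the ingredients you need for it (self-adjointness of $D$ on $L^2(w)$, the weighted Carleson/tent duality in Lemma~\ref{lem:Carleson}, and the trace results of Theorems~\ref{apriori_HSNeumann} and~\ref{apriori_HSDir}).
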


We now turn to perturbation results for both $t$-dependent and $t$-independent coefficients. Adapting \cite{AAH, AAM}, see especially Lemma 4.3 in \cite{AAM}, one obtains that
each WP(BVP) is open under perturbation of $t$-independent coefficients in $wL^\infty$.
We refer to \cite{AA1}  for the proofs of the $t$-dependent perturbations, which carry over without change to our setting. We gather these observations together in the following statement.

\begin{thm}   \label{thm:Nellie} Assume the Neumann problem with $t$-independent $A_0$ is well-posed.
 Then there exist $\varepsilon_{0}>0$ and $ \varepsilon_{1}>0$ such that if $\| w^{-1}(A-A_1) \|_* <\varepsilon_{1}$ and $A_{1}$ has $t$-independent coefficients with $\| w^{-1}(A_{1}-A_0) \|_\infty <\varepsilon_{0}$, then the Neumann problem  with coefficients $A$ is well-posed.

The corresponding result holds when the Neumann problem is replaced by the regularity problem.

Moreover, for all such $A$, the solutions  $u$ of the BVP satisfy $$
   \|\tN(\nabla_{t,x} u)\| \approx \|g_0\| \approx \|\bphi\|,
 $$
 with $\bphi$ the $w$-normalized Neumann data or  the regularity data, and one has the limits and regularity estimates as described in Theorem \ref{apriori_HSNeumann}.

\end{thm}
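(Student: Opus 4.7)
The plan is to decompose the perturbation into two steps: the $L^\infty$ passage from the reference coefficients $A_0$ to $t$-independent $A_1$, and the genuine $t$-dependent passage from $A_1$ to $A$, measured in the modified Carleson norm $\|\cdot\|_*$. By the characterization preceding this theorem, well-posedness of the Neumann problem for the $t$-independent $A_0$ amounts to bijectivity of $\pi_\perp:\mH^+_{DB_0}\to L^2(\R^n,w;\C^m)$, $f\mapsto f_\perp$ (and similarly $\pi_\ta$ for the regularity problem, landing in curl-free fields). In both steps the task is to transport this bijectivity to the perturbed coefficients.

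For the $t$-independent step, the quadratic estimate of Theorem \ref{th:main} is uniform over bounded accretive neighborhoods of $B_0=\widehat{w^{-1}A_0}$, so by Proposition \ref{prop:SFimpliesFC} the bounded holomorphic functional calculus of $DB$ depends continuously on $B$ in the $L^\infty$ operator norm. In particular the spectral projection $E^+_B=\chi^+(DB)$ is norm-continuous in $B$, hence the space $\mH^+_{DB_1}$ is a small perturbation of $\mH^+_{DB_0}$. A standard openness/homotopy argument, modelled on Lemma 4.3 of \cite{AAM} and \cite{AAH}, then shows that $\pi_\perp|_{\mH^+_{DB_1}}$ remains an isomorphism once $\|B_1-B_0\|_\infty\lesssim\|w^{-1}(A_1-A_0)\|_\infty$ is small enough, yielding the intermediate claim $A_1\in$ WP(Neu); the same argument with $\pi_\ta$ handles the regularity problem.

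For the $t$-dependent step, set $\E_t:=B_1-\widehat{w^{-1}A(t,\cdot)}$, so that $\|\E\|_*\lesssim\|w^{-1}(A-A_1)\|_*<\varepsilon_1$. The \textit{a priori} identity \eqref{eq:representationRegNeu} states that any conormal gradient $f=\nabla_{w^{-1}A}u$ with $\|\tN(\nabla_{t,x}u)\|<\infty$ obeys $f=e^{-tDB_1}h^++S_\E f$ in $\mX$ for a unique $h^+\in\mH^+_{DB_1}$. By Theorem \ref{thm:estSA}, $\|S_\E\|_{\mX\to\mX}\lesssim\|\E\|_*$, so for $\varepsilon_1$ small $I-S_\E$ is invertible on $\mX$ by Neumann series. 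Conversely, following \cite{AA1}, for every $h^+\in\mH^+_{DB_1}$ the function $f:=(I-S_\E)^{-1}(e^{-tDB_1}h^+)$ lies in $\mX\cap L^2_{\loc}(\R_+;\mH^{0})$, solves $\pd_t f+DBf=0$ weakly, and via Proposition \ref{prop:divformasODE} arises from a weak solution $u$ of $\divv A\nabla u=0$; its boundary trace is $h^++L(h^+)$ with $L(h^+):=-\int_0^\infty e^{sDB_1}E^-_0D\E_s f_s\,ds$, and Theorem \ref{thm:estSA} gives that $L$ is bounded from $\mH^+_{DB_1}$ into $\mH$ with norm $\lesssim\|\E\|_*$.

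Matching Neumann data $\bphi\in L^2(\R^n,w;\C^m)$ thus reduces to inverting $\Phi:=\pi_\perp+\pi_\perp\circ L$ on $\mH^+_{DB_1}$. The unperturbed term is an isomorphism by the first step, the correction has operator norm $\lesssim\|\E\|_*$, and a second Neumann series for $\varepsilon_1$ small yields existence and uniqueness modulo constants; the regularity problem is identical after swapping $\pi_\perp$ for $\pi_\ta$. The norm equivalences $\|\tN(\nabla_{t,x}u)\|\sim\|g_0\|\sim\|\bphi\|$ and the non-tangential and pointwise a.e.\ limits then follow from Theorem \ref{apriori_HSNeumann} applied to the constructed $u$, together with the estimate $\|f\|_\mX\sim\|h^+\|\sim\|\bphi\|$. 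The crux — and the reason the arguments of \cite{AA1} transfer cleanly — is that the operator-theoretic smallness of $S_\E$ on $\mX$ and the boundary-level smallness of $L$ on $\mH$ are both quantitatively governed by the single scalar $\|\E\|_*$, which is exactly the dual multiplier norm of Lemma \ref{lem:Carleson}; the weighted form of this multiplier identity, combined with the weighted quadratic estimate of Theorem \ref{th:main}, is what allows the unweighted proof to be imported into the $A_2$ setting without structural change.
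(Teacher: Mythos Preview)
Your proposal is correct and follows precisely the approach the paper indicates: the text preceding the theorem explicitly states that the $t$-independent perturbation step adapts \cite{AAH,AAM} (in particular Lemma~4.3 of \cite{AAM}) via the continuous dependence of $E_0^+$ on $B$ guaranteed by Theorem~\ref{th:main}, and that the $t$-dependent step is imported from \cite{AA1} using the $\mX$-boundedness of $S_\E$ in Theorem~\ref{thm:estSA} together with the representation \eqref{eq:representationRegNeu}. Your two-step Neumann-series argument, first inverting $I-S_\E$ on $\mX$ to build solutions and then inverting the perturbed boundary map $\Phi=\pi_\perp\circ(I+L)$ on $\mH^+_{DB_1}$, is exactly how the cited references proceed.
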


With the duality principle above, we obtain.

\begin{thm}\label{thm: DirLip}
  Assume the Dirichlet problem with $t$-independent $A_0$ is well-posed.
 Then there exist $\varepsilon_{0}>0$ and $\varepsilon_{1}>0$ such that if $\| w^{-1}(A-A_1) \|_* <\varepsilon_{1}$ and $A_{1}$ has $t$-independent coefficients with $\| w^{-1}(A_{1}-A_0) \|_\infty <\varepsilon_{0}$, then the Dirichlet
  problem  with coefficients $A$ is well-posed.
Moreover, one has $$
   \|\tN(u)\| \approx \sup_{t>0}\|u_t \| \approx
   \bigg(\int_0^\infty \|\nabla_{t,x} u\|^2\,  tdt \bigg)^{1/2}\approx \|\bphi\|,
 $$
  if $\bphi$ is the Dirichlet data,
and one has the limits and regularity estimates described in Theorem \ref{apriori_HSDir}.
\end{thm}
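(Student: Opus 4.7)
The approach is to reduce to the already-proved regularity perturbation result via duality. First observe that because $w$ is a real, scalar, positive weight, taking adjoints of matrix-valued multipliers commutes with all the relevant structures: one has $\|w^{-1}M^*\|_\infty = \|w^{-1}M\|_\infty$ and $\|w^{-1}M^*\|_* = \|w^{-1}M\|_*$ for any matrix-valued $M$, and $w^{-1}A$ is accretive on $\clos{\ran(D)}$ if and only if $w^{-1}A^*$ is (the real parts of the sesquilinear forms coincide). In particular all smallness hypotheses and accretivity conditions transfer unchanged between the $A$-system and the $A^*$-system.

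Well-posedness then follows by a three-step chain. First, apply Theorem \ref{th:dualityDirReg} to $A_0$ with reference $A_0$ itself (so that $\|w^{-1}(A_0-A_0)\|_*=0$): from $A_0\in\mathrm{WP(Dir)}$ we deduce $A_0^*\in\mathrm{WP(Reg)}$. Second, apply Theorem \ref{thm:Nellie} in its regularity version with reference $A_0^*$, intermediate $t$-independent coefficient $A_1^*$ (satisfying $\|w^{-1}(A_1^*-A_0^*)\|_\infty<\varepsilon_0$), and perturbation $A^*$ (satisfying $\|w^{-1}(A^*-A_1^*)\|_*<\varepsilon_1$), obtaining $A^*\in\mathrm{WP(Reg)}$. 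Third, apply Theorem \ref{th:dualityDirReg} in reverse with reference $A_1$: since $\|w^{-1}(A-A_1)\|_*<\varepsilon_1$, shrinking $\varepsilon_1$ if necessary to be below the duality threshold, we conclude $A\in\mathrm{WP(Dir)}$.

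For the quantitative estimates, once well-posedness is in hand we invoke the \emph{a priori} representation of Theorem \ref{apriori_HSDir}. Setting $B_0=\widehat{w^{-1}A_0}$, well-posedness provides an isomorphism $\bphi\longleftrightarrow\tilde h^+$ between Dirichlet data in $L^2(\R^n,w;\C^m)$ and the parameter $\tilde h^+\in \mH^+_{B_0D}$ appearing in $v_t = e^{-tB_0D}\tilde h^+ + (\tS_A(\nabla_{w^{-1}A}u))_t$, $u=-v_{\no}+c$, with $\|\tilde h^+\|\sim\|\bphi\|$. The semigroup bound from Theorem \ref{thm:NTmaxandaeCV} applied to $T=B_0D$ gives
\[
\|e^{-t|T|}\tilde h^+\|_{\mX}\sim \|\tilde h^+\|\sim \|\pd_t e^{-t|T|}\tilde h^+\|_{\mY},
\]
while Theorem \ref{thm:esttSA} bounds the $\tS_A$-contribution in $\mY$, $\mX$, and $\sup_t\|\cdot\|$ by $\|\E\|_*$ times the norm of $\nabla_{w^{-1}A}u$ in $\mY$; taking $\varepsilon_1$ small enough that $\|\E\|_*$ is absorbed into the semigroup term, the four quantities $\sup_{t>0}\|u_t\|$, $\|\tN(u)\|$, $\bigl(\int_0^\infty \|\nabla_{t,x}u\|^2\,tdt\bigr)^{1/2}$ and $\|\bphi\|$ become equivalent. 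The $L^2$-limit $\|u_t-u_0\|\to 0$ and the almost everywhere Whitney-averaged convergence are then a direct reading of Theorem \ref{apriori_HSDir}, whose hypotheses are met since $\|w^{-1}(A-A_1)\|_*<\infty$.

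The main obstacle is bookkeeping of the thresholds: the single pair $(\varepsilon_0,\varepsilon_1)$ in the statement must simultaneously be (i) below the $L^\infty$- and $*$-thresholds of Theorem \ref{thm:Nellie} applied to $A_0^*$, (ii) below the duality threshold of Theorem \ref{th:dualityDirReg} for the return step, and (iii) small enough for the $\tS_A$-contribution in the quantitative bounds to be absorbed. Since each threshold depends only on fixed data ($A_0$, $w$, $\kappa$, $n$, $m$), a common admissible choice exists; no genuinely new analytic estimate beyond Theorems \ref{th:dualityDirReg}, \ref{thm:Nellie}, \ref{thm:NTmaxandaeCV}, \ref{thm:esttSA} and \ref{apriori_HSDir} is required.
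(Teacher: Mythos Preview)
Your approach is correct and coincides with the paper's: the paper derives Theorem \ref{thm: DirLip} in one line (``With the duality principle above, we obtain'') from Theorem \ref{th:dualityDirReg} combined with the regularity case of Theorem \ref{thm:Nellie}, exactly the three-step chain you spell out. One small slip: in the quantitative part the trace of $A$ is $A_1$, not $A_0$, so the semigroup representation from Theorem \ref{apriori_HSDir} should be written with $B_1=\widehat{w^{-1}A_1}$ rather than $B_0=\widehat{w^{-1}A_0}$; the argument is otherwise unchanged.
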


\subsection{Well-posedness for $t$-independent  hermitian coefficients}

\begin{prop} Assume that  $A=A^*$ and that $A$ is $t$-independent and satisfies the usual degenerate boundedness and accretivity on $\mH^{0}=\clos{\ran(D)}$ conditions. Then the regularity, Neumann and Dirichlet problems with coefficients $A$ are well-posed.
 \end{prop}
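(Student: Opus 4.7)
First I would reduce the Dirichlet problem to the Regularity problem. Since $A$ is $t$-independent one has trivially $\|w^{-1}(A-A_0)\|_*=0$ with $A_0:=A$, so Theorem~\ref{th:dualityDirReg} applies and \text{WP(Dir)} for $A$ is equivalent to \text{WP(Reg)} for $A^*=A$. It therefore suffices to prove well-posedness of the Regularity and Neumann problems, which by the characterisation immediately preceding this proposition amounts to showing that the two natural restriction maps
\[
  \mH^+_{DB}\ni f\mapsto f_\no\in L^2(\R^n,w;\C^m),\qquad  \mH^+_{DB}\ni f\mapsto f_\ta\in\{g\in L^2(\R^n,w;\C^{mn}):\curl_x g=0\}
\]
are both isomorphisms, where $B=\widehat{w^{-1}A}$.

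The core is a Rellich identity. Introduce the involution $\hat N$ acting as $+I$ on the normal summand and $-I$ on the tangential summand. A direct block computation from the explicit formula for $\widehat{\,\cdot\,}$ shows that $A=A^*$ is equivalent to $\hat N B=B^*\hat N$, i.e.\ $\hat N B$ is self-adjoint on $\mH$; moreover $D\hat N=-\hat N D$ since $D$ is off-diagonal in the normal/tangential splitting. For $h^+\in\mH^+_{DB}$ let $f(t):=e^{-tDB}h^+$, well defined by Proposition~\ref{prop:SFimpliesFC} and satisfying $\pd_t f=-DBf$. Then
\[
  \tfrac{d}{dt}(Bf,\hat N f)=-(BDBf,\hat N f)-(Bf,\hat N DBf),
\]
and rewriting the second term via $\hat N DBf=-D(\hat N Bf)=-DB^*\hat N f$ together with self-adjointness of $D$ gives $(Bf,\hat N DBf)=-(BDBf,\hat N f)$, so the derivative vanishes. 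Since $f(t)\to 0$ in $\mH$ as $t\to\infty$ (the analytic semigroup $e^{-t|DB|}$ decays strongly on the spectral subspace $\mH^+_{DB}$, because $|DB|$ is injective with dense range there), I get
\[
  (Bh^+,\hat N h^+)=0,\qquad h^+\in \mH^+_{DB}.
\]
The left-hand side being real thanks to $(\hat N B)^*=\hat N B$, this is the identity $\re(Bh^+,P_\no h^+)=\re(Bh^+,P_\ta h^+)$, where $P_\no,P_\ta$ are the orthogonal projections onto the normal and tangential parts. Because $\mH^+_{DB}\subseteq\clos{\ran(D)}$, accretivity forces each of the two sides to be at least $\tfrac{\kappa}{2}\|h^+\|^2$, and Cauchy--Schwarz delivers the two-sided Rellich estimate
\[
  \|h^+_\no\|\approx\|h^+\|\approx\|h^+_\ta\|,\qquad h^+\in\mH^+_{DB},
\]
with constants depending only on $\kappa$ and $\|B\|_\infty$. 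In particular both maps above are injective with closed range.

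For surjectivity I would run the method of continuity along the segment $A_s:=(1-s)A+s\,wI$, $s\in[0,1]$. Each $A_s$ is Hermitian, $t$-independent, and satisfies \eqref{eq:bounded}--\eqref{eq:accrassumption} with constants uniform in $s$, so the Rellich bounds of the previous step hold for $B_s:=\widehat{w^{-1}A_s}$ with constants uniform in $s$. The openness statement of Theorem~\ref{thm:Nellie}, applied with $\|w^{-1}(A_s-A_{s'})\|_*=0$ and $\|w^{-1}(A_s-A_{s'})\|_\infty=|s-s'|\,\|w^{-1}A-I\|_\infty$, shows that the set of $s$ for which both problems are well-posed is open in $[0,1]$. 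The uniform Rellich lower bound, combined with norm continuity of the spectral projections $\chi^+(DB_s)$ (a consequence of the bounded holomorphic functional calculus from Theorem~\ref{th:main}/Proposition~\ref{prop:SFimpliesFC} and the resolvent representation of $\chi^+$), yields closedness via a standard Fredholm-index argument. At $s=1$ the system decouples into $m$ copies of $\divv(w\nabla u^\alpha)=0$, for which both problems are solved directly via the functional calculus of the self-adjoint operator $\sqrt{-\Delta_w}$ built in Lemma~\ref{lem:gradient} and Proposition~\ref{prop:SFimpliesFC}; so the set contains $s=1$ and hence equals $[0,1]$. The main obstacle will be the closedness step: one must verify that the movement of $\mH^+_{DB_s}$ with $s$ is controlled in operator norm so that the uniform Rellich bound propagates to surjectivity of the limit map, which is exactly what the uniform functional calculus bounds granted by Theorem~\ref{th:main} provide.
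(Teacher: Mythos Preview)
Your proposal is correct and follows essentially the same route as the paper: the paper also derives the Rellich identity by differentiating $(NF_t,BF_t)$ for $F_t=e^{-tDB}h^+$ using the anticommutation $DN+ND=0$ and the symmetry $B^*N=NB$ coming from $A=A^*$, obtains the same two-sided Rellich bounds from accretivity, and then passes from injectivity-with-closed-range to surjectivity by the continuity argument with endpoint $A=wI$ (where the paper checks well-posedness directly via $\sgn(D)$, which is your $\sqrt{-\Delta_w}$ computation); the Dirichlet problem is likewise deduced from Regularity via Theorem~\ref{th:dualityDirReg}. The only cosmetic differences are your sign convention $\hat N=-N$ and your more explicit phrasing of the method of continuity.
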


\begin{proof} Let $B=\widehat {w^{-1}A}$ and
  $f\in E^+_0\mH=\mH^+_{DB}$.
Theorem \ref{apriori_HSNeumann} in the case of  $t$-independent coefficients implies that the vector field $F_t=e^{-tDB}f$ in $\R^{1+n}_+$ is such that $\pd_t F_t=-DB F_t$,
$\lim_{t\rightarrow\infty}F_t=0$ and $\lim_{t\rightarrow 0}F_t=f$.
Let $N:= \begin{bmatrix} -I & 0 \\ 0 & I\end{bmatrix}$
and note that $D N +N D=0$.
Now,    the definition of $B=\widehat {w^{-1}A}$ and   the Hermitian
condition  $A^*=A$ imply $B^*N=NB$.  Using the hermitian inner product  $(\ , \ )$  for $dw$, we have
\begin{multline*}
    \pd_t (N F_t,  BF_t)
  = (NDB F_t,  BF_t)+ (N F_t, BDB F_t) \\
  =  (NDB F_t, B F_t)+ (DB^*N F_t, B F_t)
  = ((N D+ D N)B F_t, B F_t) =0.
\end{multline*}
Hence, integrating in $t$ and taking into account the  limit at $\infty$ gives us $(Nf,Bf)=0$.  Thus, separating scalar and tangential parts, we obtain the Rellich equality:
 \begin{equation}   \label{eq:rellich}
(f,Bf)= 2(f_{\perp}, (Bf)_{\perp})=2 (f_{\ta}, (Bf)_{\ta}).
\end{equation}
Consider first the Neumann problem. It follows  from  (\ref{eq:rellich}) and the accretivity of $B$ on $\mH^{0}$ that
$$
 \kappa \|f\|^2 \le \re(f,Bf)= 2\re(f_{\perp}, (Bf)_{\perp})\lesssim 2\|B\|_{\infty} \|f_{\perp}\| \|f\|.
$$
This shows that $\|f\|\lesssim \|f_{\perp}\|$  for the Neumann map for any hermitian $A$, which implies that this map is injective with closed range. The continuity argument explained above implies that $A\in $ WP(Neu) provided that  $I\in$  WP(Neu). That
$I\in$ WP(Neu) can be seen from the equality $\|\nabla (-\Delta_{w})^{-1/2}u\|=\|u\|$ and
$$
\sgn(D)= \begin{bmatrix} 0 & (-\Delta_{w})^{-1/2}\divv_{w} \\ -\nabla (-\Delta_{w})^{-1/2}  & 0\end{bmatrix}.
$$
Thus, for $f\in \mH^{0}$, $f\in \mH^+_{D}$ if and only if $f_{\ta}= -\nabla (-\Delta_{w})^{-1/2}f_{\perp}
$, which in turn holds  if and only if $f_{\perp}= (-\Delta_{w})^{-1/2}\divv_{w}f_{\ta}$. This implies that the map used for solving the Neumann problem are invertible.

That $A\in$ WP(Reg) is proved in the similar way. Then, by Theorem \ref{th:dualityDirReg}, it follows that $A=A^* \in$ WP(Dir).
\end{proof}

\subsection{Well-posedness with algebraic structure and $t$-independent coefficients}

 Recall that we write our coefficients $A$ as a $2 \times 2$ block matrix. We say that it is {\em block lower-triangular} if the upper off-diagonal block $ A_{\perp\ta}$ is 0,  and {\em block upper-triangular} if the lower block  $A_{\ta\perp}$ is 0.

\begin{thm}\label{thm:triangular} We assume that $A$ is $t$-independent and satisfies the usual degenerate boundedness and accretivity conditions.
\begin{itemize}
  \item The Neumann problem  with block lower-triangular   coefficients $A$ is well-posed.
    \item The regularity problem  with block upper-triangular   coefficients $A$ is well-posed. More generally, it suffices for the off-diagonal lower block of $A$ to be divergence free and have real entries.
  \item  The Dirichlet problem  with block lower-triangular   coefficients $A$ is well-posed. More generally, it suffices for the off-diagonal upper block to be divergence free and have real entries.\end{itemize}

\end{thm}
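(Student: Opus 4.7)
My plan is to follow the template developed in \cite{AA1, AA2} in the unweighted case: for each of the three assertions I would combine a Rellich-type inequality—which gives injectivity with closed range of the relevant boundary-trace map on the spectral subspace $\mH^+_{DB}$ (for Neumann and regularity) or $\mH^+_{BD}$ (for Dirichlet)—with a continuity/deformation argument that connects $A$ to a coefficient matrix for which the BVP is already well-posed. Both steps are legitimate in the weighted setting because Theorem \ref{th:main} and Proposition \ref{prop:SFimpliesFC} guarantee that the projections $E_{0}^{\pm}$, $\tE_{0}^{\pm}$ and the semigroups $e^{-t|DB|}$, $e^{-t|BD|}$ depend continuously on $B$ in the $L^\infty$ norm.

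The Rellich estimates are the heart of the matter. For the Neumann problem with block lower-triangular $A$, I would establish $\|f\|\lesssim\|f_\no\|$ for $f\in\mH^+_{DB}$ as follows. Since $A_{\no\ta}=0$, the formula in Proposition \ref{prop:hat} shows that $B=\widehat{w^{-1}A}$ is likewise block lower-triangular. Setting $F_t=e^{-t|DB|}f$ and computing $\pd_t(NF_t,BF_t)$ in $\mH$ with $N=\begin{bmatrix}-I&0\\0&I\end{bmatrix}$, the anticommutation $DN+ND=0$ reduces the identity to an integral involving only the off-diagonal block $B_{\ta\no}$; the block lower-triangular structure arranges the sign so that, after integrating from $0$ to $\infty$ and using the $t$-decay of the semigroup from Theorem \ref{thm:NTmaxandaeCV}, one obtains $\re(Nf,Bf)_\mH\ge 0$. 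Combined with accretivity of $B$ on $\clos{\ran(D)}\supseteq \mH^+_{DB}$ this yields $\|f\|\lesssim\|f_\no\|$. A parallel computation involving $BD$ on $\mH^+_{BD}$ yields the Rellich estimate $\|f\|\lesssim\|f_\ta\|$ needed for the regularity problem with block upper-triangular $A$. The general ``divergence-free real entries'' case for the off-diagonal block is obtained from the observation (as in \cite{AAM}) that such a term contributes only an exact divergence of a real quantity in the Rellich integration by parts, so the sign structure is preserved.

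The deformation step interpolates along $A_s=\begin{bmatrix}A_{\no\no}&0\\sA_{\ta\no}&A_{\ta\ta}\end{bmatrix}$ for $s\in[0,1]$, which keeps the boundedness and accretivity properties on $\mH^0$ uniformly in $s$. At $s=0$ the matrix is block-diagonal, and by the discussion of block-diagonal $B$ in Section \ref{sec:consequences} well-posedness of the Neumann problem reduces to the Kato square root bound for the degenerate operator $-\divv_w(w^{-1}A_{\ta\ta})\nabla$, which holds by Theorem \ref{th:main} (equivalently, by \cite{CR}). The uniform Rellich inequality along the path, combined with the continuity of $E_0^+$ in $s$, then gives surjectivity of the boundary map at $s=1$ by the standard method of continuity. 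For the Dirichlet case I would invoke Theorem \ref{th:dualityDirReg}: a block lower-triangular $A$ has block upper-triangular adjoint $A^*$, and the divergence-free/real-entries condition on the upper off-diagonal block of $A$ corresponds to the same condition on the lower off-diagonal block of $A^*$, so WP(Dir) for $A$ follows from WP(Reg) for $A^*$ established in part~(2).

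The principal difficulty I anticipate is in the divergence-free real-entries version of the Rellich identity, where the off-diagonal block is only controlled distributionally. Making sense of the cross-term produced by the integration by parts—and verifying that it reduces to a real, sign-definite boundary-type quantity—requires extra care in the $w$-normalization $B=\widehat{w^{-1}A}$, since conjugation by $w$ mixes the ``real'' and ``divergence-free'' structures. The $A_2$ assumption on $w$, together with the density statement in Proposition \ref{prop:D}(2), should allow one to justify the computation on a dense subspace and then pass to the limit using the uniform bounds coming from Theorem~\ref{th:main}.
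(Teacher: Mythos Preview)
Your overall architecture (Rellich-type inequality plus method of continuity, then duality for the Dirichlet case) is reasonable, but the Rellich step as you describe it does not work. Carrying out the computation you propose, with $F_t=e^{-tDB}f$ and $f\in\mH^+_{DB}$, gives
\[
\pd_t(NF_t,BF_t)=\big((NB-B^*N)F_t,\,DBF_t\big),
\]
using $DN+ND=0$ and self-adjointness of $D$. In the Hermitian case of the previous proposition this vanishes because $B^*N=NB$. For block lower-triangular $B$ one has
\[
NB-B^*N=\begin{bmatrix}-(B_{\no\no}-B_{\no\no}^*) & -B_{\ta\no}^* \\ B_{\ta\no} & B_{\ta\ta}-B_{\ta\ta}^*\end{bmatrix},
\]
and there is no reason for the resulting pairing with $DBF_t$ to be sign-definite; in particular the diagonal blocks of $B$ need not be self-adjoint. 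So the conclusion $\re(Nf,Bf)\ge 0$ is unsupported, and with it the bound $\|f\|\lesssim\|f_\no\|$. A further Rellich computation would be needed, and it is not clear one exists at this level of generality.

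The paper's argument is different: it follows \cite{AMM} line by line, the key inputs being well-posedness of all three problems in the \emph{energy class} $\int_{\reu}|\nabla u|^2\,d\uw<\infty$ (which is always available by Lax--Milgram) and an algebraic manipulation in which the standard Riesz transforms are replaced by $\mR_w=\nabla(-\Delta_w)^{-1/2}$ from Lemma~\ref{lem:gradient}. The triangular structure is exploited through these Riesz transforms rather than through a Rellich identity. For the ``divergence-free, real entries'' extension the paper does not argue via a sign in an integration by parts; instead it observes that such an off-diagonal block can be absorbed into the other off-diagonal block without changing the accretivity bounds or the set of weak solutions, reducing the problem to the strictly triangular case. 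Your duality step for the Dirichlet problem via Theorem~\ref{th:dualityDirReg} is correct and agrees with the paper.
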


Let us clarify  the statements above. The off-diagonal lower block is $ A_{\ta\no}= (A^{\alpha,\beta}_{i,0})_{i=1,\ldots, n}^{\alpha,\beta= 1,\ldots, m}$. Real entries means that all these coefficients are real: it guarantees that $A$ and $$A'=A-\begin{bmatrix} 0 & -A_{\ta\no}^t \\ A_{\ta\no}  & 0\end{bmatrix}=\begin{bmatrix} A_{\no\no} & A_{\no\ta}+A_{\ta\no}^t \\ 0  & A_{\ta\ta}\end{bmatrix} $$
have the same accretivity  bounds. The divergence free condition is $\sum_{i=1}^n\pd_{i}A^{\alpha,\beta}_{i,0}=0$
for all $\alpha,\beta$. It implies that weak solutions with coefficients $A$ or $A'$ are the same as can be seen by integrating by parts. In other words, we can reduce matters the special case where the  off-diagonal lower block is zero
This possibility does not appear to be available for the Neumann problem because the conormal derivative depends on the coefficients.

  The proof of this theorem is obtained by a line by line adaptation of  \cite{AMM} to the weighted setting using well-posedness  of the three problems (modulo constants) in the class  of energy solutions, that is,  having finite energy $\int_{\reu} |\nabla u|^2 d\uw<\infty$.  We  mention that to carry out the algebra  there, one should replace the standard Riesz transforms by the Riesz transforms $\mR_{w}$ defined in Lemma \ref{lem:gradient}. We leave details to the interested reader.

\section{Non-tangential maximal estimates and Fatou type results}\label{sec:NTmax}

Recall that $\uw$ is the $A_{2}$ weight on $\R^{1+n}$ defined by $\uw(t,x)=w(x)$ and that   $\uw$ and $w$ have identical $A_{2}$ constants.
Writing equations $\divv A \nabla u =0$  as $\divv_{\uw} (w^{-1}A\nabla u)= 0$ allows one to carry  some proofs to the degenerate case  without much change from the non-degenerate case. We quote two results we will be using. The first is the usual Caccioppoli inequality with a completely analogous proof:
all weak solutions $u$ in a ball  $ B=B(\bx,r) \subseteq \reu$ of  $\divv A \nabla u=0$ enjoy the Caccioppoli inequality \begin{equation}
\label{eq:caccio}
{}\int_{\alpha B} |\nabla u|^2 \, d\uw \le Cr^{-2} {}\int_{\beta B}  |u|^2 \, d\uw
\end{equation}
for any $0<\alpha<\beta<1$,
 the implicit constants depending only on $\|w^{-1}A\|_{\infty}$ the accretivity constant of $w^{-1}A$, $n$, $m$, $\alpha$ and $\beta$.

The second one is a corollary of this and Poincar\'e inequalities: there exists $1<p_{1}<2$ such that for any $p_{1}<p<2$, all weak solutions $u$ in a ball $B=B(\bx,r) \subseteq \reu$ of  $\divv A \nabla u=0$ enjoy the reverse H\"older inequality  \begin{equation}
\label{eq:meyers}
\bigg(\barint_{\hspace{-6pt}\alpha B} |\nabla u|^2 \, d\uw\bigg)^{1/2} \lesssim \bigg(\barint_{\hspace{-6pt}\beta B}  |\nabla u|^p \, d\uw\bigg)^{1/p}
\end{equation}
for any $0<\alpha<\beta<1$,
 the implicit constants depending only on $\|w^{-1}A\|_{\infty}$ the accretivity constant of $w^{-1}A$, $n$, $m$, $p$, $\alpha,\beta$ and the $A_{2}$ constant of $w$. The usual proof follows from Caccioppoli's inequality applied to $u-u_{\beta B}$ and using the Poincar\'e inequality with  the gradient
in $L^p$. Here, to be able to do that we need $\uw \in A_{2-\varepsilon}$ for $\varepsilon>0$ but this can be done  for some $\varepsilon>0$ depending only in the size  of the $A_{2}$ constant of $\uw$. Then, one can use \cite[Theorem 1.5]{FKS} which asserts that the gain of exponent in the Poincar\'e inequality for an $A_{p}$ weight on $\R^{1+n}$ is at least $p \frac{1+n}n$. So for $p> \frac {2n}{1+n}$  (and $p>2-\varepsilon$) we are done.

We continue with the analogue of Lemma 10.3 in \cite{AA1}.

\begin{lem}  \label{lem:lqoffdiag}
  Let $B$ be $t$-independent, bounded on $\mH$ and accretive on $\mH^{0}= \clos{\ran(D)}$. Let $T=DB$ or $BD$. Then there exists $1<p_{0}<2$, such that for $p_{0}<q<p_{0}'$,
$$
  \| (I+it T)^{-1} f \|_{q}\le  {C_N}\bigg(1+\frac{\dist(E,F)}{t}\bigg)^{-N} \|f\|_{q}
$$
for all   integer $N$,   $t>0$ and sets $E,F\subseteq \R^n$ such that $\supp f\subseteq F$, with $C_{N}$ independent of $f, t, E, F$.  Here $\dist(E,F):= \inf\sett{|x-y|}{x\in E, y\in F}$ and $\|\ \|_{q}$ are the weighted $L^q$ norms.
\end{lem}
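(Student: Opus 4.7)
The plan is to follow the structure of \cite[Lemma 10.3]{AA1}, adapted to the weighted setting. The starting point is the $L^2(w)$ off-diagonal decay of Lemma \ref{lem:odd}. Upgrading to $L^q(w)$ for $q$ in a neighborhood of $2$ is achieved by combining Lemma \ref{lem:odd} with a Meyers-type reverse H\"older self-improvement for the resolvent, followed by an annular decomposition of $f$. Both cases $T=DB$ and $T=BD$ are handled in parallel, since their $L^2(w)$-adjoints are $(I-itB^*D)^{-1}$ and $(I-itDB^*)^{-1}$ respectively, i.e.~of the same form with $B$ replaced by $B^*$ and $t$ by $-t$.

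The first and main step is to establish the local Meyers-type estimate: for $u=(I+itT)^{-1}f$ with $f$ vanishing on a ball $2B$ of radius $r$, and for some exponent $q_0>2$ depending only on $\|B\|_\infty$, the accretivity constant, $[w]_{A_2}$ and $n$,
\[
\Big(\barint_{\hspace{-6pt}B}|u|^{q_0}\,dw\Big)^{1/q_0}\lesssim \Big(\barint_{\hspace{-6pt}2B}|u|^{2}\,dw\Big)^{1/2}.
\]
For $T=DB$ I would first derive a Caccioppoli-type inequality by testing the first-order equation $u+itDBu=0$ against $\eta^{2}\bar u$ (or $\eta^2$ times an appropriate $B$-twisted combination, so as to invoke the accretivity of $B$ on $\clos{\ran(D)}$) for a smooth cutoff $\eta$ supported in $2B$ equal to $1$ on $B$, using that the commutator $[\eta,D]$ is pointwise multiplication by the bounded matrix in \eqref{eq:comm}. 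Absorption yields
\[
\int_{B}|u|^2\,dw \lesssim \frac{t^2}{r^2}\int_{2B\setminus B}|u|^2\,dw,
\]
to which the weighted version of Gehring's lemma, valid for doubling $dw$, applies to produce the reverse H\"older estimate.

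The $L^{q_0'}(w)$ analogue then follows by duality applied to $(I+itT)^{-*}$, which is of the same form; complex interpolation with the $L^2(w)$ bound yields the full range $p_0<q<p_0'$ with $p_0:=q_0'$. With this in hand I would conclude with a standard annular decomposition: writing $f=\sum_{k\ge 0} f_k$ with $f_k=f\,1_{F_k}$ where $F_k$ is the dyadic annulus of scale $2^{k}\max(t,\dist(E,F))$ about $E$, covering $E$ by balls on which the Meyers improvement upgrades the $L^2(w)$ control of Lemma \ref{lem:odd} to $L^q(w)$, and summing over $k$ using Lemma \ref{lem:odd} with $N$ chosen large enough to absorb any fixed polynomial loss in the geometry.

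The main obstacle is the Caccioppoli-Gehring step. The equation $u+itDBu=0$ is not a scalar elliptic equation but a vectorial first-order system mixing normal and tangential components via the non-pointwise operators $\divv_{w}$ and $\nabla$; since accretivity is only available on $\clos{\ran(D)}$, the test function must be chosen (or projected) so that the accretivity bound can be used effectively. The $t$-dependence needs to be tracked uniformly so that the factor $t/r$ from scaling appears correctly. A secondary subtlety is that the resulting Meyers exponent $q_0$ depends on the $A_2$ constant of $w$, which is handled by the openness of the $A_2$ class and the quantitative weighted form of Gehring's lemma.
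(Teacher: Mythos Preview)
Your outline follows the shape of \cite[Lemma 10.3]{AA1}, but the Caccioppoli--Gehring step as written does not close. The inequality you claim,
\[
\int_{B}|u|^{2}\,dw \lesssim \frac{t^{2}}{r^{2}}\int_{2B\setminus B}|u|^{2}\,dw,
\]
is a decay (hole-filling) estimate with $L^2$ on both sides; it carries no exponent drop and therefore is not a hypothesis for Gehring's lemma, which requires a genuine reverse H\"older inequality of the form $\big(\barint_{B} g^{2}\big)^{1/2}\lesssim \big(\barint_{2B}g^{p}\big)^{1/p}$ for some $p<2$. In the classical Meyers argument for $\divv A\nabla v=0$ that drop comes from combining Caccioppoli for $\nabla v$ with a Poincar\'e--Sobolev inequality applied to $v$; there is no analogous Poincar\'e step available directly for the first-order unknown $u=(I+itDB)^{-1}f$, since $u$ is not a gradient and, for $T=DB$, need not even belong to $\dom(D)$. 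So the ``main obstacle'' you flag is not only the test-function choice: even once a first-order Caccioppoli is obtained, the self-improvement mechanism is missing.

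The paper avoids this entirely and takes a different route. Rather than a local Caccioppoli on the first-order system, it rewrites the resolvent equation $(I+itDB)\tilde f=f$ via the correspondence of Proposition~\ref{prop:divformasODE} (with $\pd_t$ replaced by $(it)^{-1}$) as a second-order divergence-form equation on $\R^n$ for the scalar component:
\[
L_t\tilde g_\no := \begin{bmatrix}1 & it\,\divv_w\end{bmatrix}(w^{-1}A)\begin{bmatrix}1\\ it\nabla_x\end{bmatrix}\tilde g_\no
= \begin{bmatrix}1 & it\,\divv_w\end{bmatrix}\begin{bmatrix}w^{-1}A_{\no\no}g_\no\\ -w^{-1}A_{\ta\ta}g_\ta\end{bmatrix}.
\]
Accretivity of $w^{-1}A$ on $\mH^0$ makes $L_t:W^{1,2}(w)\to W^{-1,2}(w)$ an isomorphism; the interpolation of weighted Sobolev spaces in Proposition~\ref{prop:interpolation} combined with \v{S}ne\u{\i}berg's stability theorem then yields invertibility $W^{1,q}(w)\to W^{-1,q}(w)$, uniformly in $t$, for $q$ in an open interval about $2$. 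This gives the \emph{global} $L^q(w)$ bound $\|(I+itDB)^{-1}f\|_q\lesssim\|f\|_q$ directly. The off-diagonal decay in $L^q$ then follows by straightforward complex interpolation of the localized operators $1_E(I+itDB)^{-1}1_F$ between this global $L^q$ bound and the $L^2$ off-diagonal estimate of Lemma~\ref{lem:odd} (with $N$ as large as one likes), so no annular decomposition or Meyers-type local estimate is needed. Your acknowledged difficulty---exploiting accretivity only on $\clos{\ran(D)}$---is resolved precisely by passing to the second-order formulation, where it becomes standard coercivity on $W^{1,2}(w)$.
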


\begin{proof} It suffices to prove the lemma for $T=DB$ as then it holds for $T^*=B^*D$, and hence for $BD$ upon changing $B^*$ to $B$.

  For $q=2$, this is contained in  Lemma \ref{lem:odd}.  By interpolation, it suffices to estimate the operator norm of $ (I+it DB)^{-1}$ on $L^q(\R^n,w;\C^{m(1+n)})$, uniformly for $t$.

  To this end, assume that $(I+ it DB)\tilde f= f$.
  As in Proposition~\ref{prop:divformasODE}, but replacing $\pd_t$ by $(it)^{-1}$, this equation is equivalent to
$$
\begin{cases}
  (w^{-1} A\tilde g)_\no + it\divv_w(w^{-1} A\tilde g)_\ta = (w^{-1} A g)_\no, \\
  \tilde g_\ta - it \nabla_x\tilde g_\no = g_\ta,
\end{cases}
$$
where $A, g$ and $ \tilde g$ are related to $B, f $ and $ \tilde f$ respectively, as in Proposition~\ref{prop:divformasODE}.
Using the second equation to eliminate $\tilde g_\ta$ in the first, shows that $\tilde g_\no$ satisfies the divergence form
equation
$$
  L_{t}\tilde g_\no:= \begin{bmatrix} 1 & it\divv_w \end{bmatrix}
   (w^{-1}A)
    \begin{bmatrix} 1 \\ it\nabla_x \end{bmatrix}
    \tilde g_\no =
     \begin{bmatrix} 1 & it\divv_w \end{bmatrix}
      \begin{bmatrix} w^{-1}A_{\no\no} g_\no \\ - w^{-1}A_{\ta\ta} g_\ta \end{bmatrix}.
$$
Let $r(w)<2$ be the infimum of those exponents $q$ for which $w\in A_{q}$. For $r(w)<q<r(w)
'$, $L_{t}$ is bounded from $W^{1,q}(\R^n,w; \C^{m})$ equipped with the norm
$\|u\|_{q}+ t\|\nabla u \|_{q}$ into $W^{-1, q}(\R^n,w;\C^m)=(W^{1, q'}(\R^n, w;\C^m))'$ for the duality $\langle \ , \ \rangle_{w}$, uniformly in $t$. (Observe that the $A_{2}$ constant is unchanged by scaling, so we can change variable to reduce to $t=1$ up to changing $w(x)$ to $w(tx)$.) The accretivity condition on $\mH^{0}$ tells us  that
$$
\re \langle   L_{t}u,u \rangle_{w} \ge \kappa (\|u\|_{2}^2 + t^2\|\nabla u \|_{2}^2).$$
 Hence $L_{t}$ is an isomorphism for $p=2$. Using Proposition \ref{prop:interpolation} and
the stability result of {\v{S}}ne{\u\i}berg~\cite{sneiberg}, it follows that
 $L_{t}$ is an isomorphism for $p_0<q<p_{0}'$ for some $r(w)<p_{0}<2$. This gives us the desired estimate in the weighted $L^q(w)$ norms,
$$
  \|\tilde f\|_q\approx \|\tilde g\|_q \lesssim \|\tilde g_\no\|_q+ t\|\nabla_x \tilde g_\no\|_q + \|g_\ta\|_q
  \lesssim \|g\|_q\approx \|f\|_q.
$$
\end{proof}

We state a simple but useful lemma.

\begin{lem}\label{lem:SFEimplies0limit} Let $f$ be a function
 satisfying the square function estimate $\int_{0}^\infty \|f_{t}\|^2\, \frac{dt}t <\infty$. Then the Whitney averages of $f$ converge $dw$ almost everywhere in $L^2$ sense to 0.
\end{lem}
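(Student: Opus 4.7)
The plan is to show, for $f$ as in the hypothesis, that $\barint_{W(t,x_0)}|f|^2\,d\uw\to 0$ as $t\to 0^+$ for $dw$-a.e.\ $x_0\in\R^n$, by transferring the global square function bound into pointwise control via Fubini and the doubling of $w$. The strategy has two moves: first a Fubini computation to gain $dt/t$-integrated control for a.e.\ $x_0$, and then a slight enlargement of the Whitney region to upgrade this to a genuine continuous pointwise limit.

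First, I would introduce $c:=2\max(c_0,c_1)$ and the enlarged Whitney box $W^\sharp(t,x_0):=(t/c,ct)\times B(x_0,ct)$, with average $G(t,x_0):=\barint_{W^\sharp(t,x_0)}|f|^2\,d\uw$. The enlargement is chosen so that $W(s,x_0)\subseteq W^\sharp(t,x_0)$ whenever $s\in[t,2t]$. By Fubini, the identity $\chi_{B(x_0,ct)}(y)=\chi_{B(y,ct)}(x_0)$, the observation $\uw(W^\sharp(t,x_0))\approx t\,w(B(x_0,ct))$, and doubling of $w$ (which gives $w(B(x_0,ct))\approx w(B(y,ct))$ when $|x_0-y|<ct$), one finds
$$\int_{\R^n} G(t,x_0)\,dw(x_0)\ \approx\ \frac{1}{t}\int_{t/c}^{ct}\|f_s\|^2\,ds.$$
Integrating against $dt/t$ and applying Fubini a second time,
$$\int_0^\infty\!\int_{\R^n} G(t,x_0)\,dw(x_0)\,\frac{dt}{t}\ \lesssim\ \int_0^\infty \|f_s\|^2\,\frac{ds}{s}\ <\ \infty,$$
so by Tonelli $\int_0^\infty G(t,x_0)\,\frac{dt}{t}<\infty$ for $dw$-a.e.\ $x_0$.

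For any such $x_0$, I would then argue by contradiction: suppose there were $\epsilon>0$ and $s_n\to 0$ with $F(s_n,x_0):=\barint_{W(s_n,x_0)}|f|^2\,d\uw\ge\epsilon$. Since $\uw\in A_2(\R^{1+n})$ is doubling and $W(s_n,x_0)\subseteq W^\sharp(t,x_0)$ for all $t\in[s_n/2,s_n]$,
$$G(t,x_0)\ \ge\ \frac{\uw(W(s_n,x_0))}{\uw(W^\sharp(t,x_0))}\,F(s_n,x_0)\ \gtrsim\ \epsilon\qquad\text{for all } t\in[s_n/2,s_n].$$
Extracting a subsequence so that the intervals $[s_n/2,s_n]$ are pairwise disjoint yields $\int_0^\infty G(t,x_0)\,\frac{dt}{t}=\infty$, contradicting the previous step. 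Hence $F(t,x_0)\to 0$, as desired. The only real obstacle is this last step --- converting integrated-in-$t$ finiteness into a genuine pointwise limit --- and it is precisely what the enlarged region $W^\sharp$ is engineered to handle.
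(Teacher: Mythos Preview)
Your proof is correct. Both the paper and you rely on Fubini and the doubling of $w$, but the mechanisms for passing from an integrated bound to an a.e.\ limit differ. The paper introduces the cone-type area integral
\[
h(t,x):=\int_{|x-y|\le c_0c_1 s,\ s<c_0 t}\frac{|f(s,y)|^2}{s\,w(B(y,c_1 s))}\,d\uw(s,y),
\]
observes that $W_2 f(t,x)^2\lesssim h(t,x)$, and uses that $h$ is \emph{increasing} in $t$ with $\int_{\R^n} h(t,x)\,dw(x)\lesssim\int_0^{c_0 t}\|f_s\|^2\,ds/s\to 0$; monotonicity plus monotone convergence then gives $h(t,x)\to 0$ a.e.\ immediately. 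You instead show $\int_0^\infty G(t,x_0)\,dt/t<\infty$ a.e.\ for an enlarged Whitney average $G$ and finish by a contradiction argument on disjoint dyadic intervals. The paper's monotone cone integral avoids both the enlarged box and the contradiction step, which is a bit slicker; your route has the virtue of working directly with Whitney averages and needing no cone geometry.
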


\begin{proof} Recall that $c_{0},c_{1}$ are the parameters for the Whitney box $W(t,x)$. Let
$$
h(t,x):= \int_{|x-y|\le c_{1}c_{0}s, \, s<c_{0}t}  |f(s,y)|^2\frac{d\uw(s,y)}{s w(B(y,c_{1}s))}.
$$
It is an increasing function of $t$ with
$$
\int_{\R^n} h(t,x) \, dw(x) \lesssim
\int_{0}^{c_{0}t} \|f_{s}\|^2\, \frac{ds}s \to 0
$$
as $t\to 0$.  Thus $h(t,x)$ converges to 0 as $t\to 0$ for $dw$ almost every $x$. We conclude observing that
$W_{2}f(t,x)^2 \lesssim h(t,x)$.
\end{proof}

We now turn to the proof of Theorem \ref{thm:NTmaxandaeCV}.

\begin{proof}[Proof of \eqref{eq:Ntmax}.]  Here it remains to show $\|\tN(e^{-t|T|}h) \| \sim \|h\|$ whenever $h\in \clos{\ran(T)}$ and $T=DB$ or $BD$ with $B$ being $t$-independent (we drop the  subscript 0).  The bound from below is easy:
$$
\|h\|^2 = \lim_{t\to 0}  \frac 1t\int_t^{2t} \| e^{-s|T|}h \|^2\,  ds \lesssim \| \tN(e^{-s|T|}h) \|^2
$$
from Lemma \ref{lem:XlocL2}.

Let us see the converse when $T=DB$.  Split $h\in \clos{\ran(DB)}=\mH^{0}$ as $h=h^++h^-$ according to $h^\pm= \chi^\pm(DB)h$. Then $e^{-t|DB|}h^\pm$ is the $w$-normalized conormal gradient of a weak solution $u^\pm$ in the upper-(resp. lower-)half space. Thus one can use
\eqref{eq:meyers} and estimate $\tN(e^{-t|DB|}h^\pm)$ by $\tN^p(e^{-t|DB|}h^\pm)$ for some $p<2$ which we can take larger than $p_{0}$ of the Lemma \ref{lem:lqoffdiag}. From here follow the proof of Lemma 2.52 in \cite{AAH}: decompose $  e^{-t|DB|}=\psi(tDB) + (I+itDB)^{-1}$ and use  the quadratic estimate \eqref{eq:psiT} and Lemma \ref{lem:XlocL2} to obtain $$
\| \tN^p(\psi(tDB)h^\pm)\|^2 \lesssim \int_{0}^\infty \|\psi(tDB)h^\pm\|^2 \, \frac{dt}{t} \lesssim \|h^\pm\|^2
$$
and   Lemma \ref{lem:lqoffdiag} to obtain the pointwise bound
$$
\tN^p((I+itDB)^{-1}h^\pm) \le C M_{w}(|h^\pm|^p)^{1/p}, $$
where $M_{w}$ is the Hardy-Littlewood maximal operator with respect to $dw$.

The result for $T=BD$ follows from that of $DB$:  If $g\in \clos{\ran(BD)}$, then $B^{-1}g=h\in \clos{\ran(DB)}$  with $\|h\| \sim \|g\|$ and  $e^{-t|BD|}g=B e^{-t|DB|}h$. Thus
$$
 \| \tN(e^{-t|BD|}g) \| =  \| \tN(B e^{-t|DB|}h) \| \leq \|B\|_{\infty}  \| \tN( e^{-t|DB|}h) \|  \sim \|h\|.
$$
\end{proof}

\begin{proof}[Proof of \eqref{eq:CVae}.] This time we begin with $T=BD$.  We can split $h=h_{N}+ h_{R}$ where $h_{N}\in \nul(BD) $ and $h_{R}\in \clos{\ran(BD)}. $ As $e^{-s|BD|}h_{N}=h_{N}$,  the almost everywhere limit
$\lim_{t\to 0}\ \barint_{\hspace{-2pt}W(t,x_{0})} |e^{-s|BD|}h_{N}-h_{N}(x_{0})|^2\, d\uw=0$ follows from the Lebesgue differentiation theorem.  We can thus assume that $h\in \clos{\ran(BD)}$.

 Pick  a Lebesgue point  $x_{0}$ for the conditions
\begin{equation}
\label{eq:leb2}
\lim_{t\to 0}\ \barint_{\hspace{-6pt}B(x_{0},t)} |h-h(x_{0})|^2\, dw=0= \lim_{t\to 0}\ \barint_{\hspace{-6pt}B(x_{0},t)} |w^{-1}-w^{-1}(x_{0})|^2\, dw.
\end{equation}
The second equality is possible  since $w^{-1} \in L^2_{\loc}(w)$ as $w\in A_{2}$.
As a consequence of \eqref{eq:leb2} and \eqref{eq:dxdwAp} with $p=2$, we have
\begin{equation}
\label{eq:leb1}
\lim_{t\to 0}\ \barint_{\hspace{-6pt}B(x_{0},t)} w \, dx=w(x_{0}), \qquad  \lim_{t\to 0}\ \barint_{\hspace{-6pt}B(x_{0},t)} w^{-1} \, dx = w^{-1}(x_{0})
\end{equation}
and
$$\lim_{t\to 0}\ \barint_{\hspace{-6pt}B(x_{0},t)} |h-{(hw)(x_{0})}w^{-1}|^2\, dw=0.
$$
Write as above, $e^{-s|BD|}h=\psi(sBD)h + (I+isBD)^{-1}h$. The quadratic estimate  \eqref{eq:psiT}  and Lemma \ref{lem:SFEimplies0limit} imply that
$$
\lim_{t\to 0}\ \barint_{\hspace{-6pt}W(t,x_{0})} | \psi(sBD)h|^2 \, d\uw =0
$$
for $dw$ almost every $x_{0}\in \R^n$. Now the key point is that $D\begin{bmatrix}
      c_{\perp}    \\
      c_{\ta}w^{-1}
\end{bmatrix}= 0$ if $c$ is a constant, thus $(I+isBD)^{-1} \begin{bmatrix}
      h_{\perp}(x_{0})    \\
      (h_{\ta}w)(x_{0})w^{-1}
\end{bmatrix}= \begin{bmatrix}
      h_{\perp}(x_{0})    \\
      (h_{\ta}w)(x_{0})w^{-1}
\end{bmatrix}.$  It follows that
\begin{align*}
& (I+isBD)^{-1}h -h(x_{0}) \\
&
=(I+isBD)^{-1}h- \begin{bmatrix}
      h_{\perp}(x_{0})    \\
      (h_{\ta}w)(x_{0})w^{-1}
\end{bmatrix} + \begin{bmatrix}
      0    \\
      (h_{\ta}w)(x_{0})w^{-1} -h_{\ta}(x_{0})
\end{bmatrix}\\
&= (I+isBD)^{-1} \begin{bmatrix}
      h_{\perp}- h_{\perp}(x_{0})      \\
     h_{\ta}-  (h_{\ta}w)(x_{0})w^{-1}
\end{bmatrix}  + (h_{\ta}w)(x_{0})\begin{bmatrix}
      0    \\
      w^{-1} -w^{-1}(x_{0})
\end{bmatrix}.
\end{align*}
By the assumption on  $x_{0}$,
$$
\lim_{t\to 0}\ \barint_{\hspace{-6pt}W(t,x_{0})} \bigg| (h_{\ta}w)(x_{0})\begin{bmatrix}
      0    \\
      w^{-1} -w^{-1}(x_{0})
\end{bmatrix} \bigg|^2\, d\uw=0.
$$
Decomposing the inner function of the other term using annuli centred around $B(x_{0},t)$ and using Lemma \ref{lem:odd}, we have that
$$
 \barint_{\hspace{-6pt}W(t,x_{0})} \bigg | (I+isBD)^{-1} \begin{bmatrix}
      h_{\perp}- h_{\perp}(x_{0})    \\
     h_{\ta}-  (h_{\ta}w)(x_{0})w^{-1}
\end{bmatrix} \bigg|^2 \, d\uw
$$
is bounded by
\begin{equation}
\label{eq:NTbound}
 \sum_{j\ge 1} 2^{-jN}  (w(B(x_{0},t)))^{-1} \int_{B(x_{0}, 2^j t)} \bigg |  \begin{bmatrix}
      h_{\perp}- h_{\perp}(x_{0})    \\
     h_{\ta}-  (h_{\ta}w)(x_{0})w^{-1}
\end{bmatrix} \bigg|^2 \, dw.
\end{equation}
The scalar term is bounded by
\begin{align*}
&\sum_{j\ge 1} 2^{-j(N-d_{w})}   \barint_{\hspace{-6pt}B(x_{0}, 2^j t)}  |
      h_{\perp}- h_{\perp}(x_{0})     |^2 \, dw
      \\
      \lesssim &
      \sup_{\tau\le \sqrt t }\  \barint_{\hspace{-6pt}B(x_{0}, \tau)}  |        h_{\perp}- h_{\perp}(x_{0})
     |^2 \, dw + \sqrt t M_{w}(|h|^2)(x_{0})
 \end{align*}
as can be seen using the doubling condition on $w$ ($d_{w}$ being its homogeneous dimension) and
breaking the sum up at $j_{0}$ with $2^{j_{0}}\sim 1/ \sqrt t$ if $N\ge d_{w}+1$,
where $M_{w}$ is the Hardy-Littlewood maximal operator with respect to $dw$. The weak type (1,1) estimate of $M_{w}$ implies that  $M_{w}(|h|^2)(x_{0})<\infty$ for almost every $x_{0}\in \R^n$.  Hence,  the scalar term  goes to 0  as $t\to 0$ at those $x_{0}$ which meet all these requirements.
The tangential term  in \eqref{eq:NTbound} is bounded by
\begin{align*}
&\sum_{j\ge 1} 2^{-j(N-d_{w^{-1}})}   \barint_{\hspace{-6pt}B(x_{0}, 2^j t)}  |
      h_{\ta}- h_{\ta}(x_{0})     |^2 \, dw
      \\ + &  \sum_{j\ge 1} 2^{-jN} |h_{\ta}(x_{0})|^2   (w(B(x_{0},t)))^{-1} \int_{B(x_{0}, 2^j t)} |w^{-1}(x_{0})-w^{-1}|^2\,dw.
 \end{align*}
 The first sum can be treated as above. For the second when  $j\le j_{0}$, we also do as above. For $j\ge j_{0}$, we write $|w^{-1}(x_{0})-w^{-1}|^2 \le 2 w^{-2}(x_{0}) + 2w^{-2}$. The first term leads to a bound $\sqrt t  |h_{\ta}(x_{0})|^2 w^{-2}(x_{0})$ if $N\ge d_{w}+1$. For the second term,
observe that
 $$\int_{B(x_{0}, 2^j t)} w^{-2}dw= w^{-1}(B(x_{0}, 2^j t)) \lesssim 2^{jd_{w^{-1}}} w^{-1}(B(x_{0},  t))$$ and  that  by \eqref{eq:leb1}$$(w(B(x_{0},t)))^{-1}w^{-1}(B(x_{0},  t)) \to 1/w^2(x_{0}), \quad t\to 0.$$
 Thus, if also $N\ge d_{w^{-1}}+1$, then the tangential term in \eqref{eq:NTbound} tends to 0.

We now turn to the proof for $T=DB$.  If $g\in \nul(DB)$, \eqref{eq:CVae} is a consequence of the Lebesgue differentiation theorem for the measure $dw$ on $\R^n$ as $e^{-s|DB|}g=g$ for all $s$.
Now consider $g\in \clos{\ran(DB)}$. As
$$
\lim_{t\to 0}\ \barint_{\hspace{-6pt}W(t,x_{0})} |g-g(x_{0})|^2\, d\uw= \lim_{t\to 0}\ \barint_{\hspace{-6pt}B(x_{0},c_{1}t_{0})} |g-g(x_{0})|^2\, dw= 0$$
for almost every $x_{0}\in \R^n$,
 it is enough to show the almost everywhere limit
$$
\lim_{t\to 0}\ \barint_{\hspace{-6pt}W(t,x_{0})} |e^{-s|DB|}g-g|^2\, d\uw=0.
$$
Write
$e^{-s|DB|}g-g= \psi(sDB)g + (I+isDB)^{-1}g-g$. The quadratic estimate  \eqref{eq:psiT} and Lemma \ref{lem:SFEimplies0limit} imply that
$$
\lim_{t\to 0}\ \barint_{\hspace{-6pt}W(t,x_{0})} | \psi(sDB)g|^2 \, d\uw =0
$$
for almost every $x_{0}\in \R^n$. Now $(I+isDB)^{-1}g-g= -isDh_{s}$ with $ h_{s}=B(I+isDB)^{-1}g= (I+isBD)^{-1}(Bg)$ and $Bg\in \mH$. Let
 $$\tilde h_{s}:= (I+isBD)^{-1}(Bg)- \begin{bmatrix}
      (Bg)_{\perp}(x_{0})    \\
      ((Bg)_{\ta}w)(x_{0})w^{-1}
\end{bmatrix}.$$ As above we have that $Dh_{s}=D\tilde h_{s}$. We now apply the following local coercivity inequality on $\R^n$: for any $u\in \mH_{\loc}$ with $Du\in \mH_{loc}$ and any ball $B(x,r)$ in $\R^n$,
\begin{equation}
\label{eq:localcoerc}
\int_{B(x,r)} |Du|^2\, dw \lesssim \int_{B(x,2r)} |BDu|^2\, dw + r^{-2} \int_{B(x,2r)} | u|^2\, dw,
\end{equation}
where the implicit constant depends only on the ellipticity constants of $B$, the dimension and $m$. (Of course, if $B^{-1}$ is a multiplication operator, this inequality improves considerably.)

Applying this inequality  to $u=\tilde h_{s}$, using $Dh_{s}=D\tilde h_{s}$ and integrating with respect to $s$ implies
\begin{align*}
\label{}
\barint_{\hspace{-6pt}W(t,x_{0})} |isDh_{s}|^2\, d\uw    & \lesssim  \barint_{\hspace{-6pt}\widetilde W(t,x_{0})} |isBDh_{s}|^2\, d\uw  +  \barint_{\hspace{-6pt}\widetilde W(t,x_{0})} |\tilde h_{s}|^2\, d\uw  \\
    & \lesssim  \barint_{\hspace{-6pt}\widetilde W(t,x_{0})} |(I+isBD)^{-1}(Bg) -Bg|^2\, d\uw
    \\
  &  \qquad +  \barint_{\hspace{-6pt}\widetilde W(t,x_{0})} \bigg|Bg- \begin{bmatrix}
      (Bg)_{\perp}(x_{0})    \\
      ((Bg)_{\ta}w)(x_{0})w^{-1}
\end{bmatrix}\bigg|^2\, d\uw,
\end{align*}
where $\widetilde W(t,x_{0})$ is a slightly expanded version of $W(t,x_{0})$ and, in the last inequality, we have written
$$\tilde h_{s}= (I+isBD)^{-1}(Bg) -Bg + Bg- \begin{bmatrix}
      (Bg)_{\perp}(x_{0})    \\
      ((Bg)_{\ta}w)(x_{0})w^{-1}
\end{bmatrix}.
$$
The  last two integrals have been shown to converge to $0$ as $t\to 0$ for almost every $x_{0}\in \R^n$ in the argument for $BD$.  It only remains to prove   \eqref{eq:localcoerc}.

For this inequality, we let $\chi$ be a smooth scalar-valued function with $\chi=1$ on $B(x,r)$, $\chi$ supported in $B(x,2r)$  and  $|\nabla \chi|\lesssim r^{-1}$.  Using the commutator identity \eqref{eq:comm} between $\chi$ and $D$, we have
$$
\int_{B(x,r)} |Du|^2\,  dw  \leq \int  |\chi Du|^2\,  dw  \lesssim  \int  |D(\chi u)|^2\,  dw + \int  |\nabla\chi |^2| u|^2\,  dw.
$$
Since $B$ is accretive on $\ran(D)$ and  $\chi u \in \dom(D)$, we have $ \int  |D(\chi u)|^2\,  dw \lesssim  \int  |BD(\chi u)|^2\,  dw$. Now, we use again the commutation between $\chi$ and $D$ together with $\|B\|_{\infty}$. This proves \eqref{eq:localcoerc}.\end{proof}

\bibliographystyle{acm}

\end{document}